\newtheorem{thm}{Theorem}[section]
\newtheorem{lem}[thm]{Lemma}
\newtheorem{false statement}{False statement}
\newtheorem{cor}[thm]{Corollary}
\newtheorem{fact}{Fact}
\theoremstyle{definition}
\newtheorem{claim}{Claim}
\newtheorem{conj}{Conjecture}
\newtheorem{case}{Case}
\newcounter{mathitem}
  {\begin{list}{{$(\roman{mathitem})$}}{
   \setcounter{mathitem}{0}
   \usecounter{mathitem}
   \setlength{\topsep}{0pt plus 2pt minus 0pt}
   \setlength{\parskip}{0pt plus 2pt minus 0pt}
   \setlength{\partopsep}{0pt plus 2pt minus 0pt}
   \setlength{\parsep}{0pt plus 2pt minus 0pt}
   \setlength{\leftmargin}{35pt}
   \setlength{\itemsep}{0pt plus 2pt minus 0pt}}}
  {\end{list}}
\begin{document}

\title{\bf\Large Stability results on the circumference of a graph}

\date{}

\author{
Jie Ma\footnote{School of Mathematical Sciences, University of Science
and Technology of China, Hefei, Anhui 230026, China. Email: jiema@ustc.edu.cn.
Research supported in part by National Natural Science Foundation of China grants 11501539 and 11622110 and Anhui Initiative in Quantum information Technologies grant AHY150200.}~~~~~~~
Bo Ning\footnote{Center for Applied Mathematics,
Tianjin University, Tianjin 300072, China. Email: bo.ning@tju.edu.cn. Research supported in part by National Natural Science Foundation of China grant 11601379.}
}
\maketitle

\begin{abstract}

In this paper, we extend and refine previous Tur\'an-type results on graphs with a given circumference.
Let $W_{n,k,c}$ be the graph obtained from a clique $K_{c-k+1}$
by adding $n-(c-k+1)$ isolated vertices each joined to the same $k$ vertices of the clique,
and let $f(n,k,c)=e(W_{n,k,c})$.
Improving a celebrated theorem of Erd\H{o}s and Gallai \cite{EG59},
Kopylov \cite{K77} proved that for $c<n$, any 2-connected graph $G$ on $n$ vertices with circumference $c$ has at most
$\max\{f(n,2,c),f(n,\lfloor\frac{c}{2}\rfloor,c)\}$ edges,
with equality if and only if $G$ is isomorphic to $W_{n,2,c}$ or $W_{n,\lfloor\frac{c}{2}\rfloor,c}$.
Recently, F\"{u}redi et al. \cite{FKV16,FKLV-arxiv} proved a stability version of Kopylov's theorem.
Their main result states that if $G$ is a 2-connected graph on $n$ vertices with circumference $c$
such that $10\leq c<n$ and $e(G)>\max\{f(n,3,c),f(n,\lfloor\frac{c}{2}\rfloor-1,c)\}$, then either $G$ is a subgraph of $W_{n,2,c}$ or $W_{n,\lfloor\frac{c}{2}\rfloor,c}$,
or $c$ is odd and $G$ is a subgraph of a member of two well-characterized families which we define as $\mathcal{X}_{n,c}$ and $\mathcal{Y}_{n,c}$.

We prove that if $G$ is a 2-connected graph on $n$ vertices with minimum degree at least $k$ and circumference $c$
such that $10\leq c<n$ and $e(G)>\max\{f(n,k+1,c),f(n,\lfloor\frac{c}{2}\rfloor-1,c)\}$,
then one of the following holds:\\
(i) $G$ is a subgraph of $W_{n,k,c}$ or $W_{n,\lfloor\frac{c}{2}\rfloor,c}$, \\
(ii) $k=2$, $c$ is odd, and $G$ is a subgraph of a member of $\mathcal{X}_{n,c}\cup \mathcal{Y}_{n,c}$, or \\
(iii) $k\geq 3$ and $G$ is a subgraph of the union of a clique $K_{c-k+1}$ and some cliques $K_{k+1}$'s, where any two cliques share the same two vertices.

This provides a unified generalization of the above result of F\"{u}redi et al. \cite{FKV16,FKLV-arxiv} as well as
a recent result of Li et al. \cite{LN16} and independently, of F\"{u}redi et al. \cite{FKL17} on non-Hamiltonian graphs.
A refinement and some variants of this result are also obtained.
Moreover, we prove a stability result on a classical theorem of Bondy \cite{B71} on the circumference.
We use a novel approach, which combines several proof ideas including a closure operation and an edge-switching technique.
We will also discuss some potential applications of this approach for future research.
\end{abstract}



\section{Introduction}

All graphs in this paper are simple and finite.
The {\it circumference} $c(G)$ of a graph $G$ is the length of a longest cycle in $G$.
A graph $G$ is called {\it Hamiltonian} if $c(G)=|V(G)|$.
Let $\delta(G)$ and $e(G)$ denote the minimum degree and the number of edges in $G$, respectively.

Determining the circumference of a graph is a classical problem in graph theory.
It is well known that even determining if the graph is Hamiltonian is NP-hard.
There has been extensive research investigating various relations between the circumference and other natural graph parameters.
One such example is the famous theorem proved by Dirac \cite{D52} in 1952, which states that
for any 2-connected graph $G$, $c(G)\geq \min \{2\delta(G), |V(G)|\}$.
In this paper, we mainly focus on the Tur\'an-type problems on the circumference.
One cornerstone in this direction is the following celebrated Erd\H{o}s-Gallai theorem.

\begin{thm}[Erd\H{o}s and Gallai \cite{EG59}]\label{Th:EG-cyc}
For any graph $G$ on $n$ vertices, $e(G)\leq\frac{c(G)(n-1)}{2}$.\footnote{For a graph $G$ without cycles, we view $c(G)=2$.}
\end{thm}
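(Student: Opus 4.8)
The plan is to prove the inequality by induction on $n$, reducing to the $2$-connected case where Dirac's theorem (stated above) does all the work. Write $c=c(G)$ throughout. If $G$ has no cycle we have $e(G)\le n-1=\tfrac{c(n-1)}{2}$ by the convention $c=2$, and $n\le 2$ is immediate, so assume $G$ has a cycle and $n\ge 3$.

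First I would dispose of the case when $G$ is not $2$-connected. If $G$ is disconnected, let $G_1$ be one component and $G_2$ the union of the rest; then $e(G)=e(G_1)+e(G_2)$, $|V(G_1)|+|V(G_2)|=n$, each $c(G_i)\le c$, and both $G_i$ have fewer than $n$ vertices, so the inductive hypothesis gives $e(G)\le\tfrac{c}{2}\big((|V(G_1)|-1)+(|V(G_2)|-1)\big)\le\tfrac{c(n-1)}{2}$. If $G$ is connected with a cut vertex $v$, split $G$ at $v$ into $G_1,G_2$ with $V(G_1)\cap V(G_2)=\{v\}$, $V(G_1)\cup V(G_2)=V(G)$, and no edge between $V(G_1)\setminus\{v\}$ and $V(G_2)\setminus\{v\}$; then again $e(G)=e(G_1)+e(G_2)$, now $|V(G_1)|+|V(G_2)|=n+1$ with both parts of size $<n$, and induction yields the same bound.

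It remains to treat $G$ that is $2$-connected, where I would invoke Dirac's theorem $c(G)\ge\min\{2\delta(G),n\}$. If $2\delta(G)\ge n$, this forces $c=n$, and then $e(G)\le\binom{n}{2}=\tfrac{n(n-1)}{2}=\tfrac{c(n-1)}{2}$. Otherwise $2\delta(G)<n$, so Dirac's bound gives $\delta(G)\le c/2$. Pick a vertex $v$ with $\deg(v)=\delta(G)$; the graph $G-v$ has $n-1$ vertices and $c(G-v)\le c$, so by induction $e(G-v)\le\tfrac{c(n-2)}{2}$, whence $e(G)=e(G-v)+\deg(v)\le\tfrac{c(n-2)}{2}+\tfrac{c}{2}=\tfrac{c(n-1)}{2}$, completing the induction.

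I do not expect a genuine obstacle: once one reduces to the $2$-connected case, the argument is driven entirely by the inequality $c(G)\ge\min\{2\delta(G),|V(G)|\}$ already available from Dirac. The points that need care are purely bookkeeping — making sure the pieces $G_i$ in the non-$2$-connected reductions are strictly smaller so the induction is legitimate, and checking the acyclic/degenerate cases against the convention $c(G)=2$. An alternative route via a longest path (both endpoints of a longest path send all their neighbours into the path, which forces a long cycle) would also yield the bound, but the induction-plus-Dirac argument is cleaner and self-contained given what has been stated.
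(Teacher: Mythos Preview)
Your proof is correct. The induction on $n$, with the reduction to the $2$-connected case followed by Dirac's theorem $c(G)\ge\min\{2\delta(G),|V(G)|\}$, is a clean and standard argument; each step checks out, including the bookkeeping at cut vertices and the handling of the acyclic convention.

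The paper, however, does not supply its own proof of this theorem: it is quoted as the classical Erd\H{o}s--Gallai result from \cite{EG59} and used as background. The only implicit derivation the paper offers is the remark following Theorem~\ref{Le:Bondy} that Bondy's inequality --- the number of edges with at most one endpoint on a longest cycle $C$ is at most $\tfrac{c}{2}(n-c)$ --- combined with the trivial bound $e(G[C])\le\binom{c}{2}$ recovers $e(G)\le\tfrac{c(n-1)}{2}$. That route is structurally different from yours: it bounds edges directly relative to a fixed longest cycle rather than peeling off a low-degree vertex and inducting. Your approach has the virtue of being entirely self-contained given Dirac's theorem, whereas the Bondy route feeds more naturally into the stability analysis that is the paper's actual concern.
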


\noindent This is sharp if $n-1$ is divisible by $c-1$ (where $c:=c(G)$),
by considering the graph consisting of cliques $K_{c}$'s sharing
only one common vertex.
Theorem \ref{Th:EG-cyc} also implies that
if an $n$-vertex graph $G$ contains no paths of length $k$,\footnote{We specify that throughout
this paper, a path of length $k$ has $k$ edges (and hence $k+1$ vertices).}
then $e(G)\leq \frac{(k-1)n}{2}$.

Bondy \cite{B71} generalized this theorem by showing the following.

\begin{thm}[Bondy \cite{B71}]\label{Le:Bondy}
Let $G$ be a graph on $n$ vertices and let $C$ be a longest cycle of $G$
of length $c$. Then the number of edges with at most one endpoint in $C$ is at most $\frac{c}{2}\cdot (n-c)$.
In addition, if $G$ is 2-connected, then this number is at most $\lfloor\frac{c}{2}\rfloor\cdot (n-c)$.
\end{thm}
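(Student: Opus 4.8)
The plan is to fix a longest cycle $C$ of $G$, of length $c$, put $R=V(G)\setminus V(C)$ (so $|R|=n-c$), and work with the components $H_1,\dots,H_t$ of $G-V(C)$, writing $h_i=|V(H_i)|$ and letting $A_i\subseteq V(C)$ be the set of vertices of $C$ with a neighbour in $H_i$ (the attachment set of $H_i$). Every edge of $G$ with at most one endpoint on $C$ has both endpoints in $R\cup A_i$ and at least one endpoint in $H_i$ for a \emph{unique} $i$ (if both endpoints lie in $R$ they lie in a common component). So if $m_i$ denotes the number of such edges attached to $H_i$, then $\sum_i m_i$ is exactly the quantity to be bounded, and it suffices to prove the per-component estimate $m_i\le\tfrac c2 h_i$; summing yields $\sum_i m_i\le\tfrac c2\sum_i h_i=\tfrac c2(n-c)$. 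For the $2$-connected refinement one first observes that every $A_i$ has at least two vertices (a single attachment vertex would be a cutvertex of $G$, as $c<n$ may be assumed) and then aims instead for $m_i\le\lfloor c/2\rfloor h_i$.

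I would prove the per-component estimate by induction on $h_i$, in the slightly more general form: if $H$ is a connected subgraph of $G$ disjoint from $C$ with attachment set $A$, then $e(H)+e(V(H),A)\le\tfrac c2|V(H)|$. Arrange $A$ cyclically on $C$ as $w_1,\dots,w_a$ and let $\sigma_1,\dots,\sigma_a$ be the arcs of $C$ between consecutive $w_j$'s, of lengths $\ell_1,\dots,\ell_a$ with $\sum_j\ell_j=c$. The base case $|V(H)|=1$ is immediate: the lone vertex's neighbours on $C$ are pairwise non-consecutive (otherwise inserting it across a consecutive pair lengthens $C$), so every $\ell_j\ge2$ and hence $a\le\sum_j\lfloor\ell_j/2\rfloor\le\lfloor c/2\rfloor\le c/2$. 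For $|V(H)|\ge2$, choose a non-cutvertex $u$ of $H$; then $H':=H-u$ is connected, $m(H)=m(H')+\deg_{H-u}(u)+|N(u)\cap V(C)|$, and by induction $m(H')\le\tfrac c2(|V(H)|-1)$, so it remains to fit $\deg_{H-u}(u)+|N(u)\cap V(C)|$ into the leftover budget $\tfrac c2$.

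The hard part is exactly this last step: a vertex of $H$ can have incidence much larger than $c/2$ (consider a vertex inside a large clique component attached to $C$ at a single point, where $\deg_{H-u}(u)$ is nearly $c$), so the bound $\deg_{H-u}(u)+|N(u)\cap V(C)|\le c/2$ is false in general and one must instead show that any such excess is compensated by slack already present in the inductive bound for $H'$. The tools are rerouting arguments forced by the maximality of $C$: an arc $\sigma_j$ is never shorter than any $w_j$--$w_{j+1}$ path with interior in $V(H)$, and, more quantitatively, long paths or dense vertex sets inside $H$ whose attachments on $C$ are spread apart force the arc lengths $\ell_j$ to be large, which in turn caps $e(H)+e(V(H),A)$ strictly below $\tfrac c2|V(H)|$ by a controllable margin. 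Turning these qualitative statements into an accounting that exactly balances the induction -- and extracting the extra factor that upgrades $\tfrac c2$ to $\lfloor c/2\rfloor$ in the $2$-connected case, using $|A_i|\ge2$ together with a parity argument on $\sum_j\ell_j=c$ -- is where essentially all of the work lies; the final summation over components is then routine.
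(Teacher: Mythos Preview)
The paper does not prove this theorem; it is quoted as a result of Bondy \cite{B71} and used as background. So there is no ``paper's own proof'' to compare against, and the relevant question is simply whether your argument is complete.

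It is not. Your component-by-component framework is correct, and your base case is fine: an isolated vertex of $G-C$ has at most $\lfloor c/2\rfloor$ neighbours on $C$. But your inductive step is, as you yourself point out, broken: after deleting a non-cutvertex $u$ from $H$, the residual budget is $c/2$, whereas $\deg_{H}(u)+|N(u)\cap V(C)|$ can be as large as roughly $c$ (take $H$ a clique of order close to $c$ hanging off one vertex of $C$). You then say that the excess must be ``compensated by slack already present in the inductive bound for $H'$'' and that carrying out this accounting ``is where essentially all of the work lies'' --- and stop. That is not a proof; it is a description of what a proof would have to accomplish. The $2$-connected refinement is left in the same state.

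If you want to push an argument of this shape through, the induction has to carry a stronger hypothesis than $m(H')\le \tfrac{c}{2}|V(H')|$ --- for instance, a bound that also tracks the arc lengths $\ell_j$ between consecutive attachment points, since those encode exactly the slack you need. Alternatively (and this is closer to how the result is usually proved), drop the vertex-deletion induction altogether and bound each component directly: for $|A|\le 1$ apply the Erd\H{o}s--Gallai cycle theorem to $G[V(R)\cup A]$, which has $|R|+|A|$ vertices and no cycle longer than $c$, giving $e(R)+e(R,C)\le \tfrac{c}{2}|R|$; for $|A|\ge 2$ use that every $(u_i,u_{i+1})$-path through $R$ has length at most the arc $\ell_i$, together with an Erd\H{o}s--Gallai-type path bound inside $R$, to get the same inequality. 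The $2$-connected sharpening then comes from the fact that each $\ell_i\ge 2$ when $|A|\ge 2$, not from a parity trick alone.
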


\noindent Since there are at most $\binom{c}{2}$ edges spanned in $V(C)$,
we see that Theorem \ref{Le:Bondy} indeed is a strengthening of Theorem \ref{Th:EG-cyc}.
\footnote{An improved version for 2-connected graphs can be found in Fan \cite{F90}.}

Throughout this paper, let $W_{n,k,c}$ be the graph obtained from a clique $K_{c-k+1}$
by adding $n-(c-k+1)$ isolated vertices each joined to the same $k$ vertices of $K_{c-k+1}$, and
\begin{align*}
f(n,k,c):=\binom{c-k+1}{2}+k\cdot (n-c+k-1).
\end{align*}
So $W_{n,k,c}$ has $n$ vertices, minimum degree $k$ and circumference $c$ with $e(W_{n,k,c})=f(n,k,c)$.

\subsection{Stability on non-Hamiltonian graphs with large minimum degree}
For non-Hamiltonian graphs $G$ (that is, $c(G)\leq n-1$),
Ore \cite{O61} proved that $e(G)\leq \binom{n-1}{2}+1=f(n,1,n-1)$.
This was generalized further by Erd\H{o}s \cite{E62}.

\begin{thm}[Erd\H{o}s \cite{E62}]\label{ThEr}
If $G$ is a non-Hamiltonian graph on $n$ vertices with $\delta(G)\geq k$, where $1\leq
k\leq(n-1)/2$, then $e(G)\leq \max\{f(n,k,n-1),f(n,\lfloor\frac{n-1}{2}\rfloor,n-1)\}$.
\end{thm}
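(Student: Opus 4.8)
The plan is to deduce this from Chv\'atal's classical theorem on Hamiltonian degree sequences, followed by a one-line convexity argument. Order the degree sequence of $G$ as $d_1\le d_2\le\cdots\le d_n$; note $n\ge 3$ since $1\le k\le(n-1)/2$. As $G$ is non-Hamiltonian, Chv\'atal's theorem provides an integer $i$ with $1\le i<n/2$ such that $d_i\le i$ and $d_{n-i}\le n-i-1$. The hypothesis $\delta(G)\ge k$ forces $i\ge d_i\ge d_1=\delta(G)\ge k$, while $i<n/2$ gives $i\le\lfloor\frac{n-1}{2}\rfloor$; thus $k\le i\le\lfloor\frac{n-1}{2}\rfloor$.

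Next I would estimate $e(G)$ by breaking the degree sum into the three blocks dictated by these thresholds: $d_j\le i$ for $1\le j\le i$, then $d_j\le n-i-1$ for $i<j\le n-i$ (there are $n-2i\ge 1$ such indices), and trivially $d_j\le n-1$ for the last $i$ indices. This yields
\begin{align*}
2e(G)=\sum_{j=1}^{n}d_j\le i^2+(n-2i)(n-i-1)+i(n-1)=(n-i)(n-i-1)+2i^2=2f(n,i,n-1),
\end{align*}
the final equality being a routine expansion using $f(n,i,n-1)=\binom{n-i}{2}+i^2$. Hence $e(G)\le f(n,i,n-1)$ for this particular $i\in[k,\lfloor\frac{n-1}{2}\rfloor]$.

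Finally, regarding $f(n,t,n-1)=\binom{n-t}{2}+t^2$ as a function of a real variable $t$, it is a quadratic with positive leading coefficient, hence convex, so its maximum over the interval $[k,\lfloor\frac{n-1}{2}\rfloor]$ is attained at an endpoint. Combining this with the previous step gives $e(G)\le\max\{f(n,k,n-1),f(n,\lfloor\frac{n-1}{2}\rfloor,n-1)\}$, as required; the graphs $W_{n,k,n-1}$ and $W_{n,\lfloor\frac{n-1}{2}\rfloor,n-1}$ show the bound is best possible. The only substantive ingredient is the extraction of the index $i$ together with its two degree thresholds from non-Hamiltonicity — everything after that is bookkeeping — and this is exactly the step I expect to be the main obstacle if one insists on a self-contained argument; it would then be supplied by the standard longest-path rotation (vertex-switching) argument, an elementary cousin of the edge-switching technique developed later in this paper.
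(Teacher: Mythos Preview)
Your argument is correct. Note, however, that the paper does not prove this statement: it is quoted as a classical result of Erd\H{o}s \cite{E62} and used only as background and motivation, so there is no ``paper's own proof'' to compare against.

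That said, your route is very much in the spirit of the paper. Chv\'atal's theorem is precisely the tool the paper invokes (as Theorem~\ref{Th:chvatal}) when it needs degree-sequence information from non-Hamiltonicity, and your convexity step is exactly the monotonicity observation recorded at the end of Subsection~2.1, namely that $f(n,t,c)$ is a convex quadratic in $t$ so its maximum over an interval is attained at an endpoint. The only point worth a remark is your final sentence: the extraction of the index $i$ with its two thresholds is not really an obstacle here, since Chv\'atal's theorem is already available in the paper and is itself proved by the closure/rotation machinery you allude to.
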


This bound is sharp for all $1\leq k\leq(n-1)/2$.
Recently, Li and Ning \cite{LN16}, and independently,
F\"{u}redi, Kostochka and Luo \cite{FKL17} proved a stability version of this theorem.

\begin{thm}[\cite{LN16,FKL17}]\label{Th:nonH}
Let $G$ be a non-Hamiltonian graph on $n$ vertices with $\delta(G)\geq k$,
where $1\leq k\leq(n-1)/2$.
If $e(G)>\max\{f(n,k+1,n-1),f(n,\lfloor\frac{n-1}{2}\rfloor,n-1)\}$,
then $G$ is a subgraph of either $W_{n,k,n-1}$ or the edge-disjoint union of two cliques $K_{n-k}$ and $K_{k+1}$ sharing a common vertex.
\end{thm}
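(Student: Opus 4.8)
\medskip
\noindent\textit{Proof proposal.} The plan is to reduce first to the case $\delta(G)=k$ and then split on the connectivity of $G$. If $\delta(G)\ge k+1$ and $k+1\le\frac{n-1}{2}$, then Theorem~\ref{ThEr} applied with parameter $k+1$ gives $e(G)\le\max\{f(n,k+1,n-1),f(n,\lfloor\frac{n-1}{2}\rfloor,n-1)\}$, contradicting the hypothesis; the large boundary values of $k$ (where, as the argument below shows, the hypothesis forces $n>3k+2$ and hence is vacuous once $k=\lfloor\frac{n-1}{2}\rfloor$) are immediate. So from now on $G$ has a vertex of degree exactly $k$, and I would treat the cases ``$G$ not $2$-connected'' and ``$G$ $2$-connected'' separately.

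\medskip
\noindent\textit{$G$ not $2$-connected.} If $G$ is disconnected then, as every component has order $\ge k+1$, one gets $e(G)\le\binom{n-k-1}{2}+\binom{k+1}{2}<f(n,k+1,n-1)$, a contradiction. Hence $G$ has a cut vertex $w$; letting $D_1,\dots,D_r$ ($r\ge2$) be the components of $G-w$, each of order $|D_i|\ge k$, we have $e(G)\le\sum_i\binom{|D_i|+1}{2}$. Over all compositions of $n-1$ into parts $\ge k$ this sum is uniquely maximised, with value $\binom{n-k}{2}+\binom{k+1}{2}$, by $r=2$ and $\{|D_1|,|D_2|\}=\{n-1-k,k\}$; every other composition yields a strictly smaller value, which a short computation (reducing essentially to $\binom{k+1}{2}\le k(k+1)$) shows is at most $f(n,k+1,n-1)$. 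Since $e(G)>f(n,k+1,n-1)$, the block structure must be the extremal one, and so $G$ is a subgraph of the edge-disjoint union of $K_{n-k}$ and $K_{k+1}$ sharing the vertex $w$, as wanted.

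\medskip
\noindent\textit{$G$ $2$-connected.} First I would pass to the Bondy--Chv\'atal closure of $G$, which is still non-Hamiltonian, $2$-connected, has $\delta\ge k$, and has no fewer edges; so assume $G$ is closed, whence $\delta(G)=k$ (again by Theorem~\ref{ThEr}) and any two vertices of degree $\ge\lceil n/2\rceil$ are adjacent. As $G$ is non-Hamiltonian, Chv\'atal's theorem gives an index $h<n/2$ with $d_h\le h$ and $d_{n-h}\le n-1-h$ for the degree sequence $d_1\le\dots\le d_n$; since $d_h\ge d_1=k$ we get $h\ge k$, and bounding $\sum d_i$ by these inequalities yields the identity $2e(G)\le h^2+(n-2h)(n-1-h)+h(n-1)=2f(n,h,n-1)$. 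The function $h\mapsto f(n,h,n-1)=\binom{n-h}{2}+h^2$ is convex, so on $[k+1,\lfloor\frac{n-1}{2}\rfloor]$ it is at most $\max\{f(n,k+1,n-1),f(n,\lfloor\frac{n-1}{2}\rfloor,n-1)\}<e(G)$; hence $h=k$, which forces $d_1=\dots=d_k=k$, $d_{n-k}\le n-1-k$, and $2e(G)\le 2f(n,k,n-1)$ with deficit $D:=2f(n,k,n-1)-2e(G)<2\bigl(f(n,k,n-1)-f(n,k+1,n-1)\bigr)=2(n-3k-2)$.

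\medskip
\noindent\textit{The hard part.} Writing $D$ as the sum of the nonnegative per-vertex deficits $n-1-k-d_i$ (for $k<i\le n-k$) and $n-1-d_i$ (for $n-k<i\le n$), its smallness makes all but a bounded number of the ``middle'' vertices have degree exactly $n-1-k$ and all but a bounded number of the ``top'' vertices have degree $n-1$. The remaining work --- which I expect to be the main obstacle --- is to upgrade this near-equality in the degree sequence to the exact adjacency structure. Since the degree sequence does not determine $G$, one must combine the deficit estimate with closedness (every vertex of degree exceeding $n-1-k$ lies in the neighbourhood $N(u)$ of a fixed degree-$k$ vertex $u$, and the high-degree vertices form a clique) and, in the tight configurations, an examination of a longest path or cycle through $u$, to conclude that the degree-$k$ vertices are pairwise nonadjacent with a common neighbourhood of size $k$ inside a spanning $K_{n-k}$; that is, $G\subseteq W_{n,k,n-1}$. (The degenerate outcome, with this clique shrinking and the degree-$k$ vertices forming a $K_k$ attached at a single vertex, is a subgraph of the union of $K_{n-k}$ and $K_{k+1}$ sharing a vertex, which is not $2$-connected and therefore does not occur in this case.) The numerical comparisons used throughout --- monotonicity/convexity of $f(n,\cdot,\cdot)$, the composition sums, the size of the deficit window --- are routine but must be checked with some care near the end $k\approx\frac n2$ of the allowed range; and one can alternatively deduce this statement, after a similar but shorter case analysis, from the main theorem of the Introduction by specialising to $c=n-1$.
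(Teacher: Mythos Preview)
The paper does not give its own proof of this theorem: it is quoted as a known result from \cite{LN16,FKL17}. What the paper \emph{does} do is recover the 2-connected case (for $n\ge 11$) as the specialisation $c=n-1$ of Theorem~\ref{Th:main-refined}; the non-2-connected outcome $K_{n-k}\cup K_{k+1}$ never arises there, and the paper's machinery (the $C$-closure, Theorems~\ref{Th:BondyStab-2con} and~\ref{Th:WZ+closure1}) is of a completely different flavour from your degree-sequence approach. So there is no ``paper's proof'' to compare against line by line; your outline is much closer in spirit to the original argument in \cite{FKL17}.

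Your reduction to $\delta(G)=k$ and your treatment of the disconnected and cut-vertex cases are correct. The genuine gap is exactly where you flag it: in the 2-connected case, the deficit bound $D<2(n-3k-2)$ is \emph{not} enough to pin down the adjacency structure, and your sketch of ``upgrading near-equality plus closure plus a longest-path inspection'' is not a proof. The bound on $D$ grows with $n$, so ``all but a bounded number'' of middle/top vertices is not literally true, and the degree sequence by itself does not force the $k$ low-degree vertices to share a common neighbourhood.

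The clean way to close this gap --- and this is essentially what \cite{FKL17} does, and what the present paper abstracts in Lemma~\ref{Le:HamConStruc2} --- is to bypass the deficit count and argue directly with closure. Having taken the $n$-closure and found (via Chv\'atal) the index $h=k$, choose $s$ \emph{maximal} with $d_s\le s$, and let $S$ be the set of all vertices of degree at most $s$; the edge bound forces $s=k$ and $|S|=k$. Now show $G-S$ is a clique: if $u,v\in V(G)\setminus S$ are nonadjacent with $d(u)$ maximal among such vertices, then every non-neighbour $w$ of $u$ satisfies $d(w)\le n-1-d(u)$ by closedness, which produces a new set $S'$ of $n-1-d(u)$ vertices of degree at most $n-d(u)$; the maximality of $s$ then forces $n-d(u)>\lfloor (n-1)/2\rfloor$, and a short count contradicts $e(G)>f(n,\lfloor\frac{n-1}{2}\rfloor,n-1)$. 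Once $G-S$ is a clique $K_{n-k}$, closedness and $\delta(G)=k$ immediately give $N(x)\subseteq V(G)\setminus S$ of size exactly $k$ for each $x\in S$, and one more pigeonhole/closure step shows all $x\in S$ share the same $k$ neighbours, i.e.\ $G\subseteq W_{n,k,n-1}$.
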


Very recently, F\"{u}redi, Kostochka and Luo obtained a stronger stability theorem (and also some other related results) in \cite{FKL-arxiv}.

\subsection{Stability on graphs with given circumference}
There are many refinements of Theorem \ref{Th:EG-cyc} in the literature,
see \cite{FS75, L75, W76, K77} or the survey \cite{FS}.
Among them, Kopylov \cite{K77} proved the following strong version in 1977.

\begin{thm}[Kopylov \cite{K77}]\label{Th:Kopylov}
Let $G$ be a 2-connected graph on $n$ vertices.
If $c(G)=c\leq n-1$, then
$e(G)\leq \max\{f(n,2,c),f(n,\lfloor\frac{c}{2}\rfloor,c)\}$.
\end{thm}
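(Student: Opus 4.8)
\medskip
\noindent\textbf{Proof proposal.} The plan is to run Kopylov's disintegration method, fed by Dirac's and Bondy's theorems, after a convexity reduction.

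\emph{Reduction.} Since $f(n,k,c)=\binom{c-k+1}{2}+k(k-1)+k(n-c)$, one checks that $k\mapsto f(n,k,c)$ has positive second difference, hence is strictly convex on the integers in $[2,\lfloor c/2\rfloor]$; therefore $\max\{f(n,2,c),f(n,\lfloor c/2\rfloor,c)\}=\max_{2\le t\le\lfloor c/2\rfloor}f(n,t,c)$, and it suffices to exhibit one integer $t\in\{2,\dots,\lfloor c/2\rfloor\}$ with $e(G)\le f(n,t,c)$. I will also use the following consequence of Dirac's theorem $c(G)\ge\min\{2\delta(G),n\}$ together with $c<n$: as $c(G)=c$, necessarily $\delta(G)\le\lfloor c/2\rfloor$.

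\emph{The degenerate case.} Suppose first that $G$ has no subgraph of minimum degree at least $\lfloor c/2\rfloor+1$, i.e.\ $G$ is $\lfloor c/2\rfloor$-degenerate. Greedy peeling then yields $e(G)\le\lfloor c/2\rfloor\cdot n-\binom{\lfloor c/2\rfloor+1}{2}$, and a direct computation shows this is at most $f(n,\lfloor c/2\rfloor,c)$ (with equality precisely when $c$ is even); this case is done.

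\emph{The dense case.} Otherwise, let $H$ be the maximal subgraph of $G$ of minimum degree at least $\lfloor c/2\rfloor+1$ — equivalently, iteratively delete vertices of degree $\le\lfloor c/2\rfloor$, recording for each the degree at deletion, so that $e(G)=e(H)+\sum(\text{recorded degrees})$ with every recorded degree $\le\lfloor c/2\rfloor$ — and by assumption $H\neq\varnothing$. The key input is a Kopylov-type lemma exploiting $2$-connectivity: if $H\neq\varnothing$, then $G$ contains a cycle of length at least $\min\{|V(H)|,\,2(\lfloor c/2\rfloor+1)\}$, obtained by applying Dirac's theorem inside a leaf block of $H$ and lifting the resulting cycle up to $G$ through an ear/fragment analysis of the deleted part. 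Since $2(\lfloor c/2\rfloor+1)\ge c+1>c=c(G)$, this forces $|V(H)|\le c$, so $H$ is a dense, near-complete subgraph on at most $c$ vertices. It then remains to bound $e(H)+\sum(\text{recorded degrees})$, and here one uses that a longest cycle $C$ of $G$ is non-extendable: because $H$ already carries a cycle covering $|V(H)|$ vertices, the way the deleted vertices attach to $H$ and to one another is severely restricted (for instance $|V(H)|=c$ would force $H=K_c$, which is impossible since then $2$-connectivity would produce a cut vertex). Feeding in the trivial bound $e(H)\le\binom{|V(H)|}{2}$, Bondy's theorem (Theorem~\ref{Le:Bondy}) applied to $C$ to cap the number of edges with at most one endpoint on $C$ by $\lfloor c/2\rfloor(n-c)$, and these non-extendability constraints, bounds $e(G)$ by a convex function of $|V(H)|$ on $[\lfloor c/2\rfloor+2,\,c]$; optimizing at the endpoints gives $e(G)\le f(n,\lfloor c/2\rfloor,c)$ in the ``balanced'' subcase and $e(G)\le f(n,2,c)$ in the ``clique-heavy'' subcase, and the reduction step concludes the proof.

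\emph{Main obstacle.} The disintegration bookkeeping and the arithmetic are routine; the hard part is this last step — quantitatively controlling how a longest, non-extendable cycle forces the low-degree (``deleted'') vertices to attach to the dense core, so as to exclude configurations in which the core is simultaneously large and almost complete while the peeled vertices contribute many edges. This is exactly where $2$-connectivity is genuinely needed (both to manufacture the long cycle out of the core and to constrain the attachments), and making the resulting estimates meet the values $f(n,t,c)$ on the nose is the technical crux.
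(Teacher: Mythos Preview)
The paper does not give its own proof of Kopylov's theorem; it is quoted as a known result from \cite{K77}, with a remark that an alternative proof via edge-switching appears in \cite{FLW04}. So there is no ``paper's proof'' to compare against, only the two approaches the paper cites.

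Your outline follows Kopylov's original disintegration strategy, and the convexity reduction and the degenerate case are both correct. But the dense case, as you yourself flag under ``Main obstacle,'' is not closed. Two concrete issues:

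\medskip
\noindent\textbf{(1)} Your ``Kopylov-type lemma'' that $c(G)\ge\min\{|V(H)|,\,2(\lfloor c/2\rfloor+1)\}$ only yields $|V(H)|\le c$, and your attempt to exclude $|V(H)|=c$ (``would force $H=K_c$, which is impossible since then $2$-connectivity would produce a cut vertex'') is not valid: neither implication holds. A graph on $c$ vertices with minimum degree $\ge\lfloor c/2\rfloor+1$ need not be complete, and a $K_c$ sitting inside a $2$-connected $G$ certainly does not create a cut vertex.

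\medskip
\noindent\textbf{(2)} More seriously, even granting $|V(H)|\le c$, the bookkeeping does not close. Writing $m=|V(H)|$, your peeling gives
\[
e(G)\le\binom{m}{2}+\Big\lfloor\frac{c}{2}\Big\rfloor(n-m)=:g(m),
\]
a convex function on $[\lfloor c/2\rfloor+2,\,c]$. At the endpoint $m=c$ one gets $g(c)=\binom{c}{2}+\lfloor c/2\rfloor(n-c)$, and
\[
g(c)-f(n,2,c)=(c-3)+\Big(\Big\lfloor\frac{c}{2}\Big\rfloor-2\Big)(n-c)>0
\]
whenever $n>c$ and $c\ge 6$; one checks similarly that $g(c)>f(n,\lfloor c/2\rfloor,c)$. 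So ``optimizing at the endpoints'' does \emph{not} land on the target values. Invoking Bondy's Theorem~\ref{Le:Bondy} does not help either: Bondy bounds edges relative to a longest cycle $C$, not relative to the core $H$, and there is no reason $V(H)=V(C)$.

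\medskip
What Kopylov's actual argument supplies, and what is missing here, is a \emph{sharper} structural input: not merely $|V(H)|\le c$, but (in the path formulation) that the $t$-disintegration relative to two fixed anchor vertices on a longest path leaves at most $c-t+1$ vertices. That stronger bound is precisely what makes the edge count meet $f(n,t,c)$; the cycle-based lemma you state is too weak to finish the job.
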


We also mention that another proof of Theorem \ref{Th:Kopylov} was found by Fan, Lv, and Wang \cite{FLW04} in 2004.
Using an edge-switching technique, the authors of \cite{FLW04} proved a slightly stronger result when $n-1\geq c(G)\geq \frac{2n}{3}+1$.
This, together with a result of Woodall \cite{W76} that if $G$ is a 2-connected graph with circumference $c\leq \frac{2n+2}{3}$
then $e(G)\leq f(n,\lfloor\frac{c}{2}\rfloor,c)$, gives a different proof of Theorem \ref{Th:Kopylov}.
More importantly for us, the technique of \cite{FLW04} provides an integral ingredient to the proof of our main theorem
(see Subsection \ref{subsec:5.2}).

In 2016, F\"{u}redi, Kostochka, and Verstra\"{e}te \cite{FKV16} proved a stability result of Theorem \ref{Th:Kopylov} in the range of $n\geq 3\lfloor \frac{c}{2}\rfloor$.
Together with this, F\"{u}redi, Kostochka, Luo, and Verstra\"{e}te \cite{FKLV-arxiv}
recently obtained a completed stability version of the above theorem of Kopylov.
To state their result, we need to introduce two families $\mathcal{X}_{n,c}$ and $\mathcal{Y}_{n,c}$,
which contain graphs of a given circumference $c$ where $c$ is odd, as follows:

-- A graph $G$ in the family $\mathcal{X}_{n,c}$ has $n$ vertices and $V(G)=A\cup B\cup X$
such that
$G[A]$ induces a clique $K_{\lfloor\frac{c}{2}\rfloor}$,
both $G[B]$ and $G[X]$ are stable,
$(A,B)$ is complete bipartite, and
there exist two vertices $a\in A$ and $b\in B$ such that for any $x\in X$, $N_G(x)=\{a,b\}$.

-- A graph $G$ in the family $\mathcal{Y}_{n,c}$ has $n$ vertices and $V(G)=A\cup B\cup Y$
such that
$G[A]$ induces a clique $K_{\lfloor\frac{c}{2}\rfloor}$,
$G[B]$ is stable,
$G[Y]$ is a nontrivial star forest\footnote{We say a star forest is {\it nontrivial}, if it has at least two stars and every star has at least one edge.},
$(A,B)$ is complete bipartite, and
there exist two vertices $a, b\in A$ such that every star $S$ in $G[Y]$ is {\it $\{a,b\}$-feasible}:
that is, $N_G(S)=\{a,b\}$ and if $|V(S)|\geq 3$, then all leaves of $S$ have degree 2 in $G$ and have a common neighbor in $\{a,b\}$.

\begin{thm}[F\"{u}redi, Kostochka, Luo, and Verstra\"{e}te \cite{FKLV-arxiv}]\label{Th:FKLV}
Let $G$ be a 2-connected graph on $n$ vertices with circumference $c$, where $10\leq c\leq n-1$.
If $e(G)>\max\{f(n,3,c),f(n,\lfloor\frac{c}{2}\rfloor-1,c)\}$,
then one of the following conclusions holds:\\
(a) $G\subseteq W_{n,2,c}$,\\
(b) $G\subseteq W_{n,\lfloor\frac{c}{2}\rfloor,c}$, or\\
(c) if $c$ is odd, then $G$ is a subgraph of a member of $\mathcal{X}_{n,c}\cup \mathcal{Y}_{n,c}$.
\end{thm}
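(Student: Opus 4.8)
The plan is to follow Kopylov's \emph{disintegration method}~\cite{K77}, but run it so as to read off structure rather than just an edge count. For a graph $H$ and an integer $t$, write $\mathcal{D}_t(H)$ for the subgraph obtained by repeatedly deleting a vertex of current degree at most $t$ until none remains (possibly $\mathcal{D}_t(H)=\emptyset$). I would apply this to $G$ with $t=\lceil c/2\rceil-1$, obtaining a \emph{kernel} $R=\mathcal{D}_{\lceil c/2\rceil-1}(G)$, and split on whether $R=\emptyset$. This bifurcation is adapted to the two extremal graphs: one checks that $W_{n,\lfloor c/2\rfloor,c}$ and $W_{n,2,c}$ occupy disjoint regimes of $c$ versus $n$, and the range $c\le\tfrac{2n+2}{3}$ is already handled by Woodall~\cite{W76}, so the substance is for larger $c$, where the kernel tends to persist as a dense piece.

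\emph{Empty kernel.} If $R=\emptyset$, then each vertex had degree at most $\lceil c/2\rceil-1$ when deleted, so $e(G)\le\binom{\lceil c/2\rceil-1}{2}+(\lceil c/2\rceil-1)(n-\lceil c/2\rceil+1)$. For even $c$ the right-hand side equals $f(n,\lfloor c/2\rfloor-1,c)$, contradicting the hypothesis, so in fact $R\ne\emptyset$; for odd $c$ it equals $f(n,\lfloor c/2\rfloor,c)$, which is consistent but leaves essentially no slack. I would then use $e(G)>f(n,\lfloor c/2\rfloor-1,c)$ to force the deletion order to be almost perfectly efficient: all but a bounded number of deletions remove a vertex of degree exactly $\lfloor c/2\rfloor$, the last $\lfloor c/2\rfloor$ survivors span a $K_{\lfloor c/2\rfloor}$, and the few ``inefficiencies'' are localized. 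Classifying how the small deficit can be spent, while re-verifying the circumference $c$ at each step, should recover precisely $W_{n,\lfloor c/2\rfloor,c}$ together with the odd-$c$ families $\mathcal{X}_{n,c}$ and $\mathcal{Y}_{n,c}$ — the latter appearing because, when $c$ is odd, a single vertex of degree $\lceil c/2\rceil$ (one above the threshold, hence kept out of $R$) can coexist with a longest cycle of length $c$ only if it sits in exactly the pendant/star configuration defining $\mathcal{X}_{n,c}$ and $\mathcal{Y}_{n,c}$.

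\emph{Nonempty kernel.} Here $R$ is $2$-connected with $\delta(R)\ge\lceil c/2\rceil$ and $c(R)\le c$, so by Dirac's theorem~\cite{D52} either $|V(R)|\le c$, or $c$ is even, $\delta(R)=c/2$, and $|V(R)|$ may be large. In the latter subcase every surviving vertex has degree exactly $c/2$ up to a bounded deficit, and a Bondy/Woodall-type count (Theorem~\ref{Le:Bondy}) pins $G$ down to a subgraph of $W_{n,c/2,c}$, i.e.\ conclusion (b). In the former subcase $R$ is a $2$-connected graph on at most $c$ vertices with large minimum degree; combining this with the stronger hypothesis $e(G)>f(n,3,c)$ and the edge accounting of the deleted vertices should force $R$ to be nearly complete, in fact $R=K_{c-1}$ on exactly $c-1$ vertices. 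One then controls the reattachment of the deleted vertices: a vertex with three neighbours in a $K_{c-1}$ creates a cycle of length at least $c+1$, so each deleted vertex has at most two neighbours in $R$, and a further longest-cycle argument shows these pairs must all coincide, giving $G\subseteq W_{n,2,c}$, conclusion (a). The slightly sub-extremal configurations — $R$ short a few edges, or a deleted vertex attached to a ``wrong'' pair — are exactly where (a) degenerates and must be matched against (c) when $c$ is odd.

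The main obstacle is the tight stability bookkeeping in the empty-kernel branch: the surplus of $e(G)$ over $f(n,\lfloor c/2\rfloor-1,c)$ is only an additive constant, so one must enumerate \emph{all} near-optimal disintegration orders and verify that each resulting graph embeds into $W_{n,\lfloor c/2\rfloor,c}$, $\mathcal{X}_{n,c}$, or $\mathcal{Y}_{n,c}$, getting the odd/even split exactly right. A secondary obstacle is establishing the sharpened disintegration dichotomy underlying the first step with the precise degree and size bounds needed downstream — essentially re-proving Kopylov's Theorem~\ref{Th:Kopylov} with full control of the near-equality cases; here the edge-switching technique of Fan, Lv and Wang~\cite{FLW04}, effective when $c$ is close to $n$, is a natural complementary device.
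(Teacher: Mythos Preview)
Your proposal follows the Kopylov disintegration strategy, which is essentially the route taken in the original F\"uredi--Kostochka--Luo--Verstra\"ete paper~\cite{FKLV-arxiv}. The present paper, however, obtains Theorem~\ref{Th:FKLV} by a genuinely different route: it is the $k=2$ case of the more general Theorem~\ref{Th:main-refined}, whose proof fixes a longest cycle $C$ and splits not on a vertex-deletion kernel but on where the edges sit relative to $C$. If many edges have at most one endpoint in $C$, a stability version of Bondy's Theorem~\ref{Le:Bondy} (Theorem~\ref{Th:BondyStab-2con}) produces an isolated vertex of $G-C$ with $\lfloor c/2\rfloor$ neighbours on $C$, from which the structures $W_{n,\lfloor c/2\rfloor,c}$, $\mathcal{X}_{n,c}$, $\mathcal{Y}_{n,c}$ are read off directly. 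If instead many edges lie inside $V(C)$, the paper passes to the $C$-closure, uses a Chv\'atal-type degree-sequence analysis (Lemma~\ref{Le:HamConStruc2}) to locate a large clique in $\overline{G}[C]$, and then the edge-switching technique of~\cite{FLW04} pins down the attachment of $G-C$. The closure device is what lets the paper work uniformly in the minimum-degree parameter $k$ and recover the extremal graphs as \emph{equalities} for $\overline{G}$, not merely containments.

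Your outline is viable in spirit but has real gaps. The claim that the empty-kernel bound ``equals $f(n,\lfloor c/2\rfloor-1,c)$'' for even $c$ is off by an additive constant, though the intended contradiction survives; more seriously, for odd $c$ the empty-kernel branch is exactly where the families $\mathcal{X}_{n,c}$ and $\mathcal{Y}_{n,c}$ must emerge, and ``classifying how the small deficit can be spent'' is the entire content of the theorem in that regime --- it is not a step one can defer. In the nonempty-kernel branch, you assert that $R$ is $2$-connected, but disintegration does not preserve $2$-connectivity; one needs an additional argument (as in~\cite{FKV16}) to pass from $\delta(R)\ge\lceil c/2\rceil$ and $c(R)\le c$ to a usable structural conclusion. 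Finally, the assertion that each deleted vertex has at most two neighbours in a $K_{c-1}$ kernel is correct, but forcing all these pairs to coincide requires a longest-cycle argument you have not supplied, and it is precisely here that the odd-$c$ exceptions reappear.
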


\noindent We remark that the case $c\leq 9$ was also fully characterized in \cite{FKV16,FKLV-arxiv};
in particular, the case $c=9$ requires another extremal graph, besides those stated in Theorem \ref{Th:FKLV}.
As a corollary in \cite{FKLV-arxiv}, if in addition $G$ is 3-connected in Theorem \ref{Th:FKLV},
then one must have $G\subseteq W_{n,\lfloor\frac{c}{2}\rfloor,c}$.

By imposing minimum degree as a new parameter, Woodall \cite{W76}
asked the following refinement of Theorem \ref{Th:EG-cyc} in 1976.

\begin{conj}[Woodall \cite{W76}]
Let $G$ be a 2-connected graph on $n$ vertices with $\delta(G)\geq k$.
If $c(G)=c\leq n-1$, then $e(G)\leq \max\{f(n,k,c),f(n,\lfloor\frac{c}{2}\rfloor,c)\}$.
\end{conj}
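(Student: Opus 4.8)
The plan is to derive Woodall's conjecture from the main stability theorem of this paper (the one stated in the abstract), invoking Kopylov's Theorem~\ref{Th:Kopylov} and Woodall's own partial bound only to dispatch a couple of boundary cases. First I would note that, since $G$ is $2$-connected with $\delta(G)\ge k$ and $c(G)=c\le n-1<n$, Dirac's theorem gives $c\ge\min\{2\delta(G),n\}=2\delta(G)\ge 2k$, so $k\le\lfloor c/2\rfloor$ and only this range of $k$ is relevant. One may also assume $c\ge 10$: for $k=2$ the conjectured bound is exactly Kopylov's, and for $3\le k\le\lfloor c/2\rfloor\le 4$ there remain only finitely many pairs $(k,c)$ with $c\le 9$, all covered by the full characterisations of \cite{FKV16,FKLV-arxiv}.

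Next, suppose for contradiction that $e(G)>\max\{f(n,k,c),f(n,\lfloor c/2\rfloor,c)\}$. The recurring elementary fact is that $f(n,j,c)=\binom{c-j+1}{2}+j(n-c+j-1)$ is a strictly convex function of $j$ (its second difference equals $3$), so on any interval of integers it is maximised at an endpoint. If $k\le\lfloor c/2\rfloor-1$, then $k+1$ and $\lfloor c/2\rfloor-1$ both lie in $[k,\lfloor c/2\rfloor]$, so $\max\{f(n,k+1,c),f(n,\lfloor c/2\rfloor-1,c)\}\le\max\{f(n,k,c),f(n,\lfloor c/2\rfloor,c)\}<e(G)$ and the main theorem applies, placing $G$ inside one of the structures in its conclusions (i)--(iii). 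It then remains to bound $e(G)$ in each: in (i) trivially $e(G)\le f(n,k,c)$ or $e(G)\le f(n,\lfloor c/2\rfloor,c)$; in (iii), deleting from the ``union of a $K_{c-k+1}$ and some $K_{k+1}$'s sharing a fixed pair'' that shared pair leaves a disjoint union of cliques, and a short count using convexity of $\binom{\cdot}{2}$ shows every $n$-vertex subgraph of such a union has at most $f(n,k,c)$ edges (the surplus being a nonnegative multiple of $\tfrac{1}{2}(k-1)(k-2)$); and in (ii) — where $k=2$ and $c$ is odd, so $\lfloor c/2\rfloor\ge 5$ — every member of $\mathcal{X}_{n,c}$ and of $\mathcal{Y}_{n,c}$ has strictly fewer than $f(n,\lfloor c/2\rfloor,c)$ edges, because moving a vertex of $X$ (resp.\ of the star forest $Y$) into the part $B$ can only raise the edge count, so it suffices to check the configurations with $|X|$ (resp.\ $|Y|$) minimal, which is a one-line computation. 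In all cases $e(G)\le\max\{f(n,k,c),f(n,\lfloor c/2\rfloor,c)\}$, a contradiction.

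The last case, $k=\lfloor c/2\rfloor$, needs a small twist, because $f(n,\lfloor c/2\rfloor+1,c)>f(n,\lfloor c/2\rfloor,c)$ for $c<n$, so the threshold of the main theorem with parameter $k$ may exceed the conjectured value $f(n,\lfloor c/2\rfloor,c)$. Instead I would apply the main theorem with the smaller parameter $k':=\lfloor c/2\rfloor-1$: its threshold is $\max\{f(n,\lfloor c/2\rfloor,c),f(n,\lfloor c/2\rfloor-1,c)\}=f(n,\lfloor c/2\rfloor,c)$, which lies below $e(G)$ by assumption, so one of (i)--(iii) holds for $k'$ (with (ii) vacuous, as $k'\ge 4$). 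But $\delta(G)\ge\lfloor c/2\rfloor=k'+1$, whereas $W_{n,k',c}$ and every union of a $K_{c-k'+1}$ with $K_{k'+1}$'s sharing a fixed pair contains a vertex of degree at most $k'$ — namely any vertex outside the central clique $K_{c-k'+1}$ — and $G$ is forced to use such a vertex, since it has $n>c>c-k'+1$ vertices while that clique has only $c-k'+1$. Hence only $G\subseteq W_{n,\lfloor c/2\rfloor,c}$ survives, giving $e(G)\le f(n,\lfloor c/2\rfloor,c)$, again a contradiction. (Alternatively, for $c\le\tfrac{2n+2}{3}$ one may quote Woodall's bound $e(G)\le f(n,\lfloor c/2\rfloor,c)$ directly, and for $c\ge\tfrac{2n}{3}+1$ the Fan--Lv--Wang refinement of Kopylov's theorem.)

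Granting the main theorem, the genuinely delicate points in this plan are just two: (a) the edge-count inequalities for the atypical families in conclusions (ii) and (iii) — above all, checking that no member of $\mathcal{X}_{n,c}$ or $\mathcal{Y}_{n,c}$ reaches $f(n,\lfloor c/2\rfloor,c)$ edges; and (b) the boundary value $k=\lfloor c/2\rfloor$, where one must run the stability theorem with a smaller degree parameter and then use the true minimum-degree hypothesis to eliminate the spurious extremal structures. The real obstacle, of course, is proving the stability theorem itself, which is where the closure operation and the edge-switching technique enter.
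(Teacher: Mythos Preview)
Your approach is sound and non-circular, but note that the paper does \emph{not} give its own proof of this statement: Woodall's conjecture is stated only as a conjecture, followed by the one-line remark that ``Kopylov's original proof in \cite{K77} can be modified to give a solution of this conjecture.'' So there is no proof in the paper to compare against in the usual sense.

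What you are proposing --- deducing the conjecture \emph{a posteriori} from the paper's main stability result (Theorem~\ref{Th:main}) --- is a genuinely different route from the one the paper points to (a direct adaptation of Kopylov's argument). It works because the proof of Theorem~\ref{Th:main} nowhere invokes Woodall's conjecture, and because, as you observe, each extremal structure in its conclusion has at most $\max\{f(n,k,c),f(n,\lfloor c/2\rfloor,c)\}$ edges. The paper itself implicitly uses the same edge-count facts in the proof of Theorem~\ref{Th:Var1} (``it is easy to see that $e(G)\le f(n,\lfloor c/2\rfloor,c)$'' for $W_{n,\lfloor c/2\rfloor,c}$ and for members of $\mathcal{X}_{n,c}\cup\mathcal{Y}_{n,c}$), so your case analysis is consistent with what the authors take for granted. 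Your handling of the boundary $k=\lfloor c/2\rfloor$ via the theorem applied at $k'=k-1$ is also correct.

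One small point: your dismissal of $c\le 9$ with $k\ge 3$ by citing \cite{FKV16,FKLV-arxiv} is not quite right, since those characterisations are for the $k=2$ threshold. However, the leftover cases are easily recovered: for $k=\lfloor c/2\rfloor$ Woodall's own bound (or Kopylov) suffices, $c=8$ is covered by the footnote extending Theorem~\ref{Th:main}, and for $(k,c)=(3,9)$ the equality case of Kopylov's theorem (the extremal $W_{n,2,9}$ has minimum degree $2<k$) finishes it. So nothing essential is missing.
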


\noindent One may also view this as a unification of Theorems \ref{ThEr} and \ref{Th:Kopylov}.
It should be mentioned that Kopylov's original proof in \cite{K77} can be modified to give a solution of this conjecture.

The Tur\'an-type problem of cycles of given lengths for graphs with a given minimum degree is well-studied
(see Chapter 5 of \cite{Bol} for an inclusive discussion).

\subsection{The main result}
Our main result is a stability version of Woodall's conjecture,
which also is a unified generalization of Theorem \ref{Th:FKLV} and Theorem \ref{Th:nonH} for 2-connected graphs.
We define the graph $Z_{n,k,c}$ to be the union of a clique $K_{c-k+1}$ and $\frac{n-(c-k+1)}{k-1}$ cliques $K_{k+1}$'s
such that any two cliques share the same two vertices.

\begin{thm}\label{Th:main}
Let $G$ be a 2-connected graph on $n$ vertices with $\delta(G)\geq k$
and circumference $c$, where $10\leq c\leq n-1$.\footnote{Following the proofs, we shall see that the same statement also holds for the case $c=8$.}
If
$e(G)>\max\left\{f\left(n,k+1,c\right),f\left(n,\left\lfloor\frac{c}{2}\right\rfloor-1,c\right)\right\},
$
then one of the following conclusions holds:\\
(a) $G\subseteq W_{n,k,c}$,\\
(b) $G\subseteq W_{n,\lfloor\frac{c}{2}\rfloor,c}$, \\
(c) if $k=2$ and $c$ is odd, then $G$ is a subgraph of a member of $\mathcal{X}_{n,c}\cup \mathcal{Y}_{n,c}$, or\\
(d) if $k\geq 3$, then $G\subseteq Z_{n,k,c}$.
\end{thm}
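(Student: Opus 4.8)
The plan is to prove Theorem \ref{Th:main} by following the closure-plus-switching strategy advertised in the abstract, but organized so that Theorem \ref{Th:FKLV} and Theorem \ref{Th:nonH} are used as black boxes wherever possible. First I would reduce to a convenient ``stable'' configuration. Fix a longest cycle $C$ of length $c$ and the set $R=V(G)\setminus V(C)$ of $n-c$ outside vertices. By Bondy's theorem (Theorem \ref{Le:Bondy}), the edges with at most one endpoint in $C$ number at most $\lfloor c/2\rfloor(n-c)$, so the hypothesis $e(G)>f(n,\lfloor c/2\rfloor-1,c)$ forces many edges to be spanned inside $V(C)$; more precisely one gets a lower bound on $e(G[V(C)])$ that is close to $\binom{c}{2}$. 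The idea is then to run a closure operation on $G[V(C)]$: repeatedly adding an edge between two non-adjacent vertices $u,v\in V(C)$ whose degree sum (within a suitable subgraph) is large enough to guarantee that no longest cycle is destroyed and that the circumference stays $c$. This is the step where one must be careful, because unlike the Bondy--Chv\'atal closure for Hamiltonicity, here one needs a ``bounded-circumference closure'': adding $uv$ must not create a cycle longer than $c$. I would use a Kopylov-type disintegration of $G[V(C)]$ into its $\lceil c/2\rceil$-core to control which edges may safely be added, exactly as in \cite{K77,FLW04}.

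Second, after closure I expect $G[V(C)]$ to contain a large clique $K_t$ with $t$ close to $c$, together with a small ``leftover'' set $T=V(C)\setminus K_t$ of size $\le c-t$, plus the outside vertices $R$ attached to few vertices of $V(C)$. The edge count then pins down $t$: either $t=c-k+1$ and every outside vertex (and every vertex of $T$) sees at least $k$ vertices of the clique, or $t$ is so large that the structure already sits inside $W_{n,2,c}$ or $W_{n,\lfloor c/2\rfloor,c}$. In the generic case $t=c-k+1$ I would apply the edge-switching technique of Fan--Lv--Wang \cite{FLW04} to the attachment of each outside vertex: a vertex $x\in R$ adjacent to vertices $a_1,\dots,a_j$ of the clique with $j\ge k$ can, unless $j=k$ and the $a_i$ are forced, be switched to increase the number of clique edges or to reduce to a smaller configuration, contradicting maximality of $e(G[V(C)])$ or of $t$. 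This is what ultimately forces every outside vertex to have degree exactly $k$ with a common neighborhood, giving conclusion (a) $G\subseteq W_{n,k,c}$, or — when the leftover vertices in $T$ form their own small cliques sharing a common pair — conclusion (d) $G\subseteq Z_{n,k,c}$ in the case $k\ge 3$.

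Third, the remaining case is $t<c-k+1$, i.e.\ the clique inside $V(C)$ is genuinely smaller than $c-k+1$; here the edge count $e(G)>f(n,k+1,c)$ together with $e(G[V(C)])$ being close to $\binom{c}{2}$ squeezes $c-t$ into a very short range, and one can reduce to the two ``thin'' extremal pictures: either $G$ embeds into $W_{n,\lfloor c/2\rfloor,c}$ (conclusion (b)), or $c$ is odd, $t=\lfloor c/2\rfloor$, and the leftover structure is precisely the star-forest/independent-set attachment defining $\mathcal{X}_{n,c}$ and $\mathcal{Y}_{n,c}$ (conclusion (c), only possible when $k=2$ since for $k\ge3$ every vertex needs degree $\ge3$, which the $\mathcal X,\mathcal Y$ attachments forbid). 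For $k=2$ this last step is essentially a reduction to Theorem \ref{Th:FKLV}; one checks that the closure/switching operations do not change membership in $\mathcal X_{n,c}\cup\mathcal Y_{n,c}$ up to subgraph. I would present the $k=2$ case first as a warm-up (recovering Theorem \ref{Th:FKLV}) and then treat $k\ge3$ uniformly, where conclusion (c) disappears and (a), (b), (d) remain.

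The main obstacle, and the place where the novelty of the argument must live, is the bounded-circumference closure in the first step: one needs a precise lemma of the form ``if $u,v\in V(C)$ are non-adjacent and $d_H(u)+d_H(v)$ is large in an appropriate auxiliary graph $H$ on $V(C)$, then $c(G+uv)=c$ and every longest cycle of $G+uv$ still spans $V(C)$,'' together with the verification that the degree sums stay above the threshold throughout the process (so that the closure terminates at a near-clique rather than stalling). Getting the threshold right — so that it is implied by the edge hypothesis via Bondy's theorem but still strong enough to block longer cycles — is delicate, and it is precisely why the minimum-degree hypothesis $\delta(G)\ge k$ enters: it guarantees the outside vertices cannot be too sparsely attached, which is what keeps the count above $f(n,k+1,c)$ after closure. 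Once this lemma is in hand, the rest is bookkeeping of the Kopylov disintegration and careful but routine case analysis on the sizes $t$, $|T|$, and $|R|$.
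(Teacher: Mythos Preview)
Your outline has the right ingredients (closure on $V(C)$, Kopylov-style disintegration, Fan--Lv--Wang switching) but there is a genuine gap in the very first reduction, and your ``main obstacle'' is misdiagnosed.

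\textbf{The Bondy step does not give what you claim.} Using Bondy's bound $e(G-C)+e(G-C,C)\le \lfloor c/2\rfloor(n-c)$ you only get
\[
e(G[V(C)])\;>\;f\!\left(n,\lfloor c/2\rfloor-1,c\right)-\lfloor c/2\rfloor(n-c),
\]
which decreases linearly in $n$ and is nowhere near $\binom{c}{2}$; for large $n$ it is useless. The closure/disintegration argument you want to run afterwards needs a lower bound on $e(G[V(C)])$ that is a \emph{fixed function of $c$} (the paper's threshold is $h(c+1,\lfloor c/2\rfloor-1)$). The paper fixes this by splitting at $(\lfloor c/2\rfloor-1)(n-c)$ rather than $\lfloor c/2\rfloor(n-c)$: either the outside edges already exceed $(\lfloor c/2\rfloor-1)(n-c)$, or $e(G[V(C)])>h(c+1,\lfloor c/2\rfloor-1)$. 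The former case is \emph{not} a trivial preliminary; it is a separate stability theorem for Bondy's result (Theorem~\ref{Th:BondyStab-2con}) whose proof occupies all of Section~3 and is where conclusions (b) and (c) come from. Your plan never enters this branch, so you have no mechanism that produces the $\mathcal X_{n,c}\cup\mathcal Y_{n,c}$ outcome except by quoting Theorem~\ref{Th:FKLV} wholesale --- and for $k\ge 3$ you would still be missing cases where the outside edges are dense.

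\textbf{The closure obstacle is not the one you describe.} You worry about a lemma guaranteeing $c(G+uv)=c$, but that is both hard and unnecessary. The paper's $C$-closure simply takes the $(c+1)$-closure of $G[C]$ and shows (Lemma~\ref{Le:Coper}) that $C$ remains a \emph{locally maximal} cycle --- not a longest cycle --- in $\overline G$. All the downstream lemmas (the clique-number estimate and the structural Lemma~\ref{lem:CliqNum2}) are stated and proved for locally maximal cycles, so circumference preservation is never needed. The closure threshold is therefore the obvious one ($c+1$), not something delicate tuned against the edge hypothesis; the minimum-degree condition $\delta(G)\ge k$ enters later, in the structural analysis of components of $G-C$, not in making the closure close.
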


We make some remarks.
First, we see that the case $k=2$ of Theorem \ref{Th:main} gives the precise statement of Theorem \ref{Th:FKLV}.
Secondly, by letting $c=n-1$, Theorem \ref{Th:main} also provides a refined version of Theorem \ref{Th:nonH} for 2-connected graphs.
Also we have $c\geq 2k$ in Theorem \ref{Th:main},
which follows by Dirac's theorem that $c\geq \min\{n,2k\}$.
Note that $Z_{n,k,c}$ has $n$ vertices, minimum degree $k$ (assuming $c\geq 2k$) and circumference $c$
with
$$e(Z_{n,k,c})=\binom{c-k+1}{2}+\frac{k+2}{2}\cdot(n-c+k-1).$$
Thus in certain range it holds that $e(Z_{n,k,c})> \max\left\{f\left(n,k+1,c\right),f\left(n,\left\lfloor\frac{c}{2}\right\rfloor-1,c\right)\right\}$.

We also notice that every graph in $\mathcal{X}_{n,c}\cup \mathcal{Y}_{n,c}$ has a vertex of degree 2,
and the graph $Z_{n,k,c}$ has a 2-cut.
Therefore, it is prompt to deduce that

\begin{cor}
Let $G$ be a 3-connected graph on $n$ vertices with $\delta(G)\geq k$
and circumference $c$, where $10\leq c\leq n-1$.
If
$e(G)>\max\left\{f\left(n,k+1,c\right),f\left(n,\left\lfloor\frac{c}{2}\right\rfloor-1,c\right)\right\},
$
then either $G\subseteq W_{n,k,c}$ or $G\subseteq W_{n,\lfloor\frac{c}{2}\rfloor,c}$.
\end{cor}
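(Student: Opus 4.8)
The plan is to deduce this immediately from Theorem~\ref{Th:main}. Since every $3$-connected graph is $2$-connected, the hypotheses of Theorem~\ref{Th:main} are met, so $G$ satisfies one of (a)--(d); I would then only need to rule out (c) and (d) under the extra assumption $\kappa(G)\ge 3$ (which in particular gives $\delta(G)\ge 3$ and forbids any cut set of size at most $2$).

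To eliminate (c), I would use the observation recorded just before the corollary that every member $H$ of $\mathcal X_{n,c}\cup\mathcal Y_{n,c}$ has a vertex $x$ of degree $2$ (for $\mathcal X_{n,c}$ this is any $x\in X$, with $N(x)=\{a,b\}$; for $\mathcal Y_{n,c}$ it is a suitable leaf or an endpoint of a star of the nontrivial star forest $G[Y]$). If $G\subseteq H$, then $|V(G)|=|V(H)|=n$ forces $V(G)=V(H)$, so $d_G(x)\le d_H(x)=2<3\le\delta(G)$, a contradiction. To eliminate (d), I would use the other observation there, that $Z_{n,k,c}$ has a $2$-cut: let $\{u,v\}$ be the pair common to all its cliques. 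Deleting $\{u,v\}$ leaves the big clique as a $K_{c-k-1}$ (nonempty, as $c\ge 2k$) and each small clique as a $K_{k-1}$ (nonempty, as $k\ge 3$), with at least one small clique present (as $c\le n-1$), and these pieces are pairwise non-adjacent. Since $G$ is a spanning subgraph of $Z_{n,k,c}$, the same set $\{u,v\}$ separates $G$, so $\kappa(G)\le 2$, again a contradiction. Hence only (a) or (b) remains, which is the claim.

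I do not expect any genuine obstacle: the proof is a short bookkeeping argument, and the only places to be slightly careful are verifying directly from the definitions that every member of the two families really does contain a degree-$2$ vertex (here the ``nontrivial'' clause on the star forest, and the requirement that the circumference equal the odd integer $c$, rule out the degenerate sub-cases) and that the pieces of $Z_{n,k,c}-\{u,v\}$ are nonempty, which is immediate from $2k\le c\le n-1$ and $k\ge3$.
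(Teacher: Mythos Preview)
Your proposal is correct and follows exactly the same route as the paper: the authors state, immediately before the corollary, that every member of $\mathcal{X}_{n,c}\cup\mathcal{Y}_{n,c}$ has a vertex of degree~$2$ and that $Z_{n,k,c}$ has a $2$-cut, and deduce the corollary directly from Theorem~\ref{Th:main}. Your expansion of these two observations into explicit contradictions with $\kappa(G)\ge 3$ is precisely the intended argument.
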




\subsection{A refinement}
Using a novel closure operation which we define below,
we are able to refine Theorem \ref{Th:main} in more detail.
We point out that the closure operation has proved to be a powerful tool for finding long cycles (see \cite{BC76,BM08,R97}).
However it is surprising for us that in some cases one can even precisely describe the extremal graphs using closures.

The {\it \bf $k$-closure} of a graph $G$ is the
graph obtained from $G$ by recursively joining pairs of nonadjacent vertices whose
degree sum is at least $k$ until no such pair remains.
We also say that the resulting graph is {\it \bf $k$-closed}.
Let $G$ be a graph and $C$ be a cycle of $G$ of length $c$.
The \emph{\bf $C$-closure} of $G$, denoted as $\overline{G}$, is obtained from $G$
by replacing the subgraph $G[C]$ by its $(c+1)$-closure.
It is crucial to observe that $G\subseteq \overline{G}$.

\begin{thm}\label{Th:main-refined}
Let $G$ be a 2-connected graph on $n$ vertices with $\delta(G)\geq k$
and let $C$ be a longest cycle in $G$ of length $c\in [10,n-1]$.
If
$
e(G)>\max\left\{f\left(n,k+1,c\right),f\left(n,\left\lfloor\frac{c}{2}\right\rfloor-1,c\right)\right\},
$
then one of the following holds:\\
(a) $\overline{G}=W_{n,k,c}$, where $\overline{G}$ denotes the $C$-closure of $G$,\\
(b) $G\subseteq W_{n,\lfloor\frac{c}{2}\rfloor,c}$, \\
(c) if $k=2$ and $c$ is odd, then $G$ is a subgraph of a member of $\mathcal{X}_{n,c}\cup \mathcal{Y}_{n,c}$, or\\
(d) if $k\geq 3$, then $\overline{G}=Z_{n,k,c}$.
\end{thm}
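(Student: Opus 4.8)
The plan is to derive Theorem~\ref{Th:main-refined} from Theorem~\ref{Th:main}. Applying Theorem~\ref{Th:main} to $G$, if its conclusion (b) or (c) holds then conclusion (b) or (c) of Theorem~\ref{Th:main-refined} holds word for word, so it remains to strengthen the two ``clique-heavy'' cases: (I) if $G\subseteq W_{n,k,c}$, show $\overline G=W_{n,k,c}$; and (II) if $k\ge3$ and $G\subseteq Z_{n,k,c}$, show $\overline G=Z_{n,k,c}$. We handle both by one scheme; write $H$ for the target graph ($W_{n,k,c}$ or $Z_{n,k,c}$) and $D\cong K_{c-k+1}$ for its core clique, so that the vertices of $H$ outside $D$ are the pendant vertices of $W_{n,k,c}$, respectively lie in the attached copies of $K_{k+1}$ of $Z_{n,k,c}$. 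By Dirac's theorem $c\ge2k$, and a short computation with $f$ shows that $c=2k$ would force $e(H)\le f(n,k+1,c)$, contradicting $e(G)>f(n,k+1,c)$; so in cases (I) and (II) we have $c>2k$.

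\emph{Locating $G$ and $C$.} Each vertex of $H$ outside $D$ has degree exactly $k$ in $H$, so, as $\delta(G)\ge k$, it keeps all its $H$-edges in $G$; hence $G=H-F$ for a set $F$ of edges of $H[D]$ (in case (II), $F$ avoids the attached $K_{k+1}$'s, but may contain the single edge joining their two shared vertices). Comparing $e(G)>f(n,k+1,c)$ with the known values of $e(W_{n,k,c})$ and $e(Z_{n,k,c})$, and using $n\ge c+1$, one obtains the crucial bound that $|F|$ is much smaller than $c-2k$ (explicitly $|F|\le c-3k-2$ in case (I), and a still smaller bound in case (II)). Next, since $c>2k$, a short counting argument --- using in case (I) that each vertex of $C$ outside $D$ has both $C$-neighbours among the $k$ hub vertices and that no two such vertices are adjacent, and in case (II) that $C$ can enter at most one attached $K_{k+1}$ --- shows that $C$ runs through all of $D$ and exactly $k-1$ further vertices. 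So $V(C)=D\cup X'$ with $|X'|=k-1$; in particular $F\subseteq E(H[C])$, $G[C]=H[C]-F$, and $H[C]$ is itself $(c+1)$-closed (all its non-edges having $H[C]$-degree-sum at most $c$). As $G=H-F$ with $F\subseteq E(H[C])$, it now suffices to prove that the $(c+1)$-closure of $G[C]$ is $H[C]$, for then $\overline G=H$.

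\emph{The closure recovers $H[C]$.} Since the $(c+1)$-closure is monotone and $H[C]$ is $(c+1)$-closed, the $(c+1)$-closure of $G[C]=H[C]-F$ lies inside $H[C]$, so it remains to show it equals $H[C]$, i.e.\ that every edge of $F$ is eventually restored. If not, let $F'\subseteq F$ be the edges still missing when the process stalls, and for a vertex $v$ let $m_v$ be the number of edges of $F'$ at $v$. Stalling means every still-missing edge $dd'$ has endpoint-degree-sum $\le c$ in the current graph; as every vertex of $D$ has $H[C]$-degree $\ge c-k$, this forces $m_d+m_{d'}\ge c-2k$ for every $dd'\in F'$. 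Summing gives $\sum_v m_v^2\ge(c-2k)|F'|$, so some $d_0$ has $m_{d_0}\ge(c-2k)/2$; as every $F'$-neighbour of $d_0$ then has $m$-value $\ge(c-2k)-m_{d_0}$, summing over $d_0$ and its $F'$-neighbours gives $2|F'|\ge m_{d_0}((c-2k)+1-m_{d_0})$, which, minimizing the right side over $m_{d_0}\in[(c-2k)/2,|F|]$, forces $|F'|\gtrsim(c-2k)^2$ or $|F'|\ge c-2k-1$. Both contradict $|F'|\le|F|\ll c-2k$; hence $F'=\varnothing$, the closure of $G[C]$ is $H[C]$, and $\overline G=H$.

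The main obstacle is the non-stalling half of the last step, and it is delicate because the edge-count hypothesis is only just enough: the argument succeeds precisely because the threshold $\max\{f(n,k+1,c),f(n,\lfloor c/2\rfloor-1,c)\}$ pushes $|F|$ comfortably below $c-2k$, whereas a cruder estimate (say, one controlled only by the maximum $F'$-degree) fails for some ranges of $n,k,c$ --- so the bound on $|F|$ from the second step must be tracked sharply and fed back through the stalling inequality. A secondary point is case (II), where one must separately watch the edge between the two shared vertices of $Z_{n,k,c}$, which behaves like a ``double-hub'' edge of maximal degree in $H[C]$. (Alternatively one can thread the closure directly through the proof of Theorem~\ref{Th:main}, but assuming that theorem makes the bookkeeping cleaner.)
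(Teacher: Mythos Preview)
Your approach is to deduce Theorem~\ref{Th:main-refined} from Theorem~\ref{Th:main} via a closure analysis on the missing edges $F$; the paper does the opposite. The paper proves Theorem~\ref{Th:main-refined} \emph{first}, by splitting on whether $e(G-C)+e(G-C,C)>(\lfloor c/2\rfloor-1)(n-c)$ or $e(G[C])>h(c+1,\lfloor c/2\rfloor-1)$ and then invoking Theorems~\ref{Th:BondyStab-2con} and~\ref{Th:WZ+closure1}; Theorem~\ref{Th:main} is afterwards an immediate corollary since $G\subseteq\overline G$. There is no independent proof of Theorem~\ref{Th:main} anywhere in the paper, so your reduction is circular within its logical framework. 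Your parenthetical about ``threading the closure directly through the proof of Theorem~\ref{Th:main}'' is in fact precisely what the paper does --- that direct argument \emph{is} the content of Theorem~\ref{Th:WZ+closure1} (via Lemmas~\ref{Le:c/2-1:c/2}, \ref{lem:CliqNum}, \ref{lem:CliqNum2}), which already produces $\overline G\in\{W_{n,k,c},Z_{n,k,c}\}$ on the nose rather than merely $G\subseteq W_{n,k,c}$ or $G\subseteq Z_{n,k,c}$.

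Granting Theorem~\ref{Th:main}, however, your upgrade argument is correct and rather pleasant. The location of $C$ (showing $D\subseteq V(C)$ by bounding the number of outside vertices on $C$ by $k-1$ once $c>2k$) and the non-stalling step (pairing $m_d+m_{d'}\ge c-2k$ for each surviving edge with $|F|\le c-3k-2$, and then playing the parabola $M(c-2k+1-M)$ against $2|F'|$) both go through as you sketch; the dichotomy you reach, $|F'|\ge (c-2k)(c-2k+2)/8$ or $|F'|\ge c-2k-1$, genuinely contradicts $|F'|\le c-3k-2$ for all $k\ge 2$. So you have established a neat equivalence between Theorems~\ref{Th:main} and~\ref{Th:main-refined}, but this does not replace the substantive work (Theorems~\ref{Th:BondyStab-2con} and~\ref{Th:WZ+closure1}) that the paper needs to get either statement off the ground.
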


\subsection{Two variants}
The following two variants of the main result also can be obtained analogously,
from which we see how the extremal graphs of Theorem \ref{Th:main-refined} change
as the parameters vary in the function $f(n,k,c)$.

\begin{thm}\label{Th:Var1}
Let $G$ be a 2-connected graph on $n$ vertices with $\delta(G)\geq k$
and let $C$ be a longest cycle in $G$ of length $c\in [10,n-1]$.
If
$
e(G)>\max\left\{f\left(n,k+1,c\right),f\left(n,\left\lfloor\frac{c}{2}\right\rfloor,c\right)\right\},
$
then $\overline{G}=W_{n,k,c}$ or $Z_{n,k,c}$, where $\overline{G}$ denotes the $C$-closure of $G$.
\end{thm}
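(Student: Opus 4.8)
The plan is to deduce Theorem \ref{Th:Var1} from the refined main theorem, Theorem \ref{Th:main-refined}, by checking that its hypothesis is a strengthening of the hypothesis there and then discarding two of the four possible conclusions. First I would record the elementary identity
\[
f(n,k,c)-f(n,k-1,c)=n-2c+3k-3 ,
\]
obtained by expanding $f(n,k,c)=\binom{c-k+1}{2}+k(n-c+k-1)$. Setting $k=\lfloor c/2\rfloor$ and using $n\geq c+1$ and $c\geq 10$, a short calculation shows this quantity is positive, so $f(n,\lfloor c/2\rfloor,c)\geq f(n,\lfloor c/2\rfloor-1,c)$ and hence
\[
\max\{f(n,k+1,c),f(n,\lfloor c/2\rfloor,c)\}\ \geq\ \max\{f(n,k+1,c),f(n,\lfloor c/2\rfloor-1,c)\} .
\]
Thus the assumption $e(G)>\max\{f(n,k+1,c),f(n,\lfloor c/2\rfloor,c)\}$ of Theorem \ref{Th:Var1} implies the assumption of Theorem \ref{Th:main-refined}, which I would then apply to $G$ and the fixed longest cycle $C$, obtaining one of its conclusions (a)--(d).

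Next I would eliminate conclusions (b) and (c) using the stronger edge bound. Conclusion (b), $G\subseteq W_{n,\lfloor c/2\rfloor,c}$, at once gives $e(G)\leq f(n,\lfloor c/2\rfloor,c)$, a contradiction. Conclusion (c) occurs only for $k=2$ with $c$ odd, and there I would check directly from the structure of the two families that every $H\in\mathcal X_{n,c}\cup\mathcal Y_{n,c}$ has $e(H)<f(n,\lfloor c/2\rfloor,c)$, which again yields the contradiction $e(G)\leq e(H)<f(n,\lfloor c/2\rfloor,c)$. Concretely, writing $V(H)=A\cup B\cup R$ with $|A|=\lfloor c/2\rfloor=(c-1)/2$, $(A,B)$ complete bipartite and $B$ stable, one checks from the definitions that at most $\tfrac{5}{2}|R|$ edges of $H$ are incident to $R$, whereas each vertex of $R$, if instead placed in $B$, would be incident to $|A|=\tfrac{c-1}{2}\geq\tfrac{5}{2}$ edges; a direct term-by-term comparison of $H$ with $W_{n,(c-1)/2,c}$, which also uses $\binom{(c+3)/2}{2}-\binom{(c-1)/2}{2}=c$, then gives $f(n,(c-1)/2,c)-e(H)\geq 1$ once $c\geq 10$ (in fact the gap grows linearly in $|R|$).

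With (b) and (c) ruled out, Theorem \ref{Th:main-refined} leaves only conclusion (a) when $k=2$, i.e.\ $\overline G=W_{n,k,c}$, and either (a) or (d) when $k\geq 3$, i.e.\ $\overline G\in\{W_{n,k,c},Z_{n,k,c}\}$; since $Z_{n,2,c}=W_{n,2,c}$ (the two vertices shared by the triangles already lie in the clique $K_{c-1}$ and are adjacent there), this is precisely the statement of Theorem \ref{Th:Var1}. There is no genuine obstacle in this argument: its entire content is already contained in Theorem \ref{Th:main-refined}, and the only non-immediate computation is the elementary edge estimate for $\mathcal X_{n,c}\cup\mathcal Y_{n,c}$ in the second paragraph. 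Alternatively, in the spirit of the remark preceding the statement, one can prove Theorem \ref{Th:Var1} directly by re-running the proof of Theorem \ref{Th:main-refined} verbatim, since under the stronger hypothesis the subcases that would produce conclusions (b) and (c) never arise, so nothing new needs to be verified.
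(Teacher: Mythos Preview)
Your proof is correct and follows essentially the same route as the paper: use the inequality $f(n,\lfloor c/2\rfloor,c)\geq f(n,\lfloor c/2\rfloor-1,c)$ to feed the hypothesis of Theorem~\ref{Th:main-refined}, then eliminate conclusions (b) and (c) by the edge bound $e(G)\leq f(n,\lfloor c/2\rfloor,c)$, which the paper simply asserts as ``easy to see'' while you spell out the $\tfrac{5}{2}|R|$ estimate for $\mathcal X_{n,c}\cup\mathcal Y_{n,c}$. Your difference formula $f(n,k,c)-f(n,k-1,c)=n-2c+3k-3$ and the remark $Z_{n,2,c}=W_{n,2,c}$ are both correct and match observations made elsewhere in the paper.
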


\begin{thm}\label{Th:Var2}
Let $G$ be a 2-connected graph on $n$ vertices with $\delta(G)\geq k$
and let $C$ be a longest cycle in $G$ of length $c\in [10,n-1]$.
If
$
e(G)>\max\left\{f\left(n,k,c\right),f\left(n,\left\lfloor\frac{c}{2}\right\rfloor-1,c\right)\right\},
$
then either $G\subseteq W_{n,\lfloor\frac{c}{2}\rfloor,c}$, or $k=2$, $c$ is odd and $G$ is a subgraph of a member of $\mathcal{X}_{n,c}\cup \mathcal{Y}_{n,c}$.
\end{thm}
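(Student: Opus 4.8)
The plan is to derive Theorem~\ref{Th:Var2} from Theorem~\ref{Th:main-refined} by discarding two of its four conclusions, and to dispatch a short finite list of remaining parameters directly. The pruning rests on two edge counts: $e(W_{n,k,c})=f(n,k,c)$ by definition, while for $k\ge 3$
\[
e(Z_{n,k,c})=\binom{c-k+1}{2}+\frac{k+2}{2}(n-c+k-1)=f(n,k,c)-\frac{k-2}{2}(n-c+k-1)<f(n,k,c),
\]
using $k-2>0$ and $n-c+k-1\ge k-1\ge 2$. So, whenever Theorem~\ref{Th:main-refined} applies to $G$ and $C$, we may pass to the $C$-closure $\overline{G}$, note $G\subseteq\overline{G}$ and $e(\overline{G})\ge e(G)>f(n,k,c)$, and conclude that its conclusion (a) ($\overline{G}=W_{n,k,c}$) cannot hold, and that for $k\ge 3$ its conclusion (d) ($\overline{G}=Z_{n,k,c}$) cannot hold either; what survives is precisely (b), $G\subseteq W_{n,\lfloor\frac{c}{2}\rfloor,c}$, or (c), $k=2$, $c$ odd and $G$ a subgraph of a member of $\mathcal{X}_{n,c}\cup\mathcal{Y}_{n,c}$, which is the assertion.

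It therefore remains to identify when Theorem~\ref{Th:main-refined} applies, that is, when the hypothesis $e(G)>\max\{f(n,k,c),f(n,\lfloor\frac{c}{2}\rfloor-1,c)\}$ forces $e(G)>\max\{f(n,k+1,c),f(n,\lfloor\frac{c}{2}\rfloor-1,c)\}$. This is the case exactly when $\max\{f(n,k,c),f(n,\lfloor\frac{c}{2}\rfloor-1,c)\}\ge f(n,k+1,c)$, and a short computation from the identity $f(n,k+1,c)-f(n,k,c)=n-2c+3k$ together with $n\ge c+1$ shows this holds for every $c\ge 2k+4$; in particular it holds for all $c$ in range when $k=2$, since then $c\ge 10>2k+3$. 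So Theorem~\ref{Th:Var2} is immediate for $c\ge 2k+4$.

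The residual cases $c\in\{2k,2k+1,2k+2,2k+3\}$---which, as $c\ge 10$, arise only for $k\ge 4$---must be handled by hand, and this is where I expect the genuine work. For $c\in\{2k,2k+1\}$ we have $\lfloor\frac{c}{2}\rfloor=k$, so Woodall's conjecture in this range (which, as remarked after its statement, follows from Kopylov's method) gives $e(G)\le\max\{f(n,k,c),f(n,\lfloor\frac{c}{2}\rfloor,c)\}=f(n,k,c)$, contradicting the hypothesis---so there is nothing to prove. For $c\in\{2k+2,2k+3\}$ we have $f(n,\lfloor\frac{c}{2}\rfloor-1,c)=f(n,k,c)$, the hypothesis is just $e(G)>f(n,k,c)$, and the target is $G\subseteq W_{n,k+1,c}=W_{n,\lfloor\frac{c}{2}\rfloor,c}$; here one replays the argument of Theorem~\ref{Th:main-refined} (the $C$-closure, Bondy's theorem (Theorem~\ref{Le:Bondy}) applied to the edges meeting $V(G)\setminus V(C)$, a Kopylov-type disintegration of the $(c+1)$-closed graph $\overline{G}[C]$, and the edge-switching device of Fan, Lv and Wang~\cite{FLW04}), observing that the branches leading to $\overline{G}=W_{n,k,c}$ or $\overline{G}=Z_{n,k,c}$ each force $e(\overline{G})\le f(n,k,c)$ and so are excluded, leaving $G\subseteq\overline{G}\subseteq W_{n,\lfloor\frac{c}{2}\rfloor,c}$. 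The delicate point of the replay---and the crux of the whole proof---is verifying that the structural core of the argument of Theorem~\ref{Th:main-refined} runs on the single input $e(G)>f(n,\lfloor\frac{c}{2}\rfloor-1,c)$, the term $f(n,k+1,c)$ being used there only to rule out configurations of at most $f(n,k,c)$ edges, which we rule out anyway. Beyond this, the entire proof is bookkeeping among the four values $f(n,k,c)$, $f(n,k+1,c)$, $f(n,\lfloor\frac{c}{2}\rfloor-1,c)$, $f(n,\lfloor\frac{c}{2}\rfloor,c)$; all of the graph-theoretic substance is inherited from Theorem~\ref{Th:main-refined}.
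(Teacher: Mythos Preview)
Your proposal is correct, but it takes a more circuitous route than the paper. You split on $c$: for $c\ge 2k+4$ you black-box Theorem~\ref{Th:main-refined} and prune its conclusions (a) and (d) by the edge count $e(\overline{G})\le f(n,k,c)$; for $c\in\{2k,2k+1\}$ you invoke Woodall's conjecture to get vacuity; and for $c\in\{2k+2,2k+3\}$ you replay the internal argument, noting that the interval $[k+1,\lfloor c/2\rfloor-1]$ where Lemma~\ref{lem:CliqNum} needs the $f(n,k+1,c)$ term is empty. The paper instead replays the machinery (Theorem~\ref{Th:BondyStab-2con}, then Lemmas~\ref{Le:HamConStruc2}, \ref{Le:c/2-1:c/2}, \ref{lem:CliqNum}, \ref{lem:CliqNum2}) once for all $c$, with a single sharpening: since Lemma~\ref{lem:CliqNum} carries no minimum-degree hypothesis, one may apply it with $k-1$ in place of $k$, so that $e(G)>\max\{f(n,k,c),f(n,\lfloor c/2\rfloor-1,c)\}$ already forces the clique number of $\overline{G}[C]$ to be at least $c-k+2$; Lemma~\ref{lem:CliqNum2} then yields $\overline{G}\in\{W_{n,k,c},Z_{n,k,c}\}$, but both of these have clique number exactly $c-k+1$, a contradiction. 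Your edge-count elimination of $W_{n,k,c}$ and $Z_{n,k,c}$ is perfectly valid, and your modular use of Theorem~\ref{Th:main-refined} is economical for generic $c$; the paper's clique-number contradiction is cleaner in that it handles all $c$ uniformly and avoids both the appeal to Woodall's conjecture and the separate replay at the boundary. One wrinkle: your justification of $\max\{f(n,k,c),f(n,\lfloor c/2\rfloor-1,c)\}\ge f(n,k+1,c)$ for $c\ge 2k+4$ via the difference $n-2c+3k$ and $n\ge c+1$ is garbled---the clean argument is simply convexity of $f$ in its second variable together with $k+1\in[k,\lfloor c/2\rfloor-1]$.
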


In particular, if we choose $c=n-1$ in Theorem \ref{Th:Var1},
then it follows that $\overline{G}=W_{n,k,n-1}$.
This is because that $Z_{n,k,n-1}$ is valid only for $k=2$, but when $k=2$, $W_{n,2,c}$ and $Z_{n,2,c}$ are identical.
This provides another refined version of Theorem \ref{Th:nonH} for 2-connected graphs.

\subsection{Stability on a theorem of Bondy}

Our other result on the circumference of a graph is a stability version of Theorem \ref{Le:Bondy}.

\begin{thm}\label{Th:BondyStab-2con}
Let $G$ be a 2-connected graph on $n$ vertices and $C$ be a longest cycle in $G$ of length $c$, where $10\leq c\leq n-1$.
If the number of edges with at most one endpoint in $C$ is more than $\left(\left\lfloor\frac{c}{2}\right\rfloor-1\right)(n-c),$
then either $G\subseteq W_{n,\lfloor\frac{c}{2}\rfloor,c}$,
or $c$ is odd and $G$ is a subgraph of a member of $\mathcal{X}_{n,c}\cup \mathcal{Y}_{n,c}$.
\end{thm}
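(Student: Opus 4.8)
The plan is to run a \emph{local} analysis around a fixed longest cycle $C$, in the spirit of Bondy's counting argument \cite{B71} but upgraded so as to track near-equality, and then to glue the local pictures together using cycle rotations and the edge-switching technique of Fan, Lv and Wang \cite{FLW04}. Write $R=V(G)\setminus V(C)$, so $|R|=n-c$, and let $\mathcal D$ be the set of components of $G[R]$; the quantity appearing in the hypothesis is precisely $\sum_{D\in\mathcal D}\bigl(e(D)+e(D,V(C))\bigr)$, where $e(D,V(C))$ denotes the number of edges between $D$ and $V(C)$. Note first that, unlike the edge-count theorems of this paper, one cannot simply feed this into Theorem~\ref{Th:Var2}: the hypothesis controls neither $e(G[V(C)])$ nor the total edge count (for instance when $n$ is close to $c$). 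So a genuinely local argument is needed, although it reuses the structural engine built for Theorem~\ref{Th:main}. The basic facts are: every edge from a component $D$ to $V(C)$ lands in the attachment set $A_D:=N(D)\cap V(C)$; by $2$-connectivity and maximality of $C$ we have $|A_D|\ge 2$; and if $v_i,v_{i+1}$ are consecutive vertices of $A_D$ in the cyclic order on $C$, then the interior of the corresponding arc sees no vertex of $D$, while any $v_i$--$v_{i+1}$ path through $D$ using $p$ vertices of $D$ forces that arc to have length at least $p+1$ (otherwise a reroute would lengthen $C$).

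The first main step is a per-component estimate with stability: for every $D\in\mathcal D$ with $h_D:=|V(D)|$,
\[
e(D)+e(D,V(C))\le \Bigl\lfloor\tfrac c2\Bigr\rfloor h_D ,
\]
and, crucially, a dichotomy — either $D$ is ``near-extremal'' in a sense made precise below, or the slack $\lfloor c/2\rfloor h_D-(e(D)+e(D,V(C)))$ is at least $h_D$. The inequality itself follows from $|A_D|\le\lfloor c/2\rfloor$ (each of the $|A_D|$ arcs has length $\ge 2$, and they sum to $c$) combined with the ``savings'' refinement above, which penalises components that are large or dense: making $e(D,V(C))$ large forces many attachments to route long paths through $D$, hence forces long arcs, hence a smaller $|A_D|$. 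The near-extremal classification should force $D$ to be either a single vertex adjacent to exactly $\lfloor c/2\rfloor$ (or $\lfloor c/2\rfloor-1$) vertices of $C$ occurring in an essentially alternating pattern along $C$, or — and this is where the parity of $c$ enters, through an arc of odd length that such a configuration must create — one of the star / augmented-star gadgets underlying the definitions of $\mathcal X_{n,c}$ and $\mathcal Y_{n,c}$.

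The second main step is global assembly. Summing the per-component bound and comparing with the hypothesis $\sum_D\bigl(e(D)+e(D,V(C))\bigr)>(\lfloor c/2\rfloor-1)(n-c)$ shows the total slack is less than $n-c=\sum_D h_D$, so by the dichotomy \emph{every} component is near-extremal. It then remains to show these local pictures are mutually consistent: all attachment sets must be one common set $A$ of $\lfloor c/2\rfloor$ vertices, spaced alternately around $C$ (or the appropriate common configuration of an $\mathcal X_{n,c}\cup\mathcal Y_{n,c}$ member). Here rotations of $C$ and edge-switching do the work: if two components attach to incompatible sets, or to a set and a nontrivial shift of it, one splices a longer cycle through both (or re-selects a longest cycle capturing more of $R$), contradicting maximality. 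Once $A$ is identified, maximality of $C$ also pins down $G[V(C)]$: no outside vertex may be usable to lengthen $C$, which via Hamilton-connectivity-type arguments inside $G[V(C)]$ between consecutive vertices of $A$ forces $G[V(C)]$ to embed into the clique-plus-pendants pattern of $W_{n,\lfloor c/2\rfloor,c}$ (respectively of the relevant member of $\mathcal X_{n,c}\cup\mathcal Y_{n,c}$), whence $G$ does too. The hypothesis $c\ge 10$ is used to discard small exceptional configurations in this last analysis.

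I expect the global assembly to be the crux. The per-component estimate is essentially Bondy's argument with extra bookkeeping, but proving that the attachment sets of \emph{all} components coincide, and then bootstrapping that into a description of $G[V(C)]$, requires controlling many cycle rotations simultaneously; this is exactly the step where the odd-$c$ families $\mathcal X_{n,c}$ and $\mathcal Y_{n,c}$ appear, and where the interplay between the closure/edge-switching machinery and the purely local counting must be handled with care.
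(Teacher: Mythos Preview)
Your plan diverges from the paper's proof in its architecture, and the divergence is at the hardest step.

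The paper does \emph{not} attempt a per-component stability dichotomy. Instead it proves the theorem in two stages. First (Theorems~\ref{Th:BondyStab-spec} and~\ref{Th:BondyStab}) it shows that the hypothesis forces the existence of a single \emph{anchor}: an isolated vertex $u$ in $G-C$ with $d_C(u)=\lfloor c/2\rfloor$. This is obtained not by a slack analysis but by a minimal-counterexample reduction: delete any vertex of $G-C$ of degree $<\lfloor c/2\rfloor$, reduce to a single component, then to a single $2$-connected component, and finally apply Fan's average-degree-to-long-path/cycle results (Theorems~\ref{Le:FanErdoGal} and Lemma~\ref{Le:FanLongCyc}) to reach a contradiction. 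Second (Lemma~\ref{Le:LonCyc}), once one such $u$ is in hand, its neighbourhood $N=N_C(u)$ fixes an alternating pattern on $C$, and every other component of $G-C$ is classified by explicit cycle rotations that use $u$; the embedding into $W_{n,\lfloor c/2\rfloor,c}$ then follows simply from the fact that $N^+$ is stable, with no Hamilton-connectivity analysis of $G[V(C)]$ required.

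Your ``dichotomy'' --- either $D$ is near-extremal or the slack is at least $h_D$ --- is precisely the content of the first stage, restated componentwise, and the justification you give (``$|A_D|\le\lfloor c/2\rfloor$ combined with the savings refinement'') does not establish it. When $h_D\ge 2$, ruling out a dense $2$-connected $D$ with small slack but no special structure is exactly where the paper invokes Fan's results after the minimal-counterexample reductions; your sketch does not indicate how you would handle this. Once one anchor vertex is found, by contrast, the classification of \emph{all} other components becomes a short explicit case analysis (the Claim in Lemma~\ref{Le:LonCyc}), which is much easier than classifying every component from scratch.

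Two further points. The edge-switching technique of Fan--Lv--Wang is not used in this proof at all; in the paper it appears only in Section~4, for the complementary case where many edges are spanned inside $V(C)$. And you do not need to ``pin down $G[V(C)]$'' via Hamilton-connectivity: once $N=N_C(u)$ is known and $N^+$ is stable, $G$ already embeds in $W_{n,\lfloor c/2\rfloor,c}$ (or in a member of $\mathcal X_{n,c}\cup\mathcal Y_{n,c}$ in the odd case), regardless of what further edges $G[V(C)]$ may contain within $N$.
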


\subsection{Proof reduction}
In this subsection, we give a sketch of the proof of Theorem \ref{Th:main-refined},
which we emphasize is quite different from the existing ones in \cite{FKL17,FKLV-arxiv,FKV16}.

The proof will be split into two parts, according to the simple observation that
given a longest cycle $C$ in the graph $G$ which has many edges,
either the number of edges with at most one endpoint in $C$ is large or the number of edges spanned in $V(C)$ is large.
The former case will be dealt with by Theorem \ref{Th:BondyStab-2con},
and the latter case will be handled by the following result.

Define $h(n,k):=\binom{n-k}{2}+k(k-1)$. We point out that $h(n+1,k)=e(W_{n,k,n}).$

\begin{thm}\label{Th:WZ+closure1}
Let $G$ be a 2-connected graph on $n$ vertices with $\delta(G)\geq k$ and
$C$ be a longest cycle in $G$ of length $c\in [6,n-1]$.
If $e(G)>\max\left\{f\left(n,k+1,c\right),f\left(n,\left\lfloor\frac{c}{2}\right\rfloor-1,c\right)\right\}$
and
$e(G[C])>h\left(c+1,\left\lfloor\frac{c}{2}\right\rfloor-1\right),$
then either $G\subseteq W_{n,\lfloor\frac{c}{2}\rfloor,c}$,
or $\overline{G}\in \{W_{n,k,c}, ~Z_{n,k,c}\}$, where $\overline{G}$ denotes the $C$-closure of $G$.
\end{thm}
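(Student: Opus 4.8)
\medskip
\noindent\emph{Proof sketch.}
The plan is to first pass to the $C$-closure, then determine the structure of $G[C]$ via a Kopylov-type disintegration argument, and finally determine how the vertices off $C$ attach to $C$, using an edge-switching technique together with the maximality of $C$. Throughout we assume $c\ge 10$; the finitely many smaller values of $c$ are checked separately.

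\smallskip
\noindent\emph{Step 1 (reduction to the closed case).} Let $\overline{G}$ be the $C$-closure of $G$. A preliminary lemma on this operation shows that $\overline{G}$ is again $2$-connected, that $\delta(\overline{G})\ge k$, that $C$ is still a longest cycle of $\overline{G}$, and that $e(\overline{G})\ge e(G)$ and $e(\overline{G}[C])\ge e(G[C])$; hence $\overline{G}$ satisfies the same hypotheses, $\overline{G}[C]$ is $(c+1)$-closed, and the $C$-closure of $\overline{G}$ is $\overline{G}$ itself. Since $G\subseteq\overline{G}$, it suffices to prove the conclusion for $\overline{G}$. So from now on we assume $G[C]$ is $(c+1)$-closed and we must show that $G$ equals $W_{n,k,c}$ or $Z_{n,k,c}$, or that $G\subseteq W_{n,\lfloor c/2\rfloor,c}$.

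\smallskip
\noindent\emph{Step 2 (a dichotomy on the edges off $C$).} Let $m_0$ be the number of edges of $G$ with at most one endpoint on $C$. If $m_0>(\lfloor c/2\rfloor-1)(n-c)$, then Theorem~\ref{Th:BondyStab-2con} applies and gives that $G\subseteq W_{n,\lfloor c/2\rfloor,c}$ or that $G$ lies inside a member of $\mathcal{X}_{n,c}\cup\mathcal{Y}_{n,c}$; but a direct computation shows that every graph in $\mathcal{X}_{n,c}\cup\mathcal{Y}_{n,c}$ has at most $h(c+1,\lfloor c/2\rfloor-1)$ edges on any longest cycle, which contradicts $e(G[C])>h(c+1,\lfloor c/2\rfloor-1)$, so this case yields $G\subseteq W_{n,\lfloor c/2\rfloor,c}$ and we are done. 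Hence we may assume $m_0\le(\lfloor c/2\rfloor-1)(n-c)$, which forces $e(G[C])=e(G)-m_0$ to be large; in particular $e(G[C])$ exceeds both $h(c+1,\lfloor c/2\rfloor-1)$ and $\binom{c-k}{2}+k(k+1)+(k+2-\lfloor c/2\rfloor)(n-c)$.

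\smallskip
\noindent\emph{Step 3 (structure of $G[C]$ and of the attachments).} Now $G[C]$ is a $(c+1)$-closed Hamiltonian graph on $c$ vertices with many edges, so it has no two nonadjacent vertices of large degree, and a disintegration argument in the spirit of \cite{K77} applies: repeatedly peeling off minimum vertex cuts while tracking $e(G[C])$, one obtains either that $G[C]$ is an extremal split graph, in which case Theorem~\ref{Le:Bondy} gives $G\subseteq W_{n,\lfloor c/2\rfloor,c}$ directly, or that $G[C]$ contains a clique $A$ of order $c-s$ for some small $s=|V(C)\setminus A|$, with every vertex of $V(C)\setminus A$ having all its neighbours inside a fixed set $T\subseteq A$. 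In the latter case, because $A$ spans almost all of $C$ the cyclic order of $C$ restricted to $A$ may be permuted freely; combining this with repeated edge-switches in the spirit of \cite{FLW04} and with the maximality of $C$, one shows that $V(C)\setminus A$ together with every component $D$ of $G-V(C)$ attaches to $V(C)$ inside a single fixed set, and that each such $D$ is either a single vertex joined to a fixed subset of $A$ (so $G$ is a subgraph of some $W_{n,j,c}$) or a clique joined to a fixed pair of $A$ (so $G$ is a subgraph of some $Z_{n,j,c}$). Finally, $f(n,\cdot,c)$ is convex in its middle argument (and so is the edge-count function of $Z_{n,\cdot,c}$), so the hypotheses $e(G)>f(n,k+1,c)$ and $e(G)>f(n,\lfloor c/2\rfloor-1,c)$, together with $\delta(G)\ge k$ and $c\ge 2j$, force $j\in\{k,\lfloor c/2\rfloor\}$ in the first case and $j=k$ in the second, identifying $G$ as one of the claimed graphs.

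I expect Step 3 to be the main obstacle. Because $W_{n,k,c}$ and $Z_{n,k,c}$ are edge-tight, no single rotation of $C$ rules out a stray attachment edge, so one has to interleave the closure, the edge-switching moves and the maximality of $C$ while keeping the two edge-count inequalities intact throughout a case analysis on the cut size $s$ and on the component structure of $G-V(C)$; coordinating these devices so that the argument closes is where the real work lies.
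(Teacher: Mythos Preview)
Your Step~1 is right and matches the paper. But Step~2 is misplaced: the hypothesis of the theorem already contains $e(G[C])>h(c+1,\lfloor c/2\rfloor-1)$, so invoking Theorem~\ref{Th:BondyStab-2con} here is redundant --- in the paper's architecture that theorem and this one are the two \emph{parallel} halves of the reduction (Subsection~1.7), not nested. More seriously, Step~3 misses the key structural hook and contains a wrong citation. The organizing observation is that after closure $\overline{G}[C]$ is \emph{non-Hamiltonian-connected} (Lemma~\ref{Le:CoperHC}): any Hamiltonian $(x,y)$-path in $\overline{G}[C]$ combined with an $(x,R,y)$-path through a component $R$ of $G-C$ would contradict the (local) maximality of $C$. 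This, not just ``many edges'', is what feeds a Chv\'atal-type degree-sequence argument (Lemmas~\ref{Le:HamilCon-Posa} and~\ref{Le:HamConStruc2}) and yields the clean dichotomy: either $\overline{G}[C]$ has $\lfloor c/2\rfloor-1$ vertices of degree at most $\lfloor c/2\rfloor$, or it has $s-1$ low-degree vertices whose removal leaves a clique, with $s\le\lfloor c/2\rfloor-2$. A generic Kopylov ``disintegration'' on a Hamiltonian graph with many edges does not deliver this.

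In the first branch your claim that ``Theorem~\ref{Le:Bondy} gives $G\subseteq W_{n,\lfloor c/2\rfloor,c}$ directly'' is simply false: Theorem~\ref{Le:Bondy} is only Bondy's edge-count. The paper instead proves a careful standalone lemma (Lemma~\ref{Le:c/2-1:c/2}) forcing $\overline{G}[C]=W_{c,\lfloor c/2\rfloor,c}$ exactly, and then uses the Hamiltonian-path structure of that graph to pin down the attachments. In the second branch the edge-switching of \cite{FLW04} is used not to analyse attachments directly but to push $G$ to a graph $G^*$ in which every $x\in N_C(R)$ sees all of $R$ (Lemma~\ref{Le:LMC-oper}), after which an edge-count shows $s\le k$ and hence the clique number of $\overline{G}[C]$ is at least $c-k+1$ (Lemma~\ref{lem:CliqNum}); the final identification $\overline{G}\in\{W_{n,k,c},Z_{n,k,c}\}$ is then a separate, lengthy attachment analysis (Lemma~\ref{lem:CliqNum2}) based on strong attachments and Lemma~\ref{Le:clique}, not a convexity-of-$f$ computation as you suggest. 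Your outline never reaches the crucial clique-number bound, and the convexity argument at the end of your Step~3 would at best pin down $j$ for a subgraph containment, not force equality $\overline{G}=W_{n,k,c}$ or $\overline{G}=Z_{n,k,c}$.
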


We give the formal reduction of Theorem \ref{Th:main-refined} to Theorems \ref{Th:BondyStab-2con} and \ref{Th:WZ+closure1}.

\medskip

{\noindent \it Proof of Theorem \ref{Th:main-refined}.} (Assuming Theorems \ref{Th:BondyStab-2con} and \ref{Th:WZ+closure1}.)
Let $G, C$ be as in Theorem \ref{Th:main-refined}.
We notice that $e(G)>f(n,\lfloor\frac{c}{2}\rfloor-1,c)=(\lfloor\frac{c}{2}\rfloor-1)(n-c)+h(c+1,\lfloor\frac{c}{2}\rfloor-1)$.
So either $e(G-C)+e(G-C,C)>(\lfloor\frac{c}{2}\rfloor-1)(n-c)$ or $e(G[C])> h(c+1,\lfloor\frac{c}{2}\rfloor-1)$.
If the former case occurs, then by Theorem \ref{Th:BondyStab-2con},
either $G\subseteq W_{n,\lfloor\frac{c}{2}\rfloor,c}$,
or $c$ is odd and $G$ is a subgraph of a member of $\mathcal{X}_{n,c}\cup \mathcal{Y}_{n,c}$.
As every graph in $\mathcal{X}_{n,c}\cup \mathcal{Y}_{n,c}$ has a vertex of degree 2, it is only valid when $k=2$.
So the latter case occurs.
Then the assertion of Theorem \ref{Th:main-refined} follows from Theorem \ref{Th:WZ+closure1}.
\qed

\subsection{Organization}
The rest of the paper is organized as follows.
In Section 2, first we introduce notations and terminologies,
which include an important concept `locally maximal cycle' for our proofs;
then we collect and prove some lemmas on cycles and closures.
In Section 3, we prove Theorem \ref{Th:BondyStab-2con}.
In Section 4, we prove a stronger version (Theorem \ref{Th:WZ+closure}) of Theorem \ref{Th:WZ+closure1},
whose proof will be split into three technical lemmas.
In Section 5, we complete the proofs of the two variants, i.e., Theorems \ref{Th:Var1} and \ref{Th:Var2}.
In Section 6, we conclude this paper by discussing some future research.




\section{Preliminaries}
\subsection{Notations}

Let $G$ be a graph and $H$ be a subgraph of $G$.
We use $G-H$ to denote the resulting graph obtained from $G$ by deleting all vertices of $H$.
If $H$ consists of only one vertex $v$, then we just write it as $G-v$.
For convenience, sometime we would abuse the notation by using the subgraph $H$ as its vertex set.
For instance, we often use $|H|$ to express $|V(H)|$.
Let $A$ be a subset of $V(G)$.
By $N_H(A)$, we mean the set of all vertices in $V(H)\backslash A$ which have at least one neighbor in $A$.
We write $G[A]$ for the induced subgraph of $G$ on $A$.
We say $A$ is {\it stable}, if $G[A]$ has no edges.
If $H, H'$ are two disjoint subgraphs (or subsets) in $G$,
we define $(H,H')$ to be the induced bipartite subgraph of $G$ on the two parts $V(H)$ and $V(H')$.
For $x,y\in V(G)$, an $(x,y)$-path is a path in $G$ with two endpoints $x$ and $y$,
and an $(x,H,y)$-path is an $(x,y)$-path with all internal vertices in $V(H)$.
We use $d^{*}_H(x,y)$ to denote the length of a longest $(x,H,y)$-path.
We say $G$ is {\it Hamiltonian-connected},
if for any two vertices $x,y\in V(G)$, there exists an $(x,y)$-path which passes through every vertex in $G$.
The {\it clique number} of $G$ is the maximum size of a clique in $G$.
For a cycle or path $C$ with a given orientation, we denote $v^+$ and $v^-$
as the successor and predecessor of the vertex $v$ on $C$, respectively.
For a subsect $A\subseteq V(C)$, by $A^+$ (resp. $A^-$)
we mean the set consisting of $v^+$ (resp. $v^-$) for all $v\in A$.
An $(x,y)$-path in $C$ sometime is also written as $C[x,y]$.
Two edges are {\it independent}, if their endpoints are distinct.


Let $C$ be a cycle of a graph $G$ and $R$ be a component of $G-C$. A subset $M=\{x_1,x_2,\ldots,x_s\}$ of $V(G)$
is called a \emph{\bf strong attachment} of $R$ to $C$, if $x_i$'s lie on $C$ in a cyclic order,
and for any ordered pair of vertices $x_i,x_{i+1}$, where $x_{s+1}=x_1$,
there exist $y_i,y_{i+1}\in V(R)$
such that $x_iy_i,x_{i+1}y_{i+1}$ are independent edges.

A cycle $C$ is \emph{\bf locally maximal} in a graph $G$ if
there is no cycle $C'$ in $G$ such that $|E(C')|>|E(C)|$ and $|E(C')\cap E(C,G-C)|\leq 2$.
This concept will play an important role in our proofs (for Section 4 especially).
It seems that in most situations a locally maximal cycle $C$ captures the properties of a longest cycle,
and yet it has its own advantages for counting the number of edges incident with $V(C)$.

Lastly, we consider the monotonicity of the function $f(n,k,c)$, where $n, c$ are fixed.
Basic calculation shows that $f(n,k,c)=\frac{3}{2}\left[k^2-(\frac{4c-2n}{3}+1)k\right]+\frac{c^2+c}{2}$ is convex in $k$.
So the maximum of $f(n,k,c)$ over an interval $[a,b]$ is always attained at either $k=a$ or $k=b$.
Assuming that $10\leq c\leq n-1$, we have
$\left\lfloor\frac{c}{2}\right\rfloor-1\geq \frac{c-1}{3}+\frac12\geq \frac{2c-n}{3}+\frac12$,
which implies that $f\left(n,\left\lfloor\frac{c}{2}\right\rfloor,c\right)\geq f\left(n,\left\lfloor\frac{c}{2}\right\rfloor-1,c\right)$.
This inequality will be needed in the proof of Theorem \ref{Th:Var1}.

\subsection{Some results on cycles}
We collect and prove some results on cycles here.
The following result is due to Bondy \cite{B71}, which strengthens Dirac's theorem.
\begin{thm}[Bondy \cite{B71}]\label{Le:DegreeBond}
	Let $G$ be a 2-connected graph on $n$ vertices. If every vertex except
	for at most one vertex is of degree at least $k$, then $c(G)\geq \min\{n,2k\}$.
\end{thm}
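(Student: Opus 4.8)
The plan is to derive this from Dirac's classical theorem ($c(G)\geq\min\{n,2\delta(G)\}$ for $2$-connected $G$) by a reduction that handles the one low-degree vertex $v$. Let $v$ be the possible exception, so every vertex of $G-v$ has degree $\geq k$ in $G$, hence degree $\geq k-1$ in $G-v$. If $G-v$ happens to be $2$-connected, a first instinct is to apply Dirac to $G-v$, but that only gives a cycle of length $\min\{n-1,2k-2\}$, which is too weak; so the reduction has to be more careful and work with $G$ itself.

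First I would take a longest cycle $C$ in $G$ and suppose for contradiction that $|C|=c<\min\{n,2k\}$. The standard Erd\H{o}s--Gallai/Dirac rotation argument runs as follows. Since $c<n$, there is a vertex outside $C$; using $2$-connectivity, pick a path from $V(G)\setminus V(C)$ back to $C$ and, more to the point, fix a vertex $u\notin V(C)$ together with two vertices on $C$ joined to the component of $G-C$ containing $u$ by independent edges (this is where $2$-connectivity enters, via a fan/Menger argument, giving a ``strong attachment'' in the sense defined above). Now consider a longest path $P$ with both endpoints on $C$, internally disjoint from $C$, running through $u$; write its endpoints as $x,y\in V(C)$. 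The two arcs of $C$ between $x$ and $y$, call their interiors $C_1$ and $C_2$, cannot be too short, or else we could reroute through $P$ to get a longer cycle. Counting neighbours: every vertex of $V(P)\setminus\{x,y\}$ other than at most one (the exception $v$, if it lies there) has degree $\geq k$; one then shows by the usual ``no two consecutive vertices on $C_i$ can both send edges that would create a longer cycle'' argument that $x$ and $y$, together with the internal vertices of $P$, force $|C_1|+|C_2|+|V(P)|$ to be at least $2k$, contradicting $|C|=c<2k$.

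The cleanest route, and the one I would actually write, is to avoid re-deriving the rotation lemma from scratch: instead, modify a component. Let $R$ be a component of $G-C$; by $2$-connectivity $R$ has at least two neighbours on $C$, and in fact (again by Menger) there are two independent edges from $R$ to $C$. Choose $P$ a longest $(x,G-C,y)$-path with $x,y\in V(C)$ passing through as much of $R$ as possible; by maximality of $C$, the two $(x,y)$-arcs $C[x,y]$ of $C$ each have length at least $2$, and moreover for each such arc, neither the neighbours of $x^+$ nor of $y^-$ can ``jump backward'' in a way that would extend $C$. Carrying out the standard counting on $V(P)\cup V(C[x,y])$ for the longer arc $C[x,y]$, and using that at most one vertex in this set is the exceptional low-degree vertex, yields that this arc alone has length $\geq k$ on each side minus an error of one, so $c\geq 2k-1$; the parity/exception bookkeeping then upgrades $2k-1$ to $2k$ (or we simply get $c\geq\min\{n,2k-1\}$ and note that for the stated applications this suffices, but the theorem as quoted is with $2k$, so the $\pm1$ must be chased down).

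\textbf{Main obstacle.} The delicate point is exactly the ``at most one exceptional vertex'': the rotation argument naturally produces, along the path $P$, a set of vertices each of which contributes $\geq k$ neighbours, and one must ensure the single bad vertex $v$ does not cost us more than $1$ in the final count. This requires choosing the longest path $P$ (and the arc of $C$) so that either $v$ lies on the part we are counting---in which case we lose exactly one vertex's worth---or $v$ lies off it and costs nothing; handling the case where $v$ is an endpoint $x$ or $y$ of $P$, or is the special attachment vertex, is the fiddly part. Once the counting is set up with the locally-maximal-cycle viewpoint, the inequality $c\geq\min\{n,2k\}$ drops out, contradicting $c<\min\{n,2k\}$ and finishing the proof.
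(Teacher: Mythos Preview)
The paper does not supply a proof of this statement at all: it is quoted as a known result of Bondy \cite{B71} and used as a black box, so there is no ``paper's own proof'' to compare your attempt against.

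As for your sketch on its own merits, the overall strategy (longest cycle $C$, take a component $R$ of $G-C$, find two independent $R$--$C$ edges via $2$-connectivity, thread a longest internally-disjoint path $P$ through $R$, and do a rotation/counting argument on the arcs of $C$) is the standard shape of the argument, but what you have written is not yet a proof. The sentence ``Carrying out the standard counting on $V(P)\cup V(C[x,y])$ \ldots\ yields that this arc alone has length $\geq k$ on each side minus an error of one, so $c\geq 2k-1$'' is precisely where all the content lies, and you have not specified \emph{which} vertex's degree you are counting, \emph{where} its neighbours are forced to lie, or \emph{why} consecutive positions on the arc are forbidden. Without that, there is no inequality to write down. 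You then say the ``parity/exception bookkeeping upgrades $2k-1$ to $2k$,'' but you give no mechanism for this upgrade, and indeed you parenthetically concede that maybe you only get $2k-1$; that is a genuine gap, not a rounding issue.

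The specific difficulty you flag in your ``Main obstacle'' paragraph is real: the exceptional low-degree vertex $v$ might be exactly the vertex whose neighbourhood you want to count (for instance, $v$ could be the unique vertex of $G-C$, or it could be $x^+$ on the arc). The clean way around this is to set things up so that you are counting degrees of \emph{two} vertices simultaneously (e.g.\ both endpoints of a suitably chosen path, or both $x^+$ and $y^-$), so that at most one of them is $v$ and the other still contributes $k$; alternatively, one can choose the off-cycle vertex $u$ to be distinct from $v$ whenever $|V(G)\setminus V(C)|\geq 2$, and handle the case $n=c+1$ separately. Until one of these is actually carried out, the argument does not close.
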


The next result, which was proved by Fan \cite{F90}, can be viewed as an average-degree version of the classical Erd\H{o}s-Gallai
theorem. This will be frequently used in our coming proofs for finding long paths between some specified vertices.

\begin{thm}[Fan \cite{F90}]\label{Le:FanErdoGal}
Let $x,y$ be two distinct vertices in a 2-connected graph $G$.
Suppose that the average degree of the vertices other than $x$ and $y$ in $G$ is $r$,
then the longest $(x,y)$-path in $G$ has length at least $r$,
with equality if and only if $r$ is an integer and $G\in \{J, J-xy\}$,
where $J$ denotes the union of some cliques $K_{r+1}$ which pairwise share the same vertices $x$ and $y$.
\end{thm}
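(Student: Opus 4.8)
\emph{Proof sketch.}\ Write $d^*:=d^*_G(x,y)$ for the length of a longest $(x,y)$-path. Since $\sum_{v\in V(G)\setminus\{x,y\}}d_G(v)=r(n-2)$, the assertion is equivalent to the inequality $\sum_{v\ne x,y}d_G(v)\le d^*(n-2)$ together with the statement that equality forces $G\in\{J,J-xy\}$. The plan is induction on $n:=|V(G)|$, the base case $n=3$ (where $G=K_3$) being trivial. Fix a longest $(x,y)$-path $P=v_0v_1\cdots v_{d^*}$ with $v_0=x$ and $v_{d^*}=y$. If $P$ is Hamiltonian then $d^*=n-1\ge r$ at once, and equality forces every vertex other than $x,y$ to be adjacent to all of $V(G)$, i.e.\ $G=K_n$ or $G=K_n-xy$, which are exactly $J$ and $J-xy$ in the case $r=n-1$. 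So from now on $d^*\le n-2$ and $V(G)\setminus V(P)\ne\emptyset$.

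The argument branches on whether $G$ is $3$-connected. If $G$ is not $3$-connected, it has a $2$-cut $\{a,b\}$; I would decompose $G$ along it into its bridges --- each of which, with the edge $ab$ added, is a smaller $2$-connected graph --- apply the inductive hypothesis to the bridges (with the terminal pair in each bridge chosen among $x,y,a,b$ as appropriate), and then stitch the resulting long paths into an $(x,y)$-path of $G$ while summing the degree bounds. This is the step through which the extremal families $J$, $J-xy$ propagate, so its equality analysis needs care. If instead $G$ is $3$-connected and some $w\in V(G)\setminus\{x,y\}$ has $d_G(w)\le d^*/2$, then $G-w$ is again $2$-connected (a cut vertex of $G-w$ would yield a $2$-cut of $G$), and I would delete $w$: since the degree sum over the remaining non-terminal vertices decreases by at most $2d_G(w)\le d^*$ and $d^*_{G-w}(x,y)\le d^*$, the inductive bound for $G-w$ gives $\sum_{v\ne x,y}d_G(v)\le d^*(n-3)+d^*=d^*(n-2)$; moreover equality here would force $G-w$ to be extremal, whence $w$ could be spliced into a longest $(x,y)$-path of $G-w$ to produce one of $G$ of length exceeding $d^*$ --- a contradiction, so this case never attains equality.

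This leaves $G$ $3$-connected with every vertex of $V(G)\setminus\{x,y\}$ of degree $>d^*/2$, which is the hard case. (The values $d^*\le 3$ are trivial here: one checks there can be no off-path component, so $G$ would be Hamiltonian, contradicting $d^*\le n-2$; hence $d^*\ge 4$.) Since a copy of $J$ with two or more petals has $\{x,y\}$ as a $2$-cut, a $3$-connected non-Hamiltonian $G$ ought to satisfy the bound strictly, and the plan is to prove this directly using maximality of $P$. For a component $R$ of $G-P$ with attachment set $S:=N_P(R)=\{v_{i_1},\dots,v_{i_m}\}$ (where $m\ge 3$ by $3$-connectivity), rerouting $P$ through $R$ between consecutive attachments cannot lengthen it, so $d^*_R(v_{i_t},v_{i_{t+1}})\le i_{t+1}-i_t$, and a standard insertion estimate gives $|V(R)|+|S|\le|V(P)|$; combined with the clique bound and $N(v)\cap V(P)\subseteq S$ for $v\in R$, these make the total degree of $V(R)$ fall short of $d^*\,|V(R)|$ --- by $3$-connectivity, strictly, and by enough to absorb the edges that internal vertices of $P$ send into $R$ --- while the degree contributed by $V(P)$ itself stays at most $d^*(|V(P)|-2)$, again because ``$P$ is not extendable'' bounds the chords of $G[V(P)]$. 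Adding up yields $\sum_{v\ne x,y}d_G(v)<d^*(n-2)$.

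I expect the genuine obstacle to be this last case, together with the reassembly in the non-$3$-connected case; in both, the difficulty lies less in the inequality itself than in the bookkeeping --- in particular (i) attachments of off-path components at the endpoints $x$ and $y$, where the simple ``reroute and extend'' step must be modified because rerouting through an endpoint need not produce an $(x,y)$-path, and (ii) verifying that, when the bridges are reassembled, the only tight configurations are exactly $J$ and $J-xy$. Bondy's degree bound (Theorem~\ref{Le:DegreeBond}) should shortcut sub-cases in which a piece has uniformly large minimum degree. It is this accumulation of routine-but-delicate bookkeeping, rather than any single conceptual step, that I expect to make the complete proof lengthy.
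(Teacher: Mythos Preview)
The paper does not contain a proof of this statement: Theorem~\ref{Le:FanErdoGal} is quoted from Fan~\cite{F90} as a tool and is used repeatedly (e.g.\ in Claims~4--6 of Section~3 and throughout Section~4), but no argument for it is given or sketched anywhere in the paper. There is therefore nothing to compare your proposal against.

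As for the proposal itself, the inductive scheme you outline --- decompose along a $2$-cut when not $3$-connected, delete a low-degree vertex when $3$-connected, and treat the remaining high-minimum-degree $3$-connected case directly --- is a plausible architecture, and you are right that the equality characterization must be threaded through the $2$-cut decomposition. But be aware that your ``hard case'' is genuinely the heart of the matter and your sketch there is still quite soft: the claim that the degree contribution of an off-path component $R$ falls strictly short of $d^*|V(R)|$, \emph{by enough to absorb} the edges from $V(P)$ into $R$, is exactly where the real work lies, and the insertion estimate $|V(R)|+|S|\le |V(P)|$ together with $N_R(v)\subseteq S$ does not by itself give this. Fan's original proof in~\cite{F90} is substantially more intricate than what you have written, so if you intend to supply a self-contained argument you should expect the completed version of this case to be considerably longer.
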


One can derive the following lemma from Theorem 2 of \cite{F90} (choosing $k=2$).
\footnote{The original statement of Theorem 2 in \cite{F90} requires that
``$C$ is locally longest with respect to $H$ and $H$ is locally 2-connected to $C$'',
which can be implied if $C$ is a longest cycle in $G$ and both $G$ and $H$ are 2-connected as in Lemma \ref{Le:FanLongCyc}.}

\begin{lem}[Theorem 2, \cite{F90}]\label{Le:FanLongCyc}
Let $G$ be a 2-connected graph, $C$ be a longest cycle of length $c$ in $G$, and $H$ a component of $G-C$ which is 2-connected.
If the average degree of the vertices of $H$ in $G$ is $r$, then
$c\geq 2r$, with equality only if $H$ is a clique $K_{r-1}$ in which
every vertex has the same two neighbors on $C$.
\end{lem}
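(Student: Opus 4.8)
The plan is to invoke Theorem 2 of \cite{F90} in the special case $k=2$ and to verify that the hypotheses of that theorem are met by the configuration described here. First I would recall the relevant statement from \cite{F90}: if $C$ is a cycle in $G$ that is \emph{locally longest with respect to} a connected subgraph $H$ (meaning no rerouting through $H$ produces a longer cycle) and $H$ is \emph{locally $k$-connected to} $C$ (meaning there are $k$ disjoint bridges, or attachments, from $H$ to $C$), then a weighted count of the degrees of the vertices of $H$ forces $c \geq k\cdot r$ when the average degree of $V(H)$ in $G$ is $r$. So the first step is purely bookkeeping: set $k=2$, so that the conclusion becomes $c \geq 2r$, which is exactly what we want.

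The substantive step is checking the two hypotheses in our setting. Since $C$ is a \emph{longest} cycle of $G$, it is in particular locally longest with respect to every subgraph, so the first hypothesis holds automatically. For the second, I would use that both $G$ and $H$ are $2$-connected: because $G$ is $2$-connected and $H$ is a component of $G-C$, there are at least two vertex-disjoint paths from $H$ to $C$ (a single cut vertex separating $H$ from $C$ would contradict $2$-connectivity of $G$), and the $2$-connectivity of $H$ lets us realize these as genuinely independent attachments — i.e. $H$ is locally $2$-connected to $C$ in the sense of \cite{F90}. This is the point flagged in the footnote of the excerpt, so I would spell out this reduction carefully, since it is the only place where the hypotheses of the cited theorem differ cosmetically from ours.

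Finally, I would transcribe the equality characterization. Theorem 2 of \cite{F90} states that equality $c = kr$ forces a rigid structure; with $k=2$ this says that each vertex of $H$ has degree exactly $r$ in $G$, that $H$ together with its attachments forms a union of cliques $K_{r+1}$ pairwise sharing the same two vertices on $C$, and, since $H$ is a single $2$-connected component rather than a union of several cliques through a common pair, that $H$ itself must be the clique $K_{r-1}$ obtained by deleting those two shared vertices of $C$ — so every vertex of $H$ has the same two neighbors on $C$. I would note that the main obstacle is not any new argument but the faithful translation between the ``locally longest / locally $k$-connected'' language of \cite{F90} and the cleaner ``longest cycle in a $2$-connected graph'' language used here; once that dictionary is in place the lemma is immediate. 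Everything else is a direct quotation of the cited result specialized to $k=2$.
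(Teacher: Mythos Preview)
Your proposal is correct and matches the paper's approach exactly: the paper does not give an independent proof of this lemma but simply cites Theorem~2 of \cite{F90} with $k=2$, noting in a footnote that the hypotheses ``$C$ is locally longest with respect to $H$'' and ``$H$ is locally $2$-connected to $C$'' follow from $C$ being a longest cycle and both $G$ and $H$ being $2$-connected. Your write-up spells out precisely this translation, including the equality case, so nothing further is needed.
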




The following lemma studies some properties of a strong attachment.

\begin{lem}[Lemma 1, \cite{F90}]\label{Le:Fan90}
Let $G$ be a graph, $C$ be a cycle in $G$, and $R$
a component of $G-C$. Let $T=\{u_1,u_2,\ldots,u_t\}$ be a maximum
strong attachment of $R$ to $C$, $S=N_C(R)\backslash T$, $t=|T|$ and $s=|S|$. Then,\\
(i) Every vertex in $S$ is joined to only one vertex in $R$.\\
(ii) For each $1\leq i\leq t$, suppose
that $N_C(R)\cap V(C[u_i,u_{i+1}])=\{a_0,a_1,\ldots,a_p\}$,
where $a_0=u_i$, $a_{p}=u_{i+1}$, and $a_j$'s are in a cyclic order on $C$.
Then there is a subscript $m$ such that
$N_R(a_j)=N_R(a_0)$ for $0\leq j\leq m$
and
$N_R(a_j)=N_R(a_{p})$ for $m+1\leq j\leq p$.\\
(iii) If $C$ is a longest cycle in $G$ of length $c$ and $t\geq 2$,
then $c\geq \sum_{i=1}^td_{R}^{*}(u_i,u_{i+1})+2s$.
\end{lem}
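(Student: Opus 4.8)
The plan is to prove the three parts in order, each time exploiting the maximality of the strong attachment $T$, with part (iii) using in addition that $C$ is a longest cycle. For \emph{(i)} I would argue by contradiction: if some $w\in S$ had two distinct neighbours $y,y'$ in $R$, then since $w\notin T$ it lies strictly inside some arc $C[u_i,u_{i+1}]$ between cyclically consecutive vertices of $T$, and I would show $T':=T\cup\{w\}$ is again a strong attachment. All consecutive pairs of $T'$ except $(u_i,w)$ and $(w,u_{i+1})$ are already certified; for $(u_i,w)$ take a neighbour $z\in V(R)$ of $u_i$ and note that at least one of $y,y'$ differs from $z$, yielding independent edges, and symmetrically for $(w,u_{i+1})$. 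This contradicts $|T|=t$ being maximum.

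For \emph{(ii)} I would fix an arc and write $N_C(R)\cap V(C[u_i,u_{i+1}])=\{a_0,\dots,a_p\}$ with $a_0=u_i$, $a_p=u_{i+1}$; by (i) each interior $a_j$ lies in $S$ and has a unique neighbour $y_j\in V(R)$. If the asserted monotone pattern fails, I would extract from the failure two interior indices $j<k$ whose neighbours ``cross'': $y_j$ behaves like a neighbour of $u_{i+1}$ while $y_k$ behaves like a neighbour of $u_i$, with $y_j\ne y_k$. Inserting both $a_j,a_k$ into $T$ between $u_i$ and $u_{i+1}$ and checking the three new consecutive pairs exactly as in (i) would give a strong attachment of size $t+2$, a contradiction. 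The one delicate point is that this check needs $u_i$ (resp.\ $u_{i+1}$) to have an $R$-neighbour distinct from the one used by $a_j$ (resp.\ $a_k$); I would derive this from the fact that $T$ is \emph{already} a strong attachment, which prevents $N_R(u_i)$ and $N_R(u_{i+1})$ from both collapsing to the single vertex the crossing would otherwise force, reducing the remaining degenerate configurations (a singleton $N_R(a_j)$ equal to neither $N_R(u_i)$ nor $N_R(u_{i+1})$, or a forced coincidence $y_j=y_k$) to the main case by rerouting through a suitable vertex of the connected graph $R$. This bookkeeping is the main obstacle: correctly isolating the crossing from an arbitrary failure of the pattern, and verifying in each degenerate case that the augmented set is a genuine strong attachment without contradicting the hypothesis on $T$.

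For \emph{(iii)} I would use that the $t$ arcs $C[u_i,u_{i+1}]$ partition $E(C)$ and that the interior $S$-vertices of the arcs partition $S$, so it suffices to prove the per-arc inequality $|C[u_i,u_{i+1}]|\ge d^*_R(u_i,u_{i+1})+2s_i$, where $s_i=p-1$ counts the interior $S$-vertices; summing over $i$ gives the claim. Using the threshold $m$ from (ii), in the generic case $1\le m\le p-1$ the vertices $a_0,\dots,a_m$ all send their unique edge to one vertex $y\in V(R)$ and $a_{m+1},\dots,a_p$ to one vertex $y'\in V(R)$ (the cases $m=0,p$ being easier). Then I would run two elementary reroutings, each producing a cycle whose length cannot exceed $c$: replacing $C[a_j,a_{j+1}]$ by the length-two detour through the common $R$-neighbour of $a_j$ and $a_{j+1}$ gives $|C[a_j,a_{j+1}]|\ge2$ whenever they attach to the same vertex, hence $|C[u_i,a_m]|\ge2m$ and $|C[a_{m+1},u_{i+1}]|\ge2(p-m-1)$; and replacing $C[a_m,a_{m+1}]$ by $a_m\to y\to(\text{a longest }(y,y')\text{-path in }R)\to y'\to a_{m+1}$, whose length equals $d^*_R(u_i,u_{i+1})$ since (ii) forces $N_R(u_i)=\{y\}$ and $N_R(u_{i+1})=\{y'\}$, gives $|C[a_m,a_{m+1}]|\ge d^*_R(u_i,u_{i+1})$. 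Adding the three bounds yields $|C[u_i,u_{i+1}]|\ge d^*_R(u_i,u_{i+1})+2(p-1)=d^*_R(u_i,u_{i+1})+2s_i$, completing the reduction.
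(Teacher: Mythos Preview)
The paper does not prove this lemma; it is quoted verbatim as Lemma~1 of Fan~\cite{F90} and used as a black box. So there is no proof in the paper to compare your attempt against.

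On the substance of your sketch: your arguments for (i) and (iii) are sound and are essentially the standard ones. In (iii) the per-arc decomposition with the threshold $m$ from (ii) is exactly the right mechanism, and your observation that $N_R(a_m)=N_R(u_i)$, $N_R(a_{m+1})=N_R(u_{i+1})$ forces $d^*_R(a_m,a_{m+1})=d^*_R(u_i,u_{i+1})$ is the key identity that makes the summation work; the boundary cases $m=0$ and $m=p-1$ (and $p=1$) go through by the same rerouting, just with the ``long'' segment sitting at one end of the arc rather than in the middle.

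Your treatment of (ii) is where the write-up is thin, and you flag this yourself. The case split is cleaner if you organise it as follows: if some interior $a_j$ has its unique $R$-neighbour outside both $N_R(u_i)$ and $N_R(u_{i+1})$, then $T\cup\{a_j\}$ is already a strong attachment of size $t+1$; otherwise every interior neighbour lies in $N_R(u_i)\cup N_R(u_{i+1})$, both of these sets are forced to be singletons $\{y\},\{y'\}$ (with $y\ne y'$ since $u_i,u_{i+1}$ are consecutive in $T$), and the failure of the monotone pattern then gives interior indices $j<k$ with $N_R(a_j)=\{y'\}$, $N_R(a_k)=\{y\}$, from which $T\cup\{a_j,a_k\}$ is a strong attachment of size $t+2$ by exactly the check you describe. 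This avoids the vague ``rerouting through a suitable vertex of $R$'' step, which is not needed.
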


Lastly, we bound the clique number on a long cycle by
some parameters related to a strong attachment.

\begin{lem}\label{Le:clique}
Let $G$ be a 2-connected graph, $C$ a locally maximal cycle in $G$, and $R$ a component of $G-C$.
Let $T$ be a strong attachment of $R$ to $C$.
Let $t=|T|, ~q=|N_C(R)\backslash T|$ and $\omega$ be the clique number of $G[C]$.
If for any $x,x'\in T$, the longest $(x,R,x')$-path
is of length at least $d$, where $d\geq 2$, then the following hold:\\
(i) $\omega\leq |C|-(d-1)(t-1)$;\\
(ii) If $T$ is a maximum strong attachment, then
$\omega\leq |C|-(d-1)(t-1)-q$.
\end{lem}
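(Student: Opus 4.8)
\textbf{Proof plan for Lemma~\ref{Le:clique}.}
The plan is to bound the clique number $\omega$ by locating a clique $K_\omega$ on $C$ and then showing that between consecutive attachment vertices of $T$ we are forced to ``lose'' many vertices of $C$ that cannot belong to the clique. Fix a clique $Q$ in $G[C]$ with $|Q|=\omega$, and fix an orientation of $C$, so that $T=\{u_1,\dots,u_t\}$ lists the attachment vertices in cyclic order. For each $i$ let $P_i$ denote a longest $(u_i,R,u_{i+1})$-path; by hypothesis $|E(P_i)|\ge d$, and since $R$ is disjoint from $C$, the interior of $P_i$ has at least $d-1$ vertices, all lying in $V(R)$.

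First I would prove part~(i). The key observation is that the arc $C[u_i,u_{i+1}]$ together with the path $P_i$ forms a cycle, and replacing the arc by $P_i$ in $C$ produces a cycle $C'$ with $|E(C')| = |E(C)| - |E(C[u_i,u_{i+1}])| + |E(P_i)|$. Because $C$ is \emph{locally maximal} and this modification only touches edges of $C$ incident to $R$ in a controlled way — here I would invoke the definition of strong attachment to check that $|E(C')\cap E(C,G-C)|\le 2$ — we must have $|E(C')|\le|E(C)|$, hence $|E(C[u_i,u_{i+1}])|\ge |E(P_i)|\ge d$, i.e. each arc $C[u_i,u_{i+1}]$ has at least $d-1$ internal vertices on $C$. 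Now the interior of at most one such arc can be ``absorbed'' into the clique $Q$; more precisely, one shows that for each $i$ for which the arc $C[u_i,u_{i+1}]$ meets $Q$ in its interior, one can reroute to get a longer cycle unless that arc is the ``short'' side. Summing over the $t-1$ arcs not containing $Q$ (or rather, over the arcs whose interiors are disjoint from $Q\setminus T$), each contributes at least $d-1$ vertices of $C$ outside $Q$, giving $\omega = |Q| \le |C| - (d-1)(t-1)$.

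For part~(ii), when $T$ is a \emph{maximum} strong attachment we additionally have the $q=|N_C(R)\setminus T|$ vertices of $S=N_C(R)\setminus T$ on $C$. By Lemma~\ref{Le:Fan90}(i) each such vertex is joined to exactly one vertex of $R$, and by Lemma~\ref{Le:Fan90}(ii) these extra neighbors are organized along each arc $C[u_i,u_{i+1}]$ in a two-block pattern. The point is that each vertex $s\in S$, sitting strictly inside some arc $C[u_i,u_{i+1}]$, can be spliced into the detour $P_i$ — using its edge to $R$ — so that the rerouted cycle through $R$ gains one more vertex; by local maximality this forces $s\notin Q$ and moreover that $s$ lies in an arc already accounting for its $d-1$ lost vertices, so the $q$ vertices of $S$ are genuinely additional to the $(d-1)(t-1)$ already counted. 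Hence $\omega\le |C|-(d-1)(t-1)-q$.

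The main obstacle I expect is the careful bookkeeping in the rerouting arguments: one must verify that the new cycle $C'$ obtained by substituting $P_i$ (and optionally splicing in a vertex of $S$) really does satisfy the constraint $|E(C'\cap E(C,G-C))|\le 2$ required by local maximality, and that the detour vertices from $R$ used in different arcs, and the $S$-vertices, are not double-counted — this is exactly what Lemma~\ref{Le:Fan90}(iii) and the structure of a strong attachment are designed to handle, but making the inequality tight in the ``maximum'' case of part~(ii) requires using both (i) and (ii) of Lemma~\ref{Le:Fan90} simultaneously. Everything else is a counting argument on $V(C)$ once the rerouting lemmas are in place.
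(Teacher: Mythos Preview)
Your plan for part~(i) has a real gap. The claim that ``the interior of at most one arc $C[u_i,u_{i+1}]$ can be absorbed into the clique $Q$'' is false as stated: the clique can straddle an attachment vertex. Concretely, take $t=2$, $d=3$, and suppose $Q$ omits only $u_1^-$ and $u_1^+$. Then $Q$ contains $u_2^-$ (an interior vertex of the arc $C[u_1,u_2]$) and $u_2^+$ (an interior vertex of $C[u_2,u_1]$), so $Q$ meets the interiors of \emph{both} arcs; your counting then collapses to the trivial $\omega\le c$. The rerouting you sketch---using a $(u_j,R,u_k)$-path together with an edge of $Q$---does not yield a longer cycle when the two $Q$-vertices lie on opposite sides of a common $u_i$, since the detour through $R$ has length $\ge d$ but you are cutting out two short sub-arcs whose total length can exceed $d$. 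The paper's fix is a two-sided decomposition: for each $j$ set $A_j$ to be the first $\lceil(d-1)/2\rceil$ vertices on $C$ after $u_j$ and $B_j$ the last $\lfloor(d-1)/2\rfloor$ vertices before $u_{j+1}$. The rerouting argument then shows there is no edge between $A_j$ and $A_k$ for $j\ne k$ (and likewise for the $B$'s), so $Q$ meets at most one $A_j$ and at most one $B_k$; since $|A\cup B|=(d-1)t$ this gives $\omega\le c-(d-1)t+(d-1)$.

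Your plan for part~(ii) also misidentifies the obstruction. Vertices $s\in S=N_C(R)\setminus T$ can perfectly well lie in $Q$; what cannot happen is that two vertices of $(N_C(R))^+$ lie in $Q$, since $(N_C(R))^+$ is a stable set (the standard rerouting: if $x^+y^+\in E(G)$ with $x,y\in N_C(R)$, then an $(x,R,y)$-path together with the edge $x^+y^+$ gives a longer cycle using only two crossing edges). Moreover, simply appending $q$ extra forbidden vertices to the count from~(i) double-counts, because some successors in $(N_C(R))^+$ already lie in the $A_j$'s. The paper handles this by first using Lemma~\ref{Le:Fan90}(ii) to relocate each $A_j,B_j$ to the ``switchover'' segment $C[a_m,a_{m+1}]$ inside the arc, and then setting $D:=(N_C(R))^+\setminus(A\cup B)$, which has exactly $q$ elements; a short case analysis shows $|Q\cap(A\cup D)|\le\lceil(d-1)/2\rceil$, and combining with the $B$-bound gives the extra $-q$.
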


\begin{proof}
We write $C=x_1x_2...x_cx_1$ and view $x_1,x_2,...,x_c$ appearing on $C$ in the clockwise order.
(All subscripts are taken under modulo $c$ in this proof.)
For $x,y\in V(C)$, by $C[x,y]$ we denote the segment of $C$ from $x$ to $y$ in the clockwise order.
Let $T=\{u_1,u_2,\ldots, u_t\}$ and $u_j:=x_{i_j}$, where $1\leq i_1<\ldots<i_t\leq c$.
Let $W$ be a maximum clique in $G[C]$.

We first prove (i).
Since $d_R^*(u_j,u_{j+1})\geq d$ and $C$ is locally maximal,
we see that $C[u_j,u_{j+1}]$ is a path of length at least $d$.
For each $j$, let $A_j=V(C[x_{i_j+1},x_{i_j+\lceil\frac{d-1}{2}\rceil}])$
and $B_j=V(C[x_{i_{j+1}-\lfloor\frac{d-1}{2}\rfloor},x_{i_{j+1}-1}])$.
So $A_j$ and $B_j$ are disjoint.
Let $A=\cup_j A_j$ and $B=\cup_k B_k$.
We claim that for any $j\neq k$, there are no edges between $A_j$ and $A_k$.
Suppose this was not the case.
Then there exist $1\leq \ell, \ell'\leq \lceil\frac{d-1}{2}\rceil$
such that $e:=x_{i_j+\ell}x_{i_k+\ell'}\in E(G)$.
Let $P$ be a $(u_j,R,u_k)$-path of length at least $d$.
Then $C[x_{i_k+\ell'},x_{i_j}]\cup P\cup C[x_{i_j+\ell}, x_{i_k}]\cup \{e\}$
forms a cycle, say $C'$.
We see that $|C'|\geq |C|+(d+1)-(\ell+\ell')\geq |C|+(d+1)-2\lceil\frac{d-1}{2}\rceil>|C|$
and $|E(C')\cap E(C,G-C)|\leq 2$, a contradiction to that $C$ is locally maximal,
proving the claim.
The claim shows that the maximum clique $W$ can intersect with at most one $A_j$,
so $|V(W)\cap A|\leq \lceil\frac{d-1}{2}\rceil$.
Similarly, there are no edges between $B_j$ and $B_k$ for any $j\neq k$,
and thus $|V(W)\cap B|\leq \lfloor\frac{d-1}{2}\rfloor$.
As $A$ and $B$ are disjoint, we have $|A\cup B|=(d-1)t$.
Now we prove (i) by showing
\begin{align*}
\omega&=|V(W)\backslash (A\cup B)|+|V(W)\cap (A\cup B)|\leq |V(C)\backslash (A\cup B)|+|V(W)\cap (A\cup B)|\\
&\leq |V(C)|-|A\cup B|+|V(W)\cap A|+|V(W)\cap B|\leq c-(d-1)(t-1).
\end{align*}

To prove (ii), we need a refined argument for (i).
For any $1\leq j\leq t$,
let $N_C(R)\cap V(C[u_j,u_{j+1}])=\{a_0,a_1,\ldots,a_p\}$,
where $a_0=u_j$, $a_{p}=u_{j+1}$, and $a_\ell$'s for $0\leq \ell\leq p$ appear on $C$ in the clockwise order.
By Lemma \ref{Le:Fan90},
there is a subscript $m$ such that $N_R(a_\ell)=N_R(u_j)$ for $0\leq \ell\leq m$ and $N_R(a_\ell)=N_R(u_{j+1})$
for $m+1\leq \ell\leq p$.
Consider the segment $C_j:=C[a_m, a_{m+1}]$.
Since $N_R(a_m)=N_R(u_j)$ and $N_R(a_{m+1})=N_R(u_{j+1})$,
we see that $d_R^*(a_m, a_{m+1})=d_R^*(u_j, u_{j+1})\geq d$.
So $C_j$ is a path of length at least $d$.
Similarly as in the proof of (i),
let $A_j$ be the set of the first $\lceil\frac{d-1}{2}\rceil$ vertices on $C_j$ (starting from $a_m$ but not including $a_m$),
and let $B_j$ be the set of the last $\lfloor\frac{d-1}{2}\rfloor$ vertices on $C_j$ (not including $a_{m+1}$).
Also let $A=\cup_j A_j$ and $B=\cup_j B_j$.
So $A$ and $B$ are disjoint.
Similarly, we can show that $W$ intersects with at most one $A_j$.
Thus, $|V(W)\cap A|\leq \lceil\frac{d-1}{2}\rceil$, with equality if and only if
$V(W)\cap A=A_j$ for some $j$.
Also $|V(W)\cap B|\leq \lfloor\frac{d-1}{2}\rfloor$.

We consider $(N_C(R))^+$.
First we show $(N_C(R))^+$ is stable. Otherwise, there exist $x,y\in N_C(R)$ with  $x^+y^+\in E(G)$;
let $P$ be any $(x,R,y)$-path, which has length at least 2, then $(C-\{xx^+,yy^+\})\cup P\cup \{x^+y^+\}$ is a cycle
contradicting that $C$ is locally maximal.
We point out that
$(N_C(R))^+$ is disjoint from $B$, and it intersects with each $A_j$ in exactly one vertex $a_m^+$ (i.e., the first vertex after $a_m$ in $C_j$).
Let $D:=(N_C(R))^+\backslash (A\cup B)$, where $|D|=|N_C(R)|-t=q$.
We claim that $|V(W)\cap (A\cup D)|\leq \lceil\frac{d-1}{2}\rceil$.
Since $D$ is stable, $W$ intersects with $D$ in at most one vertex.
If $V(W)\cap D=\emptyset$, then this claim follows from that $|V(W)\cap A|\leq \lceil\frac{d-1}{2}\rceil$.
So we may assume that $V(W)\cap D=\{x\}$ and $|V(W)\cap A|=\lceil\frac{d-1}{2}\rceil$.
By the above analysis, we then have $V(W)\cap A=A_j$ for some $j$.
In particular, the vertex $a_m^+\in A_j$ is in $W$.
But then there are two vertices $x,a_m^+$ in $V(W)\cap (N_C(R))^+$, a contradiction.
This proves the claim.
Combining the above bounds, we have
\begin{align*}
\omega&\leq |V(C)\backslash (A\cup B\cup D)|+|V(W)\cap (A\cup B\cup D)|\\
&\leq |V(C)|-|A\cup B\cup D|+(d-1)\leq c-(d-1)(t-1)-q.
\end{align*}
This finishes the proof of Lemma \ref{Le:clique}.
\end{proof}

\subsection{Lemmas on closures}
In this subsection, we prove some lemmas on $C$-closures.

\begin{lem}\label{Le:ClosLongPath}
Let $G$ be a graph on $n$ vertices.
Then for any $x,y\in V(G)$,
the longest $(x,y)$-path in the $(n+1)$-closure of $G$ has the same length as the longest $(x,y)$-path in $G$.
\end{lem}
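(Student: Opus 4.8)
The plan is to reduce the statement to a one-edge version and then iterate it along the closure construction. Since the closure process never changes the vertex set, fix an order and write $G=G_0\subseteq G_1\subseteq\cdots\subseteq G_m=\overline G$, where $\overline G$ denotes the $(n+1)$-closure and each $G_t$ is obtained from $G_{t-1}$ by adding an edge between two nonadjacent vertices whose degree sum in $G_{t-1}$ is at least $n+1$. For a fixed pair $x,y$ and a graph $F$, write $\ell(F)$ for the length of a longest $(x,y)$-path in $F$. As $\ell$ is monotone under adding edges, it suffices to prove the \emph{core claim}: if $H$ has $n$ vertices, $uv\notin E(H)$ and $d_H(u)+d_H(v)\ge n+1$, then $\ell(H+uv)=\ell(H)$; the lemma then follows from $\ell(G)=\ell(G_0)=\cdots=\ell(G_m)=\ell(\overline G)$.

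For the core claim only $\ell(H+uv)\le\ell(H)$ needs proof. I would take a longest $(x,y)$-path $P=p_0\cdots p_L$ in $H+uv$. If $P$ avoids $uv$ then $P\subseteq H$ and we are done; otherwise, after possibly swapping $u$ and $v$, write $u=p_j$, $v=p_{j+1}$ for some $0\le j\le L-1$. (Automatically $y\ne u$ and $x\ne v$; and the degenerate case $\{x,y\}=\{u,v\}$, which forces $L=1$, cannot occur under the hypothesis, since it would put $u$ and $v$ in different components of $H$ and give $d_H(u)+d_H(v)\le n-2$.) The crux is a crossover: I claim there is an index $i\in\{0,\dots,L-1\}$ with $p_i\in N_H(u)$ and $p_{i+1}\in N_H(v)$. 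Any such $i$ necessarily satisfies $i\le j-2$ or $i\ge j+1$, and a routine rerouting of $P$ through the edge $uv$ (reversing the segment of $P$ between $p_i$ and the edge $uv$) then produces a simple $(x,y)$-path of the same length $L$ that avoids $uv$ and hence lies in $H$; this gives $\ell(H)\ge L=\ell(H+uv)$.

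It remains to produce such an $i$. Suppose none exists, and put $I_u=\{i\in\{0,\dots,L-1\}:p_i\in N_H(u)\}$ and $I_v=\{i\in\{0,\dots,L-1\}:p_{i+1}\in N_H(v)\}$, so that $I_u\cap I_v=\emptyset$. Two facts coming from the maximality of $P$ drive the count. First, $j\notin I_u\cup I_v$ (since $p_j=u\notin N_H(u)$ and $p_{j+1}=v\notin N_H(v)$), hence $|I_u|+|I_v|\le L-1$. Second, no vertex of $Z:=V(H)\setminus V(P)$ is adjacent in $H$ to two consecutive vertices of $P$; in particular, since $u,v$ are consecutive on $P$, the sets $N_H(u)\cap Z$ and $N_H(v)\cap Z$ are disjoint, so $|N_H(u)\cap Z|+|N_H(v)\cap Z|\le|Z|=n-L-1$. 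Partitioning $V(H)$ into $\{p_0,\dots,p_{L-1}\}$, $\{p_L\}$ and $Z$ and counting,
\[
d_H(u)+d_H(v)=|I_u|+|I_v|+|N_H(u)\cap\{p_L\}|+|N_H(v)\cap\{p_0\}|+|N_H(u)\cap Z|+|N_H(v)\cap Z|\le(L-1)+2+(n-L-1)=n,
\]
contradicting $d_H(u)+d_H(v)\ge n+1$. This proves the core claim, and the lemma follows by iteration.

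The main obstacle is precisely the existence of the crossover: a crude estimate of $|I_u|+|I_v|$ alone gives only $d_H(u)+d_H(v)\le 2n-L$, which is never a contradiction, and it is the combination of the ``$+1$'' gained from $j\notin I_u\cup I_v$ with the disjointness of the off-path neighbourhoods of $u$ and $v$ that closes the gap. Some minor care is also needed in the boundary subcase $v=y$ (where the crossover uses only the part of $P$ before the edge $uv$, and the term $|N_H(u)\cap\{p_L\}|$ automatically vanishes) and in verifying that each crossover is genuinely a simple path using only edges of $H$.
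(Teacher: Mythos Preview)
Your proof is correct and follows essentially the same approach as the paper's: reduce to a single edge addition, take a longest $(x,y)$-path using $uv$, and use a Pos\'a-type rotation to find consecutive vertices $p_i\in N_H(u)$, $p_{i+1}\in N_H(v)$, with the degree-sum contradiction coming from the same two ingredients (no common off-path neighbour of $u$ and $v$, and the shifted-neighbourhood disjointness along $P$). The only cosmetic difference is that the paper splits $P$ at $uv$ into $P_1=P[x,u]$ and $P_2=P[v,y]$ and bounds $|N(u)\cap V(P_i)|+|N(v)\cap V(P_i)|\le |V(P_i)|$ separately, whereas you handle the whole path at once via the index sets $I_u,I_v$; the arithmetic comes out identically.
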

\begin{proof}
It suffices to prove the following:
for two nonadjacent vertices $u,v$ in $G$ with $d_G(u)+d_G(v)\geq n+1$ and for any $x,y\in V(G)$,
there exists a longest $(x,y)$-path $P$ in $G':=G+\{uv\}$ satisfying that $E(P)\subseteq E(G)$.
Suppose this is not true.
Then any longest $(x,y)$-path $P$ in $G'$ must contain the new edge $uv$.
Assume that $x,u,v,y$ lie on $P$ in this order.
First we observe that there is no common neighbor of $u$ and $v$ in $V(G)-V(P)$,
as otherwise one can find an $(x,y)$-path longer than $P$ in $G'$.
Let $P_1:=P[x,u]$ and $P_2:=P[v,y]$.
We claim that there are no vertices $a\in N_G(u)\cap V(P_1)$ and $b\in N_G(v)\cap V(P_1)$
such that $b=a^+$ (we view $P$ from $x$ to $y$).
Suppose such $a,b$ exist.
Then $b\in V(P_1)\backslash \{u\}$.
By Posa's rotation technique, $(P-\{ab,uv\})\cup \{au,bv\}$ is a longest $(x,y)$-path in $G'$,
however all its edges are from $E(G)$, a contradiction.
This shows that $(N_G(u)\cap V(P_1))^+\cap (N_G(v)\cap V(P_1))=\emptyset$.
So $|N_G(u)\cap V(P_1)|+|N_G(v)\cap V(P_1)|\leq |V(P_1)|$.
Similarly, we have $|N_G(u)\cap V(P_2)|+|N_G(v)\cap V(P_2)|\leq |V(P_2)|$.
Combining the above bounds, it follows that $d_{G}(u)+d_{G}(v)\leq |V(G)|=n$,
contradicting that $d_{G}(u)+d_{G}(v)\geq n+1$. This proves the lemma.
\end{proof}

\begin{lem}\label{Le:Coper}
Let $G$ be a graph and $C$ be a locally maximal cycle of $G$.
Then $C$ is also a locally maximal cycle of the $C$-closure of $G$.
\end{lem}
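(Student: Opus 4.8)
The plan is to argue by contradiction, exploiting the fact that the $C$-closure $\overline G$ differs from $G$ only in edges lying entirely inside $V(C)$: we have $V(\overline G)=V(G)$, $G\subseteq\overline G$, and, crucially, $E(C,\overline G-C)=E(C,G-C)$, since the $(c+1)$-closure of $G[C]$ adds no edge with an endpoint outside $V(C)$. So suppose $C$ fails to be locally maximal in $\overline G$: there is a cycle $C'$ in $\overline G$ with $|E(C')|>c$ and $|E(C')\cap E(C,G-C)|\le 2$. The number of edges of $C'$ having exactly one endpoint in $V(C)$ is even, hence $0$ or $2$, and I would split into these two cases.

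If it is $0$: either $V(C')\subseteq V(C)$, which is impossible because a cycle on at most $|V(C)|=c$ vertices has at most $c<|E(C')|$ edges; or $V(C')\cap V(C)=\emptyset$, in which case every edge of $C'$ already lies in $G$ (the closure touches nothing outside $V(C)$), so $C'$ is a cycle of $G$ with no edge in $E(C,G-C)$ and length $>c$, contradicting that $C$ is locally maximal in $G$.

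If it is $2$: the two crossing edges $e_1,e_2$ (which lie in $E(C,G-C)$, hence in $G$) split $C'$ into an arc $P_1$ with all vertices in $V(C)$ and an arc $P_2$ with all vertices outside $V(C)$; let $x,y$ be the endpoints of $P_1$. All edges of $\overline G$ inside $V(C)$ are exactly those of the $(c+1)$-closure of $G[C]$, so $P_1$ is a path of that closure. If $P_1$ is trivial or uses only edges of $G$, then $C'$ is already a cycle of $G$ and we are done; otherwise I would apply Lemma~\ref{Le:ClosLongPath} to the $c$-vertex graph $G[C]$ to obtain an $(x,y)$-path $P_1'$ in $G[C]$ at least as long as $P_1$ (the longest $(x,y)$-path of $G[C]$ has the same length as the longest one in the $(c+1)$-closure, of which $P_1$ is one). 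Since $V(P_1')\subseteq V(C)$ is disjoint from $V(P_2)$, gluing $P_1'$ and $P_2$ along $e_1,e_2$ yields a cycle $C''$ of $G$ with $|E(C'')|=|E(P_1')|+|E(P_2)|+2\ge|E(C')|>c$ whose only edges in $E(C,G-C)$ are $e_1,e_2$, again contradicting that $C$ is locally maximal in $G$. Having reached a contradiction in all cases, I would conclude that $C$ is locally maximal in $\overline G$.

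The routine points to check are the parity of the crossing-edge count and the degenerate sub-cases where $P_1$ or $P_2$ collapses to a single vertex (there $C'$ is already a cycle of $G$, or $x\ne y$ is forced and $P_1$ is nontrivial). The one genuinely substantive step, and the heart of why the lemma holds, is the use of Lemma~\ref{Le:ClosLongPath}: it lets me replace the portion of $C'$ inside $V(C)$ by an at-least-as-long path using only original edges, while the portion of $C'$ outside $V(C)$ together with the at most two crossing edges is untouched by the closure. I expect getting that replacement step stated cleanly to be the main thing to be careful about.
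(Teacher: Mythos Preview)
Your proof is correct and follows essentially the same approach as the paper's: argue by contradiction, observe that the closure only modifies edges inside $V(C)$, split a violating cycle into an ``inside'' arc and an ``outside'' arc, and invoke Lemma~\ref{Le:ClosLongPath} to replace the inside arc by an equally long path using only original edges. One minor difference: the paper first chooses the violating cycle $D$ of maximum length so that the inside arc is a \emph{longest} $(x,x')$-path in $\overline{G}[C]$ and then gets an equally long replacement in $G[C]$, whereas you apply the lemma directly to conclude that the longest $(x,y)$-path in $G[C]$ is at least as long as your $P_1$; your route avoids the extra maximality choice and is slightly cleaner, but the substance is identical.
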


\begin{proof}
Let $\overline{G}$ denote the $C$-closure of $G$.
We point out that $\overline{G}-C=G-C$ and $E(C,\overline{G}-C)=E(C,G-C)$.
Suppose this is not true. Then there is a cycle $D$ in $\overline{G}$ such that $|D|>|C|$ and
$|E(D)\cap E(C,\overline{G}-C)|\leq 2$; and subject to this, we choose $D$ such that $|D|$ is maximum.
It is fair to assume that $|E(D)\cap E(C,\overline{G}-C)|=2$
(as otherwise $E(D)\cap E(C,\overline{G}-C)=\emptyset$, implying that $D\subseteq G-C$).
Let $xy,x'y'$ be the two edges in the intersection, where $x,x'\in V(C)$.
Then $D$ consists of two internally disjoint $(x,x')$-paths $P_1$ and $P_2$,
where $P_1$ is an $(x,G-C,x')$-path and $P_2$ is an $(x,x')$-path in $\overline{G}[C]$.
Note that $P_1$ is in $G$,
and by the maximality of $D$, $P_2$ is a longest $(x,x')$-path in $\overline{G}[C]$.
By Lemma \ref{Le:ClosLongPath}, there exists an $(x,x')$-path $P_3$ in $G[C]$ with $|P_2|=|P_3|$.
Set $C':=P_1\cup P_3$. Then $C'$ is a cycle in $G$ of length
$|C'|=|P_1|+|P_3|=|P_1|+|P_2|=|D|>|C|$. Furthermore,
$|E(C')\cap E(C,G-C)|=|E(D)\cap E(C,\overline{G}-C)|\leq 2$, which contradicts
the fact that $C$ is locally maximal in $G$. This completes the proof.
\end{proof}

\begin{lem}\label{Le:CoperHC}
Let $G$ be a 2-connected graph on $n$ vertices
and $C$ be a locally maximal cycle in $G$ of length $c$, where $c\leq n-1$.
Let $\overline{G}$ denote the $C$-closure of $G$.
Then $\overline{G}[C]$ is non-Hamiltonian-connected.
\end{lem}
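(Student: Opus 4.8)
The plan is to argue by contradiction: suppose $\overline{G}[C]$ is Hamiltonian-connected, and derive a cycle in $G$ longer than $C$, contradicting that $C$ is a longest cycle (note that a locally maximal cycle of length $c\le n-1$ in a $2$-connected graph is in fact a longest cycle in the relevant range, or at least we can use the local maximality directly). Since $c\le n-1$, the component structure of $G-C$ is nonempty, so we may fix a component $R$ of $G-C$. Because $G$ is $2$-connected, $R$ has at least two neighbors on $C$; pick $x,y\in N_C(R)$ with $x\neq y$ and a path $Q$ from $x$ to $y$ whose internal vertices all lie in $R$ (an $(x,R,y)$-path), so $|E(Q)|\geq 2$. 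The first key step is to observe that $\overline{G}[C]=G[C]$ on the vertex set $V(C)$ has been replaced by its $(c+1)$-closure, and crucially $x,y\in V(C)$, so Hamiltonian-connectedness of $\overline{G}[C]$ gives an $(x,y)$-Hamilton path $P$ of $\overline{G}[C]$, i.e.\ a path on all $c$ vertices of $C$ with $|E(P)|=c-1$.

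The second key step is to pull this path back to $G$. By Lemma~\ref{Le:ClosLongPath} applied to the graph $G[C]$ (which has $c$ vertices, so its $(c+1)$-closure is exactly $\overline{G}[C]$), the longest $(x,y)$-path in $\overline{G}[C]$ has the same length as the longest $(x,y)$-path in $G[C]$. Hence there is an $(x,y)$-path $P'$ in $G[C]$ with $|E(P')|=c-1$, that is, a Hamilton path of $G[C]$ from $x$ to $y$. Now form the cycle $C':=P'\cup Q$ in $G$. Then $C'$ passes through all $c$ vertices of $V(C)$ plus at least one vertex of $R$, so $|E(C')|=|E(P')|+|E(Q)|\geq (c-1)+2=c+1>c=|E(C)|$. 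Moreover the only edges of $C'$ incident with $V(G-C)$ are the two end-edges of $Q$ at $x$ and $y$, so $|E(C')\cap E(C,G-C)|\leq 2$. This contradicts the local maximality of $C$, completing the proof.

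The main obstacle is the bookkeeping around exactly which ambient graph the closure operation acts on: one must be careful that $\overline{G}[C]$ is genuinely the $(c+1)$-closure of the $c$-vertex graph $G[C]$ (this is the definition of the $C$-closure restricted to $V(C)$), so that Lemma~\ref{Le:ClosLongPath} applies with $n$ replaced by $c$ and $n+1$ by $c+1$. A secondary subtlety is ensuring $x\neq y$ and that the $(x,R,y)$-path has length at least $2$ — both follow from $2$-connectivity of $G$ and $R\neq\emptyset$ (which uses $c\le n-1$) — and that Hamiltonian-connectedness is being used for the specific pair $\{x,y\}\subseteq V(C)$ rather than an arbitrary pair. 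Once these points are pinned down the argument is short; no serious case analysis or estimation is needed.
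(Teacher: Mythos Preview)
Your proof is correct and follows essentially the same strategy as the paper: assume $\overline{G}[C]$ is Hamiltonian-connected, pick two distinct attachments $x,y\in N_C(R)$ of some component $R$ of $G-C$, and splice a Hamilton $(x,y)$-path on $V(C)$ with an $(x,R,y)$-path to get a longer cycle using at most two crossing edges. The only packaging difference is that the paper forms the cycle in $\overline{G}$ and then invokes Lemma~\ref{Le:Coper} (that $C$ stays locally maximal in $\overline{G}$) for the contradiction, whereas you invoke Lemma~\ref{Le:ClosLongPath} directly to pull the Hamilton path back into $G[C]$ and obtain the contradiction in $G$ itself; since Lemma~\ref{Le:Coper} is itself proved via Lemma~\ref{Le:ClosLongPath}, the two routes are equivalent. (Your parenthetical that a locally maximal cycle is a longest cycle is not needed and not true in general---you correctly use only local maximality.)
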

\begin{proof}
Suppose for a contradiction that $\overline{G}[C]$ is Hamiltonian-connected.
As $c\leq n-1$, there is a component $R$ in $G-C$. Since $G$ is 2-connected,
there exist two distinct vertices $x,x'\in N_C(R)$.
Let $P_1$ be an $(x,R,x')$-path.
Since $\overline{G}[C]$ is Hamiltonian-connected, there is an $(x,x')$-path $P_2$ in $\overline{G}[C]$,
which passes through all vertices in $V(C)$. Then $C':=P_1\cup P_2$ is a cycle in $\overline{G}$
which is longer than $C$ and $|E(C')\cap E(C,\overline{G}-C)|\leq 2$.
This contradicts Lemma \ref{Le:Coper} that $C$ is locally maximal in $\overline{G}$.
This proves the lemma.
\end{proof}

We need a theorem of Chv\'{a}tal \cite{C72} on the degree sequences of non-Hamiltonian graphs.

\begin{thm}[Chv\'{a}tal \cite{C72}]\label{Th:chvatal}
Let $G$ be a graph with degree sequence $d_1\leq d_2\leq ... \leq d_n$ and $n\geq 3$.
If $G$ is non-Hamiltonian, then there is some integer $s< n/2$
such that $d_s\leq s$ and $d_{n-s}< n-s$.
\end{thm}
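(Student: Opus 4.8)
The plan is to prove the contrapositive statement: given a non-Hamiltonian graph $G$ on $n\ge 3$ vertices, I must produce an integer $s<n/2$ with $d_s\le s$ and $d_{n-s}<n-s$. I begin with the usual saturation step. Among all graphs on the vertex set $V(G)$ that contain $G$ and are non-Hamiltonian, let $G'$ be one with the maximum number of edges; since $n\ge 3$ makes $K_n$ Hamiltonian we have $G'\ne K_n$, so $G'$ has at least one non-adjacent pair. Because adding edges cannot decrease any entry of the sorted degree sequence, $d_i(G)\le d_i(G')$ for every $i$; hence it suffices to find the required $s$ for $G'$, and I may as well assume $G$ itself is edge-maximal non-Hamiltonian.

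Next I establish an Ore-type inequality. Choose a non-adjacent pair $\{u,v\}$ maximizing $d(u)+d(v)$, with $d(u)\le d(v)$. By edge-maximality $G+uv$ is Hamiltonian, and any Hamilton cycle of $G+uv$ must use the edge $uv$ (otherwise it lies in $G$); deleting $uv$ from it gives a Hamilton path $v_1v_2\cdots v_n$ of $G$ with $v_1=u$ and $v_n=v$. If some index $i$ satisfied $v_i\in N(u)$ and $v_{i-1}\in N(v)$, then $v_1\cdots v_{i-1}v_nv_{n-1}\cdots v_iv_1$ would be a Hamilton cycle of $G$, a contradiction; so the set $\{i: v_i\in N(u)\}$ and the set $\{i: v_{i-1}\in N(v)\}$ are disjoint subsets of $\{2,\dots,n\}$, and counting gives $d(u)+d(v)\le n-1$.

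Now put $s=d(u)$. From $2s\le d(u)+d(v)\le n-1$ I get $s<n/2$, so $s$ is an admissible index. To check $d_s\le s$, I look at the non-neighbours of $v$ other than $v$ itself (note $u$ is among them): there are $n-1-d(v)\ge n-1-(n-1-s)=s$ of them, and each such $w$ satisfies $d(v)+d(w)\le d(u)+d(v)$, hence $d(w)\le d(u)=s$; thus at least $s$ vertices have degree $\le s$. To check $d_{n-s}<n-s$, I look instead at the non-neighbours of $u$ together with $u$ itself, which is a set of $n-d(u)=n-s$ vertices, each of degree at most $\max\{d(u),d(v)\}=d(v)\le n-1-s$; thus at least $n-s$ vertices have degree $\le n-1-s$, so $d_{n-s}\le n-1-s<n-s$. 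Since adding edges never decreases $d_i$, the same $s$ works for the original graph, which finishes the proof.

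The step requiring the most care is the rotation/index bookkeeping that yields $d(u)+d(v)\le n-1$: one must track precisely which endpoints of the Hamilton path can be neighbours of $u$ versus of $v$, and shift one of the two index sets by one so that both live inside the $(n-1)$-element set $\{2,\dots,n\}$. A secondary subtlety, easy to get backwards, is that bounding $d_s$ uses the non-neighbours of the larger-degree endpoint $v$, whereas bounding $d_{n-s}$ uses the non-neighbours of the smaller-degree endpoint $u$.
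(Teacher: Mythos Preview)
The paper does not supply its own proof of this statement; it is quoted from Chv\'atal~\cite{C72} and used as a black box to derive Lemma~\ref{Le:HamilCon-Posa}. Your argument is the standard proof (edge-maximal saturation, Hamilton path between a non-adjacent pair of maximum degree sum, Posa rotation to get $d(u)+d(v)\le n-1$, then counting non-neighbours of $v$ and of $u$), and it is correct as written; the only point worth recording is that $s=d(u)\ge 1$ because an edge-maximal non-Hamiltonian graph on $n\ge 3$ vertices has no isolated vertex, so $d_s$ is well defined.
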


We can get a corollary of Chv\'{a}tal's theorem on non-Hamiltonian-connected graphs.

\begin{lem}\label{Le:HamilCon-Posa}
Let $G$ be a non-Hamiltonian-connected graph on $n$ vertices with minimum degree at least 2.
Then there exists a set of $s-1$ vertices in $G$ of degree at most $s$,
where $2\leq s\leq \lfloor\frac{n}{2}\rfloor$.
\end{lem}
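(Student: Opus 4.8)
The plan is to reduce the non-Hamiltonian-connectedness of $G$ to the non-Hamiltonicity of a slightly larger graph, and then apply Chv\'{a}tal's theorem (Theorem~\ref{Th:chvatal}). Since $G$ is not Hamiltonian-connected, there are two distinct vertices $u,v\in V(G)$ admitting no Hamiltonian $(u,v)$-path. Let $G'$ be obtained from $G$ by adding one new vertex $w$ joined exactly to $u$ and $v$; then $|V(G')|=n+1$ and, as $\delta(G)\ge 2$, also $\delta(G')\ge 2$. The point is the standard apex observation: any Hamiltonian cycle of $G'$ must traverse both edges $wu$ and $wv$ (because $\deg_{G'}(w)=2$), so deleting $w$ from it yields a Hamiltonian $(u,v)$-path of $G$, and conversely such a path closes up through $w$; hence $G'$ is non-Hamiltonian.

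Next I would apply Theorem~\ref{Th:chvatal} to $G'$ (which has $n+1\ge 3$ vertices, since $\delta(G)\ge 2$ forces $n\ge 3$): there is an integer $s$ with $1\le s<\tfrac{n+1}{2}$ for which the $s$ smallest terms of the degree sequence of $G'$ are all at most $s$, so $G'$ has at least $s$ vertices of degree at most $s$. Two elementary checks pin down the range of $s$: first, $\delta(G')\ge 2$ forbids $s=1$, so $s\ge 2$; second, an integer $s<\tfrac{n+1}{2}$ automatically satisfies $s\le\lfloor n/2\rfloor$ whether $n$ is even or odd. Thus $2\le s\le\lfloor n/2\rfloor$, as required.

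Finally I would transfer the low-degree set back into $G$. Among the (at least $s$) vertices of $G'$ of degree at most $s$, at most one is the new vertex $w$, so at least $s-1$ of them lie in $V(G)$; and since passing from $G$ to $G'$ only raises degrees (and only at $u$ and $v$), each such vertex $x$ satisfies $\deg_G(x)\le\deg_{G'}(x)\le s$. This produces the desired set of $s-1$ vertices of $G$ of degree at most $s$, completing the proof.

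The argument is short, and I do not anticipate a genuine obstacle — only a few bookkeeping points deserve attention: translating the bound $s<\tfrac{n+1}{2}$ coming from the $(n{+}1)$-vertex graph $G'$ into $s\le\lfloor n/2\rfloor$ (a small parity case analysis), using $\delta(G)\ge 2$ both to validate the apex reduction and to exclude $s=1$, and correctly accounting for the lone vertex $w$ together with the degree increase at $u,v$ when moving the low-degree set from $G'$ back to $G$.
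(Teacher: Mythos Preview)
Your proof is correct and follows essentially the same approach as the paper's: add an apex vertex to reduce non-Hamiltonian-connectedness to non-Hamiltonicity, apply Chv\'{a}tal's theorem to the enlarged graph, use $\delta(G')\ge 2$ to rule out $s=1$, and transfer the low-degree vertices back to $G$ after discarding the apex. The paper phrases the final transfer via degree sequences ($f_{s-1}\le d_s\le s$) whereas you count vertices directly, but the arguments are equivalent.
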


\begin{proof}
Since $G$ is non-Hamiltonian-connected,
there exist $x,y\in V(G)$ such that there is no Hamiltonian path from $x$ to $y$ in $G$.
Let $G'$ be obtained from $G$ by adding a new vertex $z$ and two edges $xz,yz$.
Clearly, $G'$ is not Hamiltonian.
Let $d_1\leq d_2\leq \ldots \leq d_{n+1}$ be the degree sequence of $G'$.
Since $\delta(G)\geq 2$, we have $d_1=2$, which denotes the degree of $z$ in $G'$.
By Theorem \ref{Th:chvatal}, there is some integer $s<\frac{n+1}{2}$ such that $d_s\leq s$ and $d_{n+1-s}< n+1-s$.
As $d_1=2$, we see that $2\leq s\leq \lfloor\frac{n}{2}\rfloor$.
If we let $f_1\leq f_2\leq \ldots \leq f_n$ be the degree sequence of $G$,
then each $f_i$ corresponds to the vertex associated with $d_{i+1}$ and thus $f_i\leq d_{i+1}$.
This shows that $f_{s-1}\leq d_s\leq s$, proving the lemma.
\end{proof}

The next lemma (in particular, its special case when $\delta=1$) will play an important role in the proof of Theorem \ref{Th:WZ+closure}.
We establish a general version for possible studies in future.
Its proof is analogous to Lemma 6 in \cite{FKL17}.

\begin{lem}\label{Le:HamConStruc2}
Let $G_c$ be a graph on $c$ vertices with minimum degree at least 2.
Further suppose that $G_c$ is $(c+1)$-closed and non-Hamiltonian-connected
with $$e(G_c)>h\left(c+1,\left\lfloor\frac{c}{2}\right\rfloor-p\right) \text{ for some integer } p\geq 0.$$
Then one of the following holds:\\
(i)  $G_c$ contains a subset $S$ of $s-1$ vertices of degree at most
$s$, where $2\leq s\leq \lfloor\frac{c}{2}\rfloor-p-1$, such that
$G_c-S$ is a clique; or \\
(ii) $G_c$ contains a subset $T$ of $t-1$ vertices of degree at most
$t$, where $\lfloor\frac{c}{2}\rfloor-p+1\leq t\leq \lfloor\frac{c}{2}\rfloor$.
\end{lem}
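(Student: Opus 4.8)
The plan is to combine Chvátal's condition (via Lemma \ref{Le:HamilCon-Posa}) with a careful counting argument that plays the size of a low-degree vertex set against the edge lower bound $e(G_c)>h(c+1,\lfloor c/2\rfloor-p)=\binom{c-\lfloor c/2\rfloor+p+1}{2}+(\lfloor c/2\rfloor-p)(\lfloor c/2\rfloor-p-1)$. Since $G_c$ is non-Hamiltonian-connected with $\delta(G_c)\ge 2$, Lemma \ref{Le:HamilCon-Posa} furnishes an integer $r$ with $2\le r\le\lfloor c/2\rfloor$ and a set of $r-1$ vertices of degree at most $r$. Among all such pairs $(r,S)$ with $|S|=r-1$ and every vertex of $S$ of degree $\le r$, I would fix one — say with $r$ chosen appropriately (largest, or smallest, depending on which branch we aim for). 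The dichotomy in the conclusion then corresponds to whether this witnessing parameter $r$ falls in the low range $[2,\lfloor c/2\rfloor-p-1]$ or the high range $[\lfloor c/2\rfloor-p+1,\lfloor c/2\rfloor]$; the key point to rule out is the middle value(s) and, in the low-range case, to upgrade ``$G_c-S$ has few edges'' to ``$G_c-S$ is a clique.''

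First I would handle the case where some witnessing $r$ satisfies $r\ge \lfloor c/2\rfloor-p+1$: then conclusion (ii) holds immediately with $t=r$ and $T=S$, since $2\le r\le\lfloor c/2\rfloor$ forces $\lfloor c/2\rfloor-p+1\le t\le\lfloor c/2\rfloor$. So assume every witnessing parameter lies in $[2,\lfloor c/2\rfloor-p]$. Now take $S$ with $|S|=s-1$, all degrees $\le s$, where $s$ is as large as possible in this range. The bulk of the work is to show $G_c-S$ is a clique (and that $s\le\lfloor c/2\rfloor-p-1$, i.e.\ $s=\lfloor c/2\rfloor-p$ is impossible). Suppose $G_c-S$ is not complete, so it has two nonadjacent vertices $u,v$; since $G_c$ is $(c+1)$-closed, $d_{G_c}(u)+d_{G_c}(v)\le c$. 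I would bound $e(G_c)$ from above: edges inside $S$ contribute at most $\binom{s-1}{2}$, edges from $S$ to the rest contribute at most $(s-1)\cdot$(something), and edges inside $G_c-S$ are controlled by the fact that two of its vertices have small degree-sum. Carrying this out, the upper bound on $e(G_c)$ should contradict $e(G_c)>h(c+1,\lfloor c/2\rfloor-p)$ unless $s-1$ is forced to be large — and a parallel counting at $s=\lfloor c/2\rfloor-p$ should likewise overshoot the threshold. This is essentially the computation carried out for Lemma 6 in \cite{FKL17}, adapted to the shifted parameter $\lfloor c/2\rfloor-p$ in place of $\lfloor c/2\rfloor$, and to working inside $G_c$ (on $c$ vertices) rather than after adding the auxiliary vertex.

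The main obstacle I anticipate is the edge-counting in the non-clique case: one must extract enough from ``$d(u)+d(v)\le c$ for some nonadjacent $u,v\in G_c-S$'' to beat the quadratic threshold $h(c+1,\lfloor c/2\rfloor-p)$, and this is delicate precisely because $h$ is the number of edges in an extremal graph that is itself built from a clique plus a small low-degree part — so the inequalities are tight and the argument has little slack. The right way to organize it is probably to let $m=|V(G_c-S)|=c-(s-1)$, observe $e(G_c-S)\le\binom{m}{2}-\big(m-1-\max(d(u),d(v))+\text{correction}\big)$ or more simply $e(G_c-S)\le\binom{m}{2}-(m-1)+\tfrac12(d(u)+d(v))\le \binom m2 -(m-1)+\tfrac c2$ after moving the $S$-incidences out, then add the at most $(s-1)(s-1)$ crossing edges and $\binom{s-1}{2}$ internal edges, and compare the resulting function of $s$ with $h(c+1,\lfloor c/2\rfloor-p)$, using the maximality of $s$ to pin down the range. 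I would also double-check the small exceptional overlaps between the two ranges when $p$ is large relative to $c$, since for $p\ge \lfloor c/2\rfloor-1$ the low range degenerates and the statement must still be read consistently (the hypothesis $e(G_c)>h(c+1,\lfloor c/2\rfloor-p)$ with such $p$ is very weak and the relevant applications in Section 4 only use small $p$, so this is a minor point).
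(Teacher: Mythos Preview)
Your overall architecture matches the paper's: invoke Lemma~\ref{Le:HamilCon-Posa} to get a witnessing $s$ with $s-1$ vertices of degree at most $s$, take $s$ maximal, dispose of the high range by (ii), and kill $s=\lfloor c/2\rfloor-p$ by the trivial bound $e(G_c)\le (s-1)s+\binom{c-s+1}{2}=h(c+1,s)$. The gap is in the last step, showing $G_c-S$ is a clique when $2\le s\le\lfloor c/2\rfloor-p-1$.

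Your proposed edge-count from a single nonadjacent pair $u,v$ with $d(u)+d(v)\le c$ is too weak. Concretely, with $m=c-s+1$ the missing edges at $\{u,v\}$ in $G_c-S$ number at least $2(m-1)-1-(d(u)+d(v))\ge 2m-3-c$, giving
\[
e(G_c)\le (s-1)s+\binom{c-s+1}{2}-(c-2s-1)=h(c+1,s)-c+2s+1.
\]
For small $s$ this does not beat $h(c+1,\lfloor c/2\rfloor-p)$: take $c=100$, $p=1$, $s=2$; then the right side is $4853-95=4758$, while $h(101,49)=3678$. The shortfall is of order $c^2/8$, and no linear correction from one non-edge can close it. So the ``compare the resulting function of $s$ with $h$'' step fails exactly where you said it would be delicate.

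The paper's argument here is not an edge-count at all. It exploits $(c+1)$-closedness globally: pick $u\in V(G_c)\setminus S$ of \emph{maximum} degree among vertices not adjacent to everything in $V(G_c)\setminus S$, and set $S':=V(G_c)\setminus(N(u)\cup\{u\})$ with $s':=c-d(u)=|S'|+1$. Every $w\in S'$ is a non-neighbour of $u$, so $d(w)\le c-d(u)=s'$; thus $S'$ is itself a witness set of $s'-1$ vertices of degree $\le s'$. Since some $v\in S'\setminus S$ has $d(v)>s$, we get $s'>s$, and maximality of $s$ forces $s'\ge\lfloor c/2\rfloor+1$, i.e.\ $d(u)\le\lfloor c/2\rfloor$. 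By the choice of $u$, every vertex of $S'\setminus S$ also has degree $\le d(u)\le\lfloor c/2\rfloor$, so $S'$ is a set of at least $\lfloor c/2\rfloor$ vertices of degree at most $\lfloor c/2\rfloor$, and (ii) holds. The key idea you are missing is that $(c+1)$-closedness bounds the degree of \emph{every} non-neighbour of $u$ simultaneously, manufacturing a second (much larger) witness set rather than merely a few missing edges.
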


\begin{proof}
Suppose neither (i) nor (ii) holds. Since $G_c$ is non-Hamiltonian-connected,
by Lemma \ref{Le:HamilCon-Posa}, there exists some
$2\leq s\leq \lfloor\frac{c}{2}\rfloor$ such that $G_c$ contains $s-1$ vertices
of degree at most $s$. Subject to this, we choose $s$ to be maximal, and let
$S$ be the set of all vertices in $G_c$ with degree at most $s$.
If $\lfloor\frac{c}{2}\rfloor-p+1\leq s\leq \lfloor\frac{c}{2}\rfloor$,
then $(ii)$ holds.
If $s=\lfloor\frac{c}{2}\rfloor-p$,
then $e(G_c)\leq (s-1)s+\binom{c-s+1}{2}=h(c+1,s)=h(c+1,\lfloor\frac{c}{2}\rfloor-p)$,
a contradiction.
So we may assume that $2\leq s\leq\lfloor\frac{c}{2}\rfloor-p-1$.
Moreover, by the maximality of $s$, we have $|S|=s-1$.

Next, we will show that $G_c-S$ is a clique.
Suppose that there are nonadjacent vertices $u,v\in V(G_c)-S$. Without loss of generality, assume that $u$ is the one with the maximal
degree among all vertices in $V(G_c)-S$, each of which is not adjacent to every vertex in $V(G_c)-S$.
Let $S':=V(G)-N(u)-\{u\}$ and $s':=|S'|+1=c-d(u)$.
For any $w\in S'$, since $wu\notin E(G_c)$ and $G_c$ is $(c+1)$-closed,
we have $d(w)\leq c-d(u)=s'$. So $S'$ is a set of $s'-1$ vertices of degree at most $s'$.
Since $v\notin S$, by the maximality of $S$, it follows that $d(v)>s$.
So $s<d(v)\leq s'$.
By the maximality of $s$, we get that $s'\geq \lfloor\frac{c}{2}\rfloor+1$.
As $s'=c-d(u)$, we get $d(u)\leq \lfloor\frac{c}{2}\rfloor$.
We then claim that any vertex $x\in S'$ has degree at most $\lfloor\frac{c}{2}\rfloor$.
Indeed, if $x\in S$ then $d(x)\leq s\leq \lfloor\frac{c}{2}\rfloor-p-1$;
otherwise $x\in S'\backslash S$, then by the choice of $u$, $d(x)\leq d(u)\leq \lfloor\frac{c}{2}\rfloor$.
Now observe that $S'$ is a set of at least $\lfloor\frac{c}{2}\rfloor$ vertices of degree at most $\lfloor\frac{c}{2}\rfloor$,
so $(ii)$ holds, a contradiction. This shows that $G_c-S$ is a clique and thus $(i)$ holds.
This proves the lemma.
\end{proof}

We remark that if $p=0$, then only (i) occurs in Lemma \ref{Le:HamConStruc2}.

\section{Stability on a theorem of Bondy}
In this section, we prove a stability result on a classic theorem due to Bondy \cite{B71}.
We restate the statement here for the convenience of the readers.

\medskip

\noindent {\bf Theorem \ref{Th:BondyStab-2con}.}
{\em
	Let $G$ be a 2-connected graph on $n$ vertices and $C$ be a longest cycle in $G$ of length $c$, where $10\leq c\leq n-1$.
	If $
    e(G-C)+e(G-C,C)>\left(\left\lfloor\frac{c}{2}\right\rfloor-1\right)(n-c),
    $
    then either $G\subseteq W_{n,\lfloor\frac{c}{2}\rfloor,c}$ or
	$c$ is odd and $G$ is a subgraph of a member of $\mathcal{X}_{n,c}\cup \mathcal{Y}_{n,c}$.
}

\medskip

To prove Theorem \ref{Th:BondyStab-2con}, a crucial step is to find a vertex in $G-C$ with $\lfloor\frac{c}{2}\rfloor$ neighbors in $C$
(see Theorem \ref{Th:BondyStab-spec}); this will be done in Subsection \ref{subsec:Bondy1}.
As we shall see later (somehow surprisingly),
the existence of such a vertex can give a lot of structural information of the graph $G$.
We then complete the proof of Theorem \ref{Th:BondyStab-2con} in Subsection \ref{subsec:Bondy2}.


\subsection{A vertex with large degree}\label{subsec:Bondy1}
In this subsection, we prove the following result.
\begin{thm}\label{Th:BondyStab-spec}
	Let $G$ be a 2-connected graph on $n$ vertices and $C$ be a longest cycle in $G$ of length $c$, where $10\leq c\leq n-1$.
	If $e(G-C)+e(G-C,C)>(\lfloor\frac{c}{2}\rfloor-1)(n-c),$
	then there exists an isolated vertex $u$ in $G-C$ with $d_C(u)=\lfloor\frac{c}{2}\rfloor$.
\end{thm}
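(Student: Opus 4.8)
The plan is to analyze the components of $G-C$ and the bipartite graph $(G-C, C)$, and to show that under the edge hypothesis some component $R$ must contribute so many edges that it forces a single vertex of $R$ to have $\lfloor c/2\rfloor$ neighbours on $C$. First I would set up the counting: writing the edge count $e(G-C)+e(G-C,C)$ as a sum over the components $R_1,\dots,R_m$ of $G-C$ of the local contributions $e(R_i)+e(R_i,C)$, the hypothesis says this sum exceeds $(\lfloor c/2\rfloor-1)(n-c)=(\lfloor c/2\rfloor-1)\sum_i|R_i|$. Hence there is a component $R$ with $e(R)+e(R,C)>(\lfloor c/2\rfloor-1)|R|$, i.e. the vertices of $R$ have average degree (in $G$) strictly larger than $2(\lfloor c/2\rfloor-1)\ge c-2$. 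This is exactly the regime where Lemma \ref{Le:FanLongCyc} (if $R$ is 2-connected) or the strong-attachment machinery of Lemma \ref{Le:Fan90} and Lemma \ref{Le:clique} (in general) bites, because a longest cycle forces $c\ge 2r$ where $r$ is the average degree of $R$ in $G$.

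Next I would handle the case where $R$ is a single vertex $u$. Then $e(R)+e(R,C)=d_C(u)>\lfloor c/2\rfloor-1$, so $d_C(u)\ge\lfloor c/2\rfloor$; and $d_C(u)\le\lfloor c/2\rfloor$ must hold because $C$ is a longest cycle (if $u$ had $\lceil c/2\rceil+1$ neighbours on $C$, two of them would be consecutive on $C$ in the cyclic sense needed to reroute $C$ through $u$ and get a longer cycle — the standard Erd\H{o}s–Gallai/Bondy argument: the successors of $N_C(u)$ form an independent-type set avoiding $N_C(u)$). So in this case we are done with $u$ the desired isolated vertex. The substance of the proof is therefore to rule out, or rather reduce to this case, all components $R$ with $|R|\ge 2$: I would argue that such an $R$ simply cannot have average degree in $G$ exceeding $c-2$ while $C$ remains a longest cycle. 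Concretely, take a maximum strong attachment $T$ of $R$ to $C$ with $t=|T|$; by 2-connectivity $t\ge 2$. Using Lemma \ref{Le:Fan90}(iii) one gets $c\ge\sum d_R^*(u_i,u_{i+1})+2s$, and each $d_R^*(u_i,u_{i+1})\ge 2$; combined with the fact that every vertex of $R$ sends all but few of its edges either inside $R$ or to $N_C(R)$, whose size is controlled by $t+s$ and the path-length sums, one bounds $e(R)+e(R,C)$ from above by roughly $(\lfloor c/2\rfloor-1)|R|$, contradicting the choice of $R$. For the sub-case $t=2$ one needs the sharper count: $R$ attaches to $C$ essentially through two vertices, so $R\cup(\text{two attachment vertices})$ behaves like a block attached at a $2$-cut, and a longest $(u_1,R,u_2)$-path of length $d$ forces $c\ge 2d$ hence $d\le\lfloor c/2\rfloor$; feeding $d\le\lfloor c/2\rfloor$ and $|R|\le d-1$ into Theorem \ref{Le:FanErdoGal} (applied with $x=u_1,y=u_2$ inside $G[R\cup\{u_1,u_2\}]$) caps the average degree of $R$ at $d\le\lfloor c/2\rfloor$, which after accounting for the at-most-two edges to $\{u_1,u_2\}$ that are "free" still leaves the contribution below the threshold when $|R|\ge 2$, except possibly in boundary configurations that one checks directly using $c\ge 10$.

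The main obstacle I anticipate is precisely this last reduction — showing no component of size $\ge 2$ can meet the edge threshold — and within it the case $t=2$, because there the component can be quite dense (a clique $K_{\lfloor c/2\rfloor}$-minus-a-bit sitting on a $2$-cut has average degree close to $\lfloor c/2\rfloor$), so the inequality $e(R)+e(R,C)\le(\lfloor c/2\rfloor-1)|R|$ is tight-ish and one must be careful with the additive constants coming from the (at most two) edges joining $R$ to its attachment vertices and from $e(R,N_C(R)\setminus T)$. I expect to need Lemma \ref{Le:clique} (or at least its proof idea) to control how $N_C(R)$ and its shifted copies interact, and the hypothesis $c\ge 10$ to dispose of the finitely many small exceptional shapes. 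Once every component with $|R|\ge 2$ is excluded, the averaging argument of the first paragraph lands on a singleton component, and the second paragraph finishes the proof.
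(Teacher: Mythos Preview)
There is a genuine arithmetic slip at the heart of your plan that unravels the whole strategy. From $e(R)+e(R,C)>(\lfloor c/2\rfloor-1)|R|$ you conclude that the vertices of $R$ have average degree in $G$ strictly larger than $2(\lfloor c/2\rfloor-1)$. But the degree sum of $R$ in $G$ is $2e(R)+e(R,C)$, not $2\bigl(e(R)+e(R,C)\bigr)$; so what you actually get is only average degree $>\lfloor c/2\rfloor-1$, roughly $c/2$, not $c-2$. With the correct bound, Lemma~\ref{Le:FanLongCyc} yields merely $c\ge 2r$ with $r$ just above $\lfloor c/2\rfloor-1$, which is essentially the trivial inequality $c\ge c-2$ and gives no contradiction whatsoever for $|R|\ge2$. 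Your intended one-line elimination of large components therefore does not work, and the $t=2$ discussion inherits the same problem: the claim $|R|\le d-1$ is also unjustified (a longest $(u_1,R,u_2)$-path of length $d$ has $d-1$ internal vertices, but $R$ may contain many vertices off that path), so the bookkeeping there does not close either.

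The paper's proof is organized quite differently and this difference is not cosmetic. Rather than averaging to a single component, it runs a minimal-counterexample argument (minimizing $c$, then $n$) which lets it delete any vertex of $H:=G-C$ with $d_G(v)\le\lfloor c/2\rfloor-1$ and appeal to minimality; this upgrades the average bound to a \emph{minimum} degree bound $\delta_G(v)\ge\lfloor c/2\rfloor$ for every $v\in V(H)$. It then shows $H$ is connected, that $G$ is $2$-connected, that $|H|\ge3$, and finally that $H$ itself is $2$-connected. Only with this much structure does Lemma~\ref{Le:FanLongCyc} bite, and even then the inequality $c\ge 2r$ with $r\ge\lfloor c/2\rfloor$ is tight, so one must invoke the equality characterization (for even $c$) and a separate, rather delicate strong-attachment analysis via Lemma~\ref{Le:Fan90} and Theorem~\ref{Le:FanErdoGal} (for odd $c$), where the cases $c\in\{6,7,9\}$ genuinely behave differently and are only disposed of afterwards using $2$-connectivity. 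Your plan would need to recover all of this once the factor-of-two is corrected.
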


Just as in the original theorem of Bondy, we also can drop off the connectivity condition.
A more general statement is as follows.

\begin{thm}\label{Th:BondyStab}
Let $G$ be a graph on $n$ vertices and $C$ be a longest cycle in $G$ of length $c$, where $4\leq c\leq n-1$.
If $e(G-C)+e(G-C,C)>(\lfloor\frac{c}{2}\rfloor-1)(n-c)$,
then one of the following holds:\\
(a) $c\in\{6,7,9\}$;\\
(b) there exists a vertex $u\in V(G-C)$ with $d_C(u)=\lfloor\frac{c}{2}\rfloor$;\\
(c) there exists a cycle $C'$ in $G$ satisfying that $|V(C\cap C')|\leq 1$ and \\
-- if $V(C\cap C')=\emptyset$, then $|C'|\geq 2\lfloor\frac{c}{2}\rfloor-3$,\\
-- if $|V(C\cap C')|= 1$, then $|C'|\geq 2\lfloor\frac{c}{2}\rfloor-1$.
\end{thm}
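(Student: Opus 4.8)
The plan is to carry out a weighted counting argument on the edges with at most one endpoint in $C$, where the weight assigned to each component $R$ of $G-C$ is $e(R) + e(R,C)$, and to compare this with $(\lfloor\frac{c}{2}\rfloor-1)|V(R)|$ summed over all $R$. Since the hypothesis says the total exceeds $(\lfloor\frac{c}{2}\rfloor-1)(n-c) = (\lfloor\frac{c}{2}\rfloor-1)\sum_R |V(R)|$, some component $R$ must satisfy $e(R) + e(R,C) > (\lfloor\frac{c}{2}\rfloor-1)|V(R)|$. First I would fix such a component $R$ and a maximum strong attachment $T = \{u_1,\dots,u_t\}$ of $R$ to $C$, set $S = N_C(R)\setminus T$, $s=|S|$, and recall from Lemma \ref{Le:Fan90}(i) that each vertex of $S$ sends only one edge into $R$, while Lemma \ref{Le:Fan90}(iii) gives $c \geq \sum_{i=1}^t d_R^*(u_i,u_{i+1}) + 2s$ when $t\geq 2$. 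The key quantitative input is that $e(R)+e(R,C)$ can be bounded in terms of $|V(R)|$, $s$, and the quantities $d_R^*(u_i,u_{i+1})$: roughly, a path of length $d$ through $R$ between consecutive attachment vertices forces $|V(R)|$ to be large relative to the contribution those vertices make.

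Second, I would treat $t=1$ and $t\geq 2$ separately. When $t \geq 2$, I expect to show that if every $d_R^*(u_i,u_{i+1}) \geq 3$ then the edge count $e(R)+e(R,C)$ is too small (using that $R$ being traversable by long paths forces many internal vertices), so some consecutive pair has $d_R^*(u_i,u_{i+1}) \leq 2$; combined with 2-connectivity and the locally-maximal/longest-cycle property this pins $R$ down tightly, and one extracts either a vertex of the required degree (case (b)) or a long cycle $C'$ essentially disjoint from $C$ (case (c)). When $t=1$, $R$ attaches to $C$ through a single strong-attachment vertex plus the vertices of $S$ each with one edge to $R$; here 2-connectivity forces $s \geq 1$, and a direct degree count on the (at most one) vertex of $R$ with large $C$-degree, together with the longest-cycle condition constraining how $N_C(R)$ can be spread around $C$, should yield case (b), or else produce a short cycle exception forcing $c \in \{6,7,9\}$ (case (a)).

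Third, the exceptional small values $c\in\{6,7,9\}$ and the $2\lfloor c/2\rfloor - 3$ versus $2\lfloor c/2\rfloor - 1$ bookkeeping in case (c) will come from being careful about parity of $c$ and about whether the long cycle $C'$ one builds shares a vertex with $C$ or not: when one can route a path through $R$ and along an arc of $C$ of length about $\lfloor c/2\rfloor$, replacing the complementary arc by the $R$-path either keeps one endpoint on $C$ (giving the stronger bound $2\lfloor c/2\rfloor-1$) or detaches entirely (giving $2\lfloor c/2\rfloor - 3$ after losing up to two vertices at the splice points). I would verify that whenever neither (b) nor (c) can be achieved, the edge budget $e(R)+e(R,C) > (\lfloor\frac{c}{2}\rfloor-1)|V(R)|$ is violated unless $c$ is one of the listed small exceptions, where the inequality $10 \leq c$ fails and the extra room disappears.

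The main obstacle I anticipate is the $t \geq 2$ analysis: translating "$R$ admits long $(u_i,R,u_{i+1})$-paths" into a sharp lower bound on $|V(R)|$ that beats $(\lfloor\frac{c}{2}\rfloor-1)|V(R)|$ against the available edges $e(R)+e(R,C) \leq \binom{|V(R)|}{2} + (\text{contributions from }T\text{ and }S)$ requires a genuinely tight estimate — too lossy and one cannot rule out spurious configurations, too delicate and the case split explodes. I would expect to need Lemma \ref{Le:FanErdoGal} or Lemma \ref{Le:FanLongCyc} applied inside $R$ (after adding back the attachment vertices) to control the average degree of $R$, and the cleanest route is probably to argue that the relevant traversal structure forces $R\cup T$ to be close to a clique-like configuration, at which point case (b) or (c) is immediate.
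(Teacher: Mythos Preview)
Your averaging step to isolate a single component $R$ with $e(R)+e(R,C)>(\lfloor c/2\rfloor-1)|R|$ matches the paper's Claim~3, but from there the paper takes a route you are missing: it argues by a \emph{minimal counterexample} (minimum $c$, then minimum $n$). This framework is what drives most of the structure. By minimality of $n$ one deletes any vertex of $H=G-C$ with $d_G<\lfloor c/2\rfloor$ and passes to a smaller counterexample, so every vertex of $H$ has degree at least $\lfloor c/2\rfloor$ (Claim~2); the same device forces $H$ connected (Claim~3), $G$ 2-connected (Claim~4, via an end-block count), $|H|\ge 3$ (Claim~5), and finally $H$ itself 2-connected (Claim~6). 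Only after these reductions can Lemmas~\ref{Le:FanErdoGal} and~\ref{Le:FanLongCyc} be applied, since both require 2-connectivity.

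Your plan instead tries to bound $e(R)+e(R,C)$ directly against the strong-attachment data for one component, but without the minimum-degree and 2-connectivity reductions there is no handle on $R$: Fan's lemmas do not apply, and your sketch for $t\ge 2$ (``long paths through $R$ force many internal vertices'') is not sharp enough to beat $(\lfloor c/2\rfloor-1)|R|$. Your $t=1$ case invokes ``2-connectivity forces $s\ge 1$'', but the theorem does \emph{not} assume $G$ is 2-connected---that is something the paper has to earn via Claim~4. Finally, your reading of case~(c) is off: the cycle $C'$ is not built by replacing an arc of $C$; it arises as a long cycle lying almost entirely in $G-C$ (e.g.\ inside a 2-connected end-block disjoint from $C$, or inside $H$ once $H$ is 2-connected), and the $|V(C\cap C')|\le 1$ dichotomy reflects whether that block touches $C$ at a cut-vertex. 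The small values $c\in\{6,7,9\}$ emerge only at the very end, from the parity-dependent arithmetic after all reductions are in place.
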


\begin{proof}
We prove the theorem by contradiction. Suppose that there exists an $n$-vertex non-Hamiltonian graph $G$ and a longest cycle $C$ in $G$ of length $c\geq 4$ such that $e(G-C)+e(G-C,C)>(\lfloor\frac{c}{2}\rfloor-1)(n-c)$ and none of $(a), (b)$ and $(c)$ holds.
We choose such a counterexample $G$ that $c$ is minimum and subject to this, the order $n$ is minimum.
Throughout this proof, let $H:=G-C$ and so
\begin{align}\label{equ:Bondy}
e(H)+e(H,C)>\left(\left\lfloor\frac{c}{2}\right\rfloor-1\right)(n-c).
\end{align}

\setcounter{claim}{0}
\begin{claim}
	$c\geq 5$ and $n\geq c+2$.
\end{claim}
\begin{proof}
Assume that $c=4$. Then by \eqref{equ:Bondy} we have $e(H)+e(H,C)>n-4$.
Suppose that there is a cycle $C'$ in $G-E(C)$. So $|C'|=3$ or $4$.
If $|V(C'\cap C)|\leq 1$, then clearly $(c)$ holds;
otherwise $|V(C'\cap C)|\geq 2$, then there exists either a cycle longer than $C$ or
a vertex in $H$ with two neighbors in $C$ (thus $(b)$ holds), a contradiction.
So there is no cycle in $G-E(C)$.
Consider any component $R$ in $H$, which must be a tree.
If $d_C(R)\geq 2$, then either there is a vertex in $R$ with two neighbors in $C$, or we can find a longer cycle, a contradiction.
Thus $d_C(R)\leq 1$ and as $G-E(C)$ has no cycles, $e(R,C)\leq 1$.
This implies that $e(R)+e(R,C)\leq |R|$.
Summing over all components $R$ in $H$, we have $e(H)+e(H,C)\leq\sum |R|=n-4$, a contradiction.
This proves that $c\geq 5$.
	
Now suppose that $n=c+1$. Let $V(H)=\{u\}$. By \eqref{equ:Bondy}, $d_C(u)\geq \lfloor\frac{c}{2}\rfloor$.
Since $C$ is a longest cycle in $G$, we must have $d_C(u)=\lfloor\frac{c}{2}\rfloor$. This proves Claim 1.
\end{proof}

\begin{claim}
	For any vertex $v\in V(H)$, $d_G(v)\geq \lfloor\frac{c}{2}\rfloor$.
\end{claim}
\begin{proof}
	Suppose for a contradiction that there exists a vertex $u\in V(H)$ with $d_G(u)\leq \lfloor\frac{c}{2}\rfloor-1$.
	Set $G':=G-u$. So $C$ remains a longest cycle in $G'$. By Claim 1, $n\geq c+2$,
	implying that $G'$ is non-Hamiltonian. We also have
	$e(G'-C)+e(G'-C,C)=e(H)+e(H,C)-d_G(u)>(\lfloor\frac{c}{2}\rfloor-1)(n-c-1)$.
	By the choice of $G$, one of $(a)$, $(b)$ and $(c)$ holds in $G'$.
    It is obvious to see that the same case also holds in $G$. This proves the claim.
\end{proof}

\begin{claim}
	$H$ is connected.
\end{claim}
\begin{proof}
	Suppose that $H$ is not connected. Then by averaging, there exists
	a component $R$ in $H$ such that $e(G[R])+e(G[R],C)>(\lfloor\frac{c}{2}\rfloor-1)\cdot |R|$.
	It is clear that $G[R\cup C]$ is non-Hamiltonian. Then by the choice of $G$,
	one of $(a),(b)$ and $(c)$ holds in $G[R\cup C]$, which also holds in $G$, a contradiction.
	This proves Claim 3.
\end{proof}

\begin{claim}
	$G$ is 2-connected.
\end{claim}

\begin{proof}
Suppose that $G$ is not 2-connected.
Then there exists an end-block\footnote{A {\it block} $B$ in a graph $G$ is a maximal connected subgraph of $G$
such that there exists no cut-vertex of $B$. An {\it end-block} in $G$ is a block in $G$ containing at most one cut-vertex of $G$.}
$B$ of $G$ such that $|V(B\cap C)|\leq 1$.
Let $b$ be the unique cut-vertex of $G$ with $b\in V(B)$ (if it exists).
By Claim 2, every vertex in $V(B)$, except the vertex $b$, has degree at least $\lfloor\frac{c}{2}\rfloor$ in $B$.
By Theorem \ref{Le:DegreeBond}, we have $c(B)\geq \min\{|B|, 2\lfloor\frac{c}{2}\rfloor\}$.
If $|B|\geq 2\lfloor\frac{c}{2}\rfloor-1$, then $c(B)\geq 2\lfloor\frac{c}{2}\rfloor-1$, a contradiction to $(c)$.
Hence we may assume that $|B|\leq 2\lfloor\frac{c}{2}\rfloor-2$.

    Let $H_1:=G-(B-b)$. Clearly $C$ is still a longest cycle in $H_1$.
    We claim that $H_1$ is not Hamiltonian.
    Indeed, otherwise $C$ must be a Hamiltonian cycle of $H_1$ and thus we have $H=B-b$.
    So $\frac{(|B|-1)|B|}{2}\geq e(B)=e(H)+e(H,C)>(\lfloor\frac{c}{2}\rfloor-1)(n-c)=(\lfloor\frac{c}{2}\rfloor-1)(|B|-1)$,
	which implies that $|B|\geq 2\lfloor\frac{c}{2}\rfloor-1$,
	a contradiction.

    Note that we have $e(H_1)=e(G)-e(B)$.
    So $e(H_1-C)+e(H_1-C,C)= e(H_1)-e(G[C])=e(G-C)+e(G-C,C)-e(B)>(\lfloor\frac{c}{2}\rfloor-1)(n-c)-\frac{|B|(|B|-1)}{2}$.
    Since $|B|\leq 2\lfloor\frac{c}{2}\rfloor-2$, it follows that
    $e(H_1-C)+e(H_1-C,C)>(\lfloor\frac{c}{2}\rfloor-1)(|V(H_1)|-c)$.
	By the choice of $G$, one of $(a)$, $(b)$ and $(c)$ holds in $H_1$, which also holds in $G$.
    This proves Claim 4.
\end{proof}

\begin{claim}
	$|V(H)|\geq 3$.
\end{claim}
\begin{proof}
Otherwise, in view of Claims 1 and 3, we may assume that $H$ is just an edge $v_1v_2$.
So we have $e(H,C)\geq 2(\lfloor\frac{c}{2}\rfloor-1)$.
Let $T=\{u_1,u_2,\ldots,u_t\}\subset N_C(H)$ be a maximum strong attachment of $H$ to $C$.
Let $S:=N_C(H)\backslash T$, $t=|T|$ and $s=|S|$.
For any $u_i,u_{i+1}\in T$, the $(u_i,H,u_{i+1})$-path is of at least length 3.
By Lemma \ref{Le:Fan90}, we have $e(H,C)\leq 2t+s$ and $c\geq 3t+2s\geq \frac{3}{2}e(H,C)\geq 3(\lfloor\frac{c}{2}\rfloor-1)$.
From this, we can derive a contradiction if $c\geq 8$ is even or $c\geq 11$ is odd.
Thus, $c\in \{5,6,7,9\}$. It only needs to consider $c=5$, as otherwise $(a)$ holds.
In case of $c=5$, we have $e(H,C)\geq 2$ and as $G$ is 2-connected, there are two independent edges in $(H,C)$,
which would lead to a cycle of length at least 6, a contradiction. This proves Claim 5.
\end{proof}

\begin{claim}
$H$ is 2-connected.
\end{claim}
\begin{proof}
Suppose that $H$ is not 2-connected. As $|V(H)|\geq 3$,
there exist two end-blocks $B_1, B_2$ of $H$.
Let $b_i$ be the unique cut-vertex of $H$ with $b_i\in V(B_i)$ for $i\in \{1,2\}$.
Since $G$ is 2-connected, there exists a vertex $v_2\in V(B_2-b_2)$ with a neighbor $u_2\in V(C)$.
	
First assume that $B_1$ is an edge, say $v_1b_1$.
By Claim 2, we have $d_C(v_1)\geq \lfloor\frac{c}{2}\rfloor-1$.
If $d_C(v_1)\geq \lfloor\frac{c}{2}\rfloor$, then $(b)$ holds, a contradiction.
Thus, $d_C(v_1)=\lfloor\frac{c}{2}\rfloor-1$.
Notice that there is a $(v_1,v_2)$-path in $H$ of length at least 2.
If $u_2$ is the unique neighbor of $v_1$ on $C$,
then we have $c=5$ and there exists a cycle in $G[H\cup \{u_2\}]$ of length at least 4, a contradiction to $(c)$.
Hence, we may assume that $N_C(v_1)\backslash \{u_2\}\neq \emptyset$.
Let $w_1,w_2,...,w_t$ be the neighbors of $v_1$ on $C$ which appear in a cyclic order,
where $t=d_C(v_1)=\lfloor\frac{c}{2}\rfloor-1$.
For any $w_i\in N_C(v_1)\backslash \{u_2\}$,
since there exists a $(w_i,H,u_2)$-path of length at least 4,
any $(w_i,u_2)$-segment of $C$ has length at least 4. So we have $c\geq 8$ and thus $t\geq 3$.
Let $w_i, w_j\in N_C(v_1)\backslash \{u_2\}$ be two vertices such that $u_2$ is contained in a $(w_i,w_j)$-segment $P$ of $C$
and subject to this, $P$ is minimal.
Since $P$ is a union of a $(w_i,u_2)$-segment and a $(w_j,u_2)$-segment of $C$, we get that $|P|\geq 8$.
There are at least $t-2=\lfloor\frac{c}{2}\rfloor-3$ segments
between two consecutive $w_\ell,w_{\ell+1}$ in $C-E(P)$, each of which has length at least 2.
So $c=|C|\geq |P|+2(\lfloor\frac{c}{2}\rfloor-3)\geq 8+2(\lfloor\frac{c}{2}\rfloor-3)\geq c+1$, a contradiction.

Now suppose that $|V(B_1)|\geq 3$. So $B_1$ is 2-connected.
Let $d:=c(B_1)$ and $r:=|B_1|-1$.
By the Erd\H{o}s-Gallai theorem (Theorem \ref{Th:EG-cyc}),
we have $e(B_1)\leq \frac{dr}{2}$.
If $d\geq 2\lfloor\frac{c}{2}\rfloor-3$, then $(c)$ holds.
So we have $d\leq 2\lfloor\frac{c}{2}\rfloor-4.$

We claim that $e(B_1-b_1, C)>(\lfloor\frac{c}{2}\rfloor-1-\frac{d}{2})r$.
Suppose for a contradiction that $e(B_1-b_1, C)\leq (\lfloor\frac{c}{2}\rfloor-1-\frac{d}{2})r$.
Consider $G_1:=G-(B_1-b_1)$. Since $e(B_1)\leq \frac{dr}{2}$,
$e(H)=e(B_1)+e(G_1-C)$ and $e(H,C)=e(B_1-b_1,C)+e(G_1-C,C)$, we have
\begin{align*}
e(G_1-C)+e(G_1-C,C)&=e(H)+e(H,C)-e(B_1)-e(B_1-b_1, C)\\
&>\left(\left\lfloor\frac{c}{2}\right\rfloor-1\right)\cdot (n-r-c)
=\left(\left\lfloor\frac{c}{2}\right\rfloor-1\right)\cdot (|V(G_1)|-c).
\end{align*}
As $G_1$ is not Hamiltonian (because $|G_1|>|C|$),
by the choice of $G$, we see that one of $(a), (b)$ and $(c)$ holds in $G_1$ and thus in $G$, a contradiction.

Therefore by averaging, there exists a vertex $v_1\in V(B_1-b_1)$
with $t:=d_C(v_1)\geq \lfloor\frac{c}{2}\rfloor-\frac{d}{2}$.
As we have $d\leq 2\lfloor\frac{c}{2}\rfloor-4$, it follows that $t\geq \lfloor\frac{c}{2}\rfloor-\frac{d}{2}\geq 2.$
Let $w_1,w_2,...,w_t$ be the neighbors of $v_1$ on $C$ which appear in a cyclic order.
Since $c(B_1)=d$ and $B_1$ is 2-connected,
$B_1$ contains a $(v_1,b_1)$-path of length at least $\lceil \frac{d}{2}\rceil$.
So for each $w_\ell\in N_C(v_1)\backslash \{u_2\}$, there exists a $(w_\ell,H,u_2)$-path of length at least $\lceil \frac{d}{2}\rceil+3$,
which in turn implies that any $(w_\ell,u_2)$-segment of $C$ has length at least $\lceil \frac{d}{2}\rceil+3$.
This shows that if $t=2$ (and thus $\frac{d}{2}+2\geq \lfloor\frac{c}{2}\rfloor$),
then $c\geq 2(\lceil \frac{d}{2}\rceil+3)\geq 2(\lfloor\frac{c}{2}\rfloor+1)\geq c+1$, a contradiction.
So we have $t\geq 3$.
Let $w_i, w_j\in N_C(v_1)\backslash \{u_2\}$ be two vertices
such that $u_2$ is contained in a $(w_i,w_j)$-segment $P$ of $C$
and subject to this, $P$ is minimal.
Since $P$ is a union of a $(w_i,u_2)$-segment and a $(w_j,u_2)$-segment of $C$,
we have $|P|\geq 2(\lceil \frac{d}{2}\rceil+3)$.
Also there are at least $t-2$ segments between two consecutive $w_\ell,w_{\ell+1}$ in $C-E(P)$,
each of which has length at least 2.
So $c\geq 2(t-2)+2(\lceil \frac{d}{2}\rceil+3)\geq 2\lfloor \frac{c}{2}\rfloor +2\geq c+1$, a contradiction. This proves Claim 6.
\end{proof}

We now distinguish between the parities of $c$.
First assume that $c$ is even.
By Claim 2, the average degree of $H$ in $G$ is at least $\frac{c}{2}$.
By Lemma \ref{Le:FanLongCyc}, either $|C|\geq c+1$, or $H$ is a complete graph $K_{\frac{c}{2}-1}$ in which
every vertex has the same two neighbors on $C$.
Thus, the latter case occurs.
Then we have $\binom{\frac{c}{2}-1}{2}+c-2=e(H)+e(H,C)>(\frac{c}{2}-1)(n-c)=(\frac{c}{2}-1)^2$,
which implies that $2<c<8$. As $c\geq 5$ is even, we have $c=6$.
However, in this case $H$ becomes a $K_2$, a contradiction to Claim 5.
	
In what follows we consider the case that $c$ is odd.
Set $p:=|N_C(H)|$. By Claims 2 and 6, every vertex $v\in V(H)$ has
at least $\mu:=\max\{\lfloor\frac{c}{2}\rfloor-p,2\}$ neighbors in $H$.
Let $T=\{u_1,...,u_t\}$ be a maximum attachment of $H$ to $C$, and $S:=N_C(H)\backslash T$,
where $t\geq 2$, $s:=|S|$ and $p=s+t$.
By Theorem \ref{Le:FanErdoGal},
there exists a $(u_i,H,u_{i+1})$-path of length at least $\mu+2$.
If $t\geq 3$, then by Lemma \ref{Le:Fan90}, we have
$$c\geq t(\mu+2)+2s=(t-2)\mu+2(\mu+s+t)\geq 2+2\lfloor\frac{c}{2}\rfloor\geq c+1,$$
a contradiction.
Now we only need to consider the case $t=2$.

Let $T=\{u_1,u_2\}$ and $v_1u_1,v_2u_2$ be two independent edges for some $v_1, v_2\in V(H)$.
By Lemma \ref{Le:Fan90}, every vertex in $S$ has a unique neighbor in $H$,
which is either $v_1$ or $v_2$.
This shows that for any $u\in V(H)\backslash\{v_1,v_2\}$, $d_H(u)\geq \frac{c-1}{2}-2$.
By Theorem \ref{Le:FanErdoGal}, there is a $(v_1,v_2)$-path in $H$ of length $\ell\geq \frac{c-1}{2}-2$.
So we have a $(u_1,H,u_2)$-path of length $\ell+2$.
If $|S|\geq 1$ or $\ell\geq \frac{c-1}{2}-1$, then by Lemma \ref{Le:Fan90}, $c=|C|\geq 2(\ell+2)+2|S|\geq c+1$, a contradiction.
So $S=\emptyset$ and $\ell=\frac{c-1}{2}-2$.
This implies that every $u\in V(H)\backslash\{v_1,v_2\}$
has $d_H(u)=\frac{c-1}{2}-2$ and thus is adjacent to both of $u_1,u_2$.

As $S=\emptyset$, it also holds that $\delta(H)\geq \frac{c-1}{2}-2$.
Since $H$ is 2-connected, Dirac's theorem \cite{D52} shows that $c(H)\geq \min\{|H|,2\delta(H)\}\geq \min\{|H|,c-5\}$.
If $c(H)\geq c-4$, then $(c)$ holds. So we have $3\leq c(H)\leq c-5$ (note that this shows $c\geq 8$).
This implies that either $c(H)=c-5$, or $c(H)=|H|\leq c-6$.
If the latter case holds, then
$e(H)+e(H,C)\leq \frac{(n-c)(n-c-1)}{2}+2(n-c)\leq \frac{(n-c)(c-7)}{2}+2(n-c)=\frac{c-3}{2}(n-c)$,
a contradiction to \eqref{equ:Bondy}.
So we have $c(H)=|H|=c-5$.
Recall that $u_1,u_2$ are adjacent to all vertices in $H-\{v_1,v_2\}$ and $u_1v_1,u_2v_2\in E(G)$.
There exist two consecutive vertices on the longest cycle in $H$ as the neighbors of $u_1,u_2$.
Using these, we can then find a cycle of length at least
$\lceil\frac{c}{2}\rceil+2+(c-6)\geq c+1$ (as $c\geq 8$ is odd).
This final contradiction completes the proof of Theorem \ref{Th:BondyStab}.
\end{proof}

Now we can prove Theorem \ref{Th:BondyStab-spec}.

\medskip

{\noindent \it Proof of Theorem \ref{Th:BondyStab-spec}.}
By Theorem \ref{Th:BondyStab}, one of its three cases holds.
Since $c\geq 10$, $(a)$ does not hold.
Suppose that $(c)$ holds, i.e., there exists a cycle $C'$ with $|V(C\cap C')|\leq 1$.
Since $G$ is 2-connected,
there exist two disjoint paths $P_1, P_2$ from $x_1,x_2\in V(C)$ to $y_1,y_2\in V(C')$, respectively;
moreover, in the case of $|V(C\cap C')|=1$, the path $P_2$ can be chosen
so that $P_2$ consists of the single vertex in $V(C\cap C')$.
One can then find a cycle $D$ in the union $C\cup C'\cup P_1\cup P_2$
satisfying that $|D|\geq \lceil\frac{c}{2}\rceil+\lceil|C'|/2\rceil+|P_1|+|P_2|$.
If $V(C\cap C')=\emptyset$, then $|D|\geq \lceil\frac{c}{2}\rceil+(\lfloor\frac{c}{2}\rfloor-1)+2=c+1$,
a contradiction; otherwise $|V(C\cap C')|=1$,
then $|D|\geq \lceil\frac{c}{2}\rceil+\lfloor\frac{c}{2}\rfloor+1=c+1$, also a contradiction.
This shows that $(c)$ does not hold.
Hence, $(b)$ holds, i.e., there exists a vertex $u\in V(G-C)$ with $d_C(u)=\lfloor\frac{c}{2}\rfloor$.
It remains to show that $u$ is an isolated vertex in $G-C$.
Suppose this is not the case.
Then $u$ is contained in a component $R$ of $G-C$ with $|R|\geq 2$.
Since $G$ is 2-connected, there exists a vertex $v\in V(R-u)$ with a neighbor in $V(C)$.
Using this, one can easily find a cycle of length at least $c+1$.
This finishes the proof of Theorem \ref{Th:BondyStab-spec}.
\qed

\subsection{Proof of Theorem \ref{Th:BondyStab-2con}}\label{subsec:Bondy2}

To prove Theorem \ref{Th:BondyStab-2con}, in view of Theorem \ref{Th:BondyStab-spec},
it suffices to show the following lemma.

\begin{lem}\label{Le:LonCyc}
Let $G$ be a 2-connected non-Hamiltonian graph on $n$ vertices and
$C$ be a longest cycle in $G$ of length $c$.
Suppose that there exists an isolated vertex $u$ in $G-C$ with $d_C(u)=\lfloor\frac{c}{2}\rfloor$.\\
-- If $c$ is even, then $G\subseteq W_{n,\lfloor\frac{c}{2}\rfloor,c}$.\\
-- If $c\geq 9$ is odd, then $G\subseteq W_{n,\lfloor\frac{c}{2}\rfloor,c}$ or $G$ is a subgraph of a member of $\mathcal{X}_{n,c}\cup \mathcal{Y}_{n,c}$.
\end{lem}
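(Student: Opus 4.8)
The plan is to exploit the rigid structure forced by the existence of an isolated vertex $u$ in $G-C$ with exactly $\lfloor c/2\rfloor$ neighbours on the longest cycle $C$. Label $C = x_1x_2\cdots x_cx_1$ with a fixed orientation, and let $A = N_C(u)$, so $|A| = \lfloor c/2\rfloor$. The first step is to record the basic ``gap'' constraints: for any two neighbours $x_i,x_j \in A$, both arcs of $C$ between them have length at least $2$ (since $u$ together with an arc of length $1$ would give a cycle longer than $C$, as $u\notin C$ gives a detour of length $2$). When $c$ is even this is already extremely tight: $\lfloor c/2\rfloor = c/2$ neighbours with all consecutive gaps $\ge 2$ forces every gap to be exactly $2$, so $A = A^+{}^+ $ alternates perfectly and $A = \{x_1,x_3,\dots\}$ (say). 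When $c$ is odd we get exactly one gap of length $3$ and the rest of length $2$.

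**The even case.** Here I would show $G \subseteq W_{n,c/2,c}$ directly. Write $B = V(C)\setminus A$, so $|B| = c/2$ and $B$ is precisely the set of ``odd-position'' vertices $x_2,x_4,\dots$. The key claims to establish, in order: (1) $B$ is stable in $G$ — an edge $x_ix_j$ with $i,j$ both even would, combined with the arc structure and the vertex $u$, create a cycle of length $> c$ (one reroutes through $u$ to contract one side and uses the chord to skip vertices, a standard rotation count); (2) every vertex of $G-C$ other than those counted has all its $C$-neighbours inside $A$, and in fact $V(G)\setminus A$ is stable — the same rotation argument shows no edge can join two vertices outside $A$, because any such edge plus the $u$-detour beats $c$; and (3) $|A| = c/2 = \lfloor c/2\rfloor$ and $G[A]$ may be complete. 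Putting these together: $A$ spans a clique on $\lfloor c/2\rfloor = c - (\lfloor c/2\rfloor) + 1 - 1$... here I must be careful — $W_{n,k,c}$ has clique $K_{c-k+1}$ with $k = \lfloor c/2\rfloor$, so the clique has $c - \lfloor c/2\rfloor + 1 = c/2 + 1$ vertices, not $c/2$. So actually the clique is $A$ together with \emph{one} vertex of $B$. I would therefore refine step (1): $B$ is not quite stable; rather at most one vertex of $B$ can have neighbours in $B$, or more precisely the ``extra'' clique vertex is the one vertex of $C$ that behaves specially. This bookkeeping — matching the parameters of $W_{n,\lfloor c/2\rfloor,c}$ exactly, including the $+1$ in the clique size — is a point to handle with care.

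**The odd case.** Now $|A| = (c-1)/2$ with one gap of length $3$, say between $x_a$ and $x_{a+3}$, and all other gaps of length $2$. The dichotomy into $W_{n,\lfloor c/2\rfloor,c}$ versus $\mathcal{X}_{n,c}\cup\mathcal{Y}_{n,c}$ will come from analysing the single ``long gap'': the middle vertices $x_{a+1}, x_{a+2}$ of that gap, and whether edges are allowed among the non-neighbour set $B = V(C)\setminus A$, which now has size $(c+1)/2$. I would first show, as before, that $B$ is ``almost stable'' — any edge inside $B$ must involve vertices of the long gap, because elsewhere the rotation-through-$u$ argument still beats $c$. Then I'd split on the edges incident to $x_{a+1},x_{a+2}$ and to the components of $G-C$ other than $u$: if everything collapses to $A$ being the core, we land in $W_{n,(c-1)/2,c}$; if there is genuine extra structure at the long gap (a vertex of $B$ with two neighbours in $A$, giving the ``$a,b$'' pair, plus pendant stars or independent vertices hanging off), we recognise it as a member of $\mathcal{X}_{n,c}$ (when the extra vertices form a stable set all attached to a fixed $\{a,b\}$) or $\mathcal{Y}_{n,c}$ (when they form a nontrivial star forest with the $\{a,b\}$-feasibility condition). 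The definitions of $\mathcal{X}_{n,c},\mathcal{Y}_{n,c}$ are essentially reverse-engineered to match exactly the configurations that survive; the work is verifying that \emph{no other} configuration survives, i.e.\ that every potential extra edge not allowed by these families would produce a cycle of length $\ge c+1$.

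**Main obstacle.** The hard part will be the odd case, specifically the case analysis around the unique long gap: one must simultaneously control (i) chords among the $(c+1)/2$ vertices of $B$, (ii) edges from $V(G)\setminus V(C)$ into $C$ and among themselves, and (iii) the interaction of these with the detour through $u$ and with Pósa-type rotations along $C$. Each candidate edge requires exhibiting an explicit cycle of length $> c$ or else showing it fits the template of $\mathcal{X}_{n,c}$ or $\mathcal{Y}_{n,c}$, and the feasibility condition on stars in $\mathcal{Y}_{n,c}$ (leaves of degree $2$ with a common neighbour in $\{a,b\}$) is exactly the boundary case where the rotation argument is tight — so getting that condition to drop out naturally, rather than by ad hoc checking, is the delicate point. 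A secondary nuisance, already flagged above, is the off-by-one in the clique size of $W_{n,\lfloor c/2\rfloor,c}$, which means one cannot simply say ``$A$ is a clique and $V\setminus A$ is independent'' but must identify the single distinguished vertex of $C$ that joins the clique.
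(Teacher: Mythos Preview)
Your overall strategy is sound, and the even case essentially goes through as you describe. However, your ``off-by-one'' worry is a red herring: when $k = \lfloor c/2\rfloor$ and $c$ is even, the clique in $W_{n,k,c}$ has $c/2+1$ vertices, but the $(c/2+1)$-th vertex of that clique is adjacent only to the other $c/2$ clique vertices---exactly like the pendant vertices. So $W_{n,c/2,c}$ is simply the join of $K_{c/2}$ with an independent set of size $n-c/2$, and all you need is that $V(G)\setminus A$ is stable. This follows because $V(C)\setminus A = A^+$ is stable by the standard longest-cycle argument, and (as the paper shows) every component of $G-C$ is a single vertex with all its neighbours in $A$. There is no distinguished vertex to identify.

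For the odd case, your outline is correct in spirit but the paper's organization is tighter and uses a device you have not mentioned. Rather than working edge-by-edge, the paper classifies every component $R$ of $G-C$ into exactly three types: (i) $|R|=1$ with $N_C(R)\subseteq A$; (ii) $|R|=1$ with $N_C(R)$ one of two specific pairs straddling the long gap; (iii) $R$ an induced star which is $\{a,b\}$-feasible for the two vertices of $A$ bordering the long gap. Once this trichotomy is established, the families $W_{n,\lfloor c/2\rfloor,c}$, $\mathcal{X}_{n,c}$, and $\mathcal{Y}_{n,c}$ fall out directly according to which types are present. The delicate point you flagged---verifying the feasibility condition on stars in $\mathcal{Y}_{n,c}$ and controlling the two interior vertices of the long gap---is handled not by ad hoc rotation checks but by a \emph{cycle-switching} trick: one replaces $C$ by another longest cycle $C'$ (obtained by rerouting through a component already known to be of type (ii) or (iii)), so that the troublesome vertices now lie in a component of $G-C'$ to which the trichotomy applies. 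This re-use of the classification is the step most likely to be missing from a direct edge-by-edge attack, and it is what makes the $\{a,b\}$-feasibility condition drop out naturally rather than by exhaustive checking.
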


\begin{proof}
Throughout this proof, let $N:=N_C(u)$.

We first consider the case that $c$ is even. Let $C=x_1x_2...x_cx_1$.
We may assume that $N=\{x_1,x_3,...,x_{c-1}\}$.
Consider any component $R$ in $G-C$ with $u\notin V(R)$.
As $G$ is 2-connected, $|N_C(R)|\geq 2$.
We also have that $C-N$ consists of isolated vertices and $|N_C(R)\backslash N|\leq 1$
(otherwise one can easily find a cycle longer than $C$ using Posa's rotation technique).
Suppose that there exist some vertices say $x_2\in N_C(R)\backslash N$ and $x_i\in N_C(R)\cap N$.
We assume that $i\notin \{1,3\}$ (as otherwise there is a cycle longer than $C$).
There exists an $(x_2,R,x_i)$-path $P$ of length at least 2,
then $P\cup (C-x_{i+1}-x_2x_3)\cup \{ux_3,ux_{i+2}\}$ forms a cycle of length at least $c+1$, a contradiction.
Thus $N_C(R)\subseteq N$.
If $|R|\geq 2$, then there exist distinct $x_i, x_j\in N_C(R)\cap N$ and an $(x_i,R,x_j)$-path $Q$ of length at least 3.
One can find a longer cycle easily if the distance between $x_i$ and $x_j$ on $C$ is two;
otherwise, $Q\cup (C-\{x_{i+1},x_{j+1}\})\cup \{ux_{i+2},ux_{j+2}\}$ forms a cycle longer than $C$.
This shows that $|R|=1$ and $N_C(R)\subseteq N$ for any component $R$ in $G-C$.
Therefore indeed $G$ is a subgraph of $W_{n,\frac{c}{2},c}$ when $c$ is even.

From now on we assume that $c\geq 9$ is odd.
Let $c:=2\alpha+1$ and $C=x_1x_2\ldots x_{2\alpha+1}x_1$, where $\alpha\geq 4$.
We may assume that $N=N_C(u)=\{x_1,x_3,...,x_{2\alpha-1}\}$.
First we observe an easy fact that
$C-N$ consists of a unique edge $x_{2\alpha}x_{2\alpha+1}$ and isolated vertices.
Next we determine the structures of all components $R$ in $G-C$.

\medskip

{\noindent \bf Claim.} Any component $R$ in $G-C$ is of one of the following three types:\\
(i) $|R|=1$ and $N_C(R)\subseteq N_C(u)$;\\
(ii) $|R|=1$ and $N_C(R)=\{x_{2\alpha-1},x_{2\alpha+1}\}$ or $N_C(R)=\{x_1,x_{2\alpha}\}$;\\
(iii) $R$ is an induced star, which is $\{x_1,x_{2\alpha-1}\}$-feasible.\footnote{Recall the definition of {\it $\{a,b\}$-feasible} from Subsection 1.2.}

\medskip

{\noindent \it Proof of Claim.}
First assume that there are two vertices $a,b$ in $N_C(R)\backslash N$.
Then there exists an $(a,R,b)$-path $P$ of length at least 2.
If $\{a,b\}=\{x_{2\alpha},x_{2\alpha+1}\}$, then $(C-x_{2\alpha}x_{2\alpha+1})\cup P$ forms a cycle of length at least $c+1$.
Otherwise, we have either $a^+,b^+\in N$ or $a^-,b^-\in N$. We may assume the former case occurs.
Then $P\cup (C-\{aa^+,bb^+\})\cup \{ua^+,ub^+\}$ forms a cycle of length at least $c+1$, a contradiction.

Now assume that $N_C(R)\subseteq N$.
If $|R|=1$, then $R$ is of type (i). So $|R|\geq 2$.
As $G$ is 2-connected, there exist $x_{2i-1}, x_{2j-1}\in N_C(R)$ and an $(x_{2i-1},R,x_{2j-1})$-path $P$ of length at least 3.
Suppose that $\{x_{2i-1}, x_{2j-1}\}\neq \{x_1,x_{2\alpha-1}\}$.
If the distance between $x_{2i-1}$ and $x_{2j-1}$ on $C$ is two, then it is easy to find a cycle of length at least $c+1$;
otherwise, since $\alpha\geq 4$, without loss of generality we may assume that $1\leq 2j-1<2j+1<2i-1<2i+1\leq 2\alpha-1$,
then $P\cup (C-\{x_{2j},x_{2i}\})\cup \{ux_{2j+1},ux_{2i+1}\}$ forms a cycle of length at least $c+1$, a contradiction.
This shows that $N_C(R)=\{x_1,x_{2\alpha-1}\}$.
If there exists an $(x_1,R,x_{2\alpha-1})$-path $P$ of length at least 4,
then $(C-\{x_{2\alpha},x_{2\alpha+1}\})\cup P$ is a cycle of length at least $c+1$.
Hence, all $(x_1,R,x_{2\alpha-1})$-paths in $G[R\cup N_C(R)]$ are of length 3.
This forces $R$ to be an induced star, and moreover, if $|R|\geq 3$,
then all leaves of $R$ are only adjacent to the same vertex in $\{x_1,x_{2\alpha-1}\}$.
So $R$ is of type (iii).

It remains to consider that $|N_C(R)\backslash N|=1$.
As $G$ is 2-connected, there exists some $x_{2j-1}\in N_C(R)\cap N$.
Let $P$ be an $(x_{2j-1},R,N_C(R)\backslash N)$-path of length at least 2.
Let us first consider that $2\leq j\leq\alpha-1$.
If $N_C(R)\backslash N=\{x_{2i}\}$, where $1\leq i\leq \alpha$,
then we may assume that $x_{2j-1}$ and $x_{2i}$ are not adjacent (as otherwise there is a longer cycle).
By symmetry, we may also assume $2j-1<2i-1<2i$.
Thus we have $1\leq 2j-3<2j-1<2i-1<2i\leq 2\alpha$.
Then $P\cup (C-x_{2j-2}-x_{2i-1}x_{2i})\cup \{x_{2j-3}u,ux_{2i-1}\}$
forms a cycle of length at least $c+1$, a contradiction.
So, $N_C(R)\backslash N=\{x_{2\alpha+1}\}$.
Then, $(C-x_{2j}-x_{2\alpha+1}x_1)\cup P\cup \{x_1u,ux_{2j+1}\}$ is a cycle of length at least $c+1$, again a contradiction.
Hence, we have that $j\in \{1,\alpha\}$.
By symmetry, we may just consider $j=1$.
In this case, $x_1\in N_C(R)\cap N$ (so clearly $x_2, x_{2\alpha+1}\notin N_C(R)$) and we claim that $N_C(R)\backslash N=\{x_{2\alpha}\}$.
Suppose for a contradiction that $N_C(R)\backslash N=\{x_{2i}\}$ for $2\leq i\leq \alpha-1$.
Then $(C-x_2-x_{2i}x_{2i+1})\cup P\cup \{x_{3}u,ux_{2i+1}\}$ forms a cycle of length at least $c+1$, a contradiction.
This shows that $N_C(R)=\{x_1,x_{2\alpha}\}$. If $|R|\geq 2$, then $P$ can be chosen to be a path of length at least 3
and the cycle $P\cup (C-x_{2\alpha+1})$ contradicts the maximality of $C$.
Therefore, $|R|=1$ and $N_C(R)=\{x_1,x_{2\alpha}\}$. So $R$ is of type (ii). This proves the claim. \qed

\medskip

We show that all components $R$ in $G-C$ of type (ii)
have the same two neighbors in $C$ (say $N_C(R)=\{x_1, x_{2\alpha}\}$).
Otherwise there are two components in $G-C$ of type (ii), say $R_1=\{v_1\}$ and $R_2=\{v_2\}$,
such that $N_C(v_1)=\{x_{2\alpha-1},x_{2\alpha+1}\}$ and $N_C(v_2)=\{x_1, x_{2\alpha}\}$,
then $G[V(C)\cup \{v_1,v_2\}]$ contains a cycle of length $c+2$, a contradiction.

If all components in $G-C$ are of type (i),
then as $N^+$ is independent, we have $G\subseteq W_{n,\lfloor\frac{c}{2}\rfloor,c}$.
So there exists at least one component in $G-C$ of type (ii) or (iii).

Suppose that there is no component in $G-C$ of type (iii).
Then there exists some component in $G-C$, say $\{v\}$, of type (ii).
So we can assume $N_C(v)=\{x_1, x_{2\alpha}\}$.
We show that $N_G(x_{2\alpha+1})=\{x_1,x_{2\alpha}\}$.
To see this, consider $C':=(C-\{x_1x_{2\alpha+1},x_{2\alpha+1}x_{2\alpha}\})\cup \{x_1v,vx_{2\alpha}\}$,
which also is a longest cycle in $G$.
Then $x_{2\alpha+1}$ is contained in a component $R'$ in $G-C'$.
As $N_{C'}(R')\supseteq \{x_1,x_{2\alpha}\}$, by the Claim,
$R'$ must be of type (ii) and thus we have $N_G(x_{2\alpha+1})=\{x_1,x_{2\alpha}\}$.
Let $J_1$ (resp. $J_2$) be the set of all vertices in components in $G-C$ of type (i) (resp. type (ii)).
Now set $A:=N$, $B:=N^+\cup J_1$ and $X:=\{x_{2\alpha+1}\}\cup J_2$.
Then both $B$ and $X$ are stable and for any $w\in X$, $N_G(w)=\{x_1,x_{2\alpha}\}$.
This shows that $G$ is a subgraph of some graph from $\mathcal{X}_{n,c}$.

Now we assume that there exists some component $R$ in $G-C$ of type (iii).
Let $J_1,J_2, J_3$ be the sets of all vertices in components in $G-C$ of type (i), (ii), (iii), respectively.
Set $A:=N$, $B:=\{x_2,x_4,...,x_{2\alpha-2}\}\cup J_1$, and $Y:=\{x_{2\alpha},x_{2\alpha+1}\}\cup J_2\cup J_3$.
Clearly $B$ is stable.
Since every vertex $v\in J_2$ satisfies $N_C(v)=\{x_1, x_{2\alpha}\}$,
we see that $G[\{x_{2\alpha},x_{2\alpha+1}\}\cup J_2]$ induces a star, say $S$, with the center $x_{2\alpha}$.
If we can show that $S$ is $\{x_1,x_{2\alpha-1}\}$-feasible,
then $G$ is a subgraph of some graph from $\mathcal{Y}_{n,c}$ (note that $G[Y]$ has at least two stars).
To show this, we note that there exists an edge $xy$ in $R$ such that
$C':=(C-\{x_{2\alpha},x_{2\alpha+1}\})\cup \{x_1x,xy,yx_{2\alpha-1}\}$ is a longest cycle in $G$.
Then $S=G[\{x_{2\alpha},x_{2\alpha+1}\}\cup J_2]$ is contained in a component $R'$ in $G-C'$.
By the Claim, $R'$ must be of type (iii), i.e., $R'$ (and thus $S$) is $\{x_1,x_{2\alpha-1}\}$-feasible.
This proves Lemma \ref{Le:LonCyc}.
\end{proof}

We have completed the proof of Theorem \ref{Th:BondyStab-2con}.


\section{Stability from many edges spanned in a long cycle}
In this section, we prove the following strengthened version of Theorem \ref{Th:WZ+closure1},
where the longest cycle in Theorem \ref{Th:WZ+closure1} is generalized to a locally maximal cycle.

Recall that $h(n,k)=\binom{n-k}{2}+k(k-1)$.

\begin{thm}\label{Th:WZ+closure}
Let $G$ be a 2-connected graph on $n$ vertices with $\delta(G)\geq k$ and
$C$ be a locally maximal cycle in $G$ of length $c\in [6,n-1]$.
If
$e(G)>\max\left\{f\left(n,k+1,c\right),f\left(n,\left\lfloor\frac{c}{2}\right\rfloor-1,c\right)\right\}$
and
$e(G[C])>h(c+1,\lfloor\frac{c}{2}\rfloor-1),$
then either $G\subseteq W_{n,\lfloor\frac{c}{2}\rfloor,c}$,
or $\overline{G}\in \{W_{n,k,c}, ~Z_{n,k,c}\}$, where $\overline{G}$ is the $C$-closure of $G$.
\end{thm}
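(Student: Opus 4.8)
The plan is to work with the $C$-closure $\overline{G}$ and exploit the fact that, by Lemma \ref{Le:Coper}, $C$ remains a locally maximal cycle in $\overline{G}$, and by Lemma \ref{Le:CoperHC}, $\overline{G}[C]$ is non-Hamiltonian-connected. Since $\overline{G}[C]$ is by construction $(c+1)$-closed and has minimum degree at least $2$ (it contains the cycle $C$), and since $e(\overline{G}[C]) \geq e(G[C]) > h(c+1,\lfloor c/2\rfloor - 1)$, I can apply Lemma \ref{Le:HamConStruc2} with $p=1$. This yields one of two structural alternatives for $\overline{G}[C]$: either (i) there is a set $S$ of $s-1$ vertices of degree at most $s$ in $\overline{G}[C]$, with $2\le s\le \lfloor c/2\rfloor - 2$, such that $\overline{G}[C]-S$ is a clique; or (ii) there is a set $T$ of $t-1$ vertices of degree at most $t$, with $\lfloor c/2\rfloor \le t \le \lfloor c/2\rfloor$, i.e. $t = \lfloor c/2\rfloor$ and $|T| = \lfloor c/2\rfloor - 1$. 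The whole argument then splits into these two cases.

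In case (ii), the idea is that $\overline{G}[C]$ looks essentially like $W_{c,\lfloor c/2\rfloor,c}$ restricted to the cycle: there are roughly $\lfloor c/2\rfloor - 1$ low-degree vertices, so the clique number $\omega$ of $\overline{G}[C]$ is small (at most about $\lceil c/2\rceil + 1$), and one wants to show that the vertices outside $C$ can each attach to $C$ in only a very restricted way. Here I would invoke Lemma \ref{Le:clique} applied to components $R$ of $G-C$, using a strong attachment $T_R$ of $R$ to $C$: if any component had a vertex of degree $\geq 3$ into $C$, or if $|R|\geq 2$, one could combine a long $(x,R,x')$-path with a path through $\overline{G}[C]$ (using Hamiltonian-connectedness of large pieces of $\overline{G}[C]$) to build a cycle longer than $C$ with at most two edges leaving $C$, contradicting local maximality. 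Pushing this through, every component of $G-C$ is a single vertex joined to at most $\lfloor c/2\rfloor$ vertices of $C$, and a parity/counting argument forces $G\subseteq W_{n,\lfloor c/2\rfloor,c}$; here the edge-count hypothesis $e(G)>f(n,\lfloor c/2\rfloor-1,c)$ is what rules out the degenerate sub-cases.

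In case (i), $\overline{G}[C] = K_{c-s+1}$ plus a set $S$ of $s-1$ vertices of degree $\le s$, with $s$ small. The clique number is then large, $\omega = c-s+1$, and again I use Lemma \ref{Le:clique}: if some component $R$ of $G-C$ had two attachment vertices $x,x'$ on $C$ with a long $(x,R,x')$-path, part (i) of that lemma would force $\omega \le c - (d-1)(t-1)$, which with $d\geq 2, t\geq 2$ contradicts $\omega = c-s+1$ once $s$ is small enough — so every component of $G-C$ must have a \emph{strong attachment of size exactly $2$}, i.e. exactly two "real" attachment vertices, and the path lengths through $R$ are bounded (essentially $d=2$). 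Combined with $\delta(G)\geq k$, each component $R$ then has average degree forcing, via Theorem \ref{Le:FanErdoGal}/Lemma \ref{Le:FanLongCyc} applied inside $R$ together with its two attachment vertices, that $R\cup\{x,x'\}$ is (a subgraph of) a union of cliques $K_{k+1}$ sharing $\{x,x'\}$; and the two attachment vertices must be common across components, otherwise two distinct pairs of attachment vertices would yield a cycle longer than $C$. Reconstructing: $\overline{G} = K_{c-k+1}$ together with cliques $K_{k+1}$ all sharing a fixed edge of the big clique, which is exactly $W_{n,k,c}$ (when the shared pair sits so that the extra vertices each see only $k$ vertices of the clique, i.e. the star-forest degenerates) or $Z_{n,k,c}$ in general. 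The bookkeeping that matches the edge count $e(G)>\max\{f(n,k+1,c), f(n,\lfloor c/2\rfloor - 1,c)\}$ against these two candidate structures, and the verification that $\overline{G}$ is exactly $W_{n,k,c}$ or $Z_{n,k,c}$ (not merely a subgraph), is where the $k$-closure is doing real work: the closure operation fills in precisely the "missing" edges that distinguish a subgraph of $W_{n,k,c}$ from $W_{n,k,c}$ itself.

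The main obstacle I expect is the analysis of the components of $G-C$ in case (i): controlling simultaneously (a) that there are exactly two shared attachment vertices, (b) that each component together with those two vertices is a clean union of $(k+1)$-cliques — this needs the minimum-degree hypothesis pushed through Fan's theorem carefully, watching the equality cases — and (c) that the $C$-closure promotes the resulting subgraph-of-$W$ (or subgraph-of-$Z$) all the way to equality. Getting the edge-count thresholds to align with exactly $k+1$ and $\lfloor c/2\rfloor - 1$ (rather than something off by one) will require care with the convexity of $f(n,k,c)$ in $k$ noted in Subsection~2.1, and with the $p=1$ bookkeeping in Lemma \ref{Le:HamConStruc2}. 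I would also expect the small-$c$ boundary (recall $c\geq 6$ here, versus $c\geq 10$ in the main theorem) and the interface between a "locally maximal" and a "longest" cycle to generate a handful of fiddly sub-cases that need to be checked separately.
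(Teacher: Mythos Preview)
Your high-level framework --- pass to the $C$-closure, invoke Lemmas~\ref{Le:Coper} and~\ref{Le:CoperHC}, then apply Lemma~\ref{Le:HamConStruc2} with $p=1$ and split into its two alternatives --- is exactly what the paper does. In each branch, however, your plan is missing a key ingredient.

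In your case~(ii) (the $t=\lfloor c/2\rfloor$ alternative), you appeal to ``Hamiltonian-connectedness of large pieces of $\overline{G}[C]$'' without ever establishing what those pieces are. Knowing merely that $\lfloor c/2\rfloor-1$ vertices have degree at most $\lfloor c/2\rfloor$ does not by itself give enough structure; the paper inserts a separate lemma (Lemma~\ref{Le:c/2-1:c/2}) which, via a fairly delicate degree-sequence argument using the $(c+1)$-closedness and the edge bound $e>h(c+1,\lfloor c/2\rfloor-1)$, pins down $\overline{G}[C]=W_{c,\lfloor c/2\rfloor,c}$ exactly. Only after that does the attachment argument you sketch go through.

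The more serious gap is in your case~(i). You write ``with $s$ small'', but Lemma~\ref{Le:HamConStruc2} only yields $2\le s\le\lfloor c/2\rfloor-2$, which need not be small relative to $k$. Your use of Lemma~\ref{Le:clique} is then circular: from $\omega=c-s+1$ that lemma gives $(d-1)(t-1)\le s-1$, which for $s$ up to $\lfloor c/2\rfloor-2$ permits many values of $(d,t)$ and certainly does not force $d=2$. The paper obtains $s\le k$ (hence $\omega\ge c-k+1$) via Lemma~\ref{lem:CliqNum}, whose proof rests on the edge-switching operation of Fan--Lv--Wang (Lemma~\ref{Le:LMC-oper}): one iterates the switch to reach a graph $G^*$ with $e(G^*)\ge e(G)$ in which every vertex of $N_C(R)$ is complete to $R$, then counts $e(G)\le e(G^*)\le f(n,s,c)$; the hypothesis $e(G)>\max\{f(n,k+1,c),f(n,\lfloor c/2\rfloor-1,c)\}$ together with the convexity of $f$ forces $s\le k$. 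You never invoke edge-switching, and without it there is no link between $s$ and $k$. Finally, even granting $\omega\ge c-k+1$, the component analysis is substantially harder than your sketch: the paper's Lemma~\ref{lem:CliqNum2} distinguishes two legitimate component types --- singletons with $|N_C(R)|=k$ (leading to $W_{n,k,c}$) and components of character $(2,0,c-k+1)$ (leading to $Z_{n,k,c}$) --- and ruling out all other ``infeasible'' components requires a lengthy end-block analysis combining Theorem~\ref{Le:FanErdoGal} with Lemma~\ref{Le:clique}, rather than a single clean application of Fan's theorem.
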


We will reduce Theorem \ref{Th:WZ+closure} to the following three lemmas,
which are needed when dealing with the two situations arising from Lemma \ref{Le:HamConStruc2}.

\begin{lem}\label{Le:c/2-1:c/2}
Let $G_c$ be a Hamiltonian graph on $c\geq 6$ vertices.
Further suppose that $G_c$ is $(c+1)$-closed and non-Hamiltonian-connected with $e(G_c)>h(c+1,\lfloor\frac{c}{2}\rfloor-1)$.
If there exist $\lfloor\frac{c}{2}\rfloor-1$ vertices of degree at most $\lfloor\frac{c}{2}\rfloor$ in $G_c$,
then $G_c= W_{c,\lfloor\frac{c}{2}\rfloor,c}$.
\end{lem}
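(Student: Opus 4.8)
The plan is to combine an edge count with the rigidity forced by the $(c+1)$-closed hypothesis. Write $\alpha:=\lfloor c/2\rfloor$, let $S$ be the set of all vertices of $G_c$ of degree at most $\alpha$ (so $|S|\ge\alpha-1$ by hypothesis), and put $W:=V(G_c)\setminus S$ and $w:=|W|=c-|S|$. First I would note that $W$ induces a clique: if $u,v\in W$ were non-adjacent then $d_{G_c}(u)+d_{G_c}(v)\le c$ because $G_c$ is $(c+1)$-closed, yet $d_{G_c}(u),d_{G_c}(v)\ge\alpha+1$ gives $d_{G_c}(u)+d_{G_c}(v)\ge 2\alpha+2>c$, a contradiction. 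Consequently every vertex of $W$ has degree at least $\alpha+1$, and, again by $(c+1)$-closedness, if $v\in S$ is non-adjacent to some $u\in W$ then $d_{G_c}(v)\le c-d_{G_c}(u)\le c-\alpha-1$. This last ``deficiency inequality'' is the engine of the structural part.

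Second, I would pin down $|S|$. From the identity $e(G_c)=\binom{w}{2}+e(S,W)+e(G_c[S])$ together with $\sum_{v\in S}d_{G_c}(v)\le\alpha|S|$ and $e(S,W)\le w|S|$, one obtains an explicit upper bound on $e(G_c)$ in terms of $w$; a short computation shows that $|S|\ge\alpha+1$ would force this bound to be at most $h(c+1,\alpha-1)$, contradicting the hypothesis. Hence $|S|\in\{\alpha-1,\alpha\}$. The deficiency inequality then rules out one of the two values according to the parity of $c$: if $|S|=\alpha-1$ then no vertex of $S$ can have degree $\alpha$ (else it would be adjacent to all of $W$, which has $w=c-\alpha+1>\alpha$ vertices), and feeding this back into the count gives $e(G_c)\le h(c+1,\alpha-1)$ unless $c$ is odd; symmetrically $|S|=\alpha$ is excluded when $c$ is odd (immediately from the count for small $c$, and for larger $c$ by using the deficiency inequality to bound $e(S,W)$). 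Thus $|S|=\alpha$ when $c$ is even and $|S|=\alpha-1$ when $c$ is odd. In the surviving case the above upper bound on $e(G_c)$ is exactly $e(W_{c,\alpha,c})$, so $e(G_c)$ is essentially maximal; combining this with the deficiency inequality — an edge of $G_c[S]$ at a degree-$\alpha$ vertex $v$ makes $v$ miss some $u\in W$, which forces $d_{G_c}(u)\le\alpha$, impossible — forces $e(G_c[S])=0$ and every vertex of $S$ to have degree exactly $\alpha$ with all neighbours in $W$.

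Third comes the structure. Suppose some $v\in S$ misses a vertex $u\in W$. Since $d_{G_c}(v)=\alpha$, the deficiency inequality gives $d_{G_c}(u)\le c-\alpha$; as $u$ has $w-1$ neighbours inside the clique $W$, this leaves $u$ with at most $c-\alpha-(w-1)$ neighbours in $S$. When $c$ is even, $w=\alpha$ and this quantity is $\le 0$, so $d_{G_c}(u)\le\alpha$, contradicting $u\in W$; hence no vertex of $S$ misses any vertex of $W$, and since $S$ is independent with every vertex of degree $\alpha=w$, the graph $G_c$ is the join of a clique $K_\alpha$ with an independent set on $\alpha$ vertices, which is exactly $W_{c,\alpha,c}$ (absorb one vertex of the independent set into the clique). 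When $c$ is odd, $w=\alpha+2$ and the same bound shows $u$ has no neighbour in $S$, i.e.\ $u$ is missed by every vertex of $S$; counting $S$--$W$ non-edges in two ways then shows that exactly two vertices of $W$ are missed by all of $S$ and the other $\alpha$ are joined to all of $S$, so $G_c=W_{c,\alpha,c}$. For the smallest values $c\in\{6,7,9\}$ one has $e(W_{c,\alpha,c})\le h(c+1,\alpha-1)$, so the hypotheses are already contradictory and the statement is vacuous; I would dispose of these at the outset.

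The step I expect to be the main obstacle is the second one. The crude edge count by itself is not sharp enough to determine $|S|$ or the shape of $G_c$ — there are graphs with the same number of edges but a genuinely different structure — so the counting must be interleaved with repeated applications of the deficiency inequality, each local defect (an edge inside $S$, or a vertex of $S$ missing the ``wrong'' vertex of $W$) being shown to create a pair of non-adjacent vertices whose degrees sum to more than $c$. Carrying this out cleanly across both parities of $c$ and both a priori possible sizes of $S$ — and, for larger odd $c$, extracting enough from $(c+1)$-closedness to eliminate $|S|=\alpha$ — is the technical heart of the argument; the assumption that $G_c$ is Hamiltonian (hence $\delta(G_c)\ge 2$) is available should a few degenerate sub-configurations need to be cleared away.
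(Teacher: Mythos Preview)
Your proposal has a genuine gap: you never use the hypothesis that $G_c$ is non-Hamiltonian-connected, and that hypothesis is essential. The claim in your second step that ``$|S|\ge\alpha+1$ would force this bound to be at most $h(c+1,\alpha-1)$'' is simply false. In the even case with $|S|=\alpha+1$ and $w=\alpha-1$, your bound gives
\[
e(G_c)\le\binom{\alpha-1}{2}+(\alpha+1)\alpha=\frac{3\alpha^2-\alpha+2}{2},
\]
which exceeds $h(c+1,\alpha-1)=\frac{3\alpha^2-3\alpha+6}{2}$ by $\alpha-2$; so no contradiction arises. Concretely, for even $c=2\alpha\ge 14$, take $W=K_{\alpha-1}$ and $S$ a set of $\alpha+1$ vertices each joined to all of $W$, together with a matching of four edges inside $S$. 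This graph is $(c+1)$-closed (two non-adjacent vertices of $S$ have degree sum $2\alpha=c$), Hamiltonian, has more than $\alpha-1$ vertices of degree at most $\alpha$, and satisfies $e(G_c)>h(c+1,\alpha-1)$ --- yet it is not $W_{c,\alpha,c}$. Nothing in your counting-plus-closure scheme distinguishes it from $W_{c,\alpha,c}$; it is ruled out only because it is Hamiltonian-connected.

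This is exactly where the paper's proof diverges from yours. After squeezing what it can from edge counts and closedness (their Facts~3--5), the paper is left with borderline configurations in which $E(A)$ is forced to be a matching of a certain size inside a graph $H$ that is essentially a complete split graph; it then verifies by hand that $H$ together with that matching is Hamiltonian-connected, contradicting the hypothesis. These checks occur several times, in both parities, and are the technical heart of the argument --- precisely the ``main obstacle'' you anticipated, but your proposed resolution (pure counting interleaved with the deficiency inequality) does not go through. A minor aside: in your third step, when $c$ is even and $w=\alpha$, the quantity $c-\alpha-(w-1)$ equals $1$, not $\le 0$; the intended conclusion $d_{G_c}(u)\le\alpha$ still follows, but the arithmetic as written is off.
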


\begin{lem}\label{lem:CliqNum}
Let $G$ be a 2-connected graph on $n$ vertices and
$C$ be a locally maximal cycle in $G$ of length $c\leq n-1$.
Suppose that $e(G)>\max\left\{f\left(n,k+1,c\right),f\left(n,\left\lfloor\frac{c}{2}\right\rfloor-1,c\right)\right\}$.
If $G[C]$ contains a subset $S$ of $s-1$ vertices of degree at most $s$ in $G[C]$
for some integer $2\leq s\leq \lfloor\frac{c}{2}\rfloor-1$ such that $G[C]-S$ is a clique,
then $2\leq s\leq k$ and the clique number of $G[C]$ is at least $c-k+1$.
\end{lem}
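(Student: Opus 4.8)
The plan is to exploit the hypothesis $e(G) > f(n, k+1, c)$ together with a counting of edges through the set $S$ and the large clique in $G[C]$, to force the clique number to be large, and to exploit $\delta(G) \ge k$ to bound $s$ from above. Write $W$ for a maximum clique in $G[C]$, and set $\omega := |V(W)|$; by hypothesis $G[C] - S$ is a clique, so $\omega \ge c - |S| = c - s + 1$. First I would bound $e(G)$ from above. The vertices of $G$ split into three parts: $V(C) \setminus S$ (which induces a clique of size $c - s + 1$ inside $G[C]$, but may have further edges to $G - C$), the set $S$ of $s-1$ vertices, and $V(G - C)$. I would count: edges inside $V(C)$ are at most $\binom{c - s + 1}{2}$ (the clique) plus the edges incident to $S$ inside $C$, which number at most $(s-1)s - \binom{s-1}{2}$ since each vertex of $S$ has degree at most $s$ in $G[C]$; edges incident to $V(G-C)$ must be controlled by Bondy's theorem (Theorem \ref{Le:Bondy}), giving at most $\lfloor c/2 \rfloor (n - c)$ such edges with at most one endpoint in $C$. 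Comparing the resulting total against the hypothesis $e(G) > f(n, k+1, c) = \binom{c - k}{2} + (k+1)(n - c + k)$ should, after the convexity/monotonicity remarks already established for $f$ in Subsection 2.1, force $s \le k$: if $s \ge k+1$ then the clique $\binom{c-s+1}{2} \le \binom{c-k}{2}$ shrinks enough that the edge count cannot exceed $f(n, k+1, c)$, a contradiction. This is the part that requires the most careful bookkeeping, since one must also account for edges from $V(C) \setminus S$ to $V(G-C)$ and make sure they are subsumed by the Bondy bound rather than double-counted; the locally-maximal hypothesis on $C$ is what keeps the outside-edge count under control via Theorem \ref{Le:Bondy} applied to a longest cycle (or the strong-attachment lemmas of Section 2, e.g. Lemma \ref{Le:clique}, if $C$ is merely locally maximal).

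Once $s \le k$ is established, the clique-number bound $\omega \ge c - s + 1 \ge c - k + 1$ is immediate from $G[C] - S$ being a clique on $c - (s-1) = c - s + 1 \ge c - k + 1$ vertices. For the lower bound $s \ge 2$: this is part of the hypothesis ($2 \le s \le \lfloor c/2 \rfloor - 1$), so nothing is needed there — the conclusion simply restates it alongside $s \le k$. Thus the real content is the single inequality $s \le k$, and then the clique number statement follows formally.

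The main obstacle I anticipate is the edge-counting step: one needs a clean upper bound on $e(G)$ in terms of $s$, $c$, $n$, and the degree constraint on $S$, that is tight enough to beat $f(n, k+1, c)$ when $s \ge k+1$. The subtlety is that $\delta(G) \ge k$ is used to say the $s-1$ vertices of $S$ of low degree \emph{in $G[C]$} may still have neighbors outside $C$; but since $s \le \lfloor c/2 \rfloor - 1$ and the outside is governed by Bondy, the extra degree from $S$ to $G - C$ is already counted in the $\lfloor c/2 \rfloor(n-c)$ term. I would organize the inequality as
\begin{align*}
e(G) &\le \binom{c - s + 1}{2} + \Big[(s-1)s - \tbinom{s-1}{2}\Big] + \left\lfloor \tfrac{c}{2} \right\rfloor (n - c),
\end{align*}
then observe that since we are in the range where $f(n, \lfloor c/2 \rfloor - 1, c) \le f(n, \lfloor c/2\rfloor, c)$ fails to dominate, the governing bound is $f(n, k+1, c)$, and a direct comparison (using that $\binom{c-s+1}{2} + s(s-1) - \binom{s-1}{2}$ is decreasing in $s$ for $s$ in the relevant range, which is exactly the convexity of $f$ in its middle parameter) yields the contradiction for $s \ge k+1$. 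I would also double-check the small-$c$ boundary (the statement only needs $c \le n-1$, with $c \ge 6$ implicit from the surrounding theorem), where the Bondy bound $\lfloor c/2\rfloor(n-c)$ and the clique term interact most delicately.
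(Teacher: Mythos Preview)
Your edge-count strategy has a real gap: the Bondy-type bound $\lfloor c/2\rfloor(n-c)$ on the edges with at most one endpoint in $C$ is far too weak to reach a contradiction. Take for instance $k=2$, $s=3$, $c=20$, $n=30$: then $f(n,k+1,c)=189$, $f(n,\lfloor c/2\rfloor-1,c)=228$, so the hypothesis is $e(G)>228$; but your displayed inequality only yields $e(G)\le\binom{18}{2}+6+10\cdot 10=259$, which gives nothing. The whole point is that the coefficient in front of $(n-c)$ must be pushed down from $\lfloor c/2\rfloor$ to $s$, so that one obtains $e(G)\le f(n,s,c)$ and can then invoke convexity over $k+1\le s\le\lfloor c/2\rfloor-1$. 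That improvement is exactly where the structural hypothesis ``$G[C]-S$ is a clique of size $c-s+1$'' does its work, and your argument never uses it to control the outside edges.

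The paper's proof supplies the missing mechanism in two steps. First, it applies the edge-switching operation (Lemma~\ref{Le:LMC-oper}) repeatedly to pass to a graph $G^*$ with $e(G^*)\ge e(G)$, $G^*[C]=G[C]$, $C$ still locally maximal, and the extra property that each vertex of $N_C(R)$ is complete to $R$ for every component $R$ of $G^*-C$. Second, for the component $R$ maximizing $d+2p$ (where $d$ is the longest-through-path length and $p=|N_C(R)|$), it uses Lemma~\ref{Le:clique} together with $\omega\ge c-s+1$ to show $d+2p\le 2+2s$; this is the step that converts the large clique in $G[C]$ into a constraint on attachments from $G-C$. The resulting bound $e(G)\le \binom{c-s+1}{2}+(s-1)s+s(n-c)=f(n,s,c)$ then contradicts the hypothesis when $s\ge k+1$. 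Note also that $\delta(G)\ge k$ is \emph{not} assumed in this lemma, so your plan to use it to bound $s$ would not be available anyway; the bound $s\le k$ comes purely from the edge count.
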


\begin{lem}\label{lem:CliqNum2}
Let $G$ be a 2-connected graph on $n$ vertices with $\delta(G)\geq k$ and
$C$ be a locally maximal cycle in $G$ of length $c\leq n-1$.
If the clique number of $G[C]$ is at least $c-k+1$,
then $G\in \{W_{n,k,c}, ~Z_{n,k,c}\}$.
\end{lem}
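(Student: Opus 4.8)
The plan is to analyze a locally maximal cycle $C$ whose induced subgraph $G[C]$ contains a large clique $W$ with $|W| = \omega \ge c-k+1$, and to show that the only way such a configuration can coexist with $\delta(G) \ge k$, $c \le n-1$, and $C$ being locally maximal is for $G$ to sit inside $W_{n,k,c}$ or $Z_{n,k,c}$. First I would fix a component $R$ of $G-C$ (which exists since $c \le n-1$); by 2-connectivity, $|N_C(R)| \ge 2$. Let $T$ be a maximum strong attachment of $R$ to $C$, with $t = |T|$ and $q = |N_C(R) \setminus T|$. The first key step is to bound $t$: if $t \ge 3$, then since any two vertices $x,x' \in T$ are joined by an $(x,R,x')$-path of length at least $2$ and $C$ is locally maximal, Lemma \ref{Le:clique}(ii) with $d=2$ gives $\omega \le c - (t-1) - q \le c - 2 - q \le c-2$, contradicting $\omega \ge c-k+1$ once we know $k \le 2$ — so I will need to first handle small $k$ or argue more carefully. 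Actually the cleaner route: apply Lemma \ref{Le:clique}(ii) to get $\omega \le c - (d-1)(t-1) - q$ where $d = d^*_R(x,x')$ is the minimum over pairs in $T$ of the longest $(x,R,x')$-path length; combined with $\omega \ge c-k+1$ this forces $(d-1)(t-1) + q \le k-1$.

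The second step extracts structure from that inequality. Since $R$ is a component with $\delta(G) \ge k$, every vertex of $R$ has degree at least $k$, so $R$ cannot be too small and the attachment cannot be too weak; in particular I expect to deduce $t = 2$, $q = 0$ (so $N_C(R) = T = \{a,b\}$ has exactly two vertices), and $d = d^*_R(a,b) \le k-1$. Then I would show $|R| = k-1$: the vertices of $R$ together with $\{a,b\}$ must support paths realizing the degree bound, and by Theorem \ref{Le:FanErdoGal} (Fan's theorem) applied to $R \cup \{a,b\}$ with endpoints $a,b$, the longest $(a,b)$-path has length at least the average degree, which is close to $|R|$; pushing this against $d \le k-1$ and $\delta(G) \ge k$ pins down that $R \cup \{a,b\}$ is a clique $K_{k+1}$ (the equality case of Fan's theorem, or a direct argument: every vertex of $R$ has all its $\ge k$ neighbors inside $R \cup \{a,b\}$, which has $|R|+2$ vertices, forcing $|R| \ge k-1$, and then locality of $C$ forbids $|R| \ge k$). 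This shows each component of $G-C$, together with its two-vertex attachment on $C$, forms a $K_{k+1}$.

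The third step is to show all these $K_{k+1}$'s share the \emph{same} pair $\{a,b\}$ on $C$ and that $G[C]$ is exactly a clique $K_{c-k+1}$ plus the structure needed. For the latter: $\omega \ge c-k+1$ says $G[C]$ contains $K_{c-k+1}$; the at most $k-1$ vertices of $C$ outside this clique, together with locality of $C$, constrain how $G[C]$ can be completed — I expect to show $G[C] \subseteq W_{c,k,c}$-type structure or that the two attachment vertices $a,b$ lie in the big clique and the remaining $c - k -1$ cycle-vertices hang off $\{a,b\}$ appropriately, distinguishing the $W_{n,k,c}$ case (all non-clique vertices of $V(G)$ attach to a common $k$-set) from the $Z_{n,k,c}$ case (non-clique vertices form $K_{k+1}$'s all sharing the same $2$-cut). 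For the "same pair" claim: if two components attached at $\{a,b\}$ and $\{a',b'\}$ with $\{a,b\} \ne \{a',b'\}$, one builds a cycle longer than $C$ through both components (using that $C[a,b]$ and the relevant arcs are long, since $a,b$ and $a',b'$ all lie near the big clique), contradicting maximality — this is where I would be most careful about the cyclic positions. I expect the main obstacle to be precisely this last bookkeeping: simultaneously controlling (i) which of the $\le k-1$ "exceptional" vertices of $C$ lie inside the maximum clique, (ii) the exact location of the attachment vertices $\{a,b\}$ relative to that clique, and (iii) ruling out longer cycles when two $K_{k+1}$-blocks attach at different pairs; these interact and the case analysis on the placement of $a,b$ among the at most $k-1$ non-clique cycle-vertices is the delicate part, though each individual sub-case should follow from a short rerouting argument against local maximality of $C$.
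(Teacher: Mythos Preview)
Your proposal has a genuine gap in Step~2, and it propagates into an internal inconsistency with Step~3. You assert that the inequality $(d-1)(t-1)+q\le k-1$ together with $\delta(G)\ge k$ forces $t=2$, $q=0$, and $|N_C(R)|=2$ for every component $R$ of $G-C$. This is false: a single-vertex component $R=\{y\}$ with $|N_C(y)|=k$ also satisfies the inequality (here the maximum strong attachment has size at most $1$, since no two edges from a single vertex are independent), and this is precisely what occurs in $W_{n,k,c}$. So the dichotomy you need is not ``every component has a $2$-vertex attachment'' but rather ``every component has either $|N_C(R)|=k$ (and is then a single vertex) or $|N_C(R)|=2$ with $G[R\cup N_C(R)]\cong K_{k+1}$.'' You tacitly acknowledge this in Step~3 when you describe the $W_{n,k,c}$ outcome as ``all non-clique vertices attach to a common $k$-set,'' but that directly contradicts your Step~2 conclusion.

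Establishing this dichotomy is where the real difficulty lies, and your sketch does not address it. One must rule out all ``intermediate'' components: those with $2\le |N_C(R)|\le k-1$ but not of the form $(t,q)=(2,0)$ with $\omega=c-k+1$. The paper does this through a sequence of claims that analyze such a hypothetical component $R$: first showing $|R|\ge 2$ and $R$ is not $2$-connected, then studying end-blocks of $R$ via Theorem~\ref{Le:FanErdoGal} to find long paths that, fed into Lemma~\ref{Le:clique}, contradict $\omega\ge c-k+1$. This block-tree analysis is the technical heart of the proof and is absent from your plan. Once the dichotomy is in place, your Step~3 is in the right spirit (and the paper handles it much as you suggest, with separate arguments pinning down the common attachment set in each case), but as written the proposal would not get there.
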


This reduction will be done in Subsection \ref{subsec:5.0}.
We then prove these lemmas in Subsections \ref{subsec:5.1}, \ref{subsec:5.2} and \ref{subsec:WZ3}, respectively.

\subsection{Reducing Theorem \ref{Th:WZ+closure} to the lemmas}\label{subsec:5.0}

{\noindent \it Proof of Theorem \ref{Th:WZ+closure}.} (Assuming Lemmas \ref{Le:c/2-1:c/2}, \ref{lem:CliqNum} and \ref{lem:CliqNum2}.)
Let $G, C$ be as in Theorem \ref{Th:WZ+closure}. Let $\overline{G}$ be the $C$-closure of $G$.
Since $G\subseteq \overline{G}$,
we see that $\overline{G}$ is 2-connected with $\delta(\overline{G})\geq k$ and $e(\overline{G})\geq e(G)$.
By Lemma \ref{Le:Coper}, we see that the cycle $C$ remains a locally maximal cycle of length $c$ in $\overline{G}$.
By Lemma \ref{Le:CoperHC}, $\overline{G}[C]$ is non-Hamiltonian-connected.
It is also clear that $\overline{G}[C]$ is $(c+1)$-closed and $e(\overline{G}[C])\geq e(G[C])>h(c+1,\lfloor\frac{c}{2}\rfloor-1)$.


Applying Lemma \ref{Le:HamConStruc2} (with $\delta=1$) to $\overline{G}[C]$, we see that
one of the following holds:
\begin{itemize}
\item[(i)] $\overline{G}[C]$ contains a subset of $\lfloor\frac{c}{2}\rfloor-1$ vertices of degree at most
$\lfloor\frac{c}{2}\rfloor$ in $\overline{G}[C]$, or
\item[(ii)]  $\overline{G}[C]$ contains a subset $S$ of $s-1$ vertices of degree at most
$s$ in $\overline{G}[C]$ for some $2\leq s\leq \lfloor\frac{c}{2}\rfloor-2$ such that
$\overline{G}[C]-S$ is a clique.
\end{itemize}
Suppose that (i) holds. Then by Lemma \ref{Le:c/2-1:c/2} (applied to $\overline{G}[C]$),
we have $\overline{G}[C]= W_{c,\lfloor\frac{c}{2}\rfloor,c}$.
Let $B\subseteq V(C)$ consist of all vertices of degree $c-1$ in $\overline{G}[C]$.
We observe that for any two vertices $x,y\in V(C)$, if $\{x,y\}\nsubseteq B$,
then there is a Hamiltonian path from $x$ to $y$ in $\overline{G}[C]$.
Thus, for any component $R$ in $\overline{G}-C$, if $N_C(R)\nsubseteq B$,
then there is a cycle $C'$ longer than $C$ with $|E(C')\cap E(C,G-C)|\leq 2$,
a contradiction. So, we have $N_C(R)\subseteq B$. Furthermore, for any $\{x,y\}\subseteq B$,
there is an $(x,y)$-path of length at least $c-2$ in $\overline{G}[C]$.
If $|V(R)|\geq 2$, as $\overline{G}$ is 2-connected,
we can find a cycle $C'$ longer than $C$ with $|E(C')\cap E(C,G-C)|\leq 2$, a contradiction.
Hence, for any component $R$ in $\overline{G}-C$, we have $|V(R)|=1$ and $N_C(R)\subseteq B$.
This implies that $G\subseteq \overline{G}\subseteq W_{n,\lfloor\frac{c}{2}\rfloor,c}$.

So we may assume that (ii) holds.
By Lemma \ref{lem:CliqNum} (applied to $\overline{G}$ and $C$),
we get that the clique number of $\overline{G}[C]$ is at least $c-k+1$.
By Lemma \ref{lem:CliqNum2}, this shows that $\overline{G}\in \{W_{n,k,c}, ~Z_{n,k,c}\}$,
completing the proof of Theorem \ref{Th:WZ+closure}.
\qed

\subsection{Proof of Lemma \ref{Le:c/2-1:c/2}}\label{subsec:5.1}


\begin{proof}
Throughout this proof, define
$\alpha:=\left\lfloor\frac{c}{2}\right\rfloor, ~e:=e(G_c), ~A:=\{u\in V(G_c): d(u)\leq \alpha\}, \text{ and } B:=V(G_c)\backslash A.$
Since $G_c$ is $(c+1)$-closed, $B$ induces a clique.
Let $V(G_c)=\{u_1,u_2,\ldots,u_c\}$ and $f_1\leq f_2\leq \ldots\leq f_c$ be the degree sequence of $G_c$
such that $d(u_i)=f_i$ for every $1\leq i\leq c$.
There are $\alpha-1$ vertices of degree at most $\alpha$ in $G_c$, in other words,
we have $f_{\alpha-1}\leq \alpha.$

We establish some facts to be used later.
The first two facts are straightforward.

\begin{fact}
If $G_c$ has $t$ vertices of degree at most $r$, then $e(G_c)\leq tr+ \binom{c-t}{2}$.
\end{fact}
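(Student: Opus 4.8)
The plan is a single double-counting step, of exactly the kind used repeatedly elsewhere in the paper (compare the edge-count estimates in Lemma \ref{Le:HamConStruc2} and in the proof of Theorem \ref{Th:BondyStab}). Since $G_c$ has at least $t$ vertices of degree at most $r$, I would first fix a set $S\subseteq V(G_c)$ consisting of exactly $t$ such vertices. Then every edge of $G_c$ either has an endpoint in $S$ or has both of its endpoints in $V(G_c)\setminus S$, so it is enough to bound these two types of edges separately and add.

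For the edges incident with $S$, I would use $\sum_{v\in S} d_{G_c}(v)\le tr$: this sum counts each edge inside $S$ twice and each edge between $S$ and $V(G_c)\setminus S$ once, hence it is an upper bound on the number of edges meeting $S$, and it is at most $tr$ because each vertex of $S$ has degree at most $r$. The edges lying entirely in $V(G_c)\setminus S$ number at most $\binom{|V(G_c)\setminus S|}{2}=\binom{c-t}{2}$, using $|V(G_c)|=c$. Adding the two bounds gives $e(G_c)\le tr+\binom{c-t}{2}$. The only point worth recording explicitly is that these two families of edges together exhaust $E(G_c)$, which is immediate since an edge with no endpoint in $S$ has both endpoints outside $S$; there is otherwise no obstacle, which is presumably why the paper simply calls the fact ``straightforward''.
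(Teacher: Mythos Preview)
Your argument is correct and is exactly the straightforward double-count the paper has in mind; indeed the paper omits the proof entirely, calling the fact ``straightforward''.
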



\begin{fact}
We have
\begin{align*}
h\left(1+c,\left\lfloor\frac{c}{2}\right\rfloor-1\right)
&=\frac{1}{2}{\left\lceil\frac{c}{2}\right\rceil}^2+\frac{3}{2}\left\lceil\frac{c}{2}\right\rceil
+{\left\lfloor\frac{c}{2}\right\rfloor}^2-3\left\lfloor\frac{c}{2}\right\rfloor+3
=\left\{\begin{array}{ll}
      \frac{3c^2}{8}-\frac{3c}{4}+3 &\text{ if } c \text{ is even}\\
       \frac{3\alpha^2}{2}-\frac{\alpha}{2}+5 &\text{ if } c \text{ is odd}.
    \end{array}\right.
\end{align*}
\end{fact}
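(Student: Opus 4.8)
The plan is a direct computation from the definition $h(n,k)=\binom{n-k}{2}+k(k-1)$. First I would substitute $n=c+1$ and $k=\lfloor c/2\rfloor-1$, so that $n-k=c+2-\lfloor c/2\rfloor$. Using the elementary identity $c-\lfloor c/2\rfloor=\lceil c/2\rceil$, this rewrites as $n-k=\lceil c/2\rceil+2$, hence $h\!\left(c+1,\lfloor c/2\rfloor-1\right)=\binom{\lceil c/2\rceil+2}{2}+\left(\lfloor c/2\rfloor-1\right)\left(\lfloor c/2\rfloor-2\right)$.

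Next I would expand each of the two terms separately. The binomial term is $\binom{\lceil c/2\rceil+2}{2}=\tfrac{(\lceil c/2\rceil+2)(\lceil c/2\rceil+1)}{2}=\tfrac12\lceil c/2\rceil^2+\tfrac32\lceil c/2\rceil+1$, and the product term is $(\lfloor c/2\rfloor-1)(\lfloor c/2\rfloor-2)=\lfloor c/2\rfloor^2-3\lfloor c/2\rfloor+2$. Adding these and combining the two constant terms gives exactly the middle expression in the Fact, namely $\tfrac12\lceil c/2\rceil^2+\tfrac32\lceil c/2\rceil+\lfloor c/2\rfloor^2-3\lfloor c/2\rfloor+3$, which is the first claimed equality.

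Finally I would split on the parity of $c$ to obtain the closed forms. If $c$ is even, then $\lceil c/2\rceil=\lfloor c/2\rfloor=c/2$; substituting this into the middle expression and collecting the powers of $c$ (the $c^2$-coefficients $\tfrac18+\tfrac14=\tfrac38$, the $c$-coefficients $\tfrac34-\tfrac32=-\tfrac34$, constant $3$) yields $\tfrac{3c^2}{8}-\tfrac{3c}{4}+3$. If $c$ is odd, then $\lfloor c/2\rfloor=\alpha$ and $\lceil c/2\rceil=\alpha+1$; substituting, expanding $\tfrac12(\alpha+1)^2$ and $\tfrac32(\alpha+1)$, and collecting the $\alpha^2$-terms ($\tfrac12+1=\tfrac32$), the $\alpha$-terms ($1+\tfrac32-3=-\tfrac12$), and the constants ($\tfrac12+\tfrac32+3=5$) gives $\tfrac{3\alpha^2}{2}-\tfrac{\alpha}{2}+5$. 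Every step here is an elementary algebraic identity, so there is no genuine obstacle; the only thing to handle with care is the floor/ceiling bookkeeping, which the identity $c-\lfloor c/2\rfloor=\lceil c/2\rceil$ resolves cleanly before the parity split even begins.
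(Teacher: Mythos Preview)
Your proposal is correct and is exactly the straightforward computation the paper has in mind; the paper itself declares this fact ``straightforward'' and offers no further details, so your substitution $n-k=\lceil c/2\rceil+2$ followed by expansion and the parity split is precisely what is intended.
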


\begin{fact}
$f_{\alpha-1}=\alpha$ and $f_{c-\alpha}\leq c-\alpha$. Thus, when $c$ is even, we have $f_{\frac{c}{2}}=\frac{c}{2}$.
\end{fact}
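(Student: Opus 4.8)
The plan is to establish the two displayed inequalities in turn, using throughout only three ingredients: that $G_c$ is $(c+1)$-closed (so any two nonadjacent vertices have degree sum at most $c$), that $e(G_c)>h(c+1,\alpha-1)$, and that $f_{\alpha-1}\le\alpha$ (the standing hypothesis rephrased). Neither the Hamiltonicity nor the non-Hamiltonian-connectedness of $G_c$ is needed here. Write $\beta:=c-\alpha=\lceil c/2\rceil$, so that $\alpha+\beta=c$ and $\beta\in\{\alpha,\alpha+1\}$.

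First I would prove $f_{\alpha-1}=\alpha$. Since $f_{\alpha-1}\le\alpha$ already holds, I only need to exclude $f_{\alpha-1}\le\alpha-1$. In that case $G_c$ has at least $\alpha-1$ vertices of degree at most $\alpha-1$, so Fact 1 (with $t=r=\alpha-1$) gives $e(G_c)\le(\alpha-1)^2+\binom{c-\alpha+1}{2}$. Subtracting $h(c+1,\alpha-1)=\binom{c-\alpha+2}{2}+(\alpha-1)(\alpha-2)$ leaves $(\alpha-1)-(c-\alpha+1)=2\alpha-c-2\le-2<0$, using $2\alpha\le c$; thus $e(G_c)<h(c+1,\alpha-1)$, a contradiction. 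Hence $f_{\alpha-1}=\alpha$; in particular $G_c$ has a vertex of degree exactly $\alpha$ and at most $\alpha-2$ vertices of degree at most $\alpha-1$.

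Next I would prove $f_{c-\alpha}\le c-\alpha$ by contradiction, assuming $f_\beta\ge\beta+1$ (recall $f_{c-\alpha}=f_\beta$). Put $Q:=\{v\in V(G_c):d(v)\ge\beta+1\}$ and $U:=V(G_c)\setminus Q$. Two vertices of $Q$ have degree sum at least $2\beta+2=2\lceil c/2\rceil+2\ge c+2$, so $(c+1)$-closedness forces them to be adjacent; hence $Q$ is a clique, and $|Q|\ge\alpha+1$ because the $\alpha+1$ vertices of largest degree all have degree at least $f_\beta\ge\beta+1$. The crucial observation is that each $a\in U$ either has a nonneighbour $q\in Q$, and then $d(a)\le c-d(q)\le c-\beta-1=\alpha-1$, or else $N(a)\supseteq Q$, and then $d(a)\ge|Q|\ge\alpha+1$; since $a\in U$ forces $d(a)\le\beta$, the second alternative can occur only when $c$ is odd, in which case it forces $d(a)=\alpha+1$, $N(a)=Q$, and $|Q|=\alpha+1$. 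Consequently, if $c$ is even, or $c$ is odd with $|Q|\ge\alpha+2$, then every vertex of $U$ has degree at most $\alpha-1$ and every vertex of $Q$ has degree at least $\beta+1\ge\alpha+1$, so $G_c$ has no vertex of degree exactly $\alpha$, contradicting $f_{\alpha-1}=\alpha$. If instead $c$ is odd with $|Q|=\alpha+1$ (so $|U|=\alpha$), then any two vertices of $U$ joined to all of $Q$ have neighbourhood exactly $Q$, hence are nonadjacent with degree sum $2\alpha+2>c$, impossible in a $(c+1)$-closed graph; so at most one such vertex exists, leaving at least $\alpha-1$ vertices of $U$ of degree at most $\alpha-1$, again contradicting $f_{\alpha-1}=\alpha$. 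This proves $f_{c-\alpha}\le c-\alpha$. The final clause is then immediate: when $c$ is even, $c-\alpha=\alpha=c/2$, so $c/2\ge f_{c/2}=f_{c-\alpha}\ge f_{\alpha-1}=c/2$, whence $f_{c/2}=c/2$.

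The step I expect to be most delicate is the case analysis on $|Q|$ in the odd case: the argument turns on recognizing that a vertex of $U$ completely joined to the clique $Q$ is extremely rigid — its neighbourhood is exactly $Q$, its degree is exactly $\alpha+1$, and its mere existence pins $|Q|$ to $\alpha+1$ — and that $(c+1)$-closedness then forbids a second such vertex. This is precisely what prevents the near-extremal shape of a clique $K_{\alpha+1}$ with several extra vertices attached to all of it from surviving; everything else is routine arithmetic with binomial coefficients together with the monotonicity of the degree sequence.
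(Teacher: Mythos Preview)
Your proof is correct. The first half (showing $f_{\alpha-1}=\alpha$) matches the paper's argument exactly, just with the arithmetic organized differently.

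For the second half ($f_{c-\alpha}\le c-\alpha$), your argument and the paper's rest on the same closure observation, but the paper's execution is considerably shorter. Rather than partitioning $V(G_c)$ into $Q$ and $U$ and running a case analysis on $|Q|$, the paper simply focuses on the single vertex $u_{\alpha-1}$: assuming $f_{c-\alpha}\ge c-\alpha+1$, every vertex $u_j$ with $j\ge c-\alpha$ satisfies $d(u_j)+d(u_{\alpha-1})\ge (c-\alpha+1)+\alpha=c+1$, so by closedness $u_{\alpha-1}$ is adjacent to all of $u_{c-\alpha},\ldots,u_c$, forcing $d(u_{\alpha-1})\ge\alpha+1$ and contradicting $f_{\alpha-1}=\alpha$ directly. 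Your structural dichotomy for vertices of $U$ is essentially this same observation applied globally, and your odd-case argument (bounding the number of vertices fully joined to $Q$, then counting vertices of degree $\le\alpha-1$) is a detour you could have avoided by applying your own dichotomy to the specific vertex $u_{\alpha-1}\in U$: it cannot be fully joined to $Q$ (degree would be $\ge\alpha+1$) and cannot have a nonneighbour in $Q$ (degree would be $\le\alpha-1$). What your more structural framing buys is a clearer picture of why the obstruction is global, at the cost of one unnecessary case.
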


\begin{proof}
Suppose for a contradiction that $f_{\alpha-1}\leq \alpha-1$.
By Facts 1 and 2, we have the following: if $c$ is even, then
$e\leq ({\frac{c}{2}}-1)^2+\binom{\frac{c}{2}+1}{2}=\frac{3c^2}{8}-\frac{3c}{4}+1<h(1+c,\lfloor\frac{c}{2}\rfloor-1)<e$,
a contradiction; if $c$ is odd, then
$e\leq ({\lfloor\frac{c}{2}\rfloor}-1)^2+\binom{\lceil\frac{c}{2}\rceil+1}{2}=\frac{3\alpha^2}{2}-\frac{\alpha}{2}+2<h(1+c,\lfloor\frac{c}{2}\rfloor-1)<e$,
also a contradiction. Thus, $f_{\alpha-1}=\alpha$.

Suppose that $f_{c-\alpha}\geq c-\alpha+1$.
First assume that $c$ is even. As $f_{\alpha}=f_{c-\alpha}\geq \alpha+1$,
we have $d(u_{\alpha})+d(u_{\alpha-1})\geq c+1$,
so $u_{\alpha-1}$ is adjacent to all vertices in $\{u_{\alpha},u_{\alpha+1},\ldots,u_c\}$.
This implies that $d(u_{\alpha-1})=f_{\alpha-1}\geq \alpha+1$, a contradiction.
Now consider that $c$ is odd. As $f_{\alpha+1}\geq \alpha+2$, $u_{\alpha-1}$
is adjacent to all vertices in $\{u_{\alpha+1},u_{\alpha+2},\ldots,u_{2\alpha+1}\}$,
thus $f_{\alpha-1}=d(u_{\alpha-1})\geq \alpha+1$, again a contradiction. This finishes the proof.
\end{proof}

\begin{fact}
For every vertex $u\in V(G_c)$, either $d(u)=c-1$ or $2\leq d(u)\leq c-3$.
\end{fact}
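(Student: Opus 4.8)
The plan is to argue by contradiction: suppose some vertex $u$ of $G_c$ has $d(u)=c-2$, and deduce $e(G_c)\le h(c+1,\lfloor\frac{c}{2}\rfloor-1)$, contradicting the hypothesis $e(G_c)>h(c+1,\lfloor\frac{c}{2}\rfloor-1)$. Write $\alpha:=\lfloor\frac{c}{2}\rfloor$ and let $f_1\le f_2\le\cdots\le f_c$ be the degree sequence of $G_c$. The first step is to extract the structure forced by a vertex of degree $c-2$. Since $|V(G_c)|=c$, the vertex $u$ has exactly one non-neighbour, say $v$. As $G_c$ is $(c+1)$-closed and $uv\notin E(G_c)$, we must have $d(u)+d(v)\le c$, so $d(v)\le 2$; since $G_c$ is Hamiltonian, $\delta(G_c)\ge 2$, hence $d(v)=2$ and so $f_1=2$. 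Writing $N_{G_c}(v)=\{a,b\}$ with $a\ne b$, every vertex of degree $c-1$ in $G_c$ is adjacent to $v$ and therefore lies in $\{a,b\}$; so at most two vertices of $G_c$ have degree $c-1$, i.e.\ $f_{c-2}\le c-2$. (Also $u\notin\{v,a,b\}$, since $u$ is non-adjacent to $v$ while $a$ and $b$ are not.)

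The second step is to estimate $2e(G_c)=\sum_{i=1}^{c}f_i$ by splitting the degree sequence into blocks and feeding in Fact 3. We already have $f_1=2$ and $f_{c-1},f_c\le c-1$, and by Fact 3, $f_{\alpha-1}=\alpha$, with in addition $f_\alpha\le\alpha$ when $c$ is even and $f_{\alpha+1}\le\alpha+1$ when $c$ is odd. When $c$ is even I would use $f_2,\dots,f_\alpha\le\alpha$ and $f_{\alpha+1},\dots,f_{c-2}\le c-2$ to obtain
\[
2e(G_c)\le 2+(\alpha-1)\alpha+(\alpha-2)(c-2)+2(c-1);
\]
substituting $c=2\alpha$ shows the right-hand side equals $2h(c+1,\alpha-1)-2$ (using the formula for $h$ in Fact 2). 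When $c$ is odd I would instead use $f_2,\dots,f_{\alpha-1}\le\alpha$, $f_\alpha,f_{\alpha+1}\le\alpha+1$ and $f_{\alpha+2},\dots,f_{c-2}\le c-2$ to obtain
\[
2e(G_c)\le 2+(\alpha-2)\alpha+2(\alpha+1)+(\alpha-2)(c-2)+2(c-1);
\]
substituting $c=2\alpha+1$ shows the right-hand side equals $2h(c+1,\alpha-1)-4$. In both cases $e(G_c)<h(c+1,\alpha-1)$, the desired contradiction, which proves Fact 4.

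The argument is largely bookkeeping, so the points to watch are: checking that the blocks listed in each parity genuinely partition $\{1,\dots,c\}$; confirming that $d(u)=c-2$ lands in the top block (it does, since $c-2>\alpha$ for even $c\ge 6$ and $c-2>\alpha+1$ for odd $c\ge 7$, while $c-2\le c-2$ trivially fits that block's bound); and noting that all blocks stay non-empty down to $c=6,7$. The two features that make the count sharp enough to beat $h$ are the $(c+1)$-closedness---which is precisely what upgrades $d(u)=c-2$ to $d(v)=2$ and hence gives $f_1=2$, without which the estimate already fails for $c\ge 8$---and Fact 3, which caps the lower portion of the degree sequence at about $\alpha$ rather than at $c-2$. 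I expect this last point (that one genuinely needs both inputs, not just crude degree bounds) to be the only real subtlety; everything else is elementary.
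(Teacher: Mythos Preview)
Your proof is correct and follows essentially the same approach as the paper: both argue by contradiction, use the $(c+1)$-closedness to force $d(v)=2$ for the unique non-neighbour $v$ of $u$, and then combine this with Fact~3 to bound $e(G_c)$ below $h(c+1,\lfloor c/2\rfloor-1)$. The only cosmetic difference is that the paper bounds the edges among the top $\lceil c/2\rceil$ vertices by a clique count $\binom{\lceil c/2\rceil}{2}$, whereas you bound the full degree sum directly and use the extra observation $f_{c-2}\le c-2$; the resulting numerical bounds coincide (even $c$) or yours is one tighter (odd $c$).
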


\begin{proof}
Suppose for a contradiction that there exists a vertex $u$ with $d(u)=c-2$.
Then there exists a vertex $v$ not adjacent to $u$.
As $G_c$ is $(c+1)$-closed, we have $d(u)+d(v)\leq c$, implying that $d(v)\leq 2$.
Since $G_c$ is Hamiltonian, $d(v)=2$. When $c$ is even, we have
$e\leq 2+\sum_{j=2}^{\frac{c}{2}}f_j+e(G[\{u_{\frac{c}{2}+1},\ldots,u_c\}])\leq 2+(\frac{c}{2}-1)\frac{c}{2}+\binom{\frac{c}{2}}{2}
=\frac{3c^2}{8}-\frac{3c}{4}+2<h(1+c,\lfloor\frac{c}{2}\rfloor-1)<e$, a contradiction.
If $c$ is odd, then $e\leq 2+\sum_{j=2}^{\alpha+1}f_j+e(G[\{u_{\alpha+2},\ldots,u_c\}])
\leq 2+(\alpha-2)\alpha+2(\alpha+1)+\binom{\alpha}{2}
=\frac{3\alpha^2}{2}-\frac{\alpha}{2}+4<e$, a contradiction. This proves Fact 4.
\end{proof}

\begin{fact}
If it exists, let $u\in B$ be the vertex such that $d(u):=c-i$ is maximum over all vertices in $B$
with degree at most $c-2$, where $3\leq i\leq c-\alpha-1$.
If $|B|\geq i$, then $e(G_c)\leq i^2-\frac{i}{2}(|B|+\alpha+2)+\frac{\alpha}{2}(c+1)+\frac{|B|}{2}(c-\alpha)$.
\end{fact}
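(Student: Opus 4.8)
\emph{Proof plan for Fact 5.} Write $d(u)=c-i$; since $d(u)\le c-2$, Fact 4 forces $d(u)\le c-3$, so $i\ge 3$, and $u\in B$ gives $d(u)\ge\alpha+1$, so $i\le c-\alpha-1$, which accounts for the stated range. Put $b:=|B|$. The plan is to bound $2e(G_c)=\sum_{v\in V(G_c)}d(v)$ by splitting the vertex set into $A$ and $B$ and controlling degrees in each part, the two gains over a trivial count coming entirely from the $(c+1)$-closure.

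First I would extract the structural consequences of closure. Vertex $u$ is non-adjacent to exactly $i-1$ vertices; none of these can lie in $B$ since $B$ is a clique, so all $i-1$ non-neighbours of $u$ lie in $A$, and for each such $w$ the closure condition $d(u)+d(w)\le c$ gives $d(w)\le i$. In particular $|A|\ge i-1$, hence $b\le c-i+1$, and among the $|A|=c-b$ vertices of $A$ at least $i-1$ have degree at most $i$ while the remaining $c-b-i+1$ have degree at most $\alpha$ by definition of $A$; this yields $\sum_{v\in A}d(v)\le i(i-1)+(c-b-i+1)\alpha$.

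Next I would bound the degree sum over $B$. Let $b_1$ be the number of vertices of $B$ of degree $c-1$. By the maximality of $d(u)$ among the non-dominating vertices of $B$, together with Fact 4, every non-dominating vertex of $B$ has degree at most $c-i$, so $\sum_{v\in B}d(v)\le b_1(c-1)+(b-b_1)(c-i)=b(c-i)+b_1(i-1)$. The key observation is $b_1\le i$: fixing any non-neighbour $w$ of $u$ (which exists as $i\ge 3$), $w$ is adjacent to every dominating vertex, so $b_1\le d(w)\le i$. Hence $\sum_{v\in B}d(v)\le b(c-i)+i(i-1)$; here the hypothesis $|B|\ge i$ is used to keep the splitting of $B$ into dominating and non-dominating vertices consistent.

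Adding the two estimates gives $2e(G_c)\le 2i(i-1)+(c-b-i+1)\alpha+b(c-i)$, and a routine rearrangement checks that this equals $2\bigl(i^2-\tfrac{i}{2}(b+\alpha+2)+\tfrac{\alpha}{2}(c+1)+\tfrac{b}{2}(c-\alpha)\bigr)$, which is the claimed bound. I do not expect a genuine obstacle here: the only points requiring care are verifying that all non-neighbours of $u$ lie in $A$ with degree at most $i$ (from $(c+1)$-closedness) and that these same vertices cap $b_1$ by $i$; the rest is bookkeeping.
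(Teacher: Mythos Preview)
Your argument is correct and follows essentially the same route as the paper: both proofs identify the $i-1$ non-neighbours of $u$ in $A$ (each of degree at most $i$ by $(c+1)$-closedness), bound the remaining $A$-vertices by $\alpha$, and then use one such low-degree vertex $w$ to cap the number of high-degree vertices in $B$ by $i$. The only cosmetic difference is that the paper partitions $B$ into neighbours and non-neighbours of a fixed $w\in A'$ (getting $|B\setminus B'|\le i$), whereas you partition $B$ into dominating and non-dominating vertices (getting $b_1\le i$); since the dominating vertices form a subset of $B\setminus B'$, the two bounds coincide and the final arithmetic is identical. One small remark: your explanation of where $|B|\ge i$ enters is not quite right---your degree-sum bound $\sum_{v\in B}d(v)\le b(c-i)+i(i-1)$ holds regardless of whether $b\ge i$, so the hypothesis is not actually used at the point you indicate (nor is it genuinely needed in the paper's proof); but this does not affect the validity of your argument.
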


\begin{proof}
Since $B$ induces a clique,
all $i-1$ non-neighbors of $u$ are in $A$.
Let $A'$ be the subset of $A$ consisting of such $i-1$ vertices.
Since $G_c$ is $(c+1)$-closed, every vertex $x\in A'$ has $2\leq d(x)\leq i$.
Choose a fixed vertex $x\in A'$ and let $B'\subseteq B$ be the set of all non-neighbors of $x$ in $B$.
Then $|B'|\geq |B|-i$, and for any $y\in B'$, we have $d(y)\leq c-d(x)\leq c-2$.
By Fact 4, we see that any $y\in B'$ has degree at most $c-3$,
therefore, by the choice of $u$, $d(y)\leq d(u)=c-i$.
Now we get that
\begin{align*}
e(G_c)&=\frac{1}{2}\sum_{i=1}^{c}f_i=\frac{1}{2}(\sum_{v\in A'}d(v)+\sum_{v\in A\backslash A'}d(v)+\sum_{v\in B'}d(v)+\sum_{v\in B\backslash B'}d(v))\\
&\leq \frac{1}{2}\left((i-1)i+(|A|-i+1)\alpha+|B'|(c-i)+(|B|-|B'|)(c-1)\right)\\
&\leq i^2-i(|B|+\alpha+2)/2+\alpha(c+1)/2+|B|(c-\alpha)/2,
\end{align*}
where the last inequality holds because $|A|=c-|B|$ and $|B'|\geq |B|-i$.
\end{proof}

We divide the rest of the proof into two cases depending on the parity of $c$.

\begin{case}
$c$ is even.
\end{case}

In this case, we have $f_{\frac{c}{2}-1}=f_{\frac{c}{2}}=\frac{c}{2}$.
First we claim that $f_{\frac{c}{2}+2}\geq \frac{c}{2}+1$.
Otherwise, $f_{\frac{c}{2}+2}=\frac{c}{2}$, then $|A|\geq \frac{c}{2}+2$ and $|B|\leq\frac{c}{2}-2$.
This implies that $e=\frac{1}{2}\sum_{j=1}^{c}f_j\leq \frac{1}{2}((\frac{c}{2}+2)\frac{c}{2}+(\frac{c}{2}-2)(c-1))=\frac{3c^2}{8}-\frac{3c}{4}+1<h(1+c,\lfloor\frac{c}{2}\rfloor-1)<e$,
a contradiction.

Next we show that $f_{\frac{c}{2}+1}\geq \frac{c}{2}+1$.
Suppose not. Then we have $f_{\frac{c}{2}+1}=\frac{c}{2}$ and $|B|=\frac{c}{2}-1$.
Suppose there exists some vertex in $B$ with degree at most $c-2$.
By Fact 5, there exists some $3\leq i\leq \frac{c}{2}-1$ such that
$e\leq i^2-\frac{i}{2}(c+1)+\frac{3c^2}{8}
\leq \frac{3c^2}{8}-\frac{3c}{4}+3<e$, a contradiction.
To see why the second inequality holds, let $f(i):=i^2-\frac{i}{2}(c+1)$;
then we have $f(i)\leq \max\{f(3),f(\frac{c}{2}-1)\}$
and it is routine to check that as $c\geq 6$, this is at most $-\frac{3c}{4}+\frac{3}{2}$.
Hence we may assume that every vertex in $B$ has degree $c-1$.
Let $H$ be the spanning subgraph of $G_c$ consisting of all edges in $E(B)\cup (A,B)$.
As $e(H)=(\frac{c}{2}+1)(\frac{c}{2}-1)+\binom{\frac{c}{2}-1}{2}=\frac{3c^2}{8}-\frac{3c}{4}$
and $e>\frac{3c^2}{8}-\frac{3c}{4}+3$, we see that $E(A)$ has at least 4 edges.
Observe that every vertex in $A$ has degree at most $c/2$ in $G_c$ and is already adjacent to the $c/2-1$ vertices in $B$.
This shows that there exists a matching of size at least 4 in $A$.
One can check that the subgraph obtained from $H$ by adding a matching
of size 3 in $A$ is already Hamiltonian-connected.
So is the host graph $G_c$. But this is a contradiction.
This proves that $f_{\frac{c}{2}+1}\geq \frac{c}{2}+1$ and thus $|A|=|B|=\frac{c}{2}$.

Lastly, we show that any vertex $u\in B$ has degree $c-1$.
Suppose for a contradiction that there exists a vertex $u\in V(B)$ with $d(u)\leq c-2$.
By Fact 5, there exists some $3\leq i\leq \frac{c}{2}-1$ such that
$e\leq i^2-\frac{i}{2}(c+2)+\frac{3c^2}{8}+\frac{c}{4}\leq \frac{3c^2}{8}-\frac{3c}{4}+3<e$,
where the second inequality can be verified similarly as above for $c\geq 6$, a contradiction.

Now, we see that $B$ induces a clique $K_{\frac{c}{2}}$ and $(A,B)$ is complete bipartite.
As every vertex in $A$ has degree at most $\frac{c}{2}$, we see that $E(A)$ contains no edge.
This shows $G_c=W_{c,\frac{c}{2},c}$, completing the proof of Case 1.

\begin{case}
$c$ is odd.
\end{case}

Let $H$ be the spanning subgraph of $G_c$ consisting of all edges in $E(B)\cup (A,B)$.
In this case, $c=2\alpha+1$, where $\alpha\geq 3$.
By Facts 1 and 2, we have
$\frac{3\alpha^2}{2}-\frac{\alpha}{2}+5<e\leq (\alpha-1)\alpha+\binom{\alpha+2}{2}=\frac{3\alpha^2}{2}+\frac{\alpha}{2}+1$;
by Fact 3, $f_{\alpha-1}=\alpha$ and $f_{\alpha+1}\leq \alpha+1.$

We show in a sequence of claims that $f_\alpha =f_{\alpha+1}=\alpha+1$.
First we show $f_{\alpha+3}\geq \alpha+1$.
Otherwise, $f_{\alpha+3}\leq\alpha$. Then $|A|\geq \alpha+3$ and $|B|\leq \alpha-2$,
from which we derive a contradiction that $e\leq \frac{1}{2}((\alpha+3)\alpha)+(\alpha-2)(2\alpha))=\frac{3\alpha^2-\alpha}{2}<e.$

Next we show that $f_{\alpha+2}\geq \alpha+1$.
Suppose not. Then $f_{\alpha-1}=f_{\alpha}=f_{\alpha+1}=f_{\alpha+2}=\alpha$.
So $B=\{u_{\alpha+3},\ldots,u_{2\alpha+1}\}$ and $|B|=\alpha-1$.
Suppose that there are vertices in $B$ with degree at most $c-2$.
Let $u\in B$ be such a vertex with maximum degree $d(u)=c-i$, where $3\leq i\leq \alpha$.
If $3\leq i\leq \alpha-1$, then by Fact 5, we have
$e\leq i^2-\frac{i}{2}(2\alpha+1)+(\alpha^2+\alpha)+\frac{\alpha^2-1}{2}\leq \frac{3\alpha^2}{2}-\frac{\alpha}{2}+5<e$,
where the second inequality holds since $i^2-\frac{i}{2}(2\alpha+1)$ takes the maximum at $i=3$ or $\alpha-1$.
This is a contradiction.
So $i=\alpha$, that is, $d(u)=\alpha+1$. Then $e\leq \frac12((\alpha+2)\alpha+(\alpha+1)+(\alpha-2)(2\alpha))=\frac12(3\alpha^2-\alpha+1)
<\frac{3\alpha^2}{2}-\frac{\alpha}{2}+5<e$, again a contradiction.
Now we may assume that every vertex in $B$ has degree $2\alpha$.
So $(A,B)$ is complete bipartite and thus every vertex in $A$ has degree $\alpha-1$ in the subgraph $H$ defined above.
By the definition of $A$, every vertex in $A$ has degree at most $\alpha$ in $G_c$.
This shows that $E(A)$ must be a matching (if not empty).
Since $e(H)=\binom{\alpha-1}{2}+(\alpha-1)(\alpha+2)=\frac{3\alpha^2}{2}-\frac{\alpha}{2}-1$ and $e>\frac{3\alpha^2}{2}-\frac{\alpha}{2}+5$,
we see that $E(A)$ forms a matching of size at least 7.
One can check that the subgraph obtained from $H$ by adding a matching of size 4 in $A$ is Hamiltonian-connected,
so the host graph $G_c$ is also Hamiltonian-connected, a contradiction. This proves $f_{\alpha+2}\geq \alpha+1$.

We also claim that $f_{\alpha+1}=\alpha+1$.
Suppose not. Then we have $f_{\alpha-1}=f_{\alpha}=f_{\alpha+1}=\alpha$ and $B=\{u_{\alpha+2},\ldots,u_{2\alpha+1}\}$.
So $|B|=\alpha$.
First suppose that every vertex $b\in B$ has degree $c-1$.
Then the subgraph $H$ is just a vertex-disjoint union of a clique $K_{\alpha}$ and an independent set of size $\alpha+1$,
with a complete bipartite subgraph between the two parts.
So $\frac{3\alpha^2}{2}+\frac{\alpha}{2}+1\geq e\geq e(H)=\alpha(\alpha+1)+\binom{\alpha}{2}=\frac{3\alpha^2}{2}+\frac{\alpha}{2}$,
which implies that $E(A)$ has at most one edge.
Thus, $G_c=H$ or $W_{c,\lfloor\frac{c}{2}\rfloor,c}$. But for the former case, $G_c=H$ is not Hamiltonian.
Hence in this case, we prove $G_c=W_{c,\lfloor\frac{c}{2}\rfloor,c}$.
Now we may assume that there are vertices in $B$ of degree at most $c-2$.
Let $u\in B$ be such a vertex with maximum degree $d(u)=c-i$, where $3\leq i\leq \alpha$.
If $3\leq i\leq \alpha-1$, by Fact 5 we have
$e\leq i^2-i(\alpha+1)+\frac{3}{2}(\alpha^2+\alpha)\leq \frac{3\alpha^2}{2}-\frac{\alpha}{2}+5<e$,
where the second inequality holds since $i^2-i(\alpha+1)$ takes the maximum at $i=3$ or $\alpha-1$.
So we must have $i=\alpha$. This shows that for any $b\in B$, either $d(b)=2\alpha$ or $d(b)=\alpha+1$.
If there exist at least two vertices in $B$ of degree $\alpha+1$, then $e\leq \frac{1}{2}((\alpha+1)\alpha+2(\alpha+1)+(\alpha-2)(2\alpha))=\frac{3\alpha^2}{2}-\frac{\alpha}{2}+1<e$,
a contradiction. So $B$ contains $\alpha-1$ vertices of degree $2\alpha$ and a vertex say $u$ of degree $\alpha+1$.
Every $x\in A$ has at least $\alpha-1$ neighbors in $B$; this shows that $E(A)$ is a matching.
Note that the vertex $u$ has two neighbors in $A$.
So $e(H)=\binom{\alpha}{2}+2+(\alpha-1)(\alpha+1)=\frac{3\alpha^2}{2}-\frac{\alpha}{2}+1$.
This, together with $e>\frac{3\alpha^2}{2}-\frac{\alpha}{2}+5$, shows that
$E(A)$ is a matching of size at least 5.
One can check that $H$ plus one additional edge in $A$ is already Hamiltonian-connected.
Therefore, $G_c$ is Hamiltonian-connected, a contradiction.
This proves $f_{\alpha+1}=\alpha+1$.

We now claim that $f_\alpha=\alpha+1$.
Suppose not. Then $f_\alpha=\alpha$. As $f_{\alpha+1}=\alpha+1$, it follows that $B=\{u_{\alpha+1},\ldots,u_{2\alpha+1}\}$.
So $|A|=\alpha$ and $|B|=\alpha+1$.
Since $d(u_{\alpha+1})=\alpha+1$ and $B$ is a clique, $u_{\alpha+1}$ has only one neighbor in $A$,
say $x$. If every vertex in $B\backslash \{u_{\alpha+1}\}$ has degree $2\alpha$,
then $d(x)\geq |B|=\alpha+1$, contradicting the fact that $x\in A$.
Thus, there exist some vertices in $B\backslash \{u_{\alpha+1}\}$ of degree at most $2\alpha-1$.
Among all such vertices, choose $u\in B\backslash \{u_{\alpha+1}\}$ such that $d(u)=2\alpha-i$ is maximum.
By Fact 4, we have $2\leq i\leq \alpha-1$.
Suppose that $2\leq i\leq \alpha-2$.
By the similar argument as in Fact 5, there exists $A'=A\backslash N(u)$
with $|A'|=i$ such that $d(x)\leq i+1$ for any $x\in A'$;
and there also exists $B'\subseteq B$ with $|B'|=\alpha-i$
such that $d(y)\leq 2\alpha-i$ for any $y\in B'$ (except the vertex $u_{\alpha+1}$).
Notice that in this case, $|B'|=\alpha-i\geq 2$.
So $e\leq \frac{1}{2}(i(i+1)+(\alpha-i)\alpha+(\alpha+1)+(\alpha-i-1)(2\alpha-i)+(i+1)(2\alpha))
=i^2-(\alpha-1)i+\frac{3\alpha^2+\alpha+1}{2}\leq \frac{3\alpha^2}{2}-\frac{\alpha}{2}+5<e$,
where the second inequality holds as the maximum of $i^2-(\alpha-1)i$ occurs at either $i=2$ or $i=\alpha-2$.
This shows that for any $u\in B\backslash \{u_{\alpha+1}\}$, $d(u)=\alpha+1$ or $2\alpha$.
If there are two vertices in $B\backslash \{u_{\alpha+1}\}$ of degree $\alpha+1$,
then $e\leq \frac{1}{2}(\alpha^2+3(\alpha+1)+(\alpha-2)(2\alpha))=\frac{1}{2}(3\alpha^2-\alpha+3)<e$, a contradiction.
Hence, there exists only one vertex $u\in B\backslash \{u_{\alpha+1}\}$ with $d(u)=\alpha+1$.
Then each of $\{u_{\alpha+1},u\}$ has a neighbor in $A$, say $x, x'$, respectively.
We see that $x,x'$ are distinct (as otherwise $x=x'$ is adjacent to all vertices in $B$ and then $d(x)\geq \alpha+1$).
It is easy to see that $x,x'$ have degree $\alpha$ in $H$, while all other vertices in $A$ have degree $\alpha-1$ in $H$.
So $E(A)$ is a matching (if not empty).
Since $e(H)=\binom{\alpha+1}{2}+2+(\alpha-1)\alpha=\frac{3\alpha^2}{2}-\frac{\alpha}{2}+2$
and $e>\frac{3\alpha^2}{2}-\frac{\alpha}{2}+5$, $E(A)$ is a matching of size at least 4.
We can verify that $H$ plus any edge in $A$ (which is independent of $x,x'$) is Hamiltonian-connected;
so $G_c$ is Hamiltonian-connected as well, a contradiction. This proves that $f_\alpha=\alpha+1$.
Note that $|A|=\alpha-1$ and $|B|=\alpha+2$.

Lastly we claim that any vertex in $B\backslash \{u_{\alpha},u_{\alpha+1}\}$ has degree $2\alpha$ in $G_c$.
Suppose this is not true. Then there exists a vertex $u\in B\backslash \{u_{\alpha},u_{\alpha+1}\}$ with $d(u)=2\alpha-i$,
where $2\leq i\leq \alpha-1$, and subject to this, we choose $d(u)$ to be maximum.
Similarly as above, there is a subset $A'=A\backslash N(u)$ with $|A'|=i$ such that each vertex in $A'$ is of degree at most $i+1$.
Take any $x\in A'$. Then, there exists $B'\subseteq B\backslash N(x)$ with $|B'|=|B|-(i+1)=\alpha-i+1$
such that for any $y\in B'$, $d(y)\leq 2\alpha-i$.
If $2\leq i\leq\alpha-3$, then $|B'|=\alpha-i+1\geq 2$ and possibly $u_{\alpha},u_{\alpha+1}$ are in $B'$,
thus we have
$e\leq \frac{1}{2}(i(i+1)+(\alpha-1-i)\alpha+2(\alpha+1)+(\alpha-i-1)(2\alpha-i)+(i+1)(2\alpha))
=i^2-(\alpha-1)i+\frac{3\alpha^2+\alpha+2}{2}\leq \frac{3\alpha^2}{2}-\frac{\alpha}{2}+5<e$, a contradiction.
If $i=\alpha-2$, then $d(u)=\alpha+2$, so
$e\leq \frac{1}{2}(i(i+1)+(\alpha-1-i)\alpha+2(\alpha+1)+(\alpha+2)+(\alpha-1)(2\alpha))
=\frac{3\alpha^2}{2}-\frac{\alpha}{2}+3<e$, also a contradiction.
Hence, $i$ must be $\alpha-1$.
This implies that every vertex in $B\backslash \{u_{\alpha},u_{\alpha+1}\}$ has degree $2\alpha$ or $\alpha+1$.
If there are at least two vertices in $B\backslash \{u_{\alpha},u_{\alpha+1}\}$ of degree $\alpha+1$,
then $e\leq \frac{1}{2}((\alpha-1)\alpha+4(\alpha+1)+(\alpha-2)(2\alpha))=\frac{3\alpha^2}{2}-\frac{\alpha}{2}+2<e$.
So $B$ has exactly three vertices ($u_{\alpha},u_{\alpha+1}$ and say $u$) of degree $\alpha+1$,
while other vertices in $B$ have degree $2\alpha$.
Note that $(A, B-\{u_{\alpha},u_{\alpha+1}, u\})$ forms a complete bipartite $K_{\alpha-1,\alpha-1}$.
This, together with the fact that $B$ induces a clique, shows that $G_c$ is Hamiltonian-connected,
finishing the proof of this claim.

Now we see that $B$ induces a clique $K_{\alpha+2}$, $u_{\alpha}$ and $u_{\alpha+1}$ have no neighbors in $A$, and $(A,B-\{u_{\alpha},u_{\alpha+1}\})$ is complete bipartite.
So every vertex in $A$ has $\alpha$ neighbors in $B$, which in turn shows that $A$ is stable.
We have proved that $G_c=W_{c,\lfloor\frac{c}{2}\rfloor,c}$.
The proof of Lemma \ref{Le:c/2-1:c/2} is completed.
\end{proof}

\subsection{Proof of Lemma \ref{lem:CliqNum}: an estimate of the clique number}\label{subsec:5.2}
\noindent {\bf Lemma \ref{lem:CliqNum}.}
{\em
Let $G$ be a 2-connected graph on $n$ vertices and
$C$ be a locally maximal cycle in $G$ of length $c\leq n-1$.
Suppose that $e(G)>\max\left\{f\left(n,k+1,c\right),f\left(n,\left\lfloor\frac{c}{2}\right\rfloor-1,c\right)\right\}$.
If $G[C]$ contains a subset $S$ of $s-1$ vertices of degree at most $s$ in $G[C]$
for some integer $2\leq s\leq \lfloor\frac{c}{2}\rfloor-1$ such that $G[C]-S$ is a clique,
then $2\leq s\leq k$ and the clique number of $G[C]$ is at least $c-k+1$.
}

\medskip

To prove this, we will need some ingredient in the proof of \cite{FLW04} by Fan, Lv and Wang.
An important tool in \cite{FLW04} is an edge-switching technique, which we introduce as follows.
Let $xy$ be an edge in a graph $G$ and let $A\subset N(y)\backslash (N(x)\cup \{x\})$.
The \emph{edge-switching graph} of $G$ with respect to $A$ (from $y$ to $x$),
denoted by $G[y\rightarrow x; A]$, is the graph obtained from $G$
by deleting all the edges $yz$, $z\in A$ and adding all the edges $xz$,
$z\in A$.

\begin{lem}[Lemma 2.4, \cite{FLW04}]\label{Le:LMC-oper}
Let $G$ be a 2-connected graph, $C$ a locally maximal cycle in $G$, and
$R$ a component in $G-C$. Then one of the following holds:\\
(i) $N_R(x)=V(R)$ for every vertex $x\in N_C(R)$;\\
(ii) There exists a vertex $y\in N_R(x)$ for some $x\in N_C(R)$
and a nonempty set $A\subseteq N_R(y)\backslash (N_R(x)\cup \{x\})$ such that
\begin{align*}
G'&=\left\{\begin{array}{ll}
      G[y\rightarrow x; A] &\quad{\text{ if } G[y\rightarrow x; A] \text{ is 2-connected,}}\\
      G[y\rightarrow x; A]+yx' &\quad{\text{ otherwise}}.
    \end{array}\right.
\end{align*}
is 2-connected, where $x'\in N_C(R)\backslash \{x\}$, and $C$ remains a locally maximal cycle
in $G'$.
\end{lem}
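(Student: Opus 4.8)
\medskip
\noindent The plan is to assume that alternative (i) fails and then exhibit a switch with the two required properties. If (i) fails there is $x\in N_C(R)$ with $N_R(x)\neq V(R)$; writing $U:=V(R)\setminus N_R(x)\neq\emptyset$ and using that $G[R]$ is connected, pick an edge $yz\in E(G[R])$ with $y\in N_R(x)$ and $z\in U$, so that $A:=N_R(y)\setminus(N_R(x)\cup\{x\})=N_R(y)\cap U$ is nonempty (it contains $z$). In executing the argument I would retain the freedom to sharpen this choice — for instance picking, among all admissible triples, one with $|N_R(x)|$ maximum, or passing to a carefully chosen nonempty subset of $A$ — so that the verification below goes through. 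Put $G_1:=G[y\rightarrow x;A]$, and let $x'$ be any vertex of $N_C(R)\setminus\{x\}$; this set is nonempty since $G$ is $2$-connected and $R$ is a proper nonempty component of $G-C$, forcing $|N_C(R)|\geq 2$. Finally set $G':=G_1$ if $G_1$ is $2$-connected and $G':=G_1+yx'$ otherwise.

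The first thing to check is that $G'$ is $2$-connected. The switch leaves all degrees unchanged except that $d(x)$ grows and $d(y)$ drops by $|A|$ (each $z\in A$ exchanges the neighbour $y$ for the neighbour $x$), and $d_{G_1}(y)=|N_R(y)\cap N_R(x)|+|N_C(y)|+1\geq 1$ because the edge $xy$ survives. Using $2$-connectivity of $G$ one argues that the only possible obstruction to $2$-connectivity of $G_1$ is localised at $y$ — a deleted vertex can disconnect $G_1$ only by detaching a piece that hangs off $y$ through the edges $\{yz:z\in A\}$ — and that joining $y$ to a second attachment vertex $x'$ of $R$ removes it; here one uses that adding edges never destroys $2$-connectivity, that any $u$--$v$ path of $G$ through an edge $yz$ ($z\in A$) is rerouted via $x$ in $G_1$, and that $x'$ exists. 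I expect this to be routine but a little delicate, and would carry it out by comparing $u$--$v$ connectivity in $G$, $G_1$ and $G_1+yx'$.

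The heart of the proof — and the step I expect to be the genuine obstacle — is that $C$ remains a locally maximal cycle of $G'$. Note $V(G')=V(G)$, the edges of $C$ are untouched so $C$ is still a cycle, and $E(C,G'-C)=E(C,G-C)\cup\{xz:z\in A\}\cup\{yx'\}$, the last set appearing only when $yx'$ was added (and $yx'$ is itself a crossing edge, since $y\in V(R)\subseteq V(G-C)$). Since a good cycle in $G_1$ is automatically a good cycle in $G_1+yx'$, it suffices to prove that $C$ is locally maximal in $G_1+yx'$; suppose not and choose, among all cycles $D$ in $G_1+yx'$ with $|E(D)|>|E(C)|$ and $|E(D)\cap E(C,(G_1+yx')-C)|\leq 2$, one of maximum length. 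If $D$ uses no new edge then $D\subseteq G$ and $|E(D)\cap E(C,G-C)|\leq 2$, contradicting that $C$ is locally maximal in $G$. Otherwise $D$ uses a new edge, and the plan is to convert $D$ into a good cycle of $G$ by trading new edges for short detours in $G$ — a new edge $xz$ ($z\in A$) for the two $G$-edges $xy,yz$, and the new edge $yx'$ for a $(y,x')$-path running through $R$ and the attachment $r''x'$ with $r''\in N_R(x')$ — noting that, because $D$ spends at most two edges on $E(C,(G_1+yx')-C)$, these trades keep the crossing count at most two while strictly increasing the length. The one case in which the naive trade fails is when the vertex $y$ (or a vertex needed for the detour) already lies on $D$, producing only shorter sub-cycles; this is the crux. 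I would resolve it by exploiting that in $G_1$ the vertex $y$ is no longer adjacent to any vertex of $A$, so the neighbours of $y$ along $D$, when they lie in $R$, lie in $N_R(x)$ and are therefore also neighbours of $x$; combined with the extremal choice of $(x,y,A)$ and a P\'osa-type rotation of $D$, this either yields a good cycle of $G$ after all or a good cycle of $G_1+yx'$ strictly longer than $D$, contradicting the choice of $D$. Reaching a contradiction in every case shows $C$ is locally maximal in $G_1+yx'$, hence in $G'$, and together with the $2$-connectivity of $G'$ this establishes alternative (ii), completing the proof.
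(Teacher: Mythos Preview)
The paper does not prove this lemma; it is quoted verbatim as Lemma~2.4 of Fan--Lv--Wang \cite{FLW04} and used as a black box in the proof of Lemma~\ref{lem:CliqNum}. So there is no ``paper's own proof'' to compare against here.

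That said, your outline follows the natural shape such an argument must take: pick $x$ with $N_R(x)\neq V(R)$, find $y\in N_R(x)$ adjacent to some $z\notin N_R(x)$, and switch. But what you have written is a plan, not a proof. Two places are genuinely incomplete. First, your treatment of $2$-connectivity of $G_1$ is hand-waving: the assertion that ``the only possible obstruction \dots is localised at $y$'' needs justification, and you have not verified that a \emph{single} extra edge $yx'$ always repairs it regardless of how $A$ sits inside $R$. Second --- and this is the real issue --- the local-maximality step is left at the level of intention. You correctly identify the hard case (the detour vertex $y$ already lies on the offending cycle $D$), but your proposed fix (``combined with the extremal choice of $(x,y,A)$ and a P\'osa-type rotation'') is not carried out, and it is not clear that an arbitrary extremal choice suffices. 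In \cite{FLW04} the authors make a specific maximality choice on $x$ and a careful case analysis on how $D$ meets $\{x,y\}\cup A$; you would need to reproduce something of that precision to close the argument.
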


We now prove Lemma \ref{lem:CliqNum}.
We point out that the graph $G'$ defined in Lemma \ref{Le:LMC-oper}(ii) satisfies that $e(G')\geq e(G)$ and $G'[C]=G[C]$.

\begin{proof}
When applying Lemma \ref{Le:LMC-oper}(ii),
we see that the cycle $C$ remains locally maximal in the resulting graph, which is 2-connected.
So we may repeatedly apply Lemma \ref{Le:LMC-oper}(ii).
Note that as the set $A$ is nonempty, each time Lemma \ref{Le:LMC-oper}(ii) is applied,
the number of edges not incident with $C$ strictly decreases.
So this process will eventually stop (at some graph say $G^*$);
and when it stops, (i) must occur for any component $R$ in $G^*-C$, i.e.,
\begin{align}\label{equ:RtoC}
\text{ every vertex } x\in N_{G^*}(R)\cap V(C) \text{ is adjacent to all vertices in } V(R) \text{ in } G^*.
\end{align}
Let $\omega$ be the clique number of $G[C]=G^*[C]$.
Then $\omega\geq |V(C)\backslash S|\geq c-s+1.$
Also we have that $e(G)\leq e(G^*)$ and $e(G^*[C])=e(G[C])\leq \binom{c-s+1}{2}+(s-1)s$.

Let $R_1,R_2,\ldots,R_t$ be all components of $G^*-C$.
For any $1\leq i\leq t$, let $p_i=|N_{G^{*}}(R_i)\cap V(C)|$,
and $d_i$ be the length of the longest path between any two vertices in $N_{G^{*}}(R_i)\cap V(C)$
with all internal vertices in $R_i$.
In view of \eqref{equ:RtoC}, we see that $d_i-2$ denotes the length of the longest path in $R_i$.
By a theorem of Erd\H{o}s and Gallai (see \cite[Theorem~2.6]{EG59}),
we have $e_{G^*}(R_i)\leq \frac{(d_i-2)|V(R_i)|}{2}$.
Let $R_\alpha$ be the component in $G^*-C$ which attains the maximum of $\{d_i+2p_i:1\leq i\leq t\}$,
and let $p:=p_\alpha$ and $d:=d_\alpha$.
Then
\begin{align*}
e(G)&\leq e(G^{*})\leq e(G^{*}[C])+\sum_{i}\left(\frac{(d_i-2)}{2}|V(R_i)|+p_i\cdot |V(R_i)|\right)\\
&\leq \binom{c-s+1}{2}+(s-1)s+\frac{d+2p-2}{2}(n-c).
\end{align*}

Next we claim that $d+2p\leq 2+2s$.
Suppose that $d+2p\geq 3+2s$. Consider the component $R:=R_\alpha$ in $G^*-C$.
If $d=2$, then it follows $p\geq s+1$.
Since $(N_{G^{*}}(R)\cap V(C))^+$ is an independent set in $V(C)$ of size $p$
(otherwise, it would contradict that $C$ is locally maximal in $G^*$),
we have $\omega\leq c-(p-1)\leq c-s$,
a contradiction to that $\omega\geq c-s+1$.
Now we may assume $d\geq 3$. This shows that $|V(R)|\geq 2$.
Since $G^*$ is 2-connected, by \eqref{equ:RtoC}, we see that
$N_{G^{*}}(R)\cap V(C)$ is a strong attachment of $R$ to $C$.
By Lemma \ref{Le:clique}(i), $\omega\leq c-(d-1)(p-1)$.
As $p\geq 2$, we have $(\frac{d-1}{2}-1)((p-1)-1)\geq 0$,
which implies that $\frac{d-1}{2}(p-1)\geq \frac{d-1}{2}+(p-1)-1$,
that is, $(d-1)(p-1)\geq d+2p-5$.
So $\omega\leq c-(d-1)(p-1)\leq c-(d+2p)+5\leq c-2s+2\leq c-s$, again a contradiction.
This proves the claim.

Combining the above bounds, we obtain that
$$e(G)\leq \binom{c-s+1}{2}+(s-1)s+\frac{d+2p-2}{2}(n-c)\leq\binom{c-s+1}{2}+s(n-c+s-1)=f(n,s,c).$$
If $k+1\leq s\leq \lfloor\frac{c}{2}\rfloor-1$,
then by the monotonicity of the function $f(n,k,c)$,
it holds that $e(G)\leq \max\{f(n,k+1,c),f(n,\lfloor\frac{c}{2}\rfloor-1,c\}$,
a contradiction.
Thus we must have $2\leq s\leq k$ and then $\omega\geq c-s+1\geq c-k+1,$
finishing the proof of Lemma \ref{lem:CliqNum}.
\end{proof}

\subsection{Proof of Lemma \ref{lem:CliqNum2}}\label{subsec:WZ3}

\noindent {\bf Lemma \ref{lem:CliqNum2}.}
{\em
Let $G$ be a 2-connected graph on $n$ vertices with $\delta(G)\geq k$ and
$C$ be a locally maximal cycle in $G$ of length $c\leq n-1$.
If the clique number of $G[C]$ is at least $c-k+1$,
then $G\in \{W_{n,k,c}, ~Z_{n,k,c}\}$.
}

\begin{proof} Consider any component $R$ in $G-C$.
Let $T$ be a maximum strong attachment of $R$ to $C$ and $Q:=N_C(R)\backslash T$.
Let $t:=|T|$, $q:=|Q|$ and $\omega$ be the clique number of $G[C]$. So $\omega\geq c-k+1$.
We define the triple $ch(R):=(t, q,\omega)$ to be the {\it character} of the component $R$;
and we say a component $R$ is {\it infeasible}, if $|N_C(R)|\leq k-1$ and $ch(R)\neq(2,0,c-k+1)$.

We now proceed by establishing a sequence of claims.
An important step for our proof is to show that in fact there is no infeasible component $R$ in $G-C$.

\setcounter{claim}{0}

\begin{claim}
For any component $R$ in $G-C$, both $(N_C(R))^+$ and $(N_C(R))^-$ are stable, and $|N_C(R)|\leq k$.
\end{claim}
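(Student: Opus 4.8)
The plan is to obtain all three assertions from elementary cycle-surgery together with the clique-number hypothesis, in the spirit of the lemmas in Section~2 on locally maximal cycles. First I would record that no two consecutive vertices of $C$ belong to $N_C(R)$: if $a$ and $a^+$ both had a neighbour in $R$, then since $R$ is connected there is an $(a,R,a^+)$-path $P$ of length at least $2$, and $C':=(C-aa^+)\cup P$ is a cycle with $|C'|=|C|-1+|P|\geq c+1$ meeting $E(C,G-C)$ only in the first and last edges of $P$; this contradicts local maximality of $C$.

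Next, to see that $(N_C(R))^+$ is stable, suppose towards a contradiction that $a,b\in N_C(R)$ are distinct with $a^+b^+\in E(G)$. Write $C=x_1x_2\cdots x_cx_1$ with $a=x_i$, $b=x_j$ and $1\leq i<j\leq c$; by the previous paragraph $a$ and $b$ are not consecutive on $C$, so $x_{i+1}\neq x_j$ and $x_{j+1}\neq x_i$. Picking an $(a,R,b)$-path $P$ (of length at least $2$) and following $P$ from $x_i$ to $x_j$, then the arc $x_jx_{j-1}\cdots x_{i+1}$ of $C$, then the edge $x_{i+1}x_{j+1}=a^+b^+$, then the arc $x_{j+1}x_{j+2}\cdots x_i$ of $C$, produces a cycle $C'$ of length $|P|+(c-1)\geq c+1$ meeting $E(C,G-C)$ in at most the two end-edges of $P$, again contradicting local maximality. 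Reversing the orientation of $C$ in this argument shows $(N_C(R))^-$ is stable as well.

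For the bound $|N_C(R)|\leq k$, let $W$ be a maximum clique of $G[C]$, so $|W|=\omega\geq c-k+1$ by hypothesis and hence $|V(C)\setminus W|=c-\omega\leq k-1$. Since $(N_C(R))^+$ is stable in $G[C]$ while $W$ is a clique, $|W\cap (N_C(R))^+|\leq 1$; as $v\mapsto v^+$ is a bijection of $V(C)$, we get $|N_C(R)|=|(N_C(R))^+|\leq |V(C)\setminus W|+1\leq k$. The only step needing care is checking in each surgery that the new cycle meets $E(C,G-C)$ at most twice --- which is exactly what ``locally maximal'' (rather than merely ``longest'') requires --- but since every detour used runs inside the single component $R$, its only crossing edges are its two ends, so this is automatic and the claim poses no real obstacle.
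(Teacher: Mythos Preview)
Your argument is correct and follows essentially the same route as the paper: the paper also builds the longer cycle $(C-\{aa^+,bb^+\})\cup P\cup\{a^+b^+\}$ to show $(N_C(R))^+$ is stable, and then bounds $|N_C(R)|$ via $\omega\leq c-|N_C(R)|+1$, which is exactly your counting $|(N_C(R))^+\cap W|\leq 1$. Your preliminary step ruling out consecutive vertices in $N_C(R)$ is not in the paper (it is absorbed as a degenerate case of the main construction), but it does no harm.
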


\begin{proof}
If $(N_C(R))^+$ contains an edge say $x^+y^+$, where $x,y\in N_C(R)$,
then there exists an $(x,R,y)$-path $P$
and $C':=(C-\{xx^+,yy^+\})\cup P\cup \{x^+y^+\}$ is a longer cycle than $C$ with $|E(C')\cap E(C,G-C)|=2$, a contradiction.
So $(N_C(R))^+$ and $(N_C(R))^-$ are stable.
This implies that $c-k+1\leq \omega\leq c-|N_C(R)|+1$, proving the claim.
\end{proof}

\begin{claim}
For any infeasible component $R$ in $G-C$ with $ch(R)= (t, q,\omega)$, we have $|V(R)|\geq 2$ and $t\geq 2$.
\end{claim}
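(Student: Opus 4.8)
The plan is to treat the two assertions separately --- getting $|V(R)|\geq 2$ from the minimum degree hypothesis and $t\geq 2$ from $2$-connectivity, with no use of local maximality or of the clique-number bound. First I would rule out $|V(R)|=1$: if $R=\{v\}$, then every neighbour of $v$ in $G$ lies on $C$ (the component $R$ contains no edge), so $d_G(v)=d_C(v)=|N_C(v)|=|N_C(R)|$. Since $R$ is infeasible we have $|N_C(R)|\leq k-1$, whence $d_G(v)\leq k-1<k\leq \delta(G)$, a contradiction. (One may note in passing that a single-vertex component can in any case only have $t=0$: a one-element set $\{x_1\}$ is not a strong attachment, as that would require two \emph{independent} edges from $x_1$ into the unique vertex of $R$.) Hence $|V(R)|\geq 2$.

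For $t\geq 2$ it suffices to produce one strong attachment of $R$ to $C$ of size $2$, since $T$ is a \emph{maximum} one. Unwinding the definition, a pair $\{x_1,x_2\}\subseteq N_C(R)$ (which automatically lies in a cyclic order on $C$) is a strong attachment exactly when there exist $y_1\in N_R(x_1)$ and $y_2\in N_R(x_2)$ with $y_1\neq y_2$: the edges $x_1y_1$ and $x_2y_2$ are then independent (distinct endpoints on $C$, distinct endpoints in $R$), and the reversed ordered pair $(x_2,x_1)$ is witnessed the same way. Now $|N_C(R)|\geq 2$ because $G$ is $2$-connected. If no size-$2$ strong attachment existed, then for all $x,x'\in N_C(R)$, all $y\in N_R(x)$ and all $y'\in N_R(x')$ we would have $y=y'$; taking $x=x'$ this forces $|N_R(x)|=1$ for every $x\in N_C(R)$, and then taking $x\neq x'$ forces all these singletons to coincide in one common vertex $y\in V(R)$. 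But then no vertex of $V(R)\backslash\{y\}$ has a neighbour on $C$, so $G-y$ separates the nonempty set $V(R)\backslash\{y\}$ from $V(C)$, contradicting $2$-connectivity. Hence a size-$2$ strong attachment exists and $t=|T|\geq 2$.

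The only slightly delicate points are reading the strong-attachment definition correctly --- so that the existence of a size-$2$ attachment reduces to ``two vertices of $N_C(R)$ having distinct neighbours in $R$'', and its failure forces a single common neighbour and hence a cut vertex --- and the degenerate case $|V(R)|=1$, which must be handled via $\delta(G)\geq k$ rather than connectivity. Neither step requires any computation, so I expect no real obstacle; this claim is essentially a structural warm-up for the subsequent claims, which will have to actually pin down the character $ch(R)$.
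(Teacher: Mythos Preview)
Your proof is correct and follows essentially the same approach as the paper: minimum degree forces $|V(R)|\geq 2$, and $2$-connectivity yields two independent edges between $R$ and $C$, hence $t\geq 2$. The paper dispatches the second part in one line (``As $G$ is 2-connected, we have at least two independent edges between $C$ and $R$''), whereas you spell out the contrapositive in detail; one small wording issue is that the failure of a size-$2$ strong attachment only directly constrains \emph{distinct} pairs $x\neq x'$, so your ``taking $x=x'$'' step should really be phrased as a transitivity argument through a second vertex $x'\neq x$ --- but the conclusion $|N_R(x)|=1$ for all $x$ follows just the same.
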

\begin{proof}
Suppose that $|V(R)|=1$, say $V(R)=\{x\}$.
By Claim 1, we have $|N_C(x)|\leq k$.
But $\delta(G)\geq k$. This shows that $|N_{C}(x)|=k$, a contradiction to the definition of an infeasible component.
So $|V(R)|\geq 2$. As $G$ is 2-connected, we have at least two independent edges between $C$ and $R$,
implying that $t\geq 2$.
\end{proof}

\begin{claim}
For any infeasible component $R$ in $G-C$, $2\leq |N_C(R)|\leq k-2$.
\end{claim}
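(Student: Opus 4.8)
The lower bound $|N_C(R)|\ge 2$ holds because $R$ is a component of $G-C$ in the $2$-connected graph $G$, and $|N_C(R)|\le k-1$ is part of the definition of an infeasible component, so the only content is to rule out $|N_C(R)|=k-1$. The plan is to assume $|N_C(R)|=k-1$ and derive a contradiction. By Claim~2 we then have $|V(R)|\ge 2$ and $t\ge 2$; write $q:=|N_C(R)|-t=k-1-t$, let $W$ be a maximum clique of $G[C]$ and $\omega:=|W|$. By hypothesis $\omega\ge c-k+1$, and the stability of $(N_C(R))^+$ established in Claim~1 gives $\omega\le c-|N_C(R)|+1=c-k+2$; thus $\omega\in\{c-k+1,\,c-k+2\}$.

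Next I would set up a dichotomy on path lengths through $R$. Let $d$ be the minimum, over ordered pairs $x,x'\in T$, of the length of a longest $(x,R,x')$-path; since $R$ is connected and $x,x'\in N_C(R)$, we have $d\ge 2$. As $T$ is a maximum strong attachment, Lemma~\ref{Le:clique}(ii) gives $\omega\le c-(d-1)(t-1)-q$, and combining this with $\omega\ge c-k+1$ and $t+q=k-1$ yields $(d-1)(t-1)\le t$, hence $d\le 2+1/(t-1)$. Therefore either $t\ge 3$ and $d=2$, or $t=2$, in which case a strong attachment of size two supplies independent edges from $u_1,u_2$ into $R$ and hence a $(u_1,R,u_2)$-path of length at least $3$, so $d=3$, whence $\omega\le c-2-q=c-k+1$ and therefore $\omega=c-k+1$. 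In the first case, a pair $x,x'\in T$ realizing $d=2$ must satisfy $N_R(x)=N_R(x')=\{y\}$ for a single vertex $y\in V(R)$ (otherwise, using connectivity of $R$, one routes an $(x,R,x')$-path of length at least $3$), and then $x,x'$ are non-consecutive in the cyclic order of $T$ on $C$, consistent with $t\ge 3$.

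The main and hardest step is to rule out $R$ altogether by using $\delta(G)\ge k$. Since $N_G(v)\subseteq V(R)\cup N_C(R)$ for every $v\in V(R)$ and $|N_C(R)|=k-1$, we get $d_R(v)\ge d_G(v)-(k-1)\ge 1$; moreover Lemma~\ref{Le:Fan90}(i) forces each vertex of $Q=N_C(R)\setminus T$ to have exactly one neighbour in $R$, so $e(R,C)=e(T,R)+q$ and hence $2e(R)=\sum_{v\in V(R)}d_R(v)\ge k\,|V(R)|-e(T,R)-q$. Feeding this into an Erd\H{o}s--Gallai/Fan-type estimate (Theorems~\ref{Th:EG-cyc} and~\ref{Le:FanErdoGal}) for the graph obtained from $G[V(R)\cup T]$ by deleting the edges inside $T$ should produce, for a suitable pair $x,x'\in T$, an $(x,R,x')$-path longer than the value of $d$ permitted above; alternatively, if some vertices of $N_C(R)$ lie in $W$, one concatenates a long $(x,R,x')$-path with a Hamiltonian path of the clique $W$ to get a cycle longer than $C$ meeting $E(C,G-C)$ in at most two edges, contradicting the local maximality of $C$. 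The small case $|V(R)|=2$ can be handled directly: then $R=K_2$, the degree condition forces both vertices of $R$ to be adjacent to every vertex of $N_C(R)$, so every pair $x,x'\in T$ is joined by an $(x,R,x')$-path of length $3$ and thus $d\ge 3$, which by the dichotomy forces $t=2$ and $q=k-3$; if $k\ge 4$ then $q\ge 1$ and some vertex of $Q$ has two neighbours in $R$, contradicting Lemma~\ref{Le:Fan90}(i), while if $k=3$ then $q=0$, Lemma~\ref{Le:clique}(ii) pins $\omega=c-k+1$, and so $ch(R)=(2,0,c-k+1)$, contradicting that $R$ is infeasible. The real obstacle is this last step: the path-length bookkeeping must be done uniformly, and in particular one must treat components $R$ that are not $2$-connected — where a long $(x,R,x')$-path has to be extracted through an end-block decomposition of $R$, in the spirit of Claim~6 in the proof of Theorem~\ref{Th:BondyStab} — and keep careful track of which vertices of $N_C(R)$ belong to the clique $W$.
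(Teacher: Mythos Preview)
Your outline is not a proof: you explicitly flag the ``main and hardest step'' as unresolved, and indeed the two substantive cases are left open. The paper closes them with two ideas that do not appear in your sketch.

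First, the case $\omega=c-k+2$ (which in your framework lives inside ``$t\ge 3$ and $d=2$'') is handled by a direct structural argument, not by path-length bookkeeping. With $|W|=c-k+2$ and $N^+$ stable one has $|W\cap N^+|\le 1$; inclusion--exclusion then forces $W\cup N^+=V(C)$, so $I:=V(C)\setminus W\subseteq N^+$, and by symmetry $I\subseteq N^-$. Since $|I|=k-2$, this pins $I$ to an arithmetic progression of step~$2$ on $C$, hence $N=N_C(R)=\{x_i,x_{i+2},\dots,x_{i+2k-4}\}$. Now any two consecutive $x_j,x_{j+2}\in N$ are at distance~$2$ on $C$, so local maximality of $C$ forces every $(x_j,R,x_{j+2})$-path to have length exactly~$2$, i.e.\ $N_R(x_j)=N_R(x_{j+2})=\{y\}$ for a single $y$. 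This propagates along $N$, so all of $N_C(R)$ sees only $y$; but $d_G(y)\ge k>|N_C(R)|$ forces $y$ to have a neighbour in $R-y$, making $y$ a cut-vertex of $G$. Your plan of ``concatenating a long $(x,R,x')$-path with a Hamiltonian path of $W$'' does not get at this, because when $\omega=c-k+2$ the obstruction is not a long path through $R$ but the rigid placement of $N$ on $C$.

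Second, for $t=2$, $q\ge 1$, $|V(R)|\ge 3$ (your remaining case after the clean $|V(R)|=2$ argument), the paper does not invoke an end-block decomposition of $R$ at all. Since the maximum matching in $(R,C)$ has size $t=2$, K\"onig's theorem gives a two-vertex cover of $(R,C)$, which one checks can be taken as $\{y_1,y_2\}$ or $\{y_1,x_2\}$; set $H=G[R]$ or $G[R\cup\{x_2\}]$ accordingly and $z=y_2$ or $x_2$. Then $H+y_1z$ is $2$-connected and every vertex other than $y_1,z$ has degree at least $k$ in it, so Theorem~\ref{Le:FanErdoGal} yields a $(y_1,z)$-path in $H$ of length $\ge k$, hence an $(x_1,R,x_2)$-path of length $\ge k+1$. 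Now Lemma~\ref{Le:clique}(ii) with $d=k+1$ gives $\omega\le c-k-q\le c-k-1$, contradicting $\omega\ge c-k+1$. Your vague appeal to ``Erd\H{o}s--Gallai/Fan-type estimates'' is in the right spirit, but the crucial step --- using K\"onig to isolate \emph{two} vertices that absorb all edges to $C$, so that Fan's theorem can be applied with the full minimum degree $k$ --- is what makes the argument go through, and it is missing from your plan. The block decomposition you anticipate is used only in the later Claims~\ref{claim-not2-con} and~\ref{Claim-path in B}, not here.
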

\begin{proof}
Suppose not. Set $N:=N_C(R)$, then $|N|= k-1$.
As $G$ is 2-connected, $|N|\geq 2$, implying that $k\geq 3$.

Suppose that $\omega\geq c-k+2$.
Let $W$ be a maximum clique of size $\omega\geq c-k+2$ in $G[C]$ and $I:=V(C)\backslash W$.
By Claim 1, $N^+$ is stable, so $|W\cap N^+|\leq 1$.
By the inclusion-exclusion principle,
$c-k+2\leq |W|=|W\cup N^+|+|W\cap N^+|-|N^+|\leq c+1-(k-1)$.
This shows that $|W|=c-k+2$, $|W\cap N^+|=1$, and $W\cup N^+=V(C)$,
the last of which implies that $I\subseteq N^+$.
Similarly, we have $I\subseteq N^-$.
Then $I^+\cup I^-\subseteq N$.
So $k-1=|N|\geq |I^+\cup I^-|=|I^+|+|I^-|-|I^+\cap I^-|=2(k-2)-|I^+\cap I^-|$,
implying that $|I^+\cap I^-|\geq k-3$.
Let $C=x_1x_2...x_cx_1$.
Since $|I|=k-2$, it is not hard to see that
$I=\{x_{i+1},x_{i+3},...,x_{i+2k-5}\}$ for some $i$.
So $N=I^+\cup I^-=\{x_i,x_{i+2},...,x_{i+2k-4}\}$.
In this case, for any two $x_j,x_{j+2}\in N$,
every $(x_j,R,x_{j+2})$-path must be of length 2,
implying that $N_R(x_j)=N_R(x_{j+2})=\{x\}$ for some $x\in V(R)$.
So $x$ is the unique neighbor of $N_C(R)$ in $R$.
Since $\delta(G)\geq k$ and $|N_C(R)|=k-1$, $x$ should have other neighbors in $R$ and thus $R-\{x\}\neq \emptyset$.
But we also have $N_C(R-\{x\})=\{x\}$, contradicting that $G$ is 2-connected.

Now we may assume that $\omega=c-k+1$.
Recall the definitions of $T, Q, t, q$, respectively.
We have $t+q=|N_{C}(R)|=k-1$.
Since $|V(R)|\geq 2$, the longest $(x,R,y)$-path for all $x,y\in T$ is of length at least 3.
By Lemma \ref{Le:clique}(ii), we have $\omega\leq c-2(t-1)-q=c-k-t+3.$
If $t\geq 3$, then $\omega\leq c-k$, a contradiction.
So $t=2$. If $q=0$, then $ch(R)=(t,q,\omega)=(2,0,c-k+1)$, a contradiction.
So we may assume that $t=2$ and $q\geq 1$.
Let $T=\{x_1,x_2\}$ and $x_1y_1,x_2y_2$ be two independent edges in $E(R,C)$, where $y_1,y_2\in V(R)$.
Suppose that $|V(R)|=2$. Then $V(R)=\{y_1,y_2\}$.
Since $\delta(G)\geq k$, we have $d_C(y_1)\geq k-1$ and $d_C(y_2)\geq k-1$.
So $N=N_C(y_1)=N_C(y_2)$ and every vertex in $N$ belongs to $T$. So $q=0$, a contradiction.

It remains to consider $|V(R)|\geq 3$.
As $q\geq 1$, there exists some vertex $w\in Q$.
By Lemma \ref{Le:Fan90}, we may assume that $y_1$ is the unique neighbor of $w$ in $R$.
Then $y_1$ also is the unique neighbor of $x_1$ in $R$
(as otherwise counting $w,x_1,x_2$ in, we would have $t\geq 3$).
Since $t=2$, the maximum matching between $(R,C)$ has size two,
so by K\"onig's theorem \cite{K31}, either $\{y_1,y_2\}$ or $\{y_1,x_2\}$ is a vertex cover in $(R,C)$.
In the former case, let $z=y_2$ and $H=G[R]$;
and in the latter case, let $z=x_2$ and $H=G[R\cup \{x_2\}]$.
As $G$ is 2-connected and $|V(H)|\geq 3$, $H+y_1z$ is 2-connected;
and every vertex in $H+y_1z$, except $y_1,z$, has the same degree as in $G$, which is at least $k$.
Applying Theorem \ref{Le:FanErdoGal} to $H+y_1z$, there exists a $(y_1,z)$-path in $H+y_1z$ of length at least $k$.
Clearly this path also lies in $H$, which implies an $(x_1,R,x_2)$-path of length at least $k+1$.
By Lemma \ref{Le:clique}(ii) with $t=2$ and $d=k+1$,
$\omega\leq c-k-q\leq c-k-1$, a contradiction.
This proves Claim 3.
\end{proof}

Note that Claim 3 also shows that if there exist infeasible components in $G-C$, then $k\geq 4$.

\begin{claim}\label{claim-not2-con}
For any infeasible component $R$ in $G-C$, $|V(R)|\geq 3$ and $R$ is not 2-connected.
\end{claim}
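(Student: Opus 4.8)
The plan is to establish the two conclusions separately. For $|V(R)|\geq 3$ I would argue by contradiction: if $|V(R)|\leq 2$, then Claim 2 forces $|V(R)|=2$, so $R$ is a single edge $v_1v_2$ and $d_R(v_i)=1$ for $i=1,2$; as $\delta(G)\geq k$ this gives $d_C(v_i)=d_G(v_i)-1\geq k-1$, hence $|N_C(R)|\geq k-1$, contradicting the bound $|N_C(R)|\leq k-2$ of Claim 3.

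For the second conclusion, suppose $R$ is 2-connected. Put $p:=|N_C(R)|$; by Claim 3 we have $2\leq p\leq k-2$, and since every vertex of $R$ has at most $p$ neighbours on $C$ while $\delta(G)\geq k$, we get $\delta(R)\geq k-p\geq 2$. Fix a maximum strong attachment $T=\{u_1,\dots,u_t\}$ of $R$ to $C$ in cyclic order, set $Q:=N_C(R)\setminus T$ and $q:=|Q|$, so that $p=t+q$ and $t\geq 2$ by Claim 2. The idea is to produce, for appropriate pairs $u,u'\in T$, a long $(u,R,u')$-path and then to invoke Lemma~\ref{Le:clique}(ii) to force the clique number $\omega$ of $G[C]$ below $c-k+1$, contradicting the standing hypothesis $\omega\geq c-k+1$. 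Concretely, for a consecutive pair $u_i,u_{i+1}$ the definition of a strong attachment supplies independent edges $u_ia$ and $u_{i+1}b$ with $a,b\in V(R)$ and $a\neq b$; applying Theorem~\ref{Le:FanErdoGal} inside the 2-connected graph $R$ to the endpoints $a,b$, if $r$ is the average degree in $R$ of the vertices of $V(R)\setminus\{a,b\}$, then $R$ has an $(a,b)$-path of length at least $r$, and prepending $u_ia$ and appending $bu_{i+1}$ yields $d^{*}_R(u_i,u_{i+1})\geq r+2$.

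The crude estimate $r\geq\delta(R)\geq k-p$ only gives $\omega\leq c-(k-p+1)(t-1)-q$ via Lemma~\ref{Le:clique}(ii), which equals $c-k+1$ when $t=2$ and in general is not quite small enough. The improvement I would use is that, by Lemma~\ref{Le:Fan90}(i)--(ii), the set $V_0:=\bigcup_{x\in N_C(R)}N_R(x)$ of vertices of $R$ having a neighbour on $C$ coincides with $\bigcup_{i=1}^{t}N_R(u_i)$ and is small; every vertex of $R$ outside $V_0$ has all of its $\geq k$ neighbours inside $R$, so a degree count shows that the relevant average $r$ is in fact close to $k$ rather than to $k-p$. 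This upgrades the $(u,R,u')$-paths and drives the Lemma~\ref{Le:clique} bound strictly below $c-k+1$ in every configuration except $t=2$, $q=0$; and in that one exceptional case the estimate instead forces $\omega=c-k+1$, hence $ch(R)=(2,0,c-k+1)$, so that $R$ is feasible --- again a contradiction.

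I expect the hard part to be the quantitative bookkeeping in this last step: bounding $|V_0|$ (equivalently $e(R,C)$) in terms of $t$, $q$ and $|V(R)|$ well enough that the averaging argument runs uniformly over all $t\geq 2$ and all admissible $p$, together with the technical nuisance that Lemma~\ref{Le:clique} needs the length bound $d^{*}_R(u,u')\geq d$ for \emph{every} pair $u,u'\in T$, not only consecutive ones. Pairs $u,u'\in T$ whose neighbourhoods in $R$ consist of a single common vertex $y$ must be handled by hand: there one uses $y$ together with the 2-connectivity of $R$ to obtain either a long $(u,R,u')$-path after all or a cycle $C'$ with $|E(C')\cap E(C,G-C)|\leq 2$ that is longer than $C$, contradicting the local maximality of $C$.
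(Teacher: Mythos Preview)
Your argument for $|V(R)|\geq 3$ is correct and essentially the paper's. For the 2-connectivity part, your overall strategy --- Fan's average-degree theorem followed by Lemma~\ref{Le:clique} --- matches the paper, and you are right that the crude bound $\delta(R)\geq k-p$ is insufficient. However, your proposed improvement has a gap: the set $V_0=\bigcup_i N_R(u_i)$ need \emph{not} be small, since each $u_i\in T$ may have arbitrarily many neighbours in $R$, so $V_0$ can equal all of $V(R)$. The refinement that actually works, and that the paper uses, is the per-vertex inequality $d_R(u)\geq k-t-\sum_{v\in Q}I_{uv}$ for every $u\in V(R)$ (where $I_{uv}$ is the adjacency indicator); the point is that each $v\in Q$ has a \emph{unique} neighbour in $R$ by Lemma~\ref{Le:Fan90}(i), so summing over $u\in V(R)\setminus\{y,y'\}$ gives average at least $k-t-\tfrac{q}{|V(R)|-2}$. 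The paper then splits according to $|V(R)|$. The case $|V(R)|=3$ (a triangle) is handled directly and you omit it; for $|V(R)|\geq 4$ one has $\ell\geq k-t-\tfrac{q}{2}\geq\tfrac{k-t}{2}+1$, and feeding $d=\ell+2$ into Lemma~\ref{Le:clique}(i) and comparing with $\omega\geq c-k+1$ forces $t=2$ by an elementary inequality. With $t=2$, every $v\in V(R)\setminus\{y_1,y_2\}$ has $N_C(v)\subseteq\{x_1,x_2\}$ and hence $d_R(v)\geq k-2$, so Fan's theorem gives a $(y_1,y_2)$-path of length $\geq k-2$; now Lemma~\ref{Le:clique}(ii) yields $\omega\leq c-(k-1)-q$, forcing $q=0$ and $\omega=c-k+1$, i.e.\ $ch(R)=(2,0,c-k+1)$, exactly the contradiction you were aiming for.

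Your concern that Lemma~\ref{Le:clique} demands the path bound for \emph{every} pair in $T$, not only consecutive ones, is a legitimate technical point. In the paper's argument it is harmless once one is down to $t\leq 3$, since then every pair is cyclically consecutive and the strong-attachment property already supplies distinct neighbours in $R$; your suggested ad hoc treatment of the single-common-neighbour case is not needed at that stage.
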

\begin{proof}
If $|V(R)|\leq 2$, then by Claim 3, any vertex $u\in V(R)$ has degree at most $1+|N_{C}(R)|\leq k-1$ in $G$, a contradiction.
So $|V(R)|\geq 3$.

Suppose for a contradiction that $R$ is 2-connected.
For any $x,y\in V(G)$, let $I_{xy}$ be 1 if $xy\in E(G)$ and 0 otherwise.
Then for any $u\in V(R)$, we have $d_R(u)=d_G(u)-d_C(u)\geq k-t-\sum_{v\in Q}I_{uv}$.
By Theorem \ref{Le:FanErdoGal}, for any two vertices $y,y'\in V(R)$, there is a
$(y,y')$-path of length at least
$$\ell \geq \frac{\sum_{u\in V(R)\backslash\{y,y'\}}(k-t-\sum_{v\in Q}I_{uv})}{|V(R)|-2}
\geq k-t-\frac{q}{|V(R)|-2}.
$$
First we consider that $|V(R)|=3$.
In this case, $R$ is a triangle, say $V(R)=\{y_1,y_2,y_3\}$.
For any $i$, it follows from $|N_R(y_i)|= 2$ that $N_C(y_i)\geq k-2$.
By Claim 3, $N_C(y_i)=N_{C}(R)$ for each $i$ and thus $t=|N_C(R)|=k-2$ and $q=0$.
By Lemma \ref{Le:clique}, $c-k+1\leq \omega\leq c-3(k-3)$.
So $k\leq 4$. Recall that $k\geq 4$.
So we have $k=4$, $t=2$, $q=0$ and $\omega=c-3$.
That is, $ch(R)=(t,q,\omega)=(2,0,c-3)=(2,0,c-k+1)$, a contradiction.

Now we may assume that $|V(R)|\geq 4$. In this case, following the above inequality,
we have $\ell\geq k-t-\frac{q}{2}\geq k-t-\frac{k-2-t}{2}=\frac{k-t}{2}+1$,
where the last inequality holds as $t+q=|N_C(R)|\leq k-2$. By Lemma \ref{Le:clique}(i),
\begin{align*}
c-k+1\leq \omega\leq c-(\ell+1)(t-1)\leq c-\left(\frac{k-t}{2}+2\right)(t-1),
\end{align*}
which implies that $(k-t+4)(t-1)\leq 2(k-1)$,
and thus
\begin{align*}
k(t-3)\leq (t-4)(t-1)-2.
\end{align*}
If $t\geq 4$, then $k\leq \frac{(t-4)(t-1)-2}{t-3}=(t-2)-\frac{4}{t-3}\leq k-2-q-\frac{4}{t-3}\leq k-2$,
a contradiction. If $t=3$, this becomes that $0\leq -4$, which is impossible.
Thus $t=2$. Let $T=\{x_1,x_2\}$ and $x_1y_1,x_2y_2$ be two independent edges for $y_1,y_2\in V(R)$.
Then any $v\in V(R)\backslash \{y_1,y_2\}$ has $N_C(v)\subseteq \{x_1,x_2\}$,
so $d_R(v)\geq k-2$.
Since $R$ is 2-connected, by Theorem \ref{Le:FanErdoGal}, there is a $(y_1,y_2)$-path in $R$
of length at least $k-2$. By Lemma \ref{Le:clique}(ii),
$\omega\leq c-(k-1)-q\leq c-k$ if $q\geq 1$, a contradiction.
So we have $q=0$ and $\omega=c-k+1$. In this case, we have $ch(R)=(t,q,\omega)=(2,0,c-k+1)$.
This proves this claim.
\end{proof}

\begin{claim}\label{Claim-path in B}
Let $R$ be an infeasible component in $G-C$ and
$B$ an end-block of $R$ with the cut-vertex $b$.
Let $T:=\{v\in V(C):|N_{B-b}(v)|\geq 2\}$ with $t:=|T|$.
Then the following hold:\\
(i) $B$ is 2-connected with $|V(B)|\geq 5$;\\
(ii) For any $y\in V(B-b)$, there is a $(y,b)$-path in $B$ of length at least $\frac{2}{3}(k-t+1)$; \\
(iii) For any $y_1, y_2\in V(B-b)$, there is a $(y_1, y_2)$-path in $B$ of length at least $\frac{7}{12}(k-t)$;\\
(iv) $t\leq 2$.
\end{claim}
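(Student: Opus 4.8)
The plan is to establish the four parts in order, reducing (ii) and (iii) to Fan's average-degree theorem (Theorem \ref{Le:FanErdoGal}) applied \emph{inside} the block $B$, and reducing (iv) to the clique bound of Lemma \ref{Le:clique}. The key numerical inputs, all available because $R$ is infeasible, are: by Claim 3, $|N_C(R)|\le k-2$; hence for every $v\in V(B-b)$ we have $d_B(v)=d_G(v)-d_C(v)\ge k-|N_C(R)|\ge 2$, and, writing $s_v:=|N_C(v)\setminus T|$ and using $|N_C(v)\cap T|\le t$, also $d_B(v)\ge k-t-s_v$; finally, since $T\subseteq N_C(B-b)\subseteq N_C(R)$ and every vertex of $C$ outside $T$ has at most one neighbour in $B-b$, summing gives $\sum_{v\in V(B-b)}s_v=|N_C(B-b)\setminus T|\le|N_C(R)|-t\le k-2-t$; in particular $t\le k-2$, so $k-t\ge 2$. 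For (i), the bound $d_B(v)\ge 2$ shows $B$ is not a single edge, hence $B$ is $2$-connected. For $|V(B)|\ge 5$ I would suppose $|V(B)|\in\{3,4\}$: then each vertex of $B-b$ has $B$-degree at most $|V(B)|-1\le 3$, so the displayed degree inequalities force $t$ close to $k$ (namely $t=k-2$ if $|V(B)|=3$, and $t\ge k-3$ if $|V(B)|=4$); moreover $T$ is a strong attachment of $R$ to $C$ (each of its vertices having two neighbours in $B-b$), and since $B-b$ is connected any two elements of $T$ are joined through $R$ by a path of length at least $3$, so Lemma \ref{Le:clique}(i) gives $\omega\le c-2(t-1)$, which against $\omega\ge c-k+1$ leaves only a few small values of $k$; these I would rule out by a sharper use of Lemma \ref{Le:clique} (the $T$-to-$T$ paths are actually longer, or $\delta(G)\ge k$ forces extra structure at $b$ and at a second end-block of $R$).

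Writing $m:=|V(B)|-1=|V(B-b)|$, part (ii) follows by applying Theorem \ref{Le:FanErdoGal} to the $2$-connected graph $B$ with the pair $y,b$: a longest $(y,b)$-path in $B$ has length at least the average $B$-degree of $V(B)\setminus\{y,b\}=V(B-b)\setminus\{y\}$, which by the inequalities above is at least $\frac{(m-2)(k-t)+2}{m-1}$; using $m\ge 4$ (from (i)) and $k-t\ge 2$, the target $\frac23(k-t+1)$ follows since the required inequality reduces to $(m-4)(k-t-2)\ge 0$. For (iii) I would apply the same theorem with the pair $y_1,y_2$; now $b$ also lies in $V(B)\setminus\{y_1,y_2\}$ and contributes only $d_B(b)\ge 2$, so the average $B$-degree of this $(m-1)$-element set is at least $\frac{(m-3)(k-t)+4}{m-1}$. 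Here I would additionally record the easy fact $k-t\le m$ (from $d_B(v)\le m$ together with the bound on $\sum s_v$), and then verify $(5m-29)(k-t)+48\ge 0$ for all $m\ge 4$ with $k-t\le m$, which gives the claimed length $\tfrac{7}{12}(k-t)$.

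For (iv), suppose $t\ge 3$. Each $v\in T$ has at least two neighbours in $B-b\subseteq R$, so independent edges can be chosen at consecutive elements of $T$ and $T$ is a strong attachment of $R$ to $C$ of size $t$. For distinct $v_i,v_j\in T$, pick distinct neighbours $y_i,y_j\in V(B-b)$ and apply (iii): there is a $(y_i,y_j)$-path in $B$ of length at least $\tfrac{7}{12}(k-t)$, hence a $(v_i,R,v_j)$-path of length at least $\tfrac{7}{12}(k-t)+2$. By Lemma \ref{Le:clique}(i), $\omega\le c-\bigl(\tfrac{7}{12}(k-t)+1\bigr)(t-1)$, and combined with $\omega\ge c-k+1$ and $a:=k-t\ge 2$ this simplifies to $\tfrac{7}{12}\,a\,(t-1)\le a$, i.e.\ $t-1\le \tfrac{12}{7}$, contradicting $t\ge 3$. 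The main obstacle is the $|V(B)|\ge 5$ clause of (i): the averaging in (ii)--(iii) degrades when $|V(B)|\le 4$, so those cases have to be cleared by hand, and squeezing the relation among $k$, $t$ and $\omega$ through Lemma \ref{Le:clique} for the surviving small values of $k$ is the delicate step of the argument.
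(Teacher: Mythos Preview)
Your approach for parts (ii)--(iv) is essentially the paper's: Fan's average-degree theorem inside $B$ for (ii) and (iii), then Lemma~\ref{Le:clique} applied to the strong attachment $T$ for (iv). Your route through (iii) via the clean bound $k-t\le m$ is a nice alternative to the paper's longer chain of inequalities (the paper instead bounds $|B|\ge k-t-\frac{q-2}{|B|-1}$ from an edge count and propagates that through $\frac{k-t+q-2}{|B|-2}\le \frac{5(k-t)}{12}$); both reach $\frac{7}{12}(k-t)$, and your version is arguably tidier.

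For (i), your skeleton is right but you are missing the organizing observation the paper uses to avoid a case explosion. Rather than bounding $t$ first and then patching the residual small values of $k$, the paper \emph{first} invokes the second end-block $B_0$ of $R$ (available by Claim~\ref{claim-not2-con}): combining a path through $B$ to $b$ with one through $B_0$ to $b_0$ yields an $(x,R,x')$-path of length at least $6$, and Lemma~\ref{Le:clique} then forces $k\ge 6$. With $k\ge 6$ in hand, the cases $|V(B)|=3$ (where $T=N_C(B-b)$ has size $k-2$ and $d=4$) and $|V(B)|=4$ (where one checks $N_C(B-b)$ is itself a strong attachment of size $k-2$ or $k-3$ with $d\ge 4$ or $5$) each collapse immediately via $\omega\le c-3(k-3)\le c-k$ or $\omega\le c-4(k-4)\le c-k$. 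Your ``sharper use of Lemma~\ref{Le:clique}'' hint points in this direction, but without the $k\ge 6$ step first, the cleanup on $k\in\{4,5\}$ is genuinely awkward.
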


\begin{proof}
Let $Q:=N_C(B-b)\backslash T$ and $q:=|Q|$.
By Claim 3, we have $t+q\leq k-2$.

(i). For any $v\in V(B-b)$,
$d_R(v)=d_G(v)-d_C(v)\geq k-(k-2)=2$.
So any end-block $B$ of $R$ is 2-connected and thus $|V(B)|\geq 3$.
Suppose that $|V(B)|\in \{3,4\}$.
First we claim that $|N_{C}(B-b)|\geq 2$.
If $|V(B)|=3$, then it is clear, as $k\geq 4$ and every vertex in $B-b$ has degree at most 2 in $B$, there are at least 2 neighbors in $V(C)$.
For $|V(B)|=4$, by the similar argument we also see that $|N_{C}(B-b)|\geq 2$, unless $k=4$ and $B$ is a $K_4$.
In the latter case (say $N_{C}(B-b)=\{x\}$ and $k=4$), since $G$ is 2-connected,
there exists some $x'\in V(C)\backslash \{x\}$ which has a neighbor in $V(R-(B-b))$;
as $B$ is a $K_4$, there exists an $(x,R,x')$-path of length at least 5.
By Lemma \ref{Le:clique} (with $d=5$ and the strong attachment $\{x,x'\}$),
we have $c-3=\omega\leq c-4$, a contradiction. This proves that $|N_{C}(B-b)|\geq 2$.
By Claim 4, there exists another end-block $B_0$ of $R$. Let $b_0$ be the cut-vertex of $R$ with $b_0\in V(B_0)$.
As $|N_{C}(B-b)|\geq 2$, there exist $y\in V(B-b)$ and $y'\in V(B_0-b_0)$ such that
$yx,y'x'\in E(G)$ are independent edges, where $x,x'\in V(C)$.
As $B$ and $B_0$ are 2-connected, there is a $(y,y')$-path of length at least 4.
By Lemma \ref{Le:clique} (with $d=6$ and the strong attachment $\{x,x'\}$), we have
$c-k+1\leq \omega\leq c-5,$ which implies that $k\geq 6$.

Suppose $|V(B)|=3$. Then obviously $B$ is a triangle, say $by_1y_2b$.
And $d_C(y_i)\geq k-2$ for $i=1,2$. On the other hand,
$d_C(y_i)\leq |N_C(R)|\leq k-2$ for $i=1,2$. Thus, $y_1,y_2$ both are adjacent to all vertices in $N_C(B-b)$.
So $T=N_C(B-b)$ and $t=|T|=k-2$. There is a $(y_1,y_2)$-path in $B$ of length 2.
By Lemma \ref{Le:clique} (with $d=4$ and the strong attachment $T$), as $k\geq 6$,
we obtain $\omega\leq c-3(k-3)\leq c-k$, a contradiction.

Suppose $|V(B)|=4$. Then $B$ contains a cycle of length 4, say $by_1y_2y_3b$.
If $|N_C(B-b)|\leq k-3$,
then $d_B(y_i)=3$ for $i=1,2,3$, and this also implies that each of $y_1,y_2,y_3$ is adjacent to all vertices in $N_C(B-b)$.
So $N_C(B-b)$ is a strong attachment of size $k-3$.
Note that $B$ is a $K_4$.
By Lemma \ref{Le:clique} (with $d=5$ and the strong attachment $N_C(B-b)$),
we have $\omega\leq c-4(k-4)\leq c-k$, where the last inequality holds as $k\geq 6$, a contradiction.
So $|N_C(B-b)|\geq k-2$. By Claim 3, $N_C(B-b)=N_C(R)$ is of size $k-2$.
We claim that $N_C(B-b)$ is a strong attachment.
If $Q\neq \emptyset$, choose $x\in Q$.
Suppose that $y_1$ is the unique vertex in $N_{B-b}(x)$.
Then by the degree condition, we see that $y_2$ and $y_3$ are adjacent to every other vertex in $B$
and have the same neighborhood $N_C(B-b)-\{x\}$ in $C$.
So, $N_C(B-b)$ is also a strong attachment.
If $Q=\emptyset$, then $N_C(B-b)=T$ is clearly a strong attachment. This proves the claim.
By Lemma \ref{Le:clique} (with $d=4$ and the strong attachment $N_C(B-b)$),
we have $\omega\leq c-3(k-3)\leq c-k$ (since $k\geq 6$), a contradiction.
This proves (i).

(ii). For $x,y\in V(G)$, let $I_{xy}=1$ if $xy\in E(G)$ and $0$ otherwise.
Then for any vertex $u\in V(B-b)$, we have $d_B(u)\geq k-t-\sum_{v\in Q}I_{uv}$.
Since $B$ is 2-connected, by Theorem \ref{Le:FanErdoGal}, for any $y\in V(B-b)$ there is a $(y,b)$-path of length $\ell_{yb}$,
such that
\begin{align*}
\ell_{yb}&\geq \frac{\sum_{u\in V(B-\{y,b\})}d_B(u)}{|B|-2}\geq k-t-\frac{q}{|B|-2}\geq \frac{2}{3}(k-t+1),
\end{align*}
where the last inequality holds because $|B|\geq 5$ and $q\leq k-t-2$.
This proves (ii).

(iii). Recall that for any $u\in V(B-b)$, $d_B(u)\geq k-t-\sum_{v\in Q}I_{uv}$.
By Theorem \ref{Le:FanErdoGal}, for any distinct $y,y'\in V(B)$, there is a $(y,y')$-path of length $\ell_{yy'}$ at least
$$\frac{\sum_{u\in V(B-\{y,y'\})}d_B(u)}{|B|-2}=\frac{(\sum_{u\in V(B-\{y,y',b\})}d_B(u))+d_B(b)}{|B|-2}\geq$$
$$\frac{(\sum_{u\in V(B-\{y,y',b\})}(k-t-\sum_{v\in Q}I_{vu}))+2}{|B|-2}
\geq \frac{(|B|-3)(k-t)-q+2}{|B|-2}\geq k-t-\frac{k-t+q-2}{|B|-2}.
$$
On the other hand, $|B|^2-|B|\geq 2e(B)\geq \sum_{u\in B-b}(k-t-\sum_{v\in Q}I_{vu})+2=(|B|-1)(k-t)-(q-2)$,
which implies that $|B|\geq k-t-\frac{q-2}{|B|-1}$.
So, $\frac{k-t-2}{|B|-2}\leq 1+\frac{q-2}{(|B|-1)(|B|-2)}$.
Hence,
\begin{align*}
\frac{k-t+q-2}{|B|-2}\leq 1+\frac{q-2}{(|B|-1)(|B|-2)}+\frac{q}{|B|-2}\leq 1+\frac{q-2}{12}+\frac{q}{3}= 1+\frac{5q-2}{12}\leq \frac{5(k-t)}{12},
\end{align*}
since $|B|\geq 5$ and $q\leq k-t-2$. So $l_{yy'}\geq k-t-\frac{k-t+q-2}{|B|-2}\geq \frac{7(k-t)}{12}$. This proves (iii).

(iv). Suppose that $t\geq 3$. Since $T$ is a strong attachment,
by (iii) and Lemma \ref{Le:clique}, we have that
$$c-k+1\leq \omega\leq c-\left(\frac{7(k-t)}{12}+1\right)(t-1),$$
which implies that $(k-t)(7t-19)\leq 0$.
As $t\geq 3$, it follows $k\leq t$, a contradiction to $t\leq k-2-q\leq k-2$. This proves (iv).
\end{proof}

\begin{claim}\label{claim-compoent-2kinds}
There is no infeasible component in $G-C$.
In other words, any component $R$ in $G-C$ has either $|N_C(R)|=k$ or $ch(R)=(2,0,c-k+1)$.
\end{claim}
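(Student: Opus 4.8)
The plan is to argue by contradiction: assume some infeasible component $R$ in $G-C$ exists. By Claim \ref{claim-not2-con}, $R$ is not $2$-connected and $|V(R)|\ge 3$, so its block-cut tree has at least two leaves, i.e. $R$ has two distinct end-blocks $B_1,B_2$ with cut-vertices $b_1,b_2$ respectively. Since $G$ is $2$-connected, for each $i$ the set $V(B_i)\setminus\{b_i\}$ has a neighbour on $C$ (otherwise $\{b_i\}$ would be a cut-vertex of $G$). For each $i$ put $T_i:=\{v\in V(C):|N_{B_i-b_i}(v)|\ge 2\}$ and $t_i:=|T_i|$; by Claim \ref{Claim-path in B}(i) each $B_i$ is $2$-connected with $|V(B_i)|\ge 5$, and by Claim \ref{Claim-path in B}(iv) we have $t_i\le 2$. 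Throughout, recall that $\omega$, the clique number of $G[C]$, satisfies $\omega\ge c-k+1$ by hypothesis.

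First I would handle the generic case: some two distinct end-blocks $B_1,B_2$ admit $x_1\in N_C(B_1-b_1)$ and $x_2\in N_C(B_2-b_2)$ with $x_1\ne x_2$. Pick $y_i\in V(B_i-b_i)$ with $x_iy_i\in E(G)$. By Claim \ref{Claim-path in B}(ii) together with $t_i\le 2$, $B_i$ contains a $(y_i,b_i)$-path of length at least $\frac23(k-t_i+1)\ge\frac23(k-1)$. Two distinct blocks meet in at most one vertex, necessarily a common cut-vertex, so either $b_1=b_2$ or $V(B_1)\cap V(B_2)=\emptyset$ and there is a $(b_1,b_2)$-path inside $R$ avoiding the interiors of $B_1$ and $B_2$ (read off the block-cut tree). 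Concatenating the two block-paths through this $(b_1,b_2)$-path and appending the edges $x_1y_1$ and $y_2x_2$ gives an $(x_1,R,x_2)$-path of length at least $2+\frac43(k-1)=\frac43 k+\frac23\ge k+1$. As $\{x_1,x_2\}$ is a strong attachment of $R$ to $C$, Lemma \ref{Le:clique}(i) with $d=k+1$ yields $\omega\le c-(k+1-1)(2-1)=c-k$, contradicting $\omega\ge c-k+1$.

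It remains to treat the case where no such pair exists. Then all end-blocks $B_i$ satisfy $N_C(B_i-b_i)=\{x\}$ for one common vertex $x$: indeed, if some $N_C(B_i-b_i)$ had two elements or two end-blocks had distinct attachment vertices, two end-blocks would supply distinct $x_1,x_2$ as above. Consequently every $u\in V(B_1-b_1)$ has $d_C(u)\le 1$, so (its neighbours outside $B_1$ lie on $C$) $d_{B_1}(u)=d_G(u)-d_C(u)\ge k-1$. Applying Theorem \ref{Le:FanErdoGal} to the $2$-connected graph $B_1$ with a vertex $y_1\in V(B_1-b_1)$ adjacent to $x$ and the vertex $b_1$ — the vertices of $V(B_1)\setminus\{y_1,b_1\}$ all lie in $B_1-b_1$ and so have degree $\ge k-1$ in $B_1$ — gives a $(y_1,b_1)$-path in $B_1$ of length at least $k-1$. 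By $2$-connectivity of $G$ there is $x'\in N_C(R)\setminus\{x\}$, adjacent to some $w\in V(R)$; since every end-block reaches $C$ only at $x$, $w$ lies outside each $V(B_i)\setminus\{b_i\}$, in particular $w\ne y_1$, so $\{x,x'\}$ is a strong attachment. Routing $x\to y_1\to\cdots\to b_1$ (length $\ge k-1$), then through $R$ to $w$ (length $\ge 0$), then to $x'$ produces an $(x,R,x')$-path of length at least $k+1$; Lemma \ref{Le:clique}(i) again forces $\omega\le c-k$, a contradiction. Hence no infeasible component exists.

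I expect the main obstacle to be the second case: correctly deducing that all end-blocks share a single attachment vertex $x$ on $C$, and then squeezing out the stronger estimate (a path of length $\ge k-1$ rather than the weaker $\frac23(k-1)$ supplied by Claim \ref{Claim-path in B}(ii)) from the forced minimum degree $\ge k-1$ inside $B_1$, which is exactly what is needed because here we can route through only one end-block. A secondary but necessary care-point is checking that all the concatenated segments are internally disjoint, which rests on the fact that distinct blocks of $R$ intersect in at most one vertex.
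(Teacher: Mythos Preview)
Your proposal is correct and follows essentially the same route as the paper's own proof: split into the case where two end-blocks admit distinct attachments on $C$ (and combine the two long block-paths from Claim~\ref{Claim-path in B}(ii) with $t\le 2$ to feed Lemma~\ref{Le:clique}(i)), and the residual case where all end-block attachments collapse to a single vertex $x$ (and then exploit $d_{B_1}(u)\ge k-1$ with Theorem~\ref{Le:FanErdoGal} to get a path of length $\ge k-1$, route to a second attachment $x'$ outside the end-blocks, and apply Lemma~\ref{Le:clique}(i) again). The only cosmetic differences are that you quantify over \emph{all} end-blocks in the second case (the paper fixes two), and you round the Case~1 bound down to $d=k+1$ whereas the paper keeps $d\ge \tfrac{4}{3}(k-1)+2$; both suffice.
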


\begin{proof}
Suppose that there exists an infeasible component $R$ in $G-C$.
By Claim 4, $R$ is not 2-connected,
so there exist two end-blocks $B_1, B_2$ of $R$, with cut-vertices $b_1, b_2$, respectively.
By Claim 5, each $B_i$ is 2-connected and for any vertex $y\in V(B_i-b_i)$,
there exists a $(y,b_i)$-path in $B_i$ of length $\ell_{yb_i}\geq \frac{2(k-t+1)}{3}\geq \frac{2(k-1)}{3}$.

Suppose there exist distinct vertices $x\in N_C(B_1-b_1)$ and $x'\in N_C(B_2-b_2)$.
Then there exist $y\in B_1-b_1$ and $y'\in B_2-b_2$ such that $xy, x'y'$ are two independent edges.
So $\{x,x'\}$ is a strong attachment of $R$ to $C$; and moreover,
there exists an $(x,R,x')$-path of length at least $\ell_{yb_1}+\ell_{y'b_2}+2\geq \frac{4(k-1)}{3}+2$.
By Lemma \ref{Le:clique}, as $k\geq 4$, we have $c-k+1\leq \omega\leq c-\left(\frac{4}{3}(k-1)+1\right)\leq c-k,$
a contradiction.

Therefore, we may assume that $N_C(B_1-b_1)=N_C(B_2-b_2)=\{x\}$ for some vertex $x\in V(C)$.
Let $y$ be a neighbor of $x$ in $B_1-b_1$.
Note that $d_{B_1}(u)\geq k-1$ for any $u\in B_1-b_1$.
By Theorem \ref{Le:FanErdoGal}, there exists a $(y,b_1)$-path of length at least $k-1$.
Since $G$ is 2-connected, there exists an edge $x'y'\in E(G)$ with $x'\in V(C-x)$ and $y'\in V(R)-(B_1-b_1)\cup (B_2-b_2)$.
Clearly $\{x,x'\}$ is a strong attachment of $R$ to $C$ and using the above $(y,b_1)$-path,
one can easily find an $(x,R,x')$-path of length at least $k+1$.
By Lemma \ref{Le:clique}, we have $c-k+1\leq \omega\leq c-k$, a contradiction. This proves Claim \ref{claim-compoent-2kinds}.
\end{proof}

In the remaining, we let $C=x_1x_2\ldots x_cx_1$ and take the index of $x_i$ under modulo $c$.
By Dirac's theorem, $c\geq \min\{n,2k\}$. We also have $c\leq n-1$. This shows that $c\geq 2k$.

\begin{claim}\label{claim-struR}
Let $R$ be a component in $G-C$ with $|N_C(R)|=k$.
Then, there exists $i\in [c]$ such that
$I:=\{x_{i+1},x_{i+3},\ldots, x_{i+2k-3}\}$ is a stable set, $V(C)\backslash I$ is a clique of size $c-k+1$,
and $N_C(R)=\{x_i,x_{i+2},\ldots,x_{i+2k-2}\}$; moreover, $|V(R)|=1$.
\end{claim}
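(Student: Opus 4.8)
The plan is to first determine the clique number of $G[C]$ exactly, pin down the precise location on $C$ of the set $I:=V(C)\setminus W$ for a maximum clique $W$ of $G[C]$, deduce from this the form of $N_C(R)$, and only at the end prove $|V(R)|=1$ using local maximality of $C$.

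\emph{Step 1: locating $I$ and $N_C(R)$.} Write $N:=N_C(R)$, so $|N|=k$. Combining the hypothesis $\omega(G[C])\ge c-k+1$ with the bound $\omega(G[C])\le c-|N_C(R)|+1$ derived in the proof of Claim 1, we get $\omega(G[C])=c-k+1$; fix a maximum clique $W$ of $G[C]$ and set $I:=V(C)\setminus W$, so $|I|=k-1$. Since $N^{+}$ is stable (Claim 1) and $W$ is a clique, $|W\cap N^{+}|\le 1$, and then $|W|=|W\cup N^{+}|+|W\cap N^{+}|-|N^{+}|\le c-k+1$, compared with the exact value $|W|=c-k+1$, forces $W\cup N^{+}=V(C)$; hence $I\subseteq N^{+}$, and symmetrically $I\subseteq N^{-}$. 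In particular $I$ is stable (being a subset of the stable set $N^{+}$), $I^{+}\cup I^{-}\subseteq N$, and $|I^{+}\cap I^{-}|\ge 2(k-1)-|N|=k-2$.

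\emph{Step 2: the gap structure of $I$.} View $I$ as a stable subset of the cyclically ordered vertex set of $C$ of size $k-1$, with consecutive cyclic gaps $g_{1},\dots,g_{k-1}\ge 2$ satisfying $g_{1}+\dots+g_{k-1}=c$. One checks that $|I^{+}\cap I^{-}|$ is exactly the number of gaps of length $2$, so at most one gap exceeds $2$. If every gap equals $2$ then $c=2(k-1)<2k$, contradicting $c\ge 2k$ (valid since $c\ge\min\{n,2k\}$ by Dirac's theorem, as noted before the claim, and $c\le n-1$). Hence exactly $k-2$ gaps equal $2$ and the remaining one has length $c-2(k-2)\ge 4$; after relabelling this is precisely $I=\{x_{i+1},x_{i+3},\dots,x_{i+2k-3}\}$, so $W=V(C)\setminus I$ is a clique of size $c-k+1$. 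A direct computation gives $I^{+}\cup I^{-}=\{x_{i},x_{i+2},\dots,x_{i+2k-2}\}$, a set of exactly $k$ vertices; since $I^{+}\cup I^{-}\subseteq N$ and $|N|=k$, we conclude $N_C(R)=\{x_{i},x_{i+2},\dots,x_{i+2k-2}\}$.

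\emph{Step 3: $R$ is a single vertex.} For each $j\in\{0,1,\dots,k-2\}$ the vertices $x_{i+2j},x_{i+2j+2}\in N_C(R)$ lie at distance $2$ along $C$. If there were independent edges $x_{i+2j}y$ and $x_{i+2j+2}y'$ with $y\ne y'\in V(R)$, then a $(y,y')$-path inside the connected graph $R$, together with these two edges and the arc $C[x_{i+2j+2},x_{i+2j}]$ of length $c-2$, would form a cycle of length at least $c+1$ meeting $E(C,G-C)$ in only two edges, contradicting that $C$ is locally maximal. So $N_R(x_{i+2j})=N_R(x_{i+2j+2})$ is a single vertex $v_{j}$, and chaining along $j=0,\dots,k-2$ yields $v_{0}=\dots=v_{k-2}=:v$; thus every vertex of $N_C(R)$ has $v$ as its unique neighbour in $R$. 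If $|V(R)|\ge 2$ then every vertex of $V(R)\setminus\{v\}$ has all its neighbours in $R$, so $v$ is a cut vertex of $G$, contradicting $2$-connectivity; hence $|V(R)|=1$.

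I expect Step 2 to be the fiddly part — translating $|I^{+}\cap I^{-}|\ge k-2$ into the interval picture on the cycle, dealing with the single long wrap-around gap, and getting the index arithmetic for $I$ and for $I^{+}\cup I^{-}$ exactly right — though none of it is deep. The one genuinely essential ingredient is the inequality $c\ge 2k$, which is precisely what eliminates the degenerate configuration in which $I$ is equally spaced all the way around $C$.
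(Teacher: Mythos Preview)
Your proof is correct and follows essentially the same route as the paper's: the inclusion--exclusion argument pinning down $|W|=c-k+1$, $I\subseteq N^{+}\cap N^{-}$, and $|I^{+}\cap I^{-}|\ge k-2$ is identical, your gap analysis in Step~2 is just a more explicit version of what the paper dismisses with ``it is not hard to see,'' and Step~3 matches the paper's argument that consecutive attachment vertices at distance~$2$ on $C$ must share a unique neighbour in $R$, forcing $|V(R)|=1$ via $2$-connectivity.
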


\begin{proof}
Let $N:=N_C(R)$. Let $W$ be a maximum clique in $G[C]$ and $I:=V(C)\backslash W$.
By Claim 1, $N^+$ is stable and thus $|W\cap N^+|\leq 1$.
By the inclusion-exclusion principle, we have $c-k+1\leq |W|=|W\cup N^+|+|W\cap N^+|-|N^+|\leq c+1-k$.
This shows that $|W|=c-k+1, |W\cap N^+|=1$, and $W\cup N^+=V(C)$.
In particular, we have $I\subseteq N^+$. Similarly, one can show that $I\subseteq N^-$.
Thus, $I^+\cup I^-\subseteq N$.
So $k=|N|\geq |I^+\cup I^-|=|I^+|+|I^-|-|I^+\cap I^-|=2(k-1)-|I^+\cap I^-|$,
implying that $|I^+\cap I^-|\geq k-2$.
Since $|I|=k-1$ (and $c\geq 2k$), it is not hard to see that
the indices of the vertices in $I$ must form an arithmetic progression with difference two,
say $I=\{x_{i+1},x_{i+3},\ldots, x_{i+2k-3}\}$ for some $i\in [c]$.
Also since $I^+\cup I^-\subseteq N$ and $|N|=k$, it follows that $N_C(R)=N=\{x_i,x_{i+2},\ldots,x_{i+2k-2}\}$.

For any $x_j, x_{j+2}$ in $N_C(R)$,
since $C$ is locally maximal, there exists some vertex $y\in V(R)$ such that $N_R(x_j)=N_R(x_{j+2})=\{y\}$.
This further implies that $y$ is the unique neighbor in $R$ for every vertex in $N_C(R)$.
If $|V(R)|\geq 2$, then $y$ is a cut-vertex of $G$, contradicting the fact that $G$ is 2-connected.
Thus $|V(R)|=1$. This proves the claim.
\end{proof}

\begin{claim}\label{claim-strucchR}
Let $R$ be a component in $G-C$ with $ch(R)=(2,0,c-k+1)$, and $T:=N_C(R)$.
Then there exists some integer $i\in [c]$ such that $T=\{x_i,x_{i+c-k}\}$ and
$W=G[\{x_i,x_{i+1},\ldots,x_{i+c-k}\}]$ is a clique of size $c-k+1$;
moreover, $G[R\cup T]$ is a clique of size $k+1$ and there are no edges between $V(W)\backslash T$ and $V(C)\backslash V(W)$.
\end{claim}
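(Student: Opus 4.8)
Throughout write $\{a,b\}:=T=N_C(R)$, which equals $N_C(R)$ since $q=0$; note $|V(R)|\ge 2$, because a one-vertex component admits no strong attachment of size $2$. The plan is to control everything through one quantity: the length $\lambda$ of a longest $(a,b)$-path inside $G[C]$. The argument has three steps: first identify $G[R\cup T]$, then compute $\lambda$ (and locate $a,b$ in the maximum cliques of $G[C]$), and finally use $\lambda=c-k$ to pin down the two arcs of $C$ between $a,b$ and to rule out the forbidden edges.

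\emph{Step 1: $G[R\cup T]$ is a $K_{k+1}$ and $|V(R)|=k-1$.} Apply Lemma~\ref{Le:clique}(i) to the strong attachment $T$ (so $t=2$), with $d$ the length of a longest $(a,R,b)$-path: it gives $c-k+1=\omega\le c-(d-1)$, hence $d\le k$. For the reverse bound put $H:=G[R\cup\{a,b\}]+ab$. Then $H$ is $2$-connected (a cut vertex $v\in V(R)$ of $H$ would leave a component of $G[R]-v$ whose only neighbour in $G$ is $v$, contradicting $2$-connectivity of $G$), and $d_H(v)=d_G(v)\ge k$ for every $v\in V(R)$ because $N_C(R)=\{a,b\}$; so Theorem~\ref{Le:FanErdoGal} produces an $(a,b)$-path in $H$ of length $\ge k$, which -- being longer than the single edge $ab$ -- is an $(a,R,b)$-path, so $d\ge k$. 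Hence $d=k$, the average $H$-degree over $V(R)$ is exactly $k$, and the equality clause of Theorem~\ref{Le:FanErdoGal} makes $H$ a union of cliques $K_{k+1}$ all sharing $\{a,b\}$; connectedness of $R$ leaves a single $K_{k+1}$, so $|V(R)|=k-1$ (whence $k\ge 3$) and $G[R\cup T]\in\{K_{k+1},\,K_{k+1}-ab\}$. Moreover, replacing one arc of $C$ between $a$ and $b$ by a length-$k$ $(a,R,b)$-path and using local maximality shows both arcs of $C$ between $a$ and $b$ have length at least $k$.

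\emph{Step 2: $\lambda=c-k$, and $a,b$ lie in every maximum clique of $G[C]$.} First $\lambda\le c-k$: an $(a,b)$-path $Q$ in $G[C]$ of length $\ge c-k+1$ together with a length-$k$ $(a,R,b)$-path would form a cycle of length $>c$ meeting $E(C,G-C)$ in exactly two edges, contradicting local maximality of $C$. Now let $W_0$ be a maximum clique of $G[C]$, $|W_0|=c-k+1$. Inspecting the proof of Lemma~\ref{Le:clique}(i) with $t=2$, $d=k$, $\omega=c-(k-1)$, every inequality there is an equality, which forces $V(C)\setminus(A\cup B)\subseteq W_0$ (so $a,b\in W_0$, whence $ab\in E(G)$ and $G[R\cup T]=K_{k+1}$) and $W_0=(V(C)\setminus(A\cup B))\cup A_j\cup B_{j'}$ for some $j,j'\in\{1,2\}$, with $W_0$ disjoint from $A_{3-j}$ and from $B_{3-j'}$. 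A Hamilton path of $W_0$ from $a$ to $b$ has length $c-k$, so $\lambda=c-k$.

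\emph{Step 3: pinning the arcs and the non-adjacency; the main obstacle.} Fix a maximum clique $W=(V(C)\setminus(A\cup B))\cup A_j\cup B_{j'}$. If $W$ is \emph{not} the vertex set of a single arc of $C$ (necessarily of length $c-k$), then one of the blocks $A_1,A_2,B_1,B_2$ is disjoint from $W$ while the vertex of $C$ immediately beyond it lies in $W$; starting a Hamilton path of the $(c-k)$-clique $W\setminus\{a\}$ or $W\setminus\{b\}$ at one vertex of $T$, ending it at that $W$-vertex, and then walking along $C$ through the omitted block to the other vertex of $T$, yields an $(a,b)$-path in $G[C]$ of length $>c-k$ (in fact $c-k+\lceil\frac{k-1}{2}\rceil$ or $c-k+\lfloor\frac{k-1}{2}\rfloor$, positive gain since $k\ge 3$), contradicting Step~2. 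Hence for a suitable $i$ we get $W=G[\{x_i,\dots,x_{i+c-k}\}]=K_{c-k+1}$, $T=\{x_i,x_{i+c-k}\}$, and the complementary arc has length $k$. Finally, were there an edge from $x_j\in V(W)\setminus T$ to $x_\ell\in V(C)\setminus V(W)$, then $x_i\to(\text{the short arc to }x_\ell,\ s\ge 1\text{ edges})\to x_\ell\to x_j\to(\text{Hamilton path of the clique }W\setminus\{x_i\}\text{ from }x_j\text{ to }x_{i+c-k},\ c-k-1\text{ edges})$ would be an $(x_i,x_{i+c-k})$-path in $G[C]$ of length $s+c-k>c-k$, again contradicting Step~2; so no such edge exists, and the claim follows. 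The delicate point I expect is the arc-pinning inside Step~3 -- making the two over-long-path constructions go through uniformly for every shape of $W$ permitted by the equality analysis of Lemma~\ref{Le:clique}; Steps~1 and~2 are essentially careful use of the equality cases of Lemmas~\ref{Le:clique} and~\ref{Le:FanErdoGal}.
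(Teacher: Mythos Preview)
Your argument is correct, but it follows a genuinely different route from the paper's. The paper never reopens the proof of Lemma~\ref{Le:clique}; instead it argues the lower bound $\lambda\ge c-k$ by a direct construction: take disjoint subpaths $L_1,L_2$ of $C$ from $a,b$ to $V(W)$ and thread a Hamiltonian path of the clique $W$ between their endpoints, so the resulting $(a,b)$-path passes through all of $W$ and has length $|L_1|+|L_2|+(c-k)$. Equality forces $L_1,L_2$ to be trivial (so $a,b\in W$), and if $W$ met both open arcs one could reroute to pick up an extra vertex, so $W$ sits on a single arc with endpoints $a,b$; the non-adjacency then follows by the same ``detour through $V(C)\setminus W$'' trick you use. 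The paper then gets $\lambda\le c-k$ and $G[R\cup T]=K_{k+1}$ together, by combining a longest $(a,b)$-path in $G[R\cup T]+ab$ (via Theorem~\ref{Le:FanErdoGal}) with a longest $(a,b)$-path in $G[C]$ and invoking local maximality of $C$.

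Your approach instead squeezes $d=k$ first (Lemma~\ref{Le:clique}(i) for $d\le k$, Theorem~\ref{Le:FanErdoGal} for $d\ge k$, then its equality clause for the $K_{k+1}$), and then extracts the structure of $W$ by tracking equality through the \emph{proof} of Lemma~\ref{Le:clique}(i), obtaining the explicit decomposition $W=(V(C)\setminus(A\cup B))\cup A_j\cup B_{j'}$ and the membership $a,b\in W$. This is valid and even yields the extra byproduct $ab\in E(G)$ immediately, but it requires knowing the internal objects $A_j,B_j$ from that proof and then a small case analysis in your Step~3 to rule out the ``mixed'' choices of $(j,j')$. The paper's route avoids this dependence on the lemma's internals at the cost of a slightly more ad hoc path construction; your route is more systematic once one is willing to cite the equality analysis. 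Either way the delicate Step~3 you flagged does go through: the key facts you need (both arcs have length $\ge k$, and the vertex just beyond an omitted block lies in $W$) follow from Step~1 and from $V(C)\setminus(A\cup B)\subseteq W$ together with $A_1\subseteq W$ or $B_2\subseteq W$ in the mixed cases.
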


\begin{proof}
Let $T=\{x,y\}$ and $W$ be a maximum clique in $G[C]$ of size $c-k+1$.

First we show that the longest $(x,y)$-path in $G[C]$ has length at least $c-k$,
with equality if and only if $T=\{x_i,x_{i+c-k}\}$ for some integer $i\in [c]$,
$W=G[\{x_i,x_{i+1},\ldots,x_{i+c-k}\}]$,
and there are no edges between $V(W)\backslash T$ and $V(C)\backslash V(W)$.
We first observe that there are two disjoint subpaths of $C$, say $L_1, L_2$ from $x,y$ to $V(W)$, respectively.
Let the other end of $L_i$ be $a_i$.
Then, as $W$ is a clique, there exists an $(a_1,a_2)$-path in $W$ through all vertices of $W$,
which, together with $L_1$ and $L_2$, gives an $(x,y)$-path in $G[C]$ passing through all vertices of $W$.
Since $|V(W)|=c-k+1$, this $(x,y)$-path has length at least $c-k$.
Now suppose that the longest $(x,y)$-path has length exactly $c-k$.
Let $P_1,P_2$ be the two $(x,y)$-subpaths on $C$.
If $W$ intersects both $P_1-\{x,y\}$ and $P_2-\{x,y\}$,
then 
we could find an $(x,y)$-path through all vertices of $W$ and thus it has length at least $c-k+1$, a contradiction.
So we may assume that $V(W)\subseteq V(P_1)$. This further shows that $V(W)=V(P_1)$.
That is, there exists $i\in [c]$ such that $T=\{x_i,x_{i+c-k}\}$ and $W=G[\{x_i,x_{i+1},\ldots,x_{i+c-k}\}]$.
In this case, if there is some edge $uv$ with $u\in V(W)\backslash T$ and $v\in V(C)\backslash V(W)$,
then one can easily find an $(x_i,x_{i+c-k})$-path of length at least $c-k+1$, a contradiction.

Next we show that the longest $(x,y)$-path in $G[C]$ has length exactly $c-k$ and moreover, $G[R\cup T]$ is a clique of size $k+1$.
To see this, we notice that since $G$ is 2-connected,
$G[R\cup T]+xy$ is 2-connected and every vertex in $G[R\cup T]+xy$, except $x$ and $y$, has degree at least $k$.
By Theorem \ref{Le:FanErdoGal}, the longest $(x,y)$-path $P$ in $G[R\cup T]+xy$ has length at least $k$,
with equality if and only if $G[R\cup T]+xy$ is the union of some cliques $K_{k+1}$'s
which pairwise share the same vertices $x$ and $y$.
For our case, as the deletion of $\{x,y\}$ only results in one component $R$,
the equality holds if and only if $G[R\cup T]+xy$ is a clique $K_{k+1}$.
It is also clear that $P$ lies in $G[R\cup T]$.
Let $P'$ be the longest $(x,y)$-path in $G[C]$, which is of length at least $c-k$.
Then $C':=P\cup P'$ is a cycle of length at least $c$ with the property that $|E(C')\cap E(C, G-C)|=2$.
If $C'$ has length at least $c+1$, it will contradict that $C$ is locally maximal.
So $C'$ must have length $c$, and thus
the longest $(x,y)$-paths in $G[R\cup T]+xy$ and in $G[C]$ are of lengths exactly $k$ and $c-k$, respectively.
This, together with the last paragraph, imply that $\{x,y\}=\{x_i,x_{i+c-k}\}$ for some $i\in [c]$, $W=G[\{x_i,x_{i+1},\ldots,x_{i+c-k}\}]$ is a clique,
and $G[R\cup T]+xy$ is a clique $K_{k+1}$. In particular,
we see $xy\in E(G)$, so $G[R\cup T]$ is a clique $K_{k+1}$.
This proves Claim \ref{claim-strucchR}.
\end{proof}

\begin{claim}\label{claim-tqome}
If there exists a component $R$ in $G-C$ with $ch(R)=(2,0,c-k+1)$, then $G=Z_{n,k,c}$.
\end{claim}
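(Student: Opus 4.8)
The plan is to use the rigidity from Claims \ref{claim-struR} and \ref{claim-strucchR} to reconstruct $G$ exactly. Fix a component $R_0$ of $G-C$ with $ch(R_0)=(2,0,c-k+1)$ and apply Claim \ref{claim-strucchR}; after a cyclic relabelling of $C$ we may assume $N_C(R_0)=\{x_1,x_{c-k+1}\}$, that $W:=G[\{x_1,\dots,x_{c-k+1}\}]$ is a clique of size $c-k+1$, that $G[R_0\cup\{x_1,x_{c-k+1}\}]$ is a $K_{k+1}$, and, most importantly, that there are no edges between $A:=\{x_2,\dots,x_{c-k}\}$ and $B:=\{x_{c-k+2},\dots,x_c\}$. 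Write $a:=x_1$ and $b:=x_{c-k+1}$, and recall $|B|=k-1$ and $c\ge 2k$, so that $V(C)$ is the disjoint union $A\cup\{a,b\}\cup B$ and $V(W)=A\cup\{a,b\}$. Since $G$ has no edge between $A$ and $B$, no clique of $G$ contains both a vertex of $A$ and a vertex of $B$.

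The first step is to show that every component $R$ of $G-C$ satisfies $N_C(R)=\{a,b\}$; by Claim \ref{claim-compoent-2kinds} there are only two cases. If $|N_C(R)|=k$, then Claim \ref{claim-struR} gives a stable set $I_R$ of $k-1$ vertices whose indices form an arithmetic progression with common difference $2$ and with $V(C)\setminus I_R$ a clique of size $c-k+1$; this clique misses $A$ or misses $B$, i.e.\ $A\subseteq I_R$ or $B\subseteq I_R$, and a size count (using $c\ge 2k$) forces $I_R$ to be a set of $k-1$ consecutive vertices of $C$, contradicting its arithmetic-progression structure once $k\ge 3$. If $ch(R)=(2,0,c-k+1)$, then Claim \ref{claim-strucchR} gives a clique $W_R$ of size $c-k+1$ that is a consecutive arc of $C$ with $N_C(R)$ its two endpoints; since $W_R$ is a clique it misses $A$ or misses $B$. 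Missing $B$ gives $W_R\subseteq\{a,b\}\cup A=V(W)$, hence $W_R=W$ and $N_C(R)=\{a,b\}$; missing $A$ gives $W_R\subseteq\{a,b\}\cup B$, hence $c-k+1\le k+1$, so $c=2k$ and $W_R=\{a,b\}\cup B$, which is the arc $x_{c-k+1},x_{c-k+2},\dots,x_c,x_1$ and again has $\{a,b\}$ as its endpoints. In all cases $N_C(R)=\{a,b\}$. (If $k=2$ the two cases coincide and $Z_{n,k,c}=W_{n,k,c}$; the bookkeeping is the same, so from here I assume $k\ge 3$.)

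With all attachments known I would now read off every edge of $G$. No vertex of $B$ has a neighbour outside $C$, since such a neighbour would force that vertex into some $N_C(R)=\{a,b\}$; and no vertex of $B$ has a neighbour in $A$. Hence for $x_j\in B$ we have $N_G(x_j)\subseteq\{a,b\}\cup(B\setminus\{x_j\})$, a set of size $k$, so $\delta(G)\ge k$ forces equality: $G[\{a,b\}\cup B]$ is a clique $K_{k+1}$ and the vertices of $B$ have no further incidences. Together with $W$ being a clique of size $c-k+1$, this shows $G[C]$ is exactly the union of $W$ and the clique on $\{a,b\}\cup B$, sharing precisely $\{a,b\}$, with $A$ having no edges outside $W$. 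Likewise each component $R$ of $G-C$ has $N_C(R)=\{a,b\}$ and, by Claim \ref{claim-strucchR}, $G[R\cup\{a,b\}]$ is a $K_{k+1}$, so $R$ is a clique $K_{k-1}$ joined completely to $\{a,b\}$ and nothing else; and distinct components are nonadjacent.

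Assembling these facts, $G$ is the union of the clique $W$ (of size $c-k+1$) with a family of cliques $K_{k+1}$ — the clique on $\{a,b\}\cup B$ lying on $C$, plus one clique $R\cup\{a,b\}$ for each component $R$ of $G-C$ — all of which pairwise intersect in exactly $\{a,b\}$. If $m$ denotes the number of these $K_{k+1}$'s, then $n=(c-k+1)+m(k-1)$, so $m=\frac{n-(c-k+1)}{k-1}$ is a positive integer and $G=Z_{n,k,c}$. I expect the real obstacle to be the first step — excluding ``misaligned'' components — because that is precisely where the non-obvious clause of Claim \ref{claim-strucchR} (no edges between $V(W)\setminus T$ and $V(C)\setminus V(W)$) does the work, and where the boundary case $c=2k$ must be checked by hand; once every component attaches to $\{a,b\}$, the remaining reconstruction is routine.
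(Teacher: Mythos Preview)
Your proof is correct and follows essentially the same approach as the paper: anchor the clique $W$ via Claim~\ref{claim-strucchR}, exploit its ``no edges between $V(W)\setminus T$ and $V(C)\setminus V(W)$'' clause to force every component of $G-C$ to attach at the same pair $\{a,b\}$, and then reconstruct the remaining $K_{k+1}$'s from the minimum-degree bound. The paper organizes the case analysis slightly differently (it first shows that every vertex of your $B$ has all its neighbours inside $B\cup\{a,b\}$ and only then turns to the other components, ruling out $|N_C(R_0)|=k$ via the stability of $(N_C(R_0))^+$ inside the clique $W$ rather than via your $I_R$-as-consecutive-block argument), but the substance is the same.
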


\begin{proof}
Let $R$ be a component in $G-C$ with $ch(R)=(2,0,c-k+1)$.
By Claim \ref{claim-strucchR}, we may assume that $T:=N_C(R)=\{x_1,x_{c-k+1}\}$ and
$W=G[\{x_1,x_2,\ldots,x_{c-k+1}\}]$ is a clique.

Let $A:=V(C)\backslash V(W)$. We first show that for every $x\in A$,
$N_G(x)\subseteq A\cup T$.
Suppose not. In view of Claim \ref{claim-strucchR}, we may assume that
there exists another component $R'$ in $G-C$ which has a neighbor $x$ in $A$
(because $x$ has no neighbors in $R$ or $W\backslash T$).
By Claim \ref{claim-compoent-2kinds},
either $|N_C(R')|=k$ or $ch(R')=(2,0,c-k-1)$.
If $|N_C(R')|=k$, then by Claim \ref{claim-struR},
$N_C(R')$ is a clique with vertices $\{x_i,x_{i+2},\ldots,x_{i+2k-2}\}$ for some $i\in [c]$.
Since $x\in N_C(R')\cap A$ and $A$ only consists of $k-1$ consecutive vertices on $C$,
there must be $y\in N_C(R')\cap (V(W)\backslash T)$.
So $xy\in E(G)$, contradicting Claim \ref{claim-strucchR}.
So assume that $ch(R')=(2,0,c-k-1)$.
Then $N_C(R')=\{x_j,x_{j+(c-k)}\}$ for some $j\in [c]$, where $x_j\in A$.
In this case, we also see that $x_jx_{j+(c-k)}$ is an edge between $V(W)\backslash T$ and $A$,
a contradiction, finishing the proof.

Therefore, as $\delta(G)\geq k$ and $|A\cup T|=k+1$,
we also see that $G[A\cup T]$ induces a $K_{k+1}$.
Together with Claim 8,
this shows that if $R$ is a component in $G-C$ with $ch(R)=(2,0,c-k+1)$,
then $G[C]$ is a union of a clique $K_{k+1}$ and another clique $K_{c-k+1}$ which share the vertices in $N_C(R)$.

Now consider any component $R_0$ in $G-C$ other than $R$.
We just proved $N_C(R_0)\cap A=\emptyset$,
so $N_C(R_0)\subseteq V(W)$.
By Claim \ref{claim-compoent-2kinds},
either $|N_C(R_0)|=k$ or $ch(R_0)=(2,0,c-k-1)$.
Assume that $|N_C(R_0)|=k$.
Let $x_i,x_{i+2},\ldots,x_{i+2k-2}$ be the vertices of $N_C(R_0)$ for some $i$, which are in $V(W)$.
Then there exist two vertices in $(N_C(R_0))^+$, which are also in $V(W)$,
a contradiction to Claim 1 that $(N_C(R_0))^+$ is stable.
So we have $ch(R_0)=(2,0,c-k-1)$.
By the above paragraph, we must have $N_C(R_0)=N_C(R)$;
moreover, by Claim \ref{claim-strucchR}, $G[R_0\cup N_C(R_0)]$ forms a clique $K_{k+1}$.
This shows that $G=Z_{n,k,c}$, proving this claim.
\end{proof}

Hence, by Claims \ref{claim-compoent-2kinds}, \ref{claim-struR} and \ref{claim-tqome},
we may assume that every vertex $y$ in $G-C$ is an isolated vertex with $|N_C(y)|=k$.

\begin{claim}\label{Claim-N(y)}
For any $y,y'\in G-C$, it holds that $N_C(y)=N_C(y')$.
\end{claim}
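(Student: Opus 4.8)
The plan is to argue by contradiction, squeezing everything out of the structure already established. By the reduction preceding the claim, every component of $G-C$ is a single vertex meeting $C$ in exactly $k$ vertices, so Claim~\ref{claim-struR} applies to both $y$ and $y'$. Writing $C=x_1x_2\cdots x_cx_1$, there are indices $i,j$ with $N_C(y)=\{x_i,x_{i+2},\dots,x_{i+2k-2}\}$ and $N_C(y')=\{x_j,x_{j+2},\dots,x_{j+2k-2}\}$; moreover $I_y:=\{x_{i+1},x_{i+3},\dots,x_{i+2k-3}\}$ and the analogous $I_{y'}$ are stable, while $W_y:=V(C)\setminus I_y$ and $W_{y'}:=V(C)\setminus I_{y'}$ are cliques of order $c-k+1$ in $G[C]$. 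I would also record that $c\ge 2k$ by Dirac's theorem, that $(N_C(y))^{+}$ and $(N_C(y))^{-}$ are stable by Claim~1, and, directly from the index formulas, that $I_y\subseteq (N_C(y))^{+}\cap(N_C(y))^{-}$ with $(N_C(y))^{+}=I_y\cup\{x_{i+2k-1}\}$ and $(N_C(y))^{-}=I_y\cup\{x_{i-1}\}$. Now suppose, for contradiction, $N_C(y)\ne N_C(y')$.

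First I would handle the main case $c>2k$. Here the neighbourhood $N_C(y)$ determines the index $i$ (no cyclic ambiguity), so $I_y\ne I_{y'}$, and since $|I_y|=|I_{y'}|=k-1$ we may pick $v\in I_y\setminus I_{y'}$; in particular $v\in W_{y'}$. As $W_{y'}$ is a clique, $v$ is adjacent in $G[C]$ to each of the $c-k$ vertices of $W_{y'}\setminus\{v\}$; as $v$ lies in the stable set $(N_C(y))^{+}$, it is adjacent to none of the $k-1$ vertices of $(N_C(y))^{+}\setminus\{v\}$. Since $(k-1)+(c-k)=c-1=|V(C)\setminus\{v\}|$, these two sets partition $V(C)\setminus\{v\}$, whence $(N_C(y))^{+}\setminus\{v\}=V(C)\setminus W_{y'}=I_{y'}$. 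Running the identical argument with the stable set $(N_C(y))^{-}$ (which also contains $v$) gives $(N_C(y))^{-}\setminus\{v\}=I_{y'}$ as well, so $(N_C(y))^{+}\setminus\{v\}=(N_C(y))^{-}\setminus\{v\}$. But with $v\in I_y$ this says $(I_y\setminus\{v\})\cup\{x_{i+2k-1}\}=(I_y\setminus\{v\})\cup\{x_{i-1}\}$, forcing $x_{i+2k-1}=x_{i-1}$, i.e. $c\mid 2k$; together with $c\ge 2k$ this yields $c=2k$, contradicting $c>2k$.

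It remains to dispose of the boundary case $c=2k$, which is the only genuinely delicate point, since there the alternating $k$-sets collapse to the two parity classes of the even cycle $C$ and $W_y$ need not be its unique maximum clique. In this case $N_C(y)$ and $N_C(y')$ are exactly those two parity classes -- distinct by assumption -- so $(N_C(y))^{+}$ equals $N_C(y')$. By Claim~1 the set $(N_C(y))^{+}$ is stable, hence $N_C(y')$ is stable; but $N_C(y')\subseteq W_{y'}$, $W_{y'}$ is a clique, and $|N_C(y')|=k\ge 2$, a contradiction. Thus $N_C(y)=N_C(y')$ in all cases. The $c>2k$ step is essentially bookkeeping with the index sets from Claim~\ref{claim-struR} together with the one counting identity above; the part I would be most careful about is cleanly separating off and killing the degenerate case $c=2k$.
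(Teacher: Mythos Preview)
Your argument is correct, and it takes a genuinely different route from the paper. The paper first shows $|I_y\cap I_{y'}|\geq k-2$ and then, in the main case $c\geq 2k+1$, pins down $j=i+2$ (up to symmetry) and constructs an explicit cycle $C'$ through $y$ (using the edge $x_{i+2k}x_{i+1}$ coming from the clique $W_{y'}$) that is longer than $C$ with $|E(C')\cap E(C,G-C)|\leq 2$, contradicting local maximality; the boundary case $c=2k$ is also handled by a parity argument for $k\geq 3$ and by an explicit $5$-cycle for $k=2$. By contrast, your proof is entirely structural and never builds a longer cycle: for $c>2k$ you pick $v\in I_y\setminus I_{y'}$ and use the single counting identity $(k-1)+(c-k)=c-1$ to force $(N_C(y))^{+}\setminus\{v\}=(N_C(y))^{-}\setminus\{v\}=I_{y'}$, which collapses to $x_{i+2k-1}=x_{i-1}$ and hence $c=2k$; for $c=2k$ you observe that $N_C(y')=(N_C(y))^{+}$ is simultaneously stable (Claim~1) and a $k$-subset of the clique $W_{y'}$. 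This is shorter and cleaner, avoiding the case split on $k$ and the explicit cycle surgery; the price is that it relies a bit more heavily on the precise index description from Claim~\ref{claim-struR}, whereas the paper's approach makes the obstruction (a forbidden longer cycle) visible.
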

\begin{proof}
Suppose that $N_C(y)\neq N_C(y')$.
Then there exist distinct indices $i, j\in [c]$
such that $N_C(y)=\{x_i,x_{i+2},\ldots,x_{i+2k-2}\}$ and $N_C(y')=\{x_j,x_{j+2},\ldots,x_{j+2k-2}\}$.
Also $I:=\{x_{i+1},x_{i+3},\ldots,x_{i+2k-3}\}$ and $I':=\{x_{j+1},x_{j+3},\ldots,x_{j+2k-3}\}$ are independent.
Moreover, $W:=V(C)\backslash I$ and $W':=V(C)\backslash I'$ are cliques.
So $|I'\cap W|\leq 1$, implying that $|I'\cap I|\geq k-2$.

First consider the case that $c=2k$. If $k=2$, then without loss of generality,
we may assume that $N_C(y)=\{x_1,x_3\}$, $N_C(y')=\{x_2,x_4\}$, $I=\{x_2\}$, and $W=x_1x_3x_4x_1$ is a triangle.
Then one can easily find a 5-cycle $x_2y'x_4x_3x_1x_2$, contradicting that $C$ is locally maximal.
If $k\geq 3$, then $I'\cap I\neq \emptyset$.
This implies that the indies of the vertices in $I$ and in $I'$ are of the same parity,
so we must have $N_C(y)=N_C(y')$, a contradiction.

Hence we may assume that $c\geq 2k+1$. 
In this case, as $N_C(y)\neq N_C(y')$, we see that $I\neq I'$.
Since $|I'\cap I|\geq k-2$, it holds that $|I'\cap I|=k-2$.
If $k=2$, then $x_{j+1}$ is in the clique $W=V(C)\backslash\{x_{i+1}\}$.
One of $x_{j-1}, x_{j+3}$ cannot be $x_{i+1}$ (by symmetry, say $x_{i+1}\neq x_{j+3}$).
So $x_{j+1}x_{j+3}\in E(G)$. Then $(C-\{x_{j+1},x_{j+2}\})\cup x_jy'\cup y'x_{j+2}\cup x_{j+2}x_{j+1}\cup x_{j+1}x_{j+3}$
is a cycle which is longer than $C$, a contradiction.
Now let $k\geq 3$. Then without loss of generality, we may assume that $j=i+2$.
Since $x_{i+1}, x_{i+2k}\in W'$, we have $x_{i+2k}x_{i+1}\in E(G)$.
Let $P$ be the unique subpath of $C$ from $x_{i+2}$ to $x_{i+2k-2}$ which contains $x_{i+3}$,
and let $P'=x_{i+2k}x_{i+1}\cup x_{i+1}x_i\cup x_iy\cup yx_{i+2}\cup P$ be a path from $x_{i+2k}$ to $x_{i+2k-2}$.
Since $A:=(V(C)\backslash V(P'))\cup \{x_{i+2k},x_{i+2k-2}\}=V(C)-\{x_i,x_{i+1},...,x_{i+2k-3}\}\subseteq W$,
there exists a path from $x_{i+2k}$ to $x_{i+2k-2}$ and consisting of the vertices in $A$,
which, together with $P'$, forms a cycle $C'$ satisfying that $|C'|>|C|$ and $|E(C')\cap E(C,G-C)|\leq 2$.
This contradicts that $C$ is locally maximal, completing the proof of this claim.
\end{proof}

We now prove that $G=W_{n,k,c}$.
By Claim \ref{Claim-N(y)}, we may assume that for all $y\in G-C$,
$N_C(y)=\{x_1,x_3,...,x_{2k-1}\}$.
By Claim \ref{claim-struR}, $I=\{x_2,x_4,...,x_{2k-2}\}$ is an independent set and $W:=V(C)\backslash I$ is a clique of size $c-k+1$.
Therefore, to prove $G=W_{n,k,c}$, it remains to show that for every vertex $x\in I$,
$N_{G}(x)=\{x_1,x_3,...,x_{2k-1}\}$.
Since $\delta(G)\geq k$, it suffices to show that
any vertex $x_i\in I$ cannot be adjacent to some vertex $x$ in $V(C)-\{x_1,x_3,...,x_{2k-1}\}$.
Suppose for a contradiction that $x_ix\in E(G)$, where $i\in \{2,4,...,2k-2\}$.
As $I$ is independent, such $x$ must be in $V(C)-\{x_1,x_2,...,x_{2k-1}\}$.
Let $P, P'$ be two disjoint subpaths in the segment $x_1x_2...x_{2k-1}$ of $C$ from $x_i,x_{i+1}$ to $x_1, x_{2k-1}$, respectively.
Then $Q=xx_i\cup P\cup x_1y\cup yx_{i+1}\cup P'$ is a path from $x$ to $x_{2k-1}$ and passing through some vertex $y\in G-C$.
Note that $A:=(V(C)\backslash V(Q))\cup \{x,x_{2k-1}\}=V(C)-\{x_1,x_2,...,x_{2k-2}\}$ is a subset of the clique $W$.
So there exists a path from $x$ to $x_{2k-1}$ and consisting of all vertices in $A$.
This path, together with $Q$, forms a cycle $C'$, which is longer than $C$ and $|E(C')\cap E(C,G-C)|\leq 2$, a contradiction.
The proof of Lemma \ref{lem:CliqNum2} is completed.
\end{proof}

We now have finished the proof of Theorem \ref{Th:WZ+closure} (and thus Theorem \ref{Th:WZ+closure1}).

\section{Proofs of Theorems \ref{Th:Var1} and \ref{Th:Var2}}

\noindent {\bf Theorem \ref{Th:Var1}.}
{\em
Let $G$ be a 2-connected graph on $n$ vertices with $\delta(G)\geq k$
and let $C$ be a longest cycle in $G$ of length $c\in [10,n-1]$.
If
$
e(G)>\max\left\{f\left(n,k+1,c\right),f\left(n,\left\lfloor\frac{c}{2}\right\rfloor,c\right)\right\},
$
then $\overline{G}=W_{n,k,c}$ or $Z_{n,k,c}$, where $\overline{G}$ denotes the $C$-closure of $G$.
}

\begin{proof}
We derive this from Theorem \ref{Th:main-refined}.
By the discussion in Subsection 2.1, $f\left(n,\left\lfloor\frac{c}{2}\right\rfloor,c\right)\geq f\left(n,\left\lfloor\frac{c}{2}\right\rfloor-1,c\right)$.
So we have $e(G)>\max\left\{f\left(n,k+1,c\right),f\left(n,\left\lfloor\frac{c}{2}\right\rfloor-1,c\right)\right\}$.
By Theorem \ref{Th:main-refined}, either $\overline{G}=W_{n,k,c}$ or $Z_{n,k,c}$,
$G\subseteq W_{n,\lfloor\frac{c}{2}\rfloor,c}$, or $G$ is a subgraph of a member of $\mathcal{X}_{n,c}\cup \mathcal{Y}_{n,c}$ (only when $k=2$ and $c$ is odd).
If $G\subseteq W_{n,\lfloor\frac{c}{2}\rfloor,c}$ or $G$ is a subgraph of a member of $\mathcal{X}_{n,c}\cup \mathcal{Y}_{n,c}$,
then it is easy to see that $e(G)\leq f\left(n,\left\lfloor\frac{c}{2}\right\rfloor,c\right)$,
a contradiction to that $e(G)> f\left(n,\left\lfloor\frac{c}{2}\right\rfloor,c\right)$.
So it must be that $\overline{G}=W_{n,k,c}$ or $Z_{n,k,c}$.
\end{proof}

The proof of Theorem \ref{Th:Var2} is more involved, as we are not guaranteed to be able to use Theorem \ref{Th:main-refined}.
This is because $$\max\left\{f\left(n,k,c\right),f\left(n,\left\lfloor\frac{c}{2}\right\rfloor-1,c\right)\right\}\geq
\max\left\{f\left(n,k+1,c\right),f\left(n,\left\lfloor\frac{c}{2}\right\rfloor-1,c\right)\right\}$$ holds only when
$\lfloor\frac{c}{2}\rfloor-1\geq k+1$.
In fact when $c\leq 2k+3$, this inequality can be reversed.

\medskip

\noindent {\bf Theorem \ref{Th:Var2}.}
{\em
Let $G$ be a 2-connected graph on $n$ vertices with $\delta(G)\geq k$
and let $C$ be a longest cycle in $G$ of length $c\in [10,n-1]$.
If
$
e(G)>\max\left\{f\left(n,k,c\right),f\left(n,\left\lfloor\frac{c}{2}\right\rfloor-1,c\right)\right\},
$
then either $G\subseteq W_{n,\lfloor\frac{c}{2}\rfloor,c}$, or $k=2$, $c$ is odd and $G$ is a subgraph of a member of $\mathcal{X}_{n,c}\cup \mathcal{Y}_{n,c}$.
}

\begin{proof}
Since $e(G)>f(n,\lfloor\frac{c}{2}\rfloor-1,c)=(\lfloor\frac{c}{2}\rfloor-1)(n-c)+h(c+1,\lfloor\frac{c}{2}\rfloor-1)$,
it holds that either $e(G-C)+e(G-C,C)>(\lfloor\frac{c}{2}\rfloor-1)(n-c)$ or $e(G[C])> h(c+1,\lfloor\frac{c}{2}\rfloor-1)$.
If the former case occurs, then by Theorem \ref{Th:BondyStab-2con},
either $G\subseteq W_{n,\lfloor\frac{c}{2}\rfloor,c}$,
or $c$ is odd and $G$ is a subgraph of a member of $\mathcal{X}_{n,c}\cup \mathcal{Y}_{n,c}$ (if this occurs, then $k=2$).
So we may assume that $e(G[C])> h(c+1,\lfloor\frac{c}{2}\rfloor-1)$.
It suffices to show the following
\medskip

\noindent {\bf Claim.}
{\em
Let $G$ be a 2-connected graph on $n$ vertices with $\delta(G)\geq k$ and
$C$ be a locally maximal cycle in $G$ of length $c\in [10,n-1]$.
If
$e(G)>\max\left\{f\left(n,k,c\right),f\left(n,\left\lfloor\frac{c}{2}\right\rfloor-1,c\right)\right\}$
and
$e(G[C])>h(c+1,\lfloor\frac{c}{2}\rfloor-1),$
then $G\subseteq W_{n,\lfloor\frac{c}{2}\rfloor,c}$.
}

\medskip

The remaining proof is similar to the one of Theorem \ref{Th:WZ+closure}.
Let $\overline{G}$ be the $C$-closure of $G$.
By Lemma \ref{Le:Coper}, $C$ remains a locally maximal cycle in $\overline{G}$;
and by Lemma \ref{Le:CoperHC}, $\overline{G}[C]$ is non-Hamiltonian-connected.
Using Lemma \ref{Le:HamConStruc2}, we see that
one of the following holds:
\begin{itemize}
\item[(i)] $\overline{G}[C]$ contains a subset of $\lfloor\frac{c}{2}\rfloor-1$ vertices of degree at most
$\lfloor\frac{c}{2}\rfloor$ in $\overline{G}[C]$, or
\item[(ii)]  $\overline{G}[C]$ contains a subset $S$ of $s-1$ vertices of degree at most
$s$ in $\overline{G}[C]$ for some $2\leq s\leq \lfloor\frac{c}{2}\rfloor-2$ such that
$\overline{G}[C]-S$ is a clique.
\end{itemize}
Suppose that (i) holds. Lemma \ref{Le:c/2-1:c/2} implies $\overline{G}[C]= W_{c,\lfloor\frac{c}{2}\rfloor,c}$.
Following the same augments in Theorem \ref{Th:WZ+closure}, we have $G\subseteq \overline{G}\subseteq W_{n,\lfloor\frac{c}{2}\rfloor,c}$.
Now assume that (ii) holds.
Since $e(\overline{G})\geq e(G)>\max\left\{f\left(n,k,c\right),f\left(n,\left\lfloor\frac{c}{2}\right\rfloor-1,c\right)\right\}$,
by using $k$ instead of $k+1$ in Lemma \ref{lem:CliqNum}, we derive that the clique number of $\overline{G}[C]$ is at least $c-k+2$.
By Lemma \ref{lem:CliqNum2}, we have $\overline{G}\in \{W_{n,k,c}, ~Z_{n,k,c}\}$,
but in each of the two graphs, the corresponding clique number is $c-k+1$, a contradiction.
This proves the claim. Thus we have proved Theorem \ref{Th:Var2}.
\end{proof}

\section{Concluding remarks}
The approach used here seems to be applicable for the following problem of F\"{u}redi, Kostochka and Verstra\"{e}te in \cite{FKV16}:
for $n\geq \frac{3c}{2}$, to describe the structures of 2-connected $n$-vertex graphs with circumference at most $c$, where $c$ is even,
and with at least $f(n,\frac{c}{2}-2,c)$ edges.
We also wonder if a general and clear stability result can hold for $k$-connected graphs $G$ (or even for 3-connected graphs with minimum-degree at least $k$)
for which $G$ has $n$ vertices, circumference $c$ and $e(G)>\max\{f(n,k+a,c),f(n,\lfloor\frac{c}{2}\rfloor-b,c)\}$ for fixed integers $a,b\geq 1$.
Finally we would like to mention that some related problems can be found in \cite{FKL-arxiv}.

\bigskip

\noindent {\bf Acknowledgement.} The first author would like to thank Alexandr V. Kostochka for helpful discussions.

\end{document}